\newtheorem{theorem}{Theorem}[subsection]
\newtheorem{lemma}[theorem]{Lemma}
\newtheorem{proposition}[theorem]{Proposition}
\newtheorem{corollary}[theorem]{Corollary}
\theoremstyle{definition}
\newtheorem{definition}[theorem]{Definition}
\theoremstyle{remark}
\newtheorem{remark}[theorem]{Remark}
\numberwithin{equation}{section}
\def\ZZ{\mathbb{Z}}  \def\CC{\mathbb{C}} \def\FF{\mathbb{F}} \def\KK{\mathbb K}
\def\SS{\mathfrak{S}} \def\P{\mathcal{P}} \def\C{\mathcal{C}} 
\def\B{\mathfrak{S}^B} \def\D{\mathfrak{S}^D} \def\HB{H^B} \def\HD{H^D} 
\def\chiB{\chi^B} \def\chiD{\chi^D} \def\PB{\mathcal{P}^B} \def\CB{\mathcal{C}^B}
\def\PD{\mathcal{P}^D} \def\CD{\mathcal{C}^D}
\def\bx{{\mathbf x}} \def\by{{\mathbf y}} \def\bs{{\mathbf s}}
\def\bX{\mathbf{X}} \def\bY{\mathbf{Y}}
\def\Sym{\mathrm{Sym}} \def\QSym{\mathrm{QSym}}
\def\FQSym{\mathbf{FQSym}}  \def\NSym{\mathbf{NSym}} 
\def\bF{\mathbf{F}} \def\FB{F^B} \def\MB{M^B}  \def\FD{F^D} \def\MD{M^D}
\def\bFB{\mathbf{F}^B} \def\bFD{\mathbf{F}^D}
\def\bsB{\mathbf{s}^B} \def\bhB{\mathbf{h}^B}
\def\bsD{\mathbf{s}^D} \def\bhD{\mathbf{h}^D}
\def\bh{{\mathbf h}} \def\sB{s^B} \def\hB{h^B} \def\mB{m^B}
\def\sD{s^D} \def\hD{h^D}
\def\Ch{\mathrm{Ch}} \def\bch{\mathbf{ch}}
\def\ChB{\mathrm{Ch}^B}  
\def\bchB{\mathbf{ch}^B} 
 \def\inv{\mathrm{inv}}  
\def\neg{\mathrm{neg}} \def\Neg{\mathrm{Neg}} \def\nsp{\mathrm{nsp}} \def\Nsp{\mathrm{Nsp}}
\def\cleq{{\preccurlyeq}} 
\def\qand{\quad\text{and}\quad}
\def\Hom{\operatorname{Hom}} \def\pib{\overline{\pi}}
\DeclareMathOperator{\supp}{supp}
\DeclareMathOperator{\des}{des}
\def\st{\operatorname{st}\,} \def\stB{\operatorname{{st^B}}\,} 
\def\Dst{\operatorname{{^Dst}}\,} \def\stD{\operatorname{{st^D}}\,}
\def\modelsB{\operatorname{\models^B}} \def\modelsD{\operatorname{\models^D}}
\def\unshuffle{%
               \mathrel{\raisebox{.04em}{%
               \reflectbox{\rotatebox[origin=c]{180}{${\shuffle}$}}}}}
\def\shuffleB{\,\operatorname{\shuffle^B}\,} \def\unshuffleB{\,\operatorname{\unshuffle^B}\,} 
\def\shuffleD{\,\operatorname{\shuffle^D}\,} \def\unshuffleD{\,\operatorname{\unshuffle^D}\,}
\def\CupB{\,\operatorname{\Cup^B}\,}\def\CapB{\,\operatorname{\Cap^B}\,}
\def\CupD{\,\operatorname{\Cup^D}\,}\def\CapD{\,\operatorname{\Cap^D}\,}
\def\odotB{\,\odot^B\,} \def\DeltaB{\Delta^B} \def\odotD{\,\odot^D\,} \def\DeltaD{\Delta^D}
\def\DiamondFQSym #1{ 
\xymatrix @R=16pt @C=7pt {
 & \FQSym^{#1} \ar@{->>}[rd]^{\chi^{#1}} \\
 \NSym^{#1} \ar@{^(->}[ru]^{\imath} \ar@{->>}[rd]_{\chi^{#1}} & & \QSym^{#1} \ar@{<-->}[ll]^{\txt{\small dual}} \\
 & \Sym^{#1} \ar@{^(->}[ru]_\imath } }
\def\DiamondCox #1 {
\xymatrix @R=16pt @C=15pt {
 & \ZZ{#1} \ar@{->>}[rd]^{\chi'} \\
 \Sigma({#1}) \ar@{^(->}[ru]^{\imath} \ar@{->>}[rd]_{\chi'} & & \Sigma^*({#1}) \ar@{<-->}[ll]^{\txt{\small dual}} \\
 & \Lambda({#1}) \ar@{^(->}[ru]_\imath } }
\begin{document}

\title{A uniform generalization of some combinatorial Hopf algebras}
\author{Jia Huang}
\address{Department of Mathematics and Statistics, University of Nebraska at Kearney, Kearney, NE 68849, USA}
\email{huangj2@unk.edu}
\thanks{The author is grateful to Victor Reiner for his kind support and valuable suggestions.}

\maketitle

\begin{abstract}
We generalize the Hopf algebras of free quasisymmetric functions, quasisymmetric functions, noncommutative symmetric functions, and symmetric functions to certain representations of the category of finite Coxeter systems and its dual category. We investigate their connections with the representation theory of 0-Hecke algebras of finite Coxeter systems. Restricted to type B and D we obtain dual graded modules and comodules over the corresponding Hopf algebras in type A. 
\keywords{Representation of categories \and free quasisymmetric function \and quasisymmetric function \and noncomutative symmetric function \and symmetric function \and Malvenuto--Reutenauer algebra \and descent algebra \and 0-Hecke algebra \and induction \and restriction \and Coxeter group \and type B \and type D}
\end{abstract}

\section{Introduction}

The self-dual graded Hopf algebra $\Sym$ of symmetric functions plays a significant role in algebraic combinatorics. Analogues of $\Sym$ include the graded Hopf algebra $\QSym$ of quasisymmetric functions and its dual graded Hopf algebra $\NSym$ of noncommutative symmetric functions, as well as the self-dual graded Hopf algebra $\FQSym$ of free quasisymmetric functions. The relation among these Hopf algebras is illustrated by the following commutative diagram. 
\begin{equation}\label{eq:DiamondFQSymA'} \DiamondFQSym{} \end{equation}
See Grinberg and Reiner~\cite{GrinbergReiner} for details.

On the other hand, there is a graded Hopf algebra structure on $\ZZ\SS$, the direct sum of group algebras of the symmetric groups $\SS_n$ for all $n\ge0$. This is the \emph{Malvenuto--Reutenauer Hopf algebra}~\cite{MR}. One has a Hopf algebra isomorphism between $\FQSym$ and $\ZZ\SS$, sending $\NSym$ onto the descent algebra $\Sigma(\SS)$ of type A. There is a surjection $\chi'$ from $\ZZ\SS$ to the dual $\Sigma^*(\SS)$ of $\Sigma(\SS)$, and we denote $\Lambda(\SS):=\chi'(\Sigma(\SS))$. Then one has the following commutative diagram of graded Hopf algebras isomorphic to \eqref{eq:DiamondFQSymA'}.
\begin{equation}\label{eq:DiamondCoxA'}
\DiamondCox{\SS}
\end{equation}

There are interesting connections between these Hopf algebras and representation theory. The classic Frobenius correspondence provides a Hopf algebra isomorphism from the Grothendieck group $G_0(\CC\SS_\bullet)$ of the category of finitely generated (complex) representations of the symmetric groups $\SS_n$ to the Hopf algebra $\Sym$.  Analogously to this correspondence, Krob and Thibon~\cite{KrobThibon} introduced two characteristic maps
\begin{equation}\label{eq:ChA}
\Ch: G_0(H_\bullet(0))\xrightarrow\sim \QSym \qand \bch:K_0(H_\bullet(0))\xrightarrow\sim \NSym
\end{equation}
giving Hopf algebra isomorphisms from the Grothendieck groups $G_0(H_\bullet(0))$ and $K_0(H_\bullet(0))$ of the categories of finitely generated (projective) representations of the \emph{0-Hecke algebras} $H_n(0)$ of type A to the dual Hopf algebras $\QSym$ and $\NSym$. Here the 0-Hecke algebras are certain deformations of the group algebras of finite Coxeter groups, whose representation theory were studied by Norton~\cite{Norton}.

In this paper we generalize the previously mentioned results from type A to finite Coxeter systems.
An \emph{abstract finite Coxeter system} is a pair $(W,S)$ where $W$ is an \emph{abstract group} with a Coxeter presentation $W=\langle\, S \mid (st)^{m_{st}}=1,\ \forall s,t\in S\,\rangle$.
Each isomorphism class of finite Coxeter systems contains a unique abstract finite Coxeter system.
We define a category $\mathcal Cox$ whose objects are abstract finite Coxeter systems and whose morphisms are embeddings of abstract finite Coxeter systems.
Also let $\mathcal Cox^{op}$ be the opposite/dual category of $\mathcal Cox$. 
We define a representation $\Omega$ ($\Sigma$ resp.) of both $\mathcal Cox$ and $\mathcal Cox^{op}$ by sending abstract finite Coxeter systems to their group algebras (descent algebras resp.) and sending embeddings of finite Coxeter systems to certain linear maps which generalize the product and coproduct of the Malvenuto--Reutenauer Hopf algebra (descent algebra of type A resp.). The inclusion of the descent algebras into the group algebras induces a natural transformation $\imath:\Sigma\hookrightarrow\Omega$. On the other hand, we have a representation $\Sigma^*$ dual to $\Sigma$ and a natural transformation $\chi':\Omega\twoheadrightarrow\Sigma^*$ dual to $\imath$. Furthermore, applying $\chi'\circ\imath$ to $\Sigma$ gives a representation $\Lambda$ of both $\mathcal Cox$ and $\mathcal Cox^{op}$ such that the diagram below is commutative.
\begin{equation}\label{eq:DiamondCoxCat'}
\xymatrix @R=15pt @C=25pt {
 & \Omega \ar@{->>}[rd]^{\chi'} \\
 \Sigma \ar@{^(->}[ru]^{\imath} \ar@{->>}[rd]_{\chi'} & & \Sigma^* \ar@{<-->}[ll]^{\txt{\small dual}} \\
 & \Lambda \ar@{^(->}[ru]_\imath } 
 \end{equation}
Restricting this diagram to type A recovers the commutative diagram \eqref{eq:DiamondCoxA'} of Hopf algebras.

Next, using the $P$-partition theory for finite Coxeter groups by Reiner~\cite{Reiner1}, we generalize $\FQSym$ to a  representation $\mathcal FQSym$ of both $\mathcal Cox$ and $\mathcal Cox^{op}$. There is a natural isomorphism between $\Omega$ and $\mathcal FQSym$. Applying it to \eqref{eq:DiamondCoxCat'} gives the following commutative diagram, which is in natural isomorphism with \eqref{eq:DiamondCoxCat'}.
\begin{equation}\label{eq:DiamondFQSymCat'}
\xymatrix @R=15pt @C=20pt {
 & \mathcal FQSym \ar@{->>}[rd]^{\chi} \\
 \mathcal NSym \ar@{^(->}[ru]^{\imath} \ar@{->>}[rd]_{\chi} & & \mathcal QSym \ar@{<-->}[ll]^{\txt{\small dual}} \\
 & \mathcal Sym \ar@{^(->}[ru]_\imath }
\end{equation}
Restricting this diagram to type A recovers the commutative diagram \eqref{eq:DiamondFQSymA'} of Hopf algebras.

We also obtain representations $\mathcal G_0$ and $\mathcal K_0$ of both $\mathcal Cox$ and $\mathcal Cox^{op}$ from the representation theory of 0-Hecke algebras of finite Coxeter groups, together with natural isomorphisms 
\[ \Ch: \mathcal G_0\to\mathcal QSym \qand \bch:\mathcal K_0\to\mathcal NSym.\]
Restricting $\Ch$ and $\bch$ to type A recovers the characteristic maps in \eqref{eq:ChA}.

Our generalization not only recovers known results on Hopf algebras in type A, but also leads to new results in type B and type D on certain modules and comodules over the corresponding Hopf algebras in type A. 
We summarize our results in type B below; the results in type D are similar. In fact, restricting \eqref{eq:DiamondCoxCat'} and \eqref{eq:DiamondFQSymCat'} to type B gives the following commutative diagrams.
\[ \DiamondCox{\B} \qquad\qquad \DiamondFQSym{B} \]
Here each entry is a graded right module and comodule over the corresponding type A Hopf algebra. 
We also have characteristic maps analogous to \eqref{eq:ChA}, providing isomorphisms of graded modules and comodules:
\[ \ChB: G_0(\HB_\bullet(0)) \xrightarrow\sim \QSym^B \qand \bchB: K_0(\HB_\bullet(0)) \xrightarrow\sim \NSym^B. \]
In our earlier work~\cite{H0Tab} we obtained partial results on these characteristic maps using a tableau approach to the representation theory of 0-Hecke algebras; now we are able to get more complete results on them.


For each fixed finite Coxeter system $(W,S)$, Aguiar and Mahajan \cite[Theorem~5.7.1]{AguiarMahajan} obtained a commutative diagram of vector spaces over a field of characteristic zero by a very interesting approach different from ours.
Their result agrees with what we have when applying our diagrams~\eqref{eq:DiamondCoxCat'} and \eqref{eq:DiamondFQSymCat'} to the fixed finite Coxeter system $(W,S)$.
This allows us to obtain free $\ZZ$-bases for $\Lambda(W,S)\cong \mathcal Sym(W,S)$. 
See~Section~\ref{sec:AguiarMahajan}.
We do not have a representation theoretic interpretation for $\mathcal Sym(W,S)$ except in type A.

It is mentioned in \cite[\S5.1.2]{AguiarMahajan} that the construction of Hopf algebra structures is special to type A.
In this paper we use embeddings and restrictions of Coxeter systems to realize \eqref{eq:DiamondCoxCat'} and \eqref{eq:DiamondFQSymCat'} as commutative diagrams of representations of the category $\mathcal Cox$ and its opposite category $\mathcal Cox^{op}$.
It is natural from our perspective that we do not get Hopf algebras in type B and D, as parabolic subsystems in type A are disjoint unions of Coxeter systems of the same type but this is not the case in type B and D.

\tableofcontents

\section{Uniform Results}\label{sec:General}

In this section we provide our main results for finite Coxeter systems. 

\subsection{Group algebras and descent algebras of finite Coxeter groups}\label{sec:Coxeter}  

A \emph{finite Coxeter group} is a finite group $W$ with a \emph{Coxeter presentation}
\[ W := \langle\, S \mid (st)^{m_{st}} = 1,\ \forall s,t\in S\,\rangle \]
where $S$ is a finite generating set, $m_{ss} = 1$ for all $s\in S$, and $m_{st}=m_{ts} \in\{2,3,\ldots\}$ for all $s,t\in S$.
The relations for $W$ are equivalent to the \emph{quadratic relations} $s^2=1$ for all $s\in S$ and the \emph{braid relations}
$(sts\cdots)_{m_{st}}=(tst\cdots)_{m_{st}}$ for all $s,t\in S$, where $(aba\cdots)_m$ denotes an alternating product of $m$ terms. 

The pair $(W,S)$ is called a \emph{finite Coxeter system}, which is encoded by an edge-labeled graph called the \emph{Coxeter diagram} of $(W,S)$.
The vertex set of the Coxeter diagram of $(W,S)$ is $S$ and there is an edge labeled with $m_{st}$ connecting two vertices $s$ and $t$ if $m_{st}\ge3$.
When $m_{st}\in\{3,4,5\}$ the labeled edge between $s$ and $t$ is often drawn as $m_{st}-2$ multiple edges.

If the Coxeter diagram of $(W,S)$ is connected then $(W,S)$ is \emph{irreducible}.
There is a well-known classification of finite irreducible Coxeter systems.
In general if $S_1,\ldots,S_k$ are the vertex sets of the connected components of the Coxeter diagram of $(W,S)$ then $(W_{S_i},S_i)$ is an irreducible Coxeter system for each $i\in[k]:=\{1,\ldots,k\}$ and $W\cong W_{S_1}\times\cdots\times W_{S_k}$.



Every $w\in W$ can be written as $w=s_1\cdots s_k$ where $s_1,\ldots,s_k\in S$; if $k$ is minimum then such an expression is called a \emph{reduced expression of $w$} and $\ell(w):=k$ is the \emph{length} of $w$. A generator $s\in S$ is a \emph{descent} of $w$ if $\ell(ws)<\ell(w)$, or equivalently, if some reduced expression of $w$ ends with $s$. The \emph{descent set} of $w$, denoted by $D(w)$, consists of all descents of $w$. 

Let $I\subseteq S$ and denote $I^c:=S\setminus I$. The \emph{parabolic subgroup} $W_I$ of $W$ is generated by $I$. It is also a finite Coxeter group whose elements have the same lengths and descents as in $W$. 
A left or right $W_I$-coset in $W$ has a unique representative of minimal length.
The length-minimal representatives for the left and right $W_I$-cosets in $W$ are given respectively by 
\[ W^I := \{w\in W:D(w)\subseteq I^c\} \qand ^IW := \{w\in W:D(w^{-1})\subseteq I^c\}.\]

\begin{proposition}[{\cite[Proposition~2.4.4]{BjornerBrenti},~\cite[Lemma~9.7]{Lusztig}}]\label{prop:parabolic}
Every element $w\in W$ can be written uniquely as $w=w^I\cdot{_Iw}$ such that $w^I\in W^I$ and ${_Iw}\in W_I$, and this expression implies $\ell(w)=\ell(w^I)+\ell(_Iw)$. Similarly, every element $w\in W$ can be written uniquely as $w=w_I\cdot{^Iw}$ such that $w_I\in W_I$ and $^Iw\in\,^IW$, and this expression implies $\ell(w)=\ell(w_I)+\ell(^Iw)$.
\end{proposition}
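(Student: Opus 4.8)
The plan is to prove the first factorization $w=w^I\cdot{_Iw}$ in full and then deduce the second from it by the symmetry $w\mapsto w^{-1}$. Concretely, applying the first statement to $w^{-1}$ gives $w^{-1}=x\cdot y$ with $x\in W^I$, $y\in W_I$, and $\ell(w^{-1})=\ell(x)+\ell(y)$; taking inverses yields $w=y^{-1}\cdot x^{-1}$, where $y^{-1}\in W_I$ (a subgroup) and $x^{-1}\in{^IW}$, the latter because $x\in W^I$ means $D(x)\subseteq I^c$, which is exactly the condition $D((x^{-1})^{-1})\subseteq I^c$ defining ${^IW}$ for $x^{-1}$. Since $\ell(z)=\ell(z^{-1})$ for all $z$, the lengths still add, and uniqueness transfers verbatim. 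Thus it suffices to treat the first statement.

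For the first statement, existence and uniqueness are quick given the facts recalled above, namely that each coset $wW_I$ has a unique minimal-length representative and that $W^I$ is precisely the set of these representatives. I would let $w^I$ be the unique element of $W^I$ lying in $wW_I$ and set ${_Iw}:=(w^I)^{-1}w\in W_I$, so that $w=w^I\cdot{_Iw}$. For uniqueness, if $w=u_1v_1=u_2v_2$ with $u_i\in W^I$ and $v_i\in W_I$, then $u_1W_I=wW_I=u_2W_I$; as each $u_i$ is the minimal-length representative of this common coset, uniqueness of that representative forces $u_1=u_2$, and then $v_1=(u_1)^{-1}w=v_2$. So the only substantive point remaining is the length identity $\ell(w)=\ell(w^I)+\ell({_Iw})$.

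This identity is the special case $u=w^I$, $v={_Iw}$ of the following lemma, which is the heart of the argument: \emph{if $u\in W^I$ and $v\in W_I$, then $\ell(uv)=\ell(u)+\ell(v)$}. I would prove it by induction on $\ell(v)$, the inequality $\ell(uv)\le\ell(u)+\ell(v)$ being the trivial concatenation bound. For the reverse inequality, when $v\ne 1$ choose $s\in I\cap D(v)$ (any right descent of $v$ necessarily lies in $I$, since $v\in W_I$) and write $v=v's$ with $\ell(v')=\ell(v)-1$ and $v'\in W_I$, so that $\ell(uv')=\ell(u)+\ell(v')$ by induction; it then suffices to show $\ell(uv's)>\ell(uv')$. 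Suppose not. Concatenating reduced words for $u$ and $v'$ gives a reduced word for $uv'$, and the Exchange Condition (a standard property of Coxeter systems) lets us obtain $uv's$ by deleting a single letter from it.

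The case analysis on the deleted letter is where the real work, and the only genuine obstacle, lies. If the deleted letter comes from the $v'$-part, then cancelling $u$ expresses $v=v's$ as a word of length at most $\ell(v')-1<\ell(v)$ in the generators, which is absurd. If instead it comes from the $u$-part, we get $uv's=u''v'$ with $\ell(u'')=\ell(u)-1$; rearranging, $u''=u\,(v's(v')^{-1})=u\,r$ with $r:=v's(v')^{-1}\in W_I$, so $u''$ lies in the coset $uW_I$ yet has length strictly less than $\ell(u)$. This contradicts the fact that $u\in W^I$ is the minimal-length element of $uW_I$. Hence $\ell(uv's)=\ell(uv')+1=\ell(u)+\ell(v)$, closing the induction and completing the proof. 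The main point to get right is precisely this dichotomy: the only a priori dangerous possibility, a shortening inside $u$, is ruled out by the minimality characterization of $W^I$ that was recalled just before the proposition.
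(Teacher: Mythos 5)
Your proof is correct. The paper itself offers no argument for this proposition --- it simply cites Bj\"orner--Brenti and Lusztig --- so there is no in-paper proof to compare against; what you have written is essentially the standard argument for \cite[Proposition~2.4.4]{BjornerBrenti}, built on the Exchange Condition. The reduction of the second factorization to the first via $w\mapsto w^{-1}$ is clean, and the induction on $\ell(v)$ with the two-case analysis of the deleted letter is exactly right: a deletion in the $v'$-part contradicts $\ell(v)=\ell(v')+1$, and a deletion in the $u$-part produces the element $u''=u\,(v's(v')^{-1})$, a strictly shorter member of $uW_I$, contradicting minimality of $u$. One small caveat: you take as given that each coset $wW_I$ has a unique minimal-length representative and that $W^I$ is exactly the set of these. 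The paper does state this as background, but in the standard development that uniqueness is usually derived \emph{from} the very length-additivity you are proving, so as written there is an apparent circularity. It is only apparent: your key lemma uses nothing beyond minimality of $u$ in its coset (not uniqueness), and once the lemma is in hand, uniqueness of the minimal representative follows at once (if $u_1,u_2$ are both minimal in the same coset, write $u_2=u_1v$ with $v\in W_I$ and conclude $\ell(v)=0$), which then yields the uniqueness of the factorization exactly as you argue. Making that logical ordering explicit would make the writeup fully self-contained.
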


The group algebra $\ZZ W$ is self-dual under the positive definite bilinear form defined by $\langle u,v\rangle := \delta_{u,v}$ for all $u,v\in W$. For convenience of notation we identify a subset of $W$ with the sum of its elements in $\ZZ W$. 
For any group $G$ we denote by $(\ )^{-1}$ the $\ZZ$-linear map from the group algebra $\ZZ G$ to itself defined by sending every element of $G$ to its inverse.

\begin{definition}\label{def:MapsZW}
Given $I\subseteq S$, we define the following four linear maps:
\[\begin{matrix} 
\mu_I^S:& \ZZ W_I & \to & \ZZ W, \\
 & u & \mapsto & W^I\cdot u,
\end{matrix}\qquad\qquad
\begin{matrix} 
\rho_I^S: & \ZZ W & \to & \ZZ W_I, \\
 & w & \mapsto & w_I,
\end{matrix} \]
\[ \begin{matrix} 
\bar\mu_I^S: & \ZZ W_I & \to & \ZZ W, \\
 & u & \mapsto & u\cdot\, ^IW,
\end{matrix} \qquad\qquad
\begin{matrix} 
\bar\rho_I^S : & \ZZ W & \to & \ZZ W_I, \\
 & w & \mapsto & _Iw.\end{matrix}\]
\end{definition}

\begin{proposition}\label{prop:MR}
The following diagram is commutative and rotating it $180^\circ$ gives a dual diagram, i.e., $\mu_I^S$ is dual to $\bar\rho_I^S$, $\rho_I^S$ is dual to $\bar\mu_I^S$, and $(\ )^{-1}$ is self-dual.
\[ \xymatrix @R=20pt @C=25pt {
\ZZ W_I \ar[r]^{\mu_I^S} \ar@{<->}[d]^{\wr}_{(\ )^{-1}} & \ZZ W \ar@{<->}[d]^{\wr}_{(\ )^{-1}}  \ar[r]^{\rho_I^S} & \ZZ W_I \ar@{<->}[d]^{\wr}_{(\ )^{-1}}  \\
\ZZ W_I \ar[r]_{\bar\mu_I^S} & \ZZ W \ar[r]_{\bar\rho_I^S} & \ZZ W_I 
}\]
\end{proposition}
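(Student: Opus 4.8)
The plan is to establish separately the two assertions packaged in the statement: commutativity of the two squares, and the adjointness of the labelled maps with respect to the pairing $\langle u,v\rangle=\delta_{u,v}$ (used on $\ZZ W$ and on $\ZZ W_I$ alike). Everything rests on the single combinatorial identity
\[ (W^I)^{-1} = {}^IW, \]
which is immediate from $W^I=\{w:D(w)\subseteq I^c\}$ and ${}^IW=\{w:D(w^{-1})\subseteq I^c\}$ after the substitution $v=w^{-1}$.

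For the left square, for $u\in W_I$ I would compute
\[ \bigl(\mu_I^S(u)\bigr)^{-1} = (W^I\cdot u)^{-1} = u^{-1}\cdot(W^I)^{-1} = u^{-1}\cdot{}^IW = \bar\mu_I^S(u^{-1}), \]
the middle step being the displayed identity; this is $(\ )^{-1}\circ\mu_I^S=\bar\mu_I^S\circ(\ )^{-1}$. For the right square I would invert the decomposition $w=w_I\cdot{}^Iw$ of Proposition~\ref{prop:parabolic} to get $w^{-1}=({}^Iw)^{-1}\cdot(w_I)^{-1}$, note that $({}^Iw)^{-1}\in W^I$ (again by the identity) and $(w_I)^{-1}\in W_I$, and invoke the uniqueness of the $W^I\cdot W_I$ factorization to identify this with $w^{-1}=(w^{-1})^I\cdot{}_I(w^{-1})$. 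Reading off the $W_I$-part yields ${}_I(w^{-1})=(w_I)^{-1}$, i.e.\ $(\ )^{-1}\circ\rho_I^S=\bar\rho_I^S\circ(\ )^{-1}$.

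For the adjointness I would evaluate each pairing on basis elements, where every value is an indicator. For $u\in W_I$ and $w\in W$, the coefficient of $w$ in $\mu_I^S(u)=\sum_{x\in W^I}xu$ is $[\,wu^{-1}\in W^I\,]$, while ${}_Iw=u$ holds iff $wu^{-1}\in W^I$; hence
\[ \langle\mu_I^S(u),w\rangle = [\,wu^{-1}\in W^I\,] = \langle u,\bar\rho_I^S(w)\rangle, \]
so $\bar\rho_I^S=(\mu_I^S)^*$. The symmetric computation $\langle\rho_I^S(w),u\rangle=[\,u^{-1}w\in{}^IW\,]=\langle w,\bar\mu_I^S(u)\rangle$ gives $\bar\mu_I^S=(\rho_I^S)^*$, and $\langle u^{-1},v\rangle=[u=v^{-1}]=\langle u,v^{-1}\rangle$ shows $(\ )^{-1}$ is self-adjoint. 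These three adjointness relations, together with the self-duality of each $\ZZ W$ and $\ZZ W_I$ under the form, say precisely that transposing every arrow in the commuting diagram reproduces the same picture read upside-down; this is the $180^\circ$-rotation claim.

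Since each verification is a one-line manipulation of group elements, no step is a genuine obstacle. The only place demanding care is keeping the two coset factorizations $w=w^I\cdot{}_Iw$ and $w=w_I\cdot{}^Iw$ apart, and checking that the inverted factors land in the correct one-sided minimal-length representative sets so that the uniqueness clause of Proposition~\ref{prop:parabolic} is applicable.
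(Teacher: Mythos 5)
Your proof is correct and is exactly the intended argument: the paper's own proof consists of the single sentence ``The result follows from Proposition~\ref{prop:parabolic},'' and your write-up supplies precisely the details that sentence defers --- the identity $(W^I)^{-1}={}^IW$, the inversion of the two unique factorizations, and the indicator-function computation of the pairings. No gaps.
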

\begin{proof}
The result follows from Proposition~\ref{prop:parabolic}.
 \end{proof}

\begin{lemma}\label{lem:DesRes}
If $I\subseteq S$ and $w\in W$ then $D(w)\cap I = D(\,_Iw)$.
\end{lemma}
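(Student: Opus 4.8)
The plan is to reduce everything to the factorization $w = w^I \cdot {}_I w$ supplied by Proposition~\ref{prop:parabolic} and to exploit the fact that right multiplication by a generator $s \in I$ leaves the factor $w^I$ unchanged. First I would dispose of a trivial containment: since ${}_I w \in W_I$ and $(W_I, I)$ is itself a Coxeter system whose descents agree with those computed in $W$ (as recorded in the text), the descent set $D({}_I w)$ is automatically a subset of $I$. Consequently $D(w) \cap I$ and $D({}_I w)$ are both subsets of $I$, and it suffices to prove, for each fixed $s \in I$, the equivalence $s \in D(w) \iff s \in D({}_I w)$.

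Now fix $s \in I$. The key structural observation is that $W^I = \{v \in W : D(v) \subseteq I^c\}$ is exactly the set of minimal-length representatives of the left cosets $vW_I$, so that $w^I$ is the minimal-length representative of $wW_I$. Since $s \in W_I$, the element $ws$ lies in the same left coset, i.e. $wsW_I = wW_I$, and therefore shares the same minimal representative: $(ws)^I = w^I$. From this I read off ${}_I(ws) = (w^I)^{-1} ws = ({}_I w)\,s$. Applying Proposition~\ref{prop:parabolic} to both $w$ and $ws$ then yields the two length identities $\ell(w) = \ell(w^I) + \ell({}_I w)$ and $\ell(ws) = \ell(w^I) + \ell(({}_I w)s)$.

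Subtracting these gives $\ell(ws) - \ell(w) = \ell(({}_I w)s) - \ell({}_I w)$, so $\ell(ws) < \ell(w)$ holds if and only if $\ell(({}_I w)s) < \ell({}_I w)$; that is, $s \in D(w) \iff s \in D({}_I w)$, which is precisely what is needed. Combined with the first paragraph this proves $D(w) \cap I = D({}_I w)$.

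The only step carrying genuine content, and the one I would double-check, is the identity $(ws)^I = w^I$ for $s \in I$: it depends on interpreting $w^I$ as the distinguished minimal-length representative of the coset $wW_I$, so that moving from $w$ to $ws$ inside that coset cannot change it. This interpretation is guaranteed by the description of $W^I$ together with the uniqueness of minimal coset representatives quoted just before Proposition~\ref{prop:parabolic}. Everything else is routine bookkeeping with the length-additive factorization.
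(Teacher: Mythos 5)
Your proof is correct and follows essentially the same route as the paper: both rest on the length-additivity $\ell(w)=\ell(w^I)+\ell({}_Iw)$ and $\ell(ws)=\ell(w^I)+\ell(({}_Iw)s)$ from Proposition~\ref{prop:parabolic} for $s\in I$, the paper merely leaving implicit the coset observation $(ws)^I=w^I$ that you spell out. Your explicit remark that $D({}_Iw)\subseteq I$ (so that only generators in $I$ need be tested) is a detail the paper also uses tacitly.
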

\begin{proof}
Assume $s\in I$. Then $\ell(w)=\ell(w^I)+\ell(\,_Iw)$ and $\ell(ws) = \ell(w^I)+\ell(\,_Iws)$ by Proposition~\ref{prop:parabolic}. Hence $\ell(ws)<\ell(w)$ if and only if $\ell(\,_Iws)<\ell(\,_Iw)$. The result follows.
 \end{proof}

\begin{proposition}\label{prop:IJS}
If $I\subseteq J\subseteq S$ then $\mu_J^S\circ \mu_I^J = \mu_I^S$, $\bar\mu_J^S\circ \mu_I^J = \bar\mu_I^S$, $\rho_I^J\circ\rho_J^S=\rho_I^S$, and $\bar\rho_I^J\circ\bar\rho_J^S=\bar\rho_I^S$.
\end{proposition}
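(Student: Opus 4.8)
The plan is to reduce all four identities to a single transitivity property of minimal coset representatives and then propagate it through the maps of Definition~\ref{def:MapsZW}. Write $W_J^I:=(W_J)^I=\{w\in W_J:D(w)\subseteq J\setminus I\}$ for the minimal-length representatives of the left $W_I$-cosets inside $W_J$, noting that descents in $W_J$ coincide with those in $W$. The key lemma I would establish is the transitivity of left coset representatives: for $I\subseteq J\subseteq S$ the multiplication map $(x,y)\mapsto xy$ is a bijection $W^J\times W_J^I\to W^I$, so that $W^J\cdot W_J^I=W^I$ as elements of $\ZZ W$. To prove it I would check both inclusions via Proposition~\ref{prop:parabolic} and Lemma~\ref{lem:DesRes}. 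Given $x\in W^J$ and $y\in W_J^I\subseteq W_J$, the uniqueness in Proposition~\ref{prop:parabolic} identifies the $J$-factorization of $g:=xy$ as $g^J=x$ and ${}_Jg=y$; Lemma~\ref{lem:DesRes} then gives $D(g)\cap J=D(y)\subseteq J\setminus I$, whence $D(g)\cap I=\emptyset$ and $g\in W^I$. Conversely, for $w\in W^I$ the factorization $w=w^J\cdot{}_Jw$ satisfies $D({}_Jw)=D(w)\cap J\subseteq J\setminus I$ by Lemma~\ref{lem:DesRes}, so ${}_Jw\in W_J^I$ and $w\in W^J\cdot W_J^I$. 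Uniqueness of the factorization gives injectivity, so the map is a bijection and the two sets agree in $\ZZ W$.

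Granting the lemma, the identity $\mu_J^S\circ\mu_I^J=\mu_I^S$ is immediate: applying both composites to $u\in\ZZ W_I$ and using associativity in $\ZZ W$ gives $W^J\cdot(W_J^I\cdot u)=(W^J\cdot W_J^I)\cdot u=W^I\cdot u$. The projection identity $\bar\rho_I^J\circ\bar\rho_J^S=\bar\rho_I^S$ uses the same lemma together with uniqueness: decomposing $w=w^J\cdot{}_Jw$ and then ${}_Jw=({}_Jw)^I\cdot{}_I({}_Jw)$ inside $W_J$ yields $w=\big(w^J\cdot({}_Jw)^I\big)\cdot{}_I({}_Jw)$, where $w^J\cdot({}_Jw)^I\in W^J\cdot W_J^I=W^I$ and ${}_I({}_Jw)\in W_I$. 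By the uniqueness of the $I$-factorization $w=w^I\cdot{}_Iw$ this forces ${}_Iw={}_I({}_Jw)$, which is exactly the assertion.

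The two remaining identities I would then obtain by duality rather than by a fresh computation. Conjugating the first relation by the involution $(\ )^{-1}$ and invoking Proposition~\ref{prop:MR} for both $(W,S)$ and $(W_J,J)$ (which expresses each barred map as $(\ )^{-1}$ composed with its unbarred partner) converts $\mu_J^S\circ\mu_I^J=\mu_I^S$ into the corresponding identity for the maps $\bar\mu$; the same conjugation converts $\bar\rho_I^J\circ\bar\rho_J^S=\bar\rho_I^S$ into $\rho_I^J\circ\rho_J^S=\rho_I^S$. This works because conjugation by $(\ )^{-1}$ is functorial in composition and $(\ )^{-1}\circ(\ )^{-1}=\id$ cancels in the middle of each two-fold composite.

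The only real content is the transitivity lemma, and within it the delicate point is the descent bookkeeping. One must remember that $W_J^I$ is defined by descents computed in $W_J$, use the coincidence of descents in $W_J$ and in $W$, and keep careful track of which parabolic decomposition is relevant to each map — the left-coset form $w=w^I\cdot{}_Iw$ governing $\mu$ and $\bar\rho$, versus the right-coset form $w=w_I\cdot{}^Iw$ governing $\rho$ and $\bar\mu$ — so that the correct (left or right) version of transitivity is applied. Everything else, namely associativity in $\ZZ W$, uniqueness of the parabolic factorizations, and the formal conjugation by $(\ )^{-1}$, is routine.
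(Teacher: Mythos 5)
Your proof is correct and follows essentially the same route as the paper: the key step in both is the transitivity identity $W^J\cdot W_J^I=W^I$, established from the unique factorization in Proposition~\ref{prop:parabolic} together with Lemma~\ref{lem:DesRes}, from which $\mu_J^S\circ\mu_I^J=\mu_I^S$ is immediate. The only (harmless) difference lies in how the remaining identities are deduced: the paper applies $(\ )^{-1}$ once and then takes duals, whereas you give a second direct uniqueness argument for $\bar\rho_I^J\circ\bar\rho_J^S=\bar\rho_I^S$ and conjugate by $(\ )^{-1}$ to obtain the other two.
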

\begin{proof}
Assume $I\subseteq J\subseteq S$. Every element $z\in W$ can be written uniquely as $z=z^J \cdot\,_{\raisebox{-1pt}{$\scriptstyle J$}}z$. Lemma~\ref{lem:DesRes} implies $D(\,_{\raisebox{-1pt}{$\scriptstyle J$}}z) = D(z)\cap J$. Hence $z\in W^I$ if and only if ${\raisebox{-3pt}{$\scriptstyle J$}}z\in W_J^I$. Then for any $u\in W_I$ one has 
\[\mu_J^S(\mu_I^J(u)) = W^J\cdot W_J^I\cdot u = W^I\cdot u = \mu_I^S(u).\]
Hence $\mu_J^S\circ\mu_I^J=\mu_I^S$. By Proposition~\ref{prop:MR}, applying the maps $(\ )^{-1}$ to this gives $\bar\mu_J^S\circ\bar\mu_I^J=\bar\mu_I^S$, and then taking the duals gives $\rho_I^J\circ\rho_J^S=\rho_I^S$ and $\bar\rho_I^J\circ\bar\rho_J^S=\bar\rho_I^S$.
\end{proof}

\begin{definition}\label{def:Cox}
An \emph{abstract finite Coxeter system} is a pair $(W,S)$ where $W$ is an \emph{abstract group} with a Coxeter presentation $W=\langle\, S \mid (st)^{m_{st}}=1,\ \forall s,t\in S\,\rangle$.
We identify a finite Coxeter system with the unique abstract finite Coxeter system isomorphic to it.
We define a category $\mathcal Cox$ whose objects are abstract finite Coxeter systems and whose morphisms are embeddings of abstract finite Coxeter systems.
Given such an embedding $(W',S')\hookrightarrow (W,S)$, there exists a unique $I\subseteq S$ such that $(W',S')\cong (W_I,I)$, and thus we can identify this embedding with the inclusion $(W_I,I)\hookrightarrow(W,S)$.
We also denote by $\mathcal Cox^{op}$ the opposite/dual category of $\mathcal Cox$ whose morphisms are restrictions $(W,S)\twoheadrightarrow (W_I,I)$ with $I\subseteq S$.
\end{definition}

A \emph{representation} of a category $\mathcal C$ is a covariant functor $R$ from $\mathcal C$ to the category of $\ZZ$-modules. 
The \emph{dual representation} $R^*$ of $R$ is a representation of the dual category $\mathcal C^{op}$ which sends each object $O$ to the dual $\ZZ$-module $R(O)^*$ and sends each morphism $f^*:O_2\to O_1$ dual to $f:O_1\to O_2$ to the morphism $R(f)^*: R(O_2)^*\to R(O_1)^*$ dual to $R(f): R(O_1)\to R(O_2)$.
See commutative diagrams below.
\[ \xymatrix @R=20pt @C=40pt {
O_1 \ar[d]^R \ar^f[r] & O_2 \ar[d]^R \\
R(O_1) \ar^{R(f)}[r] & R(O_2) } \qquad\qquad
\xymatrix @R=20pt @C=40pt {
O_1 \ar[d]^{R^*} & O_2 \ar[l]_{f^*} \ar[d]^{R^*} \\
R(O_1)^* & R(O_2)^* \ar[l]_{R(f)^*} 
} \]

\begin{definition}
By Proposition~\ref{prop:IJS}, we have a representation $\Omega$ of $\mathcal Cox$ sending an abstract finite Coxeter system $(W,S)$ to $\ZZ W$ and sending an inclusion $(W_I,I)\hookrightarrow (W,S)$ to $\mu_I^S$ whenever $I\subseteq S$. 
By abuse of notation, also let $\Omega$ be the representation of $\mathcal Cox^{op}$ sending an abstract finite Coxeter system $(W,S)$ to $\ZZ W$ and sending a restriction $(W,S)\twoheadrightarrow (W_I,I)$ to $\rho_I^S$ whenever $I\subseteq S$. By Proposition~\ref{prop:MR}, replacing $\mu_I^S$ and $\rho_I^S$ with $\bar\mu_I^S$ and $\bar\rho_I^S$ gives the dual representation $\Omega^*$ of $\Omega$, and $w\mapsto w^{-1}$ induces a natural isomorphism between $\Omega$ and $\Omega^*$ denoted by $(\ )^{-1}$.
\end{definition}

Given $I\subseteq S$, the \emph{descent class} of $I$ in $W$ is $D_I(W):=\{w\in W:D(w)=I\}$. 

\begin{proposition}\label{prop:nonempty}
For every $I\subseteq S$ the descent class $D_I(W)$ is nonempty and becomes an interval $[w_0(I),w_1(I)]$ under the left weak order of $W$, where $w_0(I)$ and $w_1(I)$ are the longest elements in $W_{I}$ and $W^{I^c}$, respectively.
\end{proposition}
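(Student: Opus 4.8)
The plan is to prove the statement in three stages: first show $D_I(W)$ is nonempty by exhibiting a canonical element, then characterize the descent class as a left weak order interval, and finally identify the two endpoints. I expect the main obstacle to be the interval claim, i.e.\ showing that the set $\{w : D(w) = I\}$ is not just convex but precisely an interval in the left weak order, and correctly pinning down both endpoints.

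Let me work through the standard facts I'd need. The left weak order $\cleq$ on $W$ is defined by $u \cleq v$ iff $\ell(v) = \ell(u) + \ell(u^{-1}v)$, equivalently $v = wu$ for some $w$ with lengths adding. A key fact connecting it to descents: covers in the left weak order correspond to left multiplication by a generator $s$ with $\ell(sw) > \ell(w)$, and left descents of $w$ (i.e.\ $s$ with $\ell(sw) < \ell(w)$) form the descent set of $w^{-1}$. So it will be cleaner to phrase everything via $w^{-1}$, or to use the \emph{right} descent set consistently. Since the paper's $D(w)$ is the right descent set, I'd relate the left weak order interval structure to right descents directly.

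**First I would** establish nonemptiness and the lower endpoint. The longest element $w_0(I)$ of the parabolic subgroup $W_I$ has right descent set exactly $I$ within $W_I$, and by the parabolic factorization and Lemma~\ref{lem:DesRes} (which gives $D(w)\cap I = D(\,_Iw)$) its descent set in $W$ is also exactly $I$: every $s\in I$ is a descent, and no $s\in I^c$ can be a descent of an element of $W_I$. Hence $w_0(I)\in D_I(W)$, proving nonemptiness. **Next I would** show $w_0(I)$ is the unique minimal element of $D_I(W)$ in the left weak order: any $w$ with $D(w)\supseteq I$ satisfies $w_0(I)\cleq w$, because one can build $w$ up from $w_0(I)$ by appending generators on the appropriate side without ever destroying the descents already present. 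For the upper endpoint I'd argue symmetrically that $w_1(I)$, the longest element of $W^{I^c}$, is the maximal element with $D(w)\subseteq I$, using that $W^{I^c} = \{w : D(w)\subseteq I\}$ by the definition of $W^I$ given before Proposition~\ref{prop:parabolic} (applied with $I$ replaced by $I^c$).

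**The crux** is the convexity/interval step: I must show $D_I(W) = [w_0(I), w_1(I)]$, i.e.\ that $w_0(I)\cleq w\cleq w_1(I)$ holds \emph{iff} $D(w)=I$. The forward direction uses the two endpoint characterizations — $w_0(I)\cleq w$ forces $I\subseteq D(w)$ and $w\cleq w_1(I)$ forces $D(w)\subseteq I$ — but justifying that the left weak order relation controls right descents this way is the delicate part, and I expect this to require a lemma of the form: if $u\cleq v$ in left weak order then $D(u)\subseteq D(v)$ is \emph{false} in general, so instead I'd need the correct monotonicity, namely that passing up the interval can only add right descents in $I$ and remove none outside, which must be checked cover by cover. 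The reverse direction (every $w$ with $D(w)=I$ lies in the interval) then follows by sandwiching between the two endpoints. I would verify the cover-relation behavior of right descents under left multiplication carefully, as that single combinatorial lemma is what makes the whole interval identification go through.
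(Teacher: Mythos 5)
The paper itself settles this proposition by citation: nonemptiness is Lusztig's Lemma~9.8 (the longest element $w_0(I)$ of $W_I$ satisfies $D(w_0(I))=I$), and the interval statement is exactly Theorem~6.2 of Bj\"orner and Wachs. Your plan is a direct proof instead; its skeleton and your identification of the two endpoints are correct, but the argument has a genuine gap at precisely the step you call the crux, and the one concrete assertion you make about that step is backwards. You claim that ``$u\le_L v$ in left weak order implies $D(u)\subseteq D(v)$'' is false in general. It is true: if $v=xu$ with $\ell(v)=\ell(x)+\ell(u)$, then the inversion sets satisfy $N(u)\subseteq N(v)$, where $N(w):=\{\alpha\in\Phi^+:w\alpha<0\}$, and $s\in D(w)$ if and only if $\alpha_s\in N(w)$. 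This monotonicity immediately yields the containment $[w_0(I),w_1(I)]\subseteq D_I(W)$: anything above $w_0(I)$ has descent set containing $D(w_0(I))=I$, and anything below $w_1(I)$ has descent set contained in $D(w_1(I))\subseteq I$. There is nothing to check ``cover by cover.''

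What your sketch does not supply is the reverse containment $D_I(W)\subseteq[w_0(I),w_1(I)]$, which splits into two claims. First, $D(w)\supseteq I$ forces $w_0(I)\le_L w$: by Lemma~\ref{lem:DesRes} the hypothesis is equivalent to $D(\,_Iw)=I$, which forces $_Iw=w_0(I)$ (the longest element is the unique element of $W_I$ with no right ascent in $I$), so $w=w^I\cdot w_0(I)$ with lengths adding by Proposition~\ref{prop:parabolic}; your ``build $w$ up from $w_0(I)$ by appending generators'' is a hand-wave for exactly this factorization. Second, and genuinely missing, $D(w)\subseteq I$ forces $w\le_L w_1(I)$: here $w\in W^{I^c}$ and $w_1(I)=w_0\,w_0(I^c)$, and one computes
$\ell(w_1(I)w^{-1})=\ell(w_0)-\ell(w\,w_0(I^c))=\ell(w_0)-\ell(w)-\ell(w_0(I^c))=\ell(w_1(I))-\ell(w)$,
using $\ell(w_0x)=\ell(w_0)-\ell(x)$, the fact that $w_0(I^c)$ is an involution, and the length additivity of Proposition~\ref{prop:parabolic}. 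Nothing in your proposal establishes that all of $W^{I^c}$ lies below the single element $w_1(I)$, and analyzing individual cover relations will not produce this global statement without an argument of this kind (or an appeal to Bj\"orner--Wachs, as the paper makes).
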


\begin{proof}
By Lusztig~\cite[Lemma~9.8]{Lusztig}, the parabolic subgroup $W_I$ has a unique longest element $w_0(I)$ satisfying $w_0(I)=w_0(I)^{-1}$ and $D(w_0(I))=I$.
Hence $D_I(W)\ne\emptyset$. 
The rest of the result follows from Bj\"orner and Wachs~\cite[Theorem~6.2]{BjornerWachs}.
\end{proof}

We identity $D_I(W)$ with the sum of its elements in the group algebra $\ZZ W$. 
Let $\Sigma(W)$ be the $\ZZ$-span $\{D_I(W):I\subseteq S\}$.
Proposition~\ref{prop:nonempty} implies that this spanning set is indeed a free $\ZZ$-basis for $\Sigma(W)$.
One has an injection $\imath:\Sigma(W)\hookrightarrow \ZZ W$ by inclusion. Solomon~\cite{Solomon} showed that $\Sigma(W)$ is a subalgebra of the group algebra $\ZZ W$, called \emph{the descent algebra of $W$}. We will not use this algebra structure in this paper, but instead we will restrict the linear maps $\mu_I^S$ and $\rho_I^S$ to $\Sigma(W_I)$ and $\Sigma(W)$.

On the other hand, let $\Sigma^*(W)$ be the dual $\ZZ$-module of $\Sigma(W)$ with a free $\ZZ$-basis $\{D^*_I(W):I\subseteq S\}$ dual to $\{D_I(W): I\subseteq S\}$. 
If $w\in W$ then write $D^*_w(W):=D^*_I(W)$ where $I=D(w)$. 
Dual to the injection $\imath:\Sigma(W)\hookrightarrow \ZZ W$ is a surjection 
\[ \begin{matrix}
\chi: & \ZZ W & \twoheadrightarrow & \Sigma^*(W) \\
& w & \mapsto & D^*_w(W).
\end{matrix}\] 
 
\begin{proposition}\label{prop:IndDes}
Let $I\subseteq S$. Then $\mu_I^S: \ZZ W_I\to \ZZ W$ restricts to $\mu_I^S: \Sigma(W_I)\to\Sigma(W)$ and $\bar\rho_I^S: \ZZ W\to\ZZ W_I$ descends to 
\[ \begin{matrix}
\bar\rho_I^S: & \Sigma^*(W) & \to & \Sigma^*(W_I), \\
& D^*_w(W) & \mapsto & D^*_{_Iw}(W_I).
 \end{matrix} \]
If $J\subseteq I$ and $K\subseteq S$ then
\[ \mu_I^S(D_J(W_I)) = \sum_{J'\cap I=J} D_{J'}(W) \qand \bar\rho_I^S(D^*_K(W)) = D^*_{K\cap I}(W_I). \]
\end{proposition}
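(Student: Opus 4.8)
The plan is to prove the two displayed identities directly and to read off the two structural claims (the restriction of $\mu_I^S$ and the descent of $\bar\rho_I^S$) as immediate consequences. First I would expand $\mu_I^S(D_J(W_I)) = W^I\cdot D_J(W_I)$ as the double sum $\sum_{v\in W^I}\sum_{u} vu$ taken over all $u\in W_I$ with $D(u)=J$. The crucial tool is Proposition~\ref{prop:parabolic}: the map $(v,u)\mapsto vu$ is a length-additive bijection from $W^I\times W_I$ onto $W$, under which $v=w^I$ and $u={_Iw}$ for $w=vu$. Hence the double sum collapses to $\sum_{w} w$ over those $w\in W$ satisfying $D({_Iw})=J$. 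By Lemma~\ref{lem:DesRes} we have $D({_Iw})=D(w)\cap I$, so the condition $D({_Iw})=J$ is exactly $D(w)\cap I=J$; grouping the remaining $w$ according to their full descent set $J'=D(w)$ yields $\mu_I^S(D_J(W_I))=\sum_{J'\cap I=J} D_{J'}(W)$. Since each term on the right is a basis element of $\Sigma(W)$, this simultaneously shows that $\mu_I^S$ carries the basis $\{D_J(W_I):J\subseteq I\}$ of $\Sigma(W_I)$ into $\Sigma(W)$, which is the first assertion.

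For $\bar\rho_I^S$ I would argue that it descends through $\chi$ and then compute the induced map directly. By definition $\bar\rho_I^S(w)={_Iw}$, so applying the surjection $\chi$ for $W_I$ gives $\chi(\bar\rho_I^S(w))=D^*_{{_Iw}}(W_I)=D^*_{D({_Iw})}(W_I)$, which by Lemma~\ref{lem:DesRes} equals $D^*_{D(w)\cap I}(W_I)$. The key observation is that this value depends on $w$ only through $D(w)$; therefore $\chi\circ\bar\rho_I^S$ is constant on the fibers of $\chi\colon\ZZ W\twoheadrightarrow\Sigma^*(W)$ and factors through it, producing the descended map $\Sigma^*(W)\to\Sigma^*(W_I)$. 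Writing $K=D(w)$, this map sends $D^*_w(W)=D^*_K(W)\mapsto D^*_{K\cap I}(W_I)=D^*_{{_Iw}}(W_I)$, which is precisely both displayed formulas for $\bar\rho_I^S$.

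As a consistency check, the two computations are transposes of one another: in the descent-class bases the restricted $\mu_I^S$ has a single nonzero entry (equal to $1$) in the $J'$-column exactly when $J'\cap I=J$, and the transpose of this matrix sends $D^*_{J'}(W)\mapsto D^*_{J'\cap I}(W_I)$, matching the $\bar\rho_I^S$ formula. This is exactly what Proposition~\ref{prop:MR} predicts, since $\mu_I^S$ and $\bar\rho_I^S$ are dual and $\chi$ is dual to $\imath$; one could therefore alternatively obtain the entire $\bar\rho_I^S$ statement by dualizing the commutative square that expresses the restriction of $\mu_I^S$.

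I expect the only genuine content to reside in the $\mu_I^S$ identity, whose proof hinges on combining the unique length-additive factorization $w=w^I\cdot{_Iw}$ of Proposition~\ref{prop:parabolic} with the descent-restriction Lemma~\ref{lem:DesRes}; everything else is bookkeeping. The one subtle point to handle with care is the well-definedness of the descended $\bar\rho_I^S$, namely the verification that $\chi\circ\bar\rho_I^S$ depends on $w$ only through its descent set, so that it factors through $\chi$ — but this is again immediate from Lemma~\ref{lem:DesRes}.
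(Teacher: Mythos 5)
Your proof is correct and follows exactly the route the paper intends: the paper's own proof is the one-line remark that the result ``follows easily from Proposition~\ref{prop:parabolic} and Lemma~\ref{lem:DesRes},'' and your argument is precisely the expansion of that remark, using the length-additive factorization $w=w^I\cdot{_Iw}$ to collapse $W^I\cdot D_J(W_I)$ and Lemma~\ref{lem:DesRes} to identify the descent condition and to verify that $\chi\circ\bar\rho_I^S$ factors through $\chi$.
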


\begin{proof}
The result follows easily from Proposition~\ref{prop:parabolic} and Lemma~\ref{lem:DesRes}.
 \end{proof}

To restrict $\rho_I^S$ to descent algebras we need to first develop some lemmas.

\begin{lemma}\label{lem:Des}
Suppose that $I\subseteq S$, $z\in W^I$, $s\in D(z^{-1})^c$, and $sz\notin W^I$. Then $D(sz)\cap I=\{z^{-1}sz\}$.\end{lemma}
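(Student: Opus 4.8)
The plan is to reduce everything to the Exchange Condition applied to a single reduced word. First I would translate the three hypotheses into length statements. Since $s\in D(z^{-1})^c$, we have $\ell(z^{-1}s)>\ell(z^{-1})$, and taking inverses gives $\ell(sz)>\ell(z)$; as multiplication by a simple reflection changes length by exactly one, $\ell(sz)=\ell(z)+1$. Since $z\in W^I$ we have $D(z)\subseteq I^c$, so $\ell(zt)=\ell(z)+1$ for every $t\in I$. Finally, $sz\notin W^I$ means $D(sz)\not\subseteq I^c$, i.e.\ $D(sz)\cap I\ne\emptyset$. My goal is then to show that \emph{every} $t\in D(sz)\cap I$ must equal the reflection $z^{-1}sz$; this simultaneously forces $z^{-1}sz$ to be a simple reflection lying in $I$ and shows the intersection is the singleton claimed.

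The key step is a deletion argument. Fix $t\in D(sz)\cap I$ and choose a reduced expression $z=s_1\cdots s_k$ with $k=\ell(z)$. Because $\ell(sz)=\ell(z)+1$, the word $s\,s_1\cdots s_k$ is a reduced expression for $sz$. Since $t\in D(sz)$ we have $\ell((sz)t)<\ell(sz)$, so the Exchange Condition (see~\cite{BjornerBrenti}) produces $szt$ by deleting a single letter from $s\,s_1\cdots s_k$. If the deleted letter were some interior $s_i$ with $i\ge1$, then left-multiplying by $s$ would give $zt=s_1\cdots\widehat{s_i}\cdots s_k$, an expression of length $k-1$, contradicting $\ell(zt)=\ell(z)+1$. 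Hence the deleted letter must be the leading $s$, which yields $szt=s_1\cdots s_k=z$, i.e.\ $sz=zt$ and therefore $t=z^{-1}sz$.

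Since $t$ was an arbitrary element of the nonempty set $D(sz)\cap I$, this proves $D(sz)\cap I=\{z^{-1}sz\}$. The only delicate point---what I would flag as the main obstacle---is correctly ruling out the interior deletions: this is exactly where the hypothesis $z\in W^I$ (equivalently, $z$ has no descents in $I$, so right multiplication by $t\in I$ strictly increases length) is indispensable, and it is also what guarantees that $z^{-1}sz$ is genuinely a simple reflection in $I$ rather than a longer element of $W$.
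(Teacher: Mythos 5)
Your proof is correct and follows essentially the same route as the paper: both arguments note that $sz\notin W^I$ forces $D(sz)\cap I\ne\emptyset$ and then show that any $t\in D(sz)\cap I$ satisfies the three length conditions $\ell(sz)>\ell(z)$, $\ell(zt)>\ell(z)$, $\ell(szt)<\ell(sz)$, which force $sz=zt$. The only difference is that the paper cites this last implication as Lusztig's Proposition~1.10, whereas you derive it directly from the Exchange Condition by ruling out interior deletions; that derivation is sound and correctly uses $z\in W^I$ exactly where the paper uses $\ell(zr)>\ell(z)$.
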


\begin{proof}
One has $D(sz)\cap I\ne \emptyset$ since $sz\notin W^I$. Let $r\in D(sz)\cap I$. Then $\ell(szr)<\ell(sz)$ and $\ell(zr)>\ell(z)$. One also has $\ell(sz)>\ell(z)$ since $s\in D(z^{-1})^c$. By Lusztig~\cite[Proposition 1.10]{Lusztig} (applied to $w = z$ and $t = r$), one has $sz=zr$, i.e., $r=z^{-1}sz$.
 \end{proof}

\begin{lemma}\label{lem:DesIJ}
Let $I, K\subseteq S$, $u\in W_I$, and $z\in{^IW}$. Then $D(uz)=K$ if and only if (i)--(iii) all hold.

(i) If $s\in D(z)$ then $s\in K$, i.e. $D(z)\subseteq K$.

(ii) If $s\in D(z)^c$ and $D(sz^{-1})\subseteq I^c$ then $s\in K^c$.

(iii) If $s\in D(z)^c$ and $D(sz^{-1})\not\subseteq I^c$ then $s\in K \Leftrightarrow zsz^{-1}\in D(u)$.

\noindent Moreover, condition (iii) is equivalent to:

(iii') $L(z,I,K)\subseteq D(u)\subseteq L'(z,I,K)$, where $L(z,I,K)$ and $L'(z,I,K)$ are two subsets of $I$ defined as
\[ L(z,I,K):=\bigcup_{s\in D(z)^c\cap K} \left(D(sz^{-1})\cap I\right) \qand
 L'(z,I,K):=\bigcap_{s\in D(z)^c\setminus K}\left( I\setminus D(sz^{-1})\right).\]
Consequently, $z\in{^ID_K}:=\left\{{^I}w:w\in D_K(W)\right\}$ if and only if (i), (ii), and $L(z,I,K)\subseteq L'(z,I,K)$.
 \end{lemma}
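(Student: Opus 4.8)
The plan is to analyze, one generator $s \in S$ at a time, when $s \in D(uz)$, using the factorization $w = uz$ with $u \in W_I$ and $z \in {}^IW$ (so $D(z^{-1}) \subseteq I^c$). The key tool will be the length-additivity $\ell(uz) = \ell(u) + \ell(z)$ guaranteed by Proposition~\ref{prop:parabolic}, together with the criterion that $s \in D(uz)$ iff $\ell(uzs) < \ell(uz)$. First I would record the easy case (i): if $s \in D(z)$, then right-multiplication by $s$ shortens $z$, hence shortens $uz$ by length-additivity (since $uz$ and $u(zs)$ both factor with the same $u \in W_I$ and $zs \in {}^IW$ shorter than $z$), so $s \in D(uz) = K$ forces $D(z) \subseteq K$. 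The substance lies in the generators $s \in D(z)^c$, where right-multiplication by $s$ lengthens $z$; here whether $uzs$ is shorter than $uz$ depends on a subtle interaction, and this is where the element $szs^{-1}$—equivalently the conjugate $zsz^{-1}$—enters.

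For $s \in D(z)^c$ the natural move is to study $zs$ and its parabolic factorization relative to $W_I$. I expect the dichotomy in (ii)/(iii) to come from applying Lemma~\ref{lem:Des}: with the roles set up so that $z \in {}^IW$ plays against $s$, the condition $D(sz^{-1}) \subseteq I^c$ (equivalently $sz^{-1} \in W^I$, i.e. $zs^{-1}$-type membership) should correspond to the case $s \notin K$, while $D(sz^{-1}) \not\subseteq I^c$ should force a single element $r = zsz^{-1} \in I$ to be the only candidate descent contributed inside $I$. Concretely, Lemma~\ref{lem:Des} (applied after inverting, so that $z^{-1} \in W^I$ and the hypothesis $s \in D(z)^c$ becomes $s \in D((z^{-1})^{-1})^c$) yields that when $sz^{-1} \notin W^I$ the intersection $D(sz^{-1}) \cap I$ is the singleton $\{zsz^{-1}\}$. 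Translating this into a statement about $uz$ gives exactly (iii): whether $s \in D(uz)$ is governed by whether this conjugate $zsz^{-1}$ is a descent of $u$, i.e. $zsz^{-1} \in D(u) = D(u) \cap I$.

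The main obstacle I anticipate is the bookkeeping that converts the pointwise conditions (i)--(iii) into the two-sided containment (iii'). Condition (iii) is a biconditional ``$s \in K \Leftrightarrow zsz^{-1} \in D(u)$'' quantified over all relevant $s$, and I must repackage this as constraints on the single set $D(u)$. Reading (iii) in the forward direction for $s \in D(z)^c \cap K$ forces $zsz^{-1} \in D(u)$, and since by the Lemma~\ref{lem:Des} analysis $D(sz^{-1}) \cap I$ carries precisely the conjugates to be placed inside $D(u)$, taking the union over such $s$ produces the lower bound $L(z,I,K) \subseteq D(u)$. Reading (iii) in the contrapositive for $s \in D(z)^c \setminus K$ forbids $zsz^{-1} \in D(u)$, and intersecting the complements $I \setminus D(sz^{-1})$ produces the upper bound $D(u) \subseteq L'(z,I,K)$. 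I would need to verify carefully that the conjugation $s \mapsto zsz^{-1}$ matches $D(sz^{-1}) \cap I$ consistently across both bounds, so that no generator is both required and forbidden unless the containment $L \subseteq L'$ fails. Finally, the ``consequently'' clause is a clean corollary: $z \in {}^ID_K$ means some $u \in W_I$ realizes $D(uz) = K$, and such $u$ exists exactly when the interval $[L(z,I,K), L'(z,I,K)]$ for $D(u)$ is nonempty, i.e. (i), (ii), and $L(z,I,K) \subseteq L'(z,I,K)$ all hold, using that every subset of $I$ arises as $D(u)$ for some $u \in W_I$ by Proposition~\ref{prop:nonempty} applied to $W_I$.
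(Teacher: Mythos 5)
Your proposal follows essentially the same route as the paper's proof: length-additivity of the factorization $w=uz$ from Proposition~\ref{prop:parabolic}, Lemma~\ref{lem:Des} (applied to $z^{-1}\in W^I$) to identify $D(sz^{-1})\cap I$ as the singleton $\{zsz^{-1}\}$ in the third case, the same repackaging of the biconditional in (iii) into the two-sided containment (iii'), and Proposition~\ref{prop:nonempty} applied to $W_I$ for the final clause. The only point to repair is your justification of case (i): $zs$ need not lie in ${}^IW$ when $s\in D(z)$, but the conclusion still follows from subadditivity, $\ell(uzs)\leq\ell(u)+\ell(zs)<\ell(u)+\ell(z)=\ell(uz)$, which is exactly what the paper uses.
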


\begin{proof}
If $s\in D(z)$ then $\ell(uzs)\leq \ell(u)+\ell(zs)<\ell(u)+\ell(z)=\ell(uz)$. Hence $D(uz)=K$ implies (i).

If $s\in D(z)^c$ and $D(sz^{-1})\subseteq I^c$ then $zs\in\,^IW$ and thus $\ell(uzs)=\ell(u)+\ell(zs)>\ell(uz)$. Hence $D(uz)=K$ implies (ii).

If $s\in D(z)^c$ and $D(sz^{-1})\not\subseteq I^c$, then Lemma~\ref{lem:Des} implies that $D(sz^{-1})\cap I=\{r\}$ where $r=zsz^{-1}$. Since 
\[ \ell(uzs)=\ell(urz)=\ell(ur)+\ell(z) \qand
 \ell(uz)=\ell(u)+\ell(z)\]
one has  $s\in D(uz)$ if and only if $r\in D(u)$. Hence $D(uz)=K$ implies (iii).
    
Conversely, one sees that (i)--(iii) imply $s\in D(uz)\Leftrightarrow s\in K$ in the above three cases. Thus $D(uz)=K$ is equivalent to (i)--(iii).

Next, assume (iii') holds. Let $s\in D(z)^c$ such that $D(sz^{-1})\not\subseteq I^c$. Then $D(sz^{-1})\cap I = \{ r = zsz^{-1} \}$ by Lemma~\ref{lem:Des}. If $s\in K$ then $r\in L(z,I,K)\subseteq D(u)$. If $s\in K^c$ then $r\in I\setminus L'(z,I,K)\subseteq I\setminus D(u)$. Thus (iii) holds.

Conversely, assume (iii) holds and let $s\in D(z)^c$. If there exists $r\in D(sz^{-1})\cap I$ then $r=zsz^{-1}$ by Lemma~\ref{lem:Des}, and one has $r\in D(u) \Leftrightarrow s\in K$. This implies $L(z,I,K)\subseteq D(u)$ and $I\setminus L'(z,I,K)\subseteq I\setminus D(u)$. Thus (iii') holds.

Finally, $z\in{^ID_K}$ if and only if there exists an element $u\in W_I$ such that $D(uz)=K$. 
We know $D(uz)=K$ is equivalent to (i), (ii), and (iii').
By Proposition~\ref{prop:nonempty}, any descent class is nonempty.
Thus the existence of $u$ is equivalent to (i), (ii), and $L(z,I,K)\subseteq L'(z,I,K)$.
 \end{proof}



\begin{proposition}\label{prop:ResDes}
Let $I\subseteq S$. Then $\rho_I^S: \ZZ W\to\ZZ W_I$ restricts to $\rho_I^S: \Sigma(W)\to\Sigma(W_I)$ such that
\[ \rho_I^S(D_K(W)) = \sum_{z\in\,{^{I}D_K}} \ \sum_{ L(z,I,K)\subseteq K' \subseteq L'(z,I,K) } D_{K'}(W_I), \quad \forall K\subseteq S \]
and $\bar\mu_I^S: \ZZ W_I\to\ZZ W$ descends to the following map dual to $\rho_I^S: \Sigma(W)\to\Sigma(W_I)$:
\[ \begin{matrix}
\bar\mu_I^S: & \Sigma^*(W_I) & \to & \Sigma^*(W), \\
 & D^*_u(W_I) & \mapsto & \sum_{z\in\,^IW} D^*_{uz}(W).
 \end{matrix}\]
 
\end{proposition}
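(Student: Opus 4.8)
The plan is to establish the formula for $\rho_I^S$ first, and then deduce both the restriction to descent algebras and the dual statement about $\bar\mu_I^S$. I would begin by expanding $\rho_I^S(D_K(W))=\sum_{w\in D_K(W)} w_I$ straight from the definition of $\rho_I^S$. By Proposition~\ref{prop:parabolic} each $w\in W$ factors uniquely as $w=u z$ with $u=w_I\in W_I$ and $z={}^Iw\in{}^IW$, so summing over $w\in D_K(W)$ is the same as summing the summand $u$ over all pairs $(u,z)\in W_I\times{}^IW$ with $D(uz)=K$. Grouping this double sum by the value of $z$, the outer index ranges exactly over those $z$ for which some admissible $u$ exists, i.e.\ over $z\in{}^ID_K$.

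For a fixed $z\in{}^ID_K$ I would invoke Lemma~\ref{lem:DesIJ}. Its conditions (i) and (ii) depend only on $z,I,K$, and are already guaranteed by $z\in{}^ID_K$ (together with $L(z,I,K)\subseteq L'(z,I,K)$) via the last line of that lemma; hence the only remaining constraint on $u$ is (iii), equivalently (iii'), namely $L(z,I,K)\subseteq D(u)\subseteq L'(z,I,K)$. Since every descent set of an element of $W_I$ lies in $I$, the inner sum $\sum_{u\in W_I,\, L\subseteq D(u)\subseteq L'} u$ rewrites as $\sum_{L(z,I,K)\subseteq K'\subseteq L'(z,I,K)} D_{K'}(W_I)$. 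Assembling the outer and inner sums yields the claimed formula, and because its right-hand side is a $\ZZ$-combination of the basis elements $D_{K'}(W_I)$ of $\Sigma(W_I)$, this simultaneously shows that $\rho_I^S$ restricts to a map $\Sigma(W)\to\Sigma(W_I)$.

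For the second statement I would argue by duality. By Proposition~\ref{prop:MR}, $\bar\mu_I^S$ is dual to $\rho_I^S$ at the level of group algebras, and $\chi$ is dual to $\imath$ by construction. The restriction just proved says precisely that $\rho_I^S$ commutes with the inclusions $\imath:\Sigma\hookrightarrow\Omega$; dualizing this commutative square gives $\chi\circ\bar\mu_I^S=(\rho_I^S)^*\circ\chi$, which is exactly the assertion that $\bar\mu_I^S$ descends through $\chi$ to the dual map $(\rho_I^S)^*:\Sigma^*(W_I)\to\Sigma^*(W)$. It then remains to compute this map on the basis $\{D_u^*(W_I)\}$.

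To pin down the explicit formula I would pair against $\Sigma(W)$. Using the formula from the first part, $\langle D_u^*(W_I),\,\rho_I^S(D_K(W))\rangle$ counts the $z\in{}^ID_K$ with $L(z,I,K)\subseteq D(u)\subseteq L'(z,I,K)$; by Lemma~\ref{lem:DesIJ} this condition is equivalent to $D(uz)=K$, and conversely $D(uz)=K$ forces $z={}^I(uz)\in{}^ID_K$, so the count equals $\#\{z\in{}^IW:D(uz)=K\}$. This is precisely $\langle \sum_{z\in{}^IW} D_{uz}^*(W),\,D_K(W)\rangle$, whence $(\rho_I^S)^*(D_u^*(W_I))=\sum_{z\in{}^IW} D_{uz}^*(W)$, as claimed. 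The only genuine difficulty is the descent-set bookkeeping, but that is already carried out in Lemma~\ref{lem:DesIJ}; granting it, the proposition is an assembly of that lemma with the duality of Proposition~\ref{prop:MR}, so I expect no further obstacle.
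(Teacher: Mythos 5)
Your proof is correct and follows essentially the same route as the paper's: both halves rest on Lemma~\ref{lem:DesIJ}, the unique factorization $w=w_I\cdot{}^Iw$, and the same count of $z\in{}^IW$ with $D(uz)=K$. The only organizational difference is that the paper first shows directly that $\bar\mu_I^S$ descends (because $D(uz)$ depends only on $D(u)$ and $z$) and then verifies the duality by the pairing computation, whereas you obtain the descent formally by dualizing the commutative square established in the first half and then extract the explicit formula from the same pairing computation; both are valid.
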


\begin{proof}
Let $w\in W$ with $w_I=u$ and $^Iw=z$. By Lemma~\ref{lem:DesIJ}, one has $D(w)=K$ if and only if $z\in{^I}D_K$ and $L(z,I,K) \subseteq D(u)\subseteq L'(z,I,K)$. Thus $\rho_I^S: \ZZ W\to\ZZ W_I$ restricts to $\rho_I^S: \Sigma(W)\to\Sigma(W_I)$ and the desired formula for $\rho_I^S(D_K(W))$ holds.

Next, fix $z\in\,^IW$. Lemma~\ref{lem:DesIJ} implies that $D(uz)$ depends only on $z$ and $D(u)$ for any $u\in W_I$. Hence $\bar\mu_I^S: \ZZ W_I\to\ZZ W$ descends to $\bar\mu_I^S:\Sigma^*(W_I)\to\Sigma^*(W)$.

Finally, if $K\subseteq S$ and $u\in W_I$ then  
\[ \langle D_K(W),\, \bar\mu_I^S(D^*_u(W_I)) \rangle = \# \left\{ z\in\,^IW: D(uz)=K \right\} \qand \]
\[ \langle \rho_I^S(D_K(W)), D^*_u(W_I) \rangle = \# \left\{ z\in\,^I D_K:L(z,I,K)\subseteq D(u) \subseteq L'(z,I,K)\right\}.\]
The two sets on the right hand side are the same by Lemma~\ref{lem:DesIJ}. The duality follows.
 \end{proof}

\begin{theorem}\label{thm:FullDiagram}
The diagram below is commutative and rotating it $180^\circ$ gives a dual diagram.
\[ \xymatrix @R=20pt @C=25pt{
\Sigma(W_I) \ar@{^(->}[r]^{\imath} \ar[d]^{\mu_I^S} & \ZZ W_I \ar@{<->}[r]^{(\ )^{-1}} \ar[d]^{\mu_I^S} & \ZZ W_I \ar@{->>}[r]^-{\chi} \ar[d]^{\bar\mu_I^S} & \Sigma^*(W_I) \ar[d]^{\bar\mu_I^S} \\
\Sigma(W) \ar@{^(->}[r]^{\imath} \ar[d]^{\rho_I^S} & \ZZ W \ar@{<->}[r]^{(\ )^{-1}} \ar[d]^{\rho_I^S} & \ZZ W \ar@{->>}[r]^-{\chi} \ar[d]^{\bar\rho_I^S} & \Sigma^*(W) \ar[d]^{\bar\rho_I^S} \\
\Sigma(W_I) \ar@{^(->}[r]^{\imath} & \ZZ W_I \ar@{<->}[r]^{(\ )^{-1}}  & \ZZ W_I \ar@{->>}[r]^-{\chi} & \Sigma^*(W_I) 
}\]
\end{theorem}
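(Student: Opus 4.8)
The plan is to verify the big diagram one elementary square at a time, since it decomposes into six squares (three columns of vertical arrows, each between two adjacent rows), and then to deduce the rotated dual diagram formally from the duality relations already recorded. Each of the six squares will turn out to be a direct instance of one of the four preceding propositions, so no new computation is needed beyond bookkeeping.

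First I would treat the leftmost pair of squares, whose horizontal arrows are the inclusions $\imath$. In the top-left square, going right-then-down versus down-then-right is exactly the assertion that $\mu_I^S:\ZZ W_I\to\ZZ W$ carries $\Sigma(W_I)$ into $\Sigma(W)$ and there agrees with the restricted map $\mu_I^S:\Sigma(W_I)\to\Sigma(W)$, which is the ``restricts to'' part of Proposition~\ref{prop:IndDes} (with $\imath$ plain inclusion); the bottom-left square is the same statement for $\rho_I^S$ from Proposition~\ref{prop:ResDes}. The middle pair of squares, whose horizontal arrows are $(\ )^{-1}$, are literally the two squares of Proposition~\ref{prop:MR}: the top-middle square is $(\ )^{-1}\circ\mu_I^S=\bar\mu_I^S\circ(\ )^{-1}$ and the bottom-middle square is $(\ )^{-1}\circ\rho_I^S=\bar\rho_I^S\circ(\ )^{-1}$. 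Finally, the rightmost pair of squares, whose horizontal arrows are the surjections $\chi$, express that $\bar\mu_I^S$ and $\bar\rho_I^S$ on the group algebras descend through $\chi$ to the stated maps on $\Sigma^*$: the top-right square is $\chi\circ\bar\mu_I^S=\bar\mu_I^S\circ\chi$, the ``descends to'' part of Proposition~\ref{prop:ResDes}, and the bottom-right square is $\chi\circ\bar\rho_I^S=\bar\rho_I^S\circ\chi$, the ``descends to'' part of Proposition~\ref{prop:IndDes}. Commutativity of all six squares yields commutativity of the whole diagram.

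For the dual statement I would argue formally. A $180^\circ$ rotation sends the entry in row $r$ and column $c$ (with $1\le r\le 3$, $1\le c\le 4$) to row $4-r$ and column $5-c$ and reverses every arrow. One checks entrywise that each object goes to its dual module: $\Sigma(W_I)$ and $\Sigma^*(W_I)$ swap, $\Sigma(W)$ and $\Sigma^*(W)$ swap, and the four middle copies of $\ZZ W$ and $\ZZ W_I$ are self-dual. Likewise each arrow is sent to the dual of an arrow, using the dualities already fixed: $\imath$ is dual to $\chi$ by the very definition of $\chi$, the map $(\ )^{-1}$ is self-dual, and $\mu_I^S$, $\rho_I^S$ are dual to $\bar\rho_I^S$, $\bar\mu_I^S$ respectively (Proposition~\ref{prop:MR}, refined on the descent and dual-descent modules by Propositions~\ref{prop:IndDes} and \ref{prop:ResDes}). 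Hence the rotated diagram is precisely the dual diagram; and since transposing all maps with respect to the pairings turns a commuting square $f\circ g=h\circ k$ into the commuting square $g^*\circ f^*=k^*\circ h^*$, commutativity of the rotated diagram is inherited from the original.

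The only genuinely delicate point is the matching just invoked: I must make sure that the abstractly ``restricted'' and ``descended'' maps produced by Propositions~\ref{prop:IndDes} and \ref{prop:ResDes} are literally the composites appearing in each square (for instance that ``$\bar\mu_I^S$ descends through $\chi$'' is the equation $\chi\circ\bar\mu_I^S=\bar\mu_I^S\circ\chi$, not merely the existence of some induced map), and that the identifications of $\mu_I^S$ with the dual of $\bar\rho_I^S$ and of $\rho_I^S$ with the dual of $\bar\mu_I^S$ survive restriction to $\Sigma$ and corestriction to $\Sigma^*$, so that the rotation produces genuine dual arrows rather than merely arrows between dual spaces. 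Once this compatibility is confirmed, the theorem reduces to the assembly of the four preceding propositions, and I expect no further obstacle.
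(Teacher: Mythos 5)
Your proposal is correct and follows the same route as the paper, whose proof of this theorem is simply to combine Propositions~\ref{prop:MR}, \ref{prop:IndDes}, and \ref{prop:ResDes}; you have merely made explicit the square-by-square bookkeeping and the formal dualization that the paper leaves implicit.
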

\begin{proof}
Combine the previous results obtained in this subsection.
 \end{proof}

\begin{definition}
By Theorem~\ref{thm:FullDiagram}, one has a representation $\Sigma$ of both the category $\mathcal Cox$ and its dual category $\mathcal Cox^{op}$ by sending each abstract finite Coxeter system $(W,S)$ to $\Sigma(W)$ and sending an inclusion $(W_I,I)\hookrightarrow(W,S)$ to $\mu_I^S:\Sigma(W_I)\to\Sigma(W)$ and a restriction $(W,S)\twoheadrightarrow(W_I,I)$ to $\rho_I^S:\Sigma(W)\to\Sigma(W_I)$ whenever $I\subseteq S$. 
By Theorem~\ref{thm:FullDiagram}, replacing $\Sigma(W)$, $\mu_I^S$, and $\rho_I^S$ with $\Sigma^*(W)$, $\bar\mu_I^S$, and $\bar\rho_I^S$ gives the dual representation $\Sigma^*$ of $\Sigma$. 
Theorem~\ref{thm:FullDiagram} also gives dual natural transformations $\imath:\Sigma\hookrightarrow\Omega$ and $\chi:\Omega^*\twoheadrightarrow\Sigma^*$, as well as a natural transformation $\chi':\Omega\twoheadrightarrow \Sigma^*$ induced by 
\[ \xymatrix@C=25pt{\chi': \ZZ W \ar[r]^-{(\ )^{-1}} & \ZZ W \ar@{->>}[r]^-{\chi} & \Sigma^*(W)}.\]
\end{definition}

Lastly, let $\Lambda(W):=\chi'(\Sigma(W))$. Then $\Lambda(W)$ is spanned by 
\[ \Lambda_I(W):= \chi'(D_I(W)) = \sum_{w\in D_I(W)} D^*_{w^{-1}}(W),\qquad\forall I\subseteq S.\]
Bases for $\Lambda(W)$ will be provided in Section~\ref{sec:AguiarMahajan}.
We have the following commutative diagram of $\ZZ$-modules.
\begin{equation}\label{eq:DiamondZW}
\xymatrix @R=16pt @C=7pt {
 & \ZZ W \ar@{->>}[rd]^{\chi'} \\
 \Sigma(W) \ar@{^(->}[ru]^{\imath} \ar@{->>}[rd]_{\chi'} & & \Sigma^*(W) \ar@{<-->}[ll]^{\txt{\small dual}} \\
 & \Lambda(W) \ar@{^(->}[ru]_\imath } 
\end{equation}
We define a bilinear form on $\Lambda(W)$ by $\langle \Lambda_I(W),\Lambda_J(W) \rangle:=c_{IJ}$ for all $I,J\subseteq S$, where 
\[ c_{IJ}:=\#\{w\in W:D(w^{-1})=I,\ D(w)=J\}.\]
The proof the following proposition is a straightforward exercise, left to the reader.

\begin{proposition}\label{prop:SymForm}
The above bilinear form on $\Lambda(W)$ is well defined, symmetric, and nondegenerate. With this bilinear form the injection $\imath:\Lambda(W)\hookrightarrow \Sigma^*(W)$ and the surjection $\chi':\Sigma(W)\twoheadrightarrow\Lambda(W)$ are dual to each other.
\end{proposition}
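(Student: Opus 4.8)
The plan is to reduce every assertion to a single structural fact, namely that the matrix $C:=(c_{IJ})_{I,J\subseteq S}$ is symmetric and is exactly the matrix of $\chi'$ restricted to $\Sigma(W)$, relative to the bases $\{D_I(W)\}$ of $\Sigma(W)$ and $\{D^*_J(W)\}$ of $\Sigma^*(W)$. First I would rewrite the defining formula for $\Lambda_I(W)$: grouping the terms of $\Lambda_I(W)=\sum_{w\in D_I(W)}D^*_{w^{-1}}(W)$ according to the value of $D(w^{-1})$ yields $\chi'(D_I(W))=\Lambda_I(W)=\sum_{J}c_{JI}\,D^*_J(W)$, so the $(J,I)$ entry of the matrix of $\chi'|_{\Sigma(W)}$ is $c_{JI}$. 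I would then record the key symmetry $c_{IJ}=c_{JI}$: the involution $w\mapsto w^{-1}$ of $W$ interchanges $D(w)$ and $D(w^{-1})$, hence restricts to a bijection between $\{w:D(w^{-1})=I,\ D(w)=J\}$ and $\{w:D(w^{-1})=J,\ D(w)=I\}$. Thus $C=C^{\mathsf T}$.

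Granting $C$ symmetric, well-definedness and symmetry of the form fall out. A linear relation $\sum_I a_I\Lambda_I(W)=0$ among the spanning set is exactly the statement that the coordinate vector $a=(a_I)$ satisfies $Ca=0$. For the form to descend to $\Lambda(W)$ I must check that such a relation forces $\sum_I a_I c_{IJ}=0$ for every $J$; but $\sum_I a_I c_{IJ}=(C^{\mathsf T}a)_J=(Ca)_J=0$ by symmetry, so the first argument is well defined, and the identical computation (or symmetry of $C$) handles the second argument. Bilinear extension then gives a well-defined form, and symmetry of the form itself is just $\langle\Lambda_I(W),\Lambda_J(W)\rangle=c_{IJ}=c_{JI}=\langle\Lambda_J(W),\Lambda_I(W)\rangle$.

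For nondegeneracy I would take an arbitrary element of $\Lambda(W)$, necessarily of the form $\chi'(a)=\sum_I a_I\Lambda_I(W)$ with $a=\sum_I a_I D_I(W)\in\Sigma(W)$, and suppose it pairs to zero with every $\Lambda_J(W)$. Then $\langle\chi'(a),\Lambda_J(W)\rangle=\sum_I a_I c_{IJ}=(Ca)_J$, so the hypothesis forces $Ca=0$; but $\chi'(a)=\sum_J(Ca)_J\,D^*_J(W)$, so $\chi'(a)=0$, and the radical is trivial. Finally, for the duality of $\imath$ and $\chi'$ I would verify the adjunction $\langle\chi'(a),y\rangle=\langle a,\imath(y)\rangle$ on basis elements $a=D_I(W)$ and $y=\Lambda_J(W)$: the left-hand side is $\langle\Lambda_I(W),\Lambda_J(W)\rangle=c_{IJ}$ by definition of the new form, while the right-hand side is the $\Sigma$--$\Sigma^*$ pairing $\langle D_I(W),\,\sum_K c_{KJ}D^*_K(W)\rangle=c_{IJ}$. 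Since both equal $c_{IJ}$ and the maps are linear, $\chi'$ is the transpose of $\imath$ under these pairings.

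The only genuine content is the symmetry of $C$; everything else is bookkeeping around it. I therefore expect the main (and essentially only) obstacle to be recognizing that well-definedness is not automatic: the $\Lambda_I(W)$ generally satisfy relations, because $\chi'|_{\Sigma(W)}$ need not be injective, and it is precisely the self-inverse symmetry $c_{IJ}=c_{JI}$ coming from $w\mapsto w^{-1}$ that reconciles the spanning-set definition with those relations. Once that symmetry is in hand, the remaining steps are routine linear algebra over $\ZZ$.
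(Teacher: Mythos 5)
Your proof is correct; the paper leaves this proposition as an exercise, and your argument is exactly the intended one. You correctly identify the single nontrivial point — that $c_{IJ}=c_{JI}$ via $w\mapsto w^{-1}$, which reconciles the definition on the spanning set $\{\Lambda_I(W)\}$ with the relations coming from the kernel of $\chi'|_{\Sigma(W)}$ — and the remaining verifications (well-definedness, nondegeneracy via $\chi'(a)=\sum_J(Ca)_J D^*_J(W)$, and the adjunction $\langle\chi'(a),y\rangle=\langle a,\imath(y)\rangle$) are carried out correctly.
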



By Theorem~\ref{thm:FullDiagram}, the linear maps $\bar\mu_I^S:\Sigma^*(W_I)\to\Sigma^*(W)$ and $\bar\rho_I^S:\Sigma^*(W)\to\Sigma^*(W_I)$ restrict to linear maps $\bar\mu_I^S:\Lambda(W_I)\to\Lambda(W)$ and $\bar\rho_I^S:\Lambda(W)\to\Lambda(W_I)$. More precisely, if $J\subseteq I\subseteq S$ and $K\subseteq S$ then
\[ \bar\mu_I^S(\Lambda_J(W_I)) := \bar\mu_I^S(\chi'(D_J(W_I))) = \chi'(\mu_I^S(D_J(W_I))), \]
\[ \bar\rho_I^S(\Lambda_K(W)) := \bar\rho_I^S(\chi'(D_K(W))) = \chi'(\rho_I^S(D_K(W))).\]
It follows from Proposition~\ref{prop:IndDes} and Proposition~\ref{prop:ResDes} that
\[ \bar\mu_I^S(\Lambda_J(W_I)) = \sum_{J'\cap I=J} \Lambda_{J'}(W) \qand
\bar\rho_I^S(\Lambda_K(W)) = \sum_{z\in\,{^{I}D_K}} \ \sum_{ L(z,I,K)\subseteq K' \subseteq L'(z,I,K) } \Lambda_{K'}(W_I). \]
By diagram chasing in Theorem~\ref{thm:FullDiagram} one checks that $\mu_I^S:\Sigma(W_I)\to\Sigma(W)$ and $\rho_I^S:\Sigma(W)\to\Sigma(W_I)$ also descend to $\bar\mu_I^S:\Lambda(W_I)\to\Lambda(W)$ and $\bar\rho_I^S:\Lambda(W)\to\Lambda(W_I)$, and thus the following result holds.

\begin{corollary}\label{cor:Sym}
The diagram below is commutative and rotating it $180^\circ$ gives a dual diagram.
\[ \xymatrix @R=20pt @C=25pt{
\Sigma(W_I) \ar@{->>}[r]^{\chi'} \ar[d]^{\mu_I^S} & \Lambda(W_I) \ar@{^(->}[r]^{\imath} \ar[d]^{\bar\mu_I^S} & \Sigma^*(W_I) \ar[d]^{\bar\mu_I^S} \\
\Sigma(W) \ar@{->>}[r]^{\chi'} \ar[d]^{\rho_I^S} & \Lambda(W) \ar@{^(->}[r]^{\imath} \ar[d]^{\bar\rho_I^S} & \Sigma^*(W) \ar[d]^{\bar\rho_I^S}\\
\Sigma(W_I) \ar@{->>}[r]^{\chi'} & \Lambda(W_I) \ar@{^(->}[r]^{\imath} & \Sigma^*(W_I)
}\]
\end{corollary}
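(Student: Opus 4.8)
The plan is to deduce everything formally from Theorem~\ref{thm:FullDiagram} and Proposition~\ref{prop:SymForm}, observing that the diagram in question is the factorization through $\Lambda(W)=\chi'(\Sigma(W))$ of the horizontal composites joining the leftmost column $\Sigma$ to the rightmost column $\Sigma^*$ in Theorem~\ref{thm:FullDiagram}. Recall $\chi'=\chi\circ(\ )^{-1}$, so each horizontal composite $\Sigma\xrightarrow{\imath}\ZZ W\xrightarrow{(\ )^{-1}}\ZZ W\xrightarrow{\chi}\Sigma^*$ of Theorem~\ref{thm:FullDiagram} equals $\chi'\circ\imath$, which factors as $\Sigma\xrightarrow{\chi'}\Lambda\xrightarrow{\imath}\Sigma^*$. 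Thus the three columns of the corollary sit as a slice of Theorem~\ref{thm:FullDiagram}, and the remaining task is only to check that $\bar\mu_I^S$ and $\bar\rho_I^S$ are well defined on $\Lambda$ and that the four resulting squares commute.

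First I would treat the two left squares, which assert $\bar\mu_I^S\circ\chi'=\chi'\circ\mu_I^S$ and $\bar\rho_I^S\circ\chi'=\chi'\circ\rho_I^S$; these are exactly the two displayed identities preceding the corollary. Each follows by a short chase: writing $\chi'=\chi\circ(\ )^{-1}$ and pushing it across the top (resp.\ bottom) two rows of Theorem~\ref{thm:FullDiagram}, one commutes $(\ )^{-1}$ past $\mu_I^S$ (resp.\ $\rho_I^S$) by Proposition~\ref{prop:MR} and then commutes $\chi$ past $\bar\mu_I^S$ (resp.\ $\bar\rho_I^S$) by the right-hand squares of Theorem~\ref{thm:FullDiagram}. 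The same computation yields $\bar\mu_I^S(\Lambda(W_I))=\chi'(\mu_I^S(\Sigma(W_I)))\subseteq\chi'(\Sigma(W))=\Lambda(W)$, and likewise for $\bar\rho_I^S$, so the two middle-column maps are well defined on $\Lambda$ and carry the left squares.

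Next I would dispose of the two right squares. Here $\bar\mu_I^S,\bar\rho_I^S$ on $\Lambda$ are, by construction, the restrictions along $\imath:\Lambda\hookrightarrow\Sigma^*$ of the corresponding maps on $\Sigma^*$, so $\imath\circ\bar\mu_I^S=\bar\mu_I^S\circ\imath$ and $\imath\circ\bar\rho_I^S=\bar\rho_I^S\circ\imath$ hold automatically; well-definedness of these restrictions is the statement already recorded from Theorem~\ref{thm:FullDiagram}. Combined with the previous paragraph, all four squares commute, which gives commutativity of the whole diagram.

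For the self-duality I would invoke the pairings assembled earlier. By Proposition~\ref{prop:SymForm} the injection $\imath:\Lambda(W)\hookrightarrow\Sigma^*(W)$ is dual to the surjection $\chi':\Sigma(W)\twoheadrightarrow\Lambda(W)$, and by Theorem~\ref{thm:FullDiagram} (via Proposition~\ref{prop:MR}) the pairs $\mu_I^S\!\leftrightarrow\!\bar\rho_I^S$ and $\rho_I^S\!\leftrightarrow\!\bar\mu_I^S$ are dual. The $180^\circ$ rotation replaces every object by its dual and every arrow by the dual arrow, and the listed dualities identify the rotated arrows as $\imath,\chi',\mu_I^S,\rho_I^S,\bar\mu_I^S,\bar\rho_I^S$ again. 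The point needing care — the place I expect to be the main obstacle — is to confirm that \emph{on} $\Lambda$ the maps $\bar\mu_I^S$ and $\bar\rho_I^S$ are mutually dual with respect to the nondegenerate symmetric form of Proposition~\ref{prop:SymForm}; on $\Sigma^*$ they are instead dual to the $\Sigma$-side maps $\rho_I^S,\mu_I^S$. I would close this gap by a single adjunction computation: for $a\in\Lambda(W_I)$ and $b=\chi'(\beta)\in\Lambda(W)$, applying the $\imath$–$\chi'$ adjunction of Proposition~\ref{prop:SymForm}, then the right square, then the $\Sigma^*$-duality $\bar\mu_I^S\!\leftrightarrow\!\rho_I^S$, then the adjunction at $W_I$, and finally the left square, transforms $\langle\bar\mu_I^S a,\,b\rangle_{\Lambda(W)}$ into $\langle a,\,\bar\rho_I^S b\rangle_{\Lambda(W_I)}$. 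This is precisely where the self-duality of $\Lambda$ reconciles the two descriptions of the middle-column maps; once it is checked, the rotated diagram is the dual diagram, completing the proof.
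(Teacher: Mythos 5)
Your proposal is correct and follows essentially the same route as the paper: the paper derives the left squares by the same diagram chase in Theorem~\ref{thm:FullDiagram} (commuting $(\ )^{-1}$ past $\mu_I^S,\rho_I^S$ via Proposition~\ref{prop:MR} and $\chi$ past $\bar\mu_I^S,\bar\rho_I^S$), obtains the right squares by restriction along $\imath$, and invokes Proposition~\ref{prop:SymForm} for the duality. Your closing adjunction computation showing $\bar\mu_I^S$ and $\bar\rho_I^S$ are mutually dual on $\Lambda$ is a detail the paper leaves implicit, and it is carried out correctly.
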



\begin{definition}
We define a representation $\Lambda$ of both the category $\mathcal Cox$ and its dual category $\mathcal Cox^{op}$ by sending each abstract finite Coxeter system $(W,S)$ to $\Lambda(W)$ and sending an inclusion $(W_I,I)\hookrightarrow (W,S)$ to $\bar\mu_I^S:\Lambda(W_I)\to\Lambda(W)$ and a restriction $(W,S)\twoheadrightarrow (W_I,I)$ to $\bar\rho_I^S:\Lambda(W)\to\Lambda(W_I)$ whenever $I\subseteq S$. There are also dual natural transformations $\imath: \Lambda\hookrightarrow\Sigma^*$ and $\chi':\Sigma\twoheadrightarrow\Lambda$.
\end{definition}

In summary, we have a commutative diagram of representations of categories below.
\begin{equation}\label{eq:DiamondCoxCat}
\xymatrix @R=15pt @C=25pt {
 & \Omega \ar@{->>}[rd]^{\chi'} \\
 \Sigma \ar@{^(->}[ru]^{\imath} \ar@{->>}[rd]_{\chi'} & & \Sigma^* \ar@{<-->}[ll]^{\txt{\small dual}} \\
 & \Lambda \ar@{^(->}[ru]_\imath } 
 \end{equation}

\subsection{P-partitions and free quasisymmetric functions}\label{sec:FQSym}

In this subsection we generalize free quasisymmetric functions from type A to finite Coxeter groups. 
We first review a generalization of the $P$-partition theory by Reiner~\cite{Reiner1}, with some slight but not essential modifications.
See Humphreys~\cite{Humphreys} for details on root systems.
 
Let $E=\mathbb R^n$ be a finite dimensional (real) Euclidean space with standard inner product $(-,-)$. A \emph{root system} is a finite set $\Phi\subset E\setminus\{0\}$ such that
\begin{itemize}
\item if $\alpha\in\Phi$ then $\mathbb R\alpha\cap\Phi=\{\pm\alpha\}$, and
\item if $\alpha,\beta\in\Phi$ then $s_\alpha\beta:=\beta-\frac{2(\alpha,\beta)}{(\alpha,\alpha)}\alpha\in\Phi$.
\end{itemize}
Here $s_\alpha$ is the reflection across the hyperplane $H_\alpha$ perpendicular to $\alpha$. The elements of $\Phi$ are called \emph{roots}. One can choose a linearly independent subset $\Delta\subset\Phi$, whose elements are called \emph{simple roots}, such that every root is either \emph{positive} or \emph{negative}, meaning that it is a linear combination of simple roots with coefficients either all nonnegative or all nonpositive. 
We do not require $\Delta$ to be a basis for $E$, as one can always restrict to the subspace of $E$ spanned by $\Delta$ if needed.
We write $\alpha>0$ ($\alpha<0$ resp.) if $\alpha$ is a positive (negative resp.) root. The root system $\Phi$ is the disjoint union of the set $\Phi^+$ of positive roots and the set $\Phi^-$ of negative roots. 
Let $W$ be the group generated by the set $S$ of \emph{simple reflections} $s_\alpha$ for all $\alpha\in\Delta$. 
Then $(W,S)$ is a finite Coxeter system. 
Note that $\Phi=W \Delta$~\cite[\S1.5]{Humphreys}. 

Conversely, every finite Coxeter system $(W,S)$ can be obtained in above way, i.e., one can realize $W$ as a group generated by a set $S$ of simple reflections of a Euclidean space $E=\mathbb R^n$.
This is called a \emph{geometric realization} of $(W,S)$. 
The simple root $\alpha\in\Delta$ corresponding to a simple reflection $s=s_\alpha\in S$ is denoted by $\alpha_s$.
The action of $W$ on $E$ is orthogonal, i.e. $(wf,wg)=(f,g)$ for all $w\in W$ and all $f,g\in E$. 

\begin{proposition}[{\cite[\S1.6, \S1.7]{Humphreys}}]\label{prop:ws}
Let $s\in S$ and $w\in W$. Then $\ell(ws)>\ell(w) \Leftrightarrow w(\alpha_s)>0$.
\end{proposition}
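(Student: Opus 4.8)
The plan is to argue geometrically, using two standard facts about root systems that are exactly the content of the cited sections \S1.6--\S1.7 of Humphreys. The first is the \emph{key lemma} on simple reflections: if $\alpha=\alpha_s$ is the simple root attached to $s\in S$, then $s=s_\alpha$ sends $\alpha$ to $-\alpha$ and permutes the set $\Phi^+\setminus\{\alpha\}$ of remaining positive roots among themselves. The second is the \emph{length formula} $\ell(w)=|N(w)|$, where $N(w):=\{\beta\in\Phi^+:w\beta<0\}$ is the inversion set of $w$. Granting these, the proposition reduces to comparing $N(ws)$ with $N(w)$.

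First I would compute $N(ws)$ root by root, writing $ws(\beta)=w(s\beta)$ and splitting according to the key lemma. For $\beta=\alpha$ we get $ws(\alpha)=w(-\alpha)=-w(\alpha)$, so $\alpha\in N(ws)$ exactly when $w(\alpha)>0$. For $\beta\in\Phi^+\setminus\{\alpha\}$ the root $s\beta$ again lies in $\Phi^+\setminus\{\alpha\}$, and the condition $ws(\beta)<0$ says precisely $s\beta\in N(w)$; thus the elements of $N(ws)$ other than $\alpha$ form the set $\{\beta\in\Phi^+\setminus\{\alpha\}:s\beta\in N(w)\}$, which the involution $s$ carries bijectively onto $N(w)\setminus\{\alpha\}$.

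Combining the two cases, $N(ws)\setminus\{\alpha\}$ is in bijection with $N(w)\setminus\{\alpha\}$, while $\alpha$ belongs to $N(ws)$ if and only if $w(\alpha)>0$. Hence if $w(\alpha_s)>0$ then $\alpha\notin N(w)$ and $|N(ws)|=|N(w)|+1$, whereas if $w(\alpha_s)<0$ then $\alpha\in N(w)$ and $|N(ws)|=|N(w)|-1$. In either case the length formula gives $\ell(ws)>\ell(w)\Leftrightarrow w(\alpha_s)>0$, as claimed.

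The counting above is routine once the two inputs are available, so the real obstacle is the length formula $\ell(w)=|N(w)|$ itself. Establishing it from scratch takes an induction on $\ell(w)$ tracking how the inversion set changes under right multiplication by a simple reflection --- essentially the $\pm1$ statement proved above, but for the combinatorial quantity $|N(w)|$ in place of $\ell(w)$ --- together with a comparison against reduced expressions. Since both the key lemma and the length formula are precisely what Humphreys \S1.6--\S1.7 supply, I would simply invoke them and present only the inversion-set comparison.
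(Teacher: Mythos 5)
Your argument is correct and is exactly the standard proof from the cited source: the paper itself gives no proof of Proposition~\ref{prop:ws}, only the reference to Humphreys \S1.6--\S1.7, and your combination of the key lemma ($s$ permutes $\Phi^+\setminus\{\alpha_s\}$) with the length formula $\ell(w)=|N(w)|$ is precisely how Humphreys establishes it. The inversion-set bookkeeping is carried out correctly, so there is nothing to add.
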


In general there are different geometric realizations of the same finite Coxeter system. 
In this paper we fix one geometric realization for each abstract finite irreducible Coxeter system.
We will explicitly give a ``standard'' geometric realization for type A, B, and D in \S\ref{sec:FQSymA}, \S\ref{sec:FQSymB}, and \S\ref{sec:FQSymD}, but our main results are still valid if a different geometric realization is fixed.
For other types the choice of a geometric realization is arbitrary for our purposes. 

Now suppose that $(W,S)$ is an arbitrary abstract finite Coxeter system. 
Let $S_1,\ldots,S_k$ be the vertex sets of the connected components of the Coxeter diagram of $(W,S)$.
For each $i\in[k]$ we have already fixed a geometric realization of the irreducible $W_{S_i}$ as a reflection group of a Euclidean space $\mathbb R^{n_i}$.
This gives a realization of $W\cong W_{S_1}\times\cdots\times W_{S_k}$ as a reflection group of the Euclidean space $E=\mathbb R^n$ where $n=n_1+\cdots+n_k$.
Let $\Phi$ be the root system associated with this geometric realization of $(W,S)$ and fix a set $\Delta$ of simple roots.

Reiner~\cite{Reiner1} defined a \emph{parset} (\emph{partial root system}) of $\Phi$ to be a subset $P\subseteq\Phi$ such that 
\begin{itemize}
\item if $\alpha\in P$ then $-\alpha\notin P$, and
\item if $\sum_{i=1}^k c_i\alpha_i \in\Phi$ where $\alpha_i\in P$ and $c_i>0$ for all $i=1,\ldots, k$, then $\sum_{i=1}^k c_i\alpha_i \in P$.
\end{itemize}
Let $P\subseteq\Phi$ be a parset. 
The \emph{Jordan-H\"older set} of $P$ is  $\mathcal L(P):=\{w\in W: P\subseteq w\Phi^+\}$.
Denote by $\mathcal A(P)$ the set of all $P$-partitions, where a \emph{$P$-partition} is a function $f:[n]\to\ZZ$, identified with a vector $(f(1),\ldots,f(n))\in \ZZ^n$, such that
\begin{itemize}
\item $(\alpha,f)\geq0$ for all $\alpha\in P$, and
\item $(\alpha,f)>0$ for all $\alpha\in P\cap\Phi^-$.
\end{itemize}

The next result is well known in type A and actually holds for all finite Coxeter systems. 
A proof can be found in~{\cite[Proposition~3.1.1]{Reiner1}, which does not depend on the extra axiom for a root system adopted in~\cite{Reiner1}: $\Delta$ is a basis for $E$.

\begin{theorem}[Fundamental Theorem of $P$-partitions~{\cite[Proposition~3.1.1]{Reiner1}}]\label{thm:FTP}
For every parset $P$ of $\Phi$, the set $\mathcal A(P)$ of $P$-partitions is the disjoint union of $\mathcal A(w\Phi^+)$ for all $w\in\mathcal L(P)$.
\end{theorem}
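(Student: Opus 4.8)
The plan is to produce the decomposition from a single ``tie-breaking'' construction that assigns to each $f$ exactly one positive system $w\Phi^+$ containing $f$ in $\mathcal{A}(w\Phi^+)$, and then to recognize the relevant indices $w$ as precisely those in $\mathcal{L}(P)$. Throughout I would work in the fixed geometric realization $E=\mathbb{R}^n$ and use the standard fact (see \cite{Humphreys}) that $w\mapsto w\Phi^+$ is a bijection from $W$ onto the positive systems of $\Phi$, so that $w\Phi^+=w'\Phi^+$ already forces $w=w'$, together with the fact that a linear functional not vanishing on any root cuts out a positive system of the form $w\Phi^+$. The target is to show the pieces $\mathcal{A}(w\Phi^+)$ are pairwise disjoint and that $\mathcal{A}(P)$ is exactly the union of those indexed by $\mathcal{L}(P)$.

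\emph{Disjointness.} First I would show each $f$ lies in at most one $\mathcal{A}(w\Phi^+)$. If $f\in\mathcal{A}(w\Phi^+)\cap\mathcal{A}(w'\Phi^+)$ with $w\Phi^+\ne w'\Phi^+$, choose a root $\beta\in w\Phi^+$ with $-\beta\in w'\Phi^+$. The defining inequalities give $(\beta,f)\ge0$ and $(-\beta,f)\ge0$, hence $(\beta,f)=0$; but exactly one of $\beta,-\beta$ is negative and lies in the intersection of its positive system with $\Phi^-$, so the strict part of the definition of $\mathcal{A}$ forces the corresponding pairing to be $>0$, contradicting $(\beta,f)=0$. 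Thus the pieces are pairwise disjoint, and in particular the union over $\mathcal{L}(P)$ on the right-hand side is disjoint.

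\emph{Existence and the key step.} Given $f$, fix a strictly dominant vector $\rho$ (so $(\alpha,\rho)>0$ for all $\alpha\in\Phi^+$) and take $\epsilon>0$ sufficiently small. Then $f+\epsilon\rho$ pairs nonzero with every root, so it determines a positive system, and a short sign analysis shows
\[ w\Phi^+ \;=\; \{\beta\in\Phi:(\beta,f+\epsilon\rho)>0\} \;=\; \{\beta\in\Phi:(\beta,f)>0\}\ \cup\ \{\beta\in\Phi^+:(\beta,f)=0\}. \]
I expect this identification to be the technical heart of the proof. From it, $f\in\mathcal{A}(w\Phi^+)$: every $\beta\in w\Phi^+$ has $(\beta,f)\ge0$, and if moreover $\beta\in\Phi^-$ it cannot lie in the tie-broken second set, so $(\beta,f)>0$. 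Now suppose $f\in\mathcal{A}(P)$. For each $\alpha\in P$ the $P$-partition inequalities give $(\alpha,f)\ge0$, with $(\alpha,f)>0$ whenever $\alpha\in\Phi^-$; checking this against the two parts of the displayed set shows $\alpha\in w\Phi^+$ in every case. Hence $P\subseteq w\Phi^+$, i.e.\ $w\in\mathcal{L}(P)$, giving $\mathcal{A}(P)\subseteq\bigcup_{w\in\mathcal{L}(P)}\mathcal{A}(w\Phi^+)$. (The characterization $w\Phi_I^+\subseteq\Phi^+$ of coset representatives via Proposition~\ref{prop:ws} is the analogue one would invoke if instead organizing the existence step through dominant representatives of the orbit $Wf$.)

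\emph{Reverse inclusion and conclusion.} The opposite containment is routine: if $w\in\mathcal{L}(P)$, so $P\subseteq w\Phi^+$, then for $f\in\mathcal{A}(w\Phi^+)$ each $\alpha\in P$ satisfies $(\alpha,f)\ge0$, and $(\alpha,f)>0$ when $\alpha\in\Phi^-$ since then $\alpha\in w\Phi^+\cap\Phi^-$; thus $f\in\mathcal{A}(P)$. Combining the three steps gives $\mathcal{A}(P)=\bigsqcup_{w\in\mathcal{L}(P)}\mathcal{A}(w\Phi^+)$. The main obstacle is the existence step, namely verifying that the tie-broken set above is an honest positive system $w\Phi^+$; the disjointness and reverse inclusion are straightforward sign-chasing. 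I would also remark that only the first parset axiom (that $P$ contains no antipodal pair of roots) appears to enter, through the consistency of the inequalities in the disjointness argument, and I would double-check against \cite{Reiner1} whether the convexity axiom is needed for this particular decomposition or is rather used elsewhere in the theory.
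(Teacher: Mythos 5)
Your proof is correct, and it is essentially the standard argument: the paper itself offers no proof (it defers to Reiner's Proposition~3.1.1, noting only that the argument there does not use the axiom that $\Delta$ spans $E$), and your tie-breaking construction --- perturbing $f$ by $\epsilon\rho$ to land in an open chamber and identifying $w\Phi^+=\{\beta\in\Phi:(\beta,f)>0\}\cup\{\beta\in\Phi^+:(\beta,f)=0\}$ as the unique positive system with $f\in\mathcal A(w\Phi^+)$ --- is exactly the mechanism behind the cited result. Your closing observation is also accurate: the decomposition needs neither parset axiom (if $P$ contained an antipodal pair, both sides would be empty), and the convexity axiom is used elsewhere in Reiner's theory rather than in this statement.
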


Let $\bX=\{\bx_i:i\in\ZZ\}$ be a set of noncommutative variables. The free associative algebra $\mathbb Z\langle \bX \rangle$ generated by $\bX$ has a free $\ZZ$-basis $\{\bx_a:a\in \mathbb Z^n,\ n\ge0\}$, where $\bx_{a_1\cdots a_n}:=\bx_{a_1}\cdots\bx_{a_n}$. Denote by $\FQSym^S$ the $\ZZ$-span of $\bF_P^S$ for all parsets $P$ of $\Phi$, where 
\[\bF_P^S:=\sum_{f\in\mathcal A(P)} \bx_{f(1)}\cdots \bx_{f(n)}\]
is the (noncommutative) generating function of $P$. 
Let $\bF_w^S:=\bF_{w\Phi^+}^S$ and $\bs_w^S:=\bF_{w^{-1}}^S$ for all $w\in W$.

\begin{lemma}\label{lem:Z^n}
The set $\ZZ^n$ is a disjoint union of nonempty subsets: $\ZZ^n = \bigsqcup_{\,w\in W}  \mathcal A(w\Phi^+)$.
\end{lemma}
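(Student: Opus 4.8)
The plan is to obtain the decomposition as the special case $P=\emptyset$ of the Fundamental Theorem of $P$-partitions (Theorem~\ref{thm:FTP}), and then to settle nonemptiness of each piece by a short geometric argument.

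First I would observe that the empty set is a parset, since both defining axioms of a parset hold vacuously. For $P=\emptyset$ there are no constraints on $f$, so $\mathcal A(\emptyset)=\ZZ^n$; moreover $\mathcal L(\emptyset)=\{w\in W:\emptyset\subseteq w\Phi^+\}=W$ because the empty set is contained in every set. Feeding $P=\emptyset$ into Theorem~\ref{thm:FTP} then yields at once
\[ \ZZ^n=\mathcal A(\emptyset)=\bigsqcup_{w\in\mathcal L(\emptyset)}\mathcal A(w\Phi^+)=\bigsqcup_{w\in W}\mathcal A(w\Phi^+), \]
which is exactly the asserted decomposition. This single application disposes of both the covering and the pairwise disjointness.

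It remains to show each $\mathcal A(w\Phi^+)$ is nonempty. I would exhibit a point inside the smaller set $U_w:=\{f\in E:(\alpha,f)>0\text{ for all }\alpha\in w\Phi^+\}$, which is contained in $\mathcal A(w\Phi^+)$ because a strict inequality on all of $w\Phi^+$ forces both defining conditions of a $w\Phi^+$-partition (the $\geq 0$ condition on all roots of $w\Phi^+$ and the $>0$ condition on those lying in $\Phi^-$). To see $U_w\neq\emptyset$ over $\mathbb R$, I would use that the $W$-action is orthogonal: writing $g=w^{-1}f$ turns the conditions into $(\beta,g)>0$ for all $\beta\in\Phi^+$, and since $\Delta$ is linearly independent one can pick $g$ with $(\alpha_s,g)=1$ for every simple root $\alpha_s$, so that $(\beta,g)>0$ for every positive root $\beta$ and $f=wg\in U_w$. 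Thus $U_w$ is nonempty, open (a finite intersection of open half-spaces), and invariant under positive scaling. Being open and nonempty it contains a rational point $q$, and multiplying $q$ by a suitable positive integer clears denominators while staying in the cone, producing an integer point of $U_w\subseteq\mathcal A(w\Phi^+)$.

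The only genuine content beyond invoking Theorem~\ref{thm:FTP} is this nonemptiness step, and the point to be careful about is that $\Delta$ need not span $E$: I must produce an honest element of $\ZZ^n$, not merely a real point. This is why I phrase the cone $U_w$ inside the full space $E=\mathbb R^n$ (all roots lie in $\mathrm{span}\,\Delta$, so the orthogonal complement is unconstrained and $U_w$ is open in $\mathbb R^n$), and why I pass from a rational point to an integer point by positive scaling rather than assuming that $W$ preserves the lattice $\ZZ^n$.
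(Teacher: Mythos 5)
Your proof is correct, and the decomposition itself is obtained exactly as in the paper: both arguments feed $P=\emptyset$ into Theorem~\ref{thm:FTP}, using $\mathcal A(\emptyset)=\ZZ^n$ and $\mathcal L(\emptyset)=W$. Where you diverge is the nonemptiness step. The paper produces a single integer point $f$ in the dominant cone (observing that $\bigcap_{\alpha\in\Delta}\{f\in\ZZ^n:(f,\alpha)>0\}$ is infinite) and then asserts that $wf\in\mathcal A(w\Phi^+)$; this tacitly uses that $wf$ again lies in $\ZZ^n$, which holds for the standard realizations in types A, B, D but is not automatic for an arbitrary fixed geometric realization of, say, $H_3$ or $I_2(m)$. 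You instead show directly that each open cone $U_w=\{f\in E:(\alpha,f)>0\ \ \forall\alpha\in w\Phi^+\}$ is nonempty --- transporting the condition to the dominant cone via the orthogonal action and using linear independence of $\Delta$ --- and then extract an integer point from $U_w$ by density of $\QQ^n$ together with invariance under positive scaling. This costs a few extra lines but removes the implicit assumption that $W$ preserves the lattice $\ZZ^n$, so your version is the more robust of the two; your remark that $\Delta$ need not span $E$ is also handled correctly, since $U_w$ stays open in all of $\mathbb R^n$.
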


\begin{proof}
Applying Theorem~\ref{thm:FTP} to $P=\emptyset$ one partitions $\ZZ^n$ into a disjoint union of $\mathcal A(w\Phi^+)$ for all $w\in W$. 
Since the intersection $\bigcap_{\alpha\in\Delta} \{ f\in\ZZ^n:(f,\alpha)>0\}$ is infinite, there exists $f\in\ZZ^n$ such that $f\in\mathcal A(\Phi^+)$.
Then $\mathcal A(w\Phi^+)$ contains $wf$ for each $w\in W$. 
\end{proof}

\begin{proposition}\label{prop:FQSymW}
One has two bases $\{\bF_w^S: w\in W\}$ and $\{\bs_w^S:w\in W\}$ for $\FQSym^S$.
\end{proposition}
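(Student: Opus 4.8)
The plan is to show that $\{\bF_w^S:w\in W\}$ is a free $\ZZ$-basis for $\FQSym^S$; the statement for $\{\bs_w^S:w\in W\}$ then comes for free, since $\bs_w^S=\bF_{w^{-1}}^S$ and $w\mapsto w^{-1}$ permutes $W$, so the two indexed families consist of exactly the same elements of $\FQSym^S$. I would establish the basis property in two independent steps: spanning via the Fundamental Theorem of $P$-partitions, and linear independence via a disjoint-support argument powered by Lemma~\ref{lem:Z^n}.

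For spanning, I would first note that each chamber $w\Phi^+$ is itself a parset (the two defining axioms hold for $\Phi^+$ and are preserved by the orthogonal action of $w$), so every $\bF_w^S=\bF_{w\Phi^+}^S$ lies in $\FQSym^S$; in fact $\mathcal L(w\Phi^+)=\{w\}$, since $w\Phi^+\subseteq v\Phi^+$ forces $w\Phi^+=v\Phi^+$ by cardinality and the positive system $v\Phi^+$ determines $v$. Then, for an arbitrary parset $P$, applying Theorem~\ref{thm:FTP} and summing the monomials $\bx_{f(1)}\cdots\bx_{f(n)}$ over the disjoint union $\mathcal A(P)=\bigsqcup_{w\in\mathcal L(P)}\mathcal A(w\Phi^+)$ yields
\[ \bF_P^S=\sum_{w\in\mathcal L(P)}\bF_w^S. \]
Thus every defining generator of $\FQSym^S$ is a $\ZZ$-linear combination of the $\bF_w^S$, so these span.

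For linear independence, I would exploit that the assignment $f\mapsto\bx_{f(1)}\cdots\bx_{f(n)}$ is a bijection from $\ZZ^n$ onto the set of degree-$n$ monomials of $\ZZ\langle\bX\rangle$. Each $\bF_w^S$ is then the multiplicity-free sum of the monomials indexed by $\mathcal A(w\Phi^+)$, and Lemma~\ref{lem:Z^n} says precisely that these index sets are nonempty and pairwise disjoint as $w$ ranges over $W$. Consequently distinct $\bF_w^S$ have disjoint, nonempty monomial supports, so a vanishing combination $\sum_w c_w\bF_w^S=0$ can be tested against any single monomial arising from $\mathcal A(w_0\Phi^+)$ to read off $c_{w_0}=0$, for every $w_0\in W$.

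The argument is genuinely short, because all of the real work has been front-loaded into Theorem~\ref{thm:FTP} and Lemma~\ref{lem:Z^n}. The only point that needs care is the bookkeeping in the independence step: one must use that $f\mapsto\bx_{f(1)}\cdots\bx_{f(n)}$ is injective (so the blocks $\mathcal A(w\Phi^+)$ really do produce disjoint sets of monomials) together with the nonemptiness assertion of Lemma~\ref{lem:Z^n} (so that each $\bF_w^S$ actually contributes a monomial seen in no other $\bF_v^S$). I expect no further obstacle.
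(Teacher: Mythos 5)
Your proposal is correct and follows essentially the same route as the paper: linear independence from the disjoint nonempty supports guaranteed by Lemma~\ref{lem:Z^n}, spanning from the expansion $\bF_P^S=\sum_{w\in\mathcal L(P)}\bF_w^S$ given by Theorem~\ref{thm:FTP}, and the $\bs$-basis obtained for free since inversion permutes $W$. You merely spell out a few details (that $w\Phi^+$ is a parset, that monomial supports are genuinely disjoint) that the paper leaves implicit.
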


\begin{proof}
By Lemma~\ref{lem:Z^n}, $\{\bF_w^S: w\in W\}$ is linearly independent. If $P$ is an arbitrary parset of $\Phi$ then Theorem~\ref{thm:FTP} implies that $\bF_P^S$ is the sum of $\bF_w^S$ for all $w \in \mathcal L(P)$. The result follows.
 \end{proof}

It follows that one has the following two isomorphisms of free $\ZZ$-modules:
\[\begin{matrix}
\bF: & \ZZ W & \xrightarrow{\sim} & \FQSym^S & \qand & \bs: & \ZZ W & \xrightarrow{\sim} & \FQSym^S \\
& w & \mapsto & \bF_w^S  & & & w & \mapsto & \bs_w^S.
\end{matrix} \]

We will also need the following result when we study the $\bs$-basis in type A, B, and D.

\begin{proposition}\label{prop:InverseP} 
Let $w\in W$ and  $f\in\ZZ^n$. Then $f\in\mathcal A(w^{-1}\Phi^+)$ is equivalent to 
\[\begin{cases}
(f,\alpha)\geq0, & {\rm if}\ \alpha>0,\ w\alpha>0,\\
(f,\alpha)<0, & {\rm if}\ \alpha>0,\ w\alpha<0.
\end{cases} \]
\end{proposition}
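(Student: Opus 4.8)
The plan is to unwind the definition of the $P$-partition set $\mathcal A(w^{-1}\Phi^+)$ and to reindex its defining inequalities by positive roots. First I would record that
$w^{-1}\Phi^+=\{\beta\in\Phi:w\beta>0\}$
is a parset of $\Phi$ --- it is one of the sets already appearing in Theorem~\ref{thm:FTP} and Lemma~\ref{lem:Z^n} (with $w$ replaced by $w^{-1}$) --- so $\mathcal A(w^{-1}\Phi^+)$ is defined. By the definition of a $P$-partition, $f\in\mathcal A(w^{-1}\Phi^+)$ is equivalent to the conjunction of $(\beta,f)\geq0$ for every $\beta\in w^{-1}\Phi^+$ together with $(\beta,f)>0$ for every $\beta\in w^{-1}\Phi^+\cap\Phi^-$. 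Since the strict inequality subsumes the non-strict one on negative roots, this amounts to requiring $(\beta,f)\geq0$ for positive $\beta\in w^{-1}\Phi^+$ and $(\beta,f)>0$ for negative $\beta\in w^{-1}\Phi^+$.

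Next I would reparametrize these conditions by the positive roots $\alpha>0$. Because $\Phi=\Phi^+\sqcup\Phi^-$ and $w^{-1}\Phi^+$ contains exactly one of $\beta,-\beta$ from each root pair, it suffices to examine, for each $\alpha>0$, which of $\alpha$ and $-\alpha$ lies in $w^{-1}\Phi^+$. If $w\alpha>0$ then $\alpha\in w^{-1}\Phi^+$ is a positive root of the parset, contributing only $(\alpha,f)\geq0$. If instead $w\alpha<0$ then $w(-\alpha)>0$, so $-\alpha\in w^{-1}\Phi^+$ is a negative root of the parset, contributing the strict inequality $(-\alpha,f)>0$, that is, $(f,\alpha)<0$. (Here $w\alpha$ is a root and hence nonzero, so exactly one of the two cases occurs for each $\alpha$.) Collecting these over all $\alpha>0$ yields precisely the displayed case statement.

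The argument is in the end a direct translation of the definition, and the only point requiring care will be tracking the strict-versus-non-strict distinction together with the sign flip $\beta=-\alpha$: the strict inequality in $\mathcal A(w^{-1}\Phi^+)$ is the one attached to the negative roots of the parset, which are exactly the $-\alpha$ with $w\alpha<0$, and it is the flip there that converts $(\beta,f)>0$ into $(f,\alpha)<0$. No use of the closure axiom for parsets or of the orthogonality of the inner product is needed beyond confirming that $w^{-1}\Phi^+$ is a legitimate parset.
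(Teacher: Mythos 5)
Your proposal is correct and is essentially the paper's own argument written out in full: the paper's one-line proof rests on the same observation that $\Phi^+$ decomposes as $\{w\alpha:\alpha>0,\,w\alpha>0\}\sqcup\{-w\alpha:\alpha>0,\,w\alpha<0\}$, i.e.\ that $w^{-1}\Phi^+$ contains exactly one of $\pm\alpha$ for each $\alpha>0$, with the sign flip on $-\alpha\in\Phi^-$ converting the strict inequality $(-\alpha,f)>0$ into $(f,\alpha)<0$. Your careful tracking of strict versus non-strict inequalities matches the definition of $P$-partitions used in the paper.
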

\begin{proof}
This follows from the definition of $P$-partitions and the observation that $\Phi^+$ is the disjoint union of $\{w\alpha:\alpha>0,w\alpha>0\}$ and $\{-w\alpha:\alpha>0,w\alpha<0\}$.
 \end{proof}  

Next assume $I\subseteq S$. By Humphreys~\cite[\S1.10]{Humphreys}, $W_I$ is isomorphic to the reflection group of $E$ with root system $\Phi_I=\Phi^+_I\sqcup \Phi^-_I$, where $\Phi^+_I$ consists of all roots in $\Phi$ that are nonnegative linear combinations of $\{\alpha\in\Phi^+:s_\alpha\in I\}$ and $\Phi^-_I=-\Phi^+_I$. For each $u\in W_I$ one can check that $u\Phi_I^+$ is a parset of $\Phi$.

\begin{proposition}\label{prop:shuffleW}
Let $I\subseteq S$ and $u\in W_I$. Then $\bF_{u\Phi^+_I}^S = \sum_{z\in\,^IW} \bF_{uz}^S$. 
\end{proposition}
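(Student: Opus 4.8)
The plan is to apply the Fundamental Theorem of $P$-partitions to the parset $P=u\Phi^+_I$. Since $\bF_w^S=\bF_{w\Phi^+}^S$ collects exactly the monomials coming from $\mathcal A(w\Phi^+)$, Theorem~\ref{thm:FTP} gives
\[ \bF_{u\Phi^+_I}^S = \sum_{w\in\mathcal L(u\Phi^+_I)} \bF_w^S, \]
where $\mathcal L(u\Phi^+_I)=\{w\in W: u\Phi^+_I\subseteq w\Phi^+\}$ is the Jordan--H\"older set. Thus the whole statement reduces to the purely combinatorial identification of index sets $\mathcal L(u\Phi^+_I)=\{uz:z\in{}^IW\}$.

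First I would strip off the factor $u$. Since $u\Phi^+_I\subseteq w\Phi^+$ is equivalent to $\Phi^+_I\subseteq (u^{-1}w)\Phi^+$, the bijection $w\mapsto u^{-1}w$ carries $\mathcal L(u\Phi^+_I)$ onto $\mathcal L(\Phi^+_I)$, so it suffices to treat $u=1$, i.e. to prove $\mathcal L(\Phi^+_I)={}^IW$. Both sides admit a description by root positivity: on one hand $z\in\mathcal L(\Phi^+_I)$ means $z^{-1}\beta>0$ for every $\beta\in\Phi^+_I$; on the other hand, unwinding the definition of ${}^IW$ and applying Proposition~\ref{prop:ws} to $z^{-1}$ shows $z\in{}^IW$ iff $z^{-1}\alpha_s>0$ for every $s\in I$, i.e. $z^{-1}$ keeps the simple roots of $W_I$ positive. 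As these simple roots lie in $\Phi^+_I$, the inclusion $\mathcal L(\Phi^+_I)\subseteq{}^IW$ is immediate.

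The heart of the argument, and the step I expect to be the main obstacle, is the reverse inclusion: upgrading ``$z^{-1}$ keeps every simple root of $W_I$ positive'' to ``$z^{-1}$ keeps every positive root of $\Phi_I$ positive''. I would handle this by writing an arbitrary $\beta\in\Phi^+_I$ as $\beta=u'\alpha_s$ with $u'\in W_I$ and $s\in I$ (every positive root of the parabolic root system lies in the $W_I$-orbit of a simple root), so that $\ell(u's)>\ell(u')$ by Proposition~\ref{prop:ws} applied inside $W_I$. Then $z^{-1}\beta=z^{-1}u'\alpha_s$, and Proposition~\ref{prop:ws} applied to $z^{-1}u'$ reduces the desired positivity to the inequality $\ell(z^{-1}u's)>\ell(z^{-1}u')$. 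Here I would invoke the length-additive factorization of Proposition~\ref{prop:parabolic}: since $z\in{}^IW$ gives $z^{-1}\in W^I$, one has $\ell(z^{-1}v)=\ell(z^{-1})+\ell(v)$ for every $v\in W_I$, whence
\[ \ell(z^{-1}u's)-\ell(z^{-1}u') = \ell(u's)-\ell(u') = 1 > 0, \]
so $z^{-1}\beta>0$. This establishes $\mathcal L(\Phi^+_I)={}^IW$, and the proposition follows. The delicate point is exactly the passage from simple roots to all of $\Phi^+_I$: controlling $z^{-1}$ on simple directions alone does not obviously prevent a positive combination from being sent negative, and it is the additivity of length across the parabolic decomposition that rules out such cancellation and forces global positivity.
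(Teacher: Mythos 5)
Your proof is correct and follows essentially the same route as the paper: apply Theorem~\ref{thm:FTP} to the parset $u\Phi_I^+$ and identify the Jordan--H\"older set $\mathcal L(u\Phi_I^+)$ with $\{uz: z\in{}^IW\}$ via the equivalence $u\Phi_I^+\subseteq w\Phi^+ \Leftrightarrow z^{-1}\Phi_I^+\subseteq\Phi^+ \Leftrightarrow z^{-1}\in W^I$. The only difference is that the paper compresses the last equivalence into a citation of Proposition~\ref{prop:ws}, whereas you correctly supply the nontrivial direction (positivity on $\Delta_I$ implies positivity on all of $\Phi_I^+$) via $\Phi_I=W_I\Delta_I$ and the length additivity of Proposition~\ref{prop:parabolic}.
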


\begin{proof} 
Each element $w\in W$ can be written as $w=uz$ with $z\in W$, and
\[ u \Phi^+_I \subseteq w\Phi^+\,\Leftrightarrow\,z^{-1}\Phi_I^+\subseteq \Phi^+\,\Leftrightarrow\,z^{-1}\in W^I.\]
Here the last equivalence uses Proposition~\ref{prop:ws}. Applying Theorem~\ref{thm:FTP} completes the proof.
 \end{proof}

Define a symmetric bilinear form on $\FQSym^S$ by $\langle \bF_u^S,\bs_v^S\rangle:=\delta_{u,v}$ for all $u,v\in W$. 
This is not positive definite and thus not isomorphic to the bilinear form on $\ZZ W$ defined in~\S\ref{sec:Coxeter}. 
With this bilinear form $\FQSym^S$ becomes self-dual. Define linear maps 
\[\begin{matrix}
\mu_I^S=\bar\mu_I^S:& \FQSym^I & \to & \FQSym^S & \qand & \rho_I^S=\bar\rho_I^S:& \FQSym^S & \to & \FQSym^I  \\
 & \bF_u^I & \mapsto & \bF_{u\Phi_I^+}^S & & & \bF_w^S & \mapsto & \bF_{_Iw}^I.
 \end{matrix}\]

\begin{proposition}\label{prop:CatFQSym}
The diagram below is commutative; rotating it $180^\circ$ gives a dual diagram.
\[ \xymatrix @R=20pt @C=25pt{
 \FQSym^I \ar[d]_-{\mu_I^S}  \ar@{->}[r]^-{\sim}_-{\bs^{-1}} & \ZZ W_I \ar[d]_-{\mu_I^S} \ar@{<->}^{(\ )^{-1}}[r] & \ZZ W_I \ar[d]_-{\bar\mu_I^S} \ar@{->}[r]^-{\sim}_-{\bF} & \FQSym^I \ar[d]_-{\bar\mu_I^S}\\
 \FQSym^S \ar[d]_-{\rho_I^S}  \ar@{->}[r]^-{\sim}_-{\bs^{-1}} & \ZZ W \ar[d]_-{\rho_I^S} \ar@{<->}^{(\ )^{-1}}[r] & \ZZ W \ar[d]_-{\bar\rho_I^S} \ar@{->}[r]^-{\sim}_-{\bF} & \FQSym^S \ar[d]_-{\bar\rho_I^S}\\
 \FQSym^I  \ar@{->}[r]^-{\sim}_-{\bs^{-1}} & \ZZ W_I \ar@{<->}^{(\ )^{-1}}[r] & \ZZ W_I \ar@{->}[r]^-{\sim}_-{\bF} & \FQSym^I
} \]
\end{proposition}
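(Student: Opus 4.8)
The plan is to verify commutativity square-by-square, exploiting the single structural identity $\bs_w^S=\bF_{w^{-1}}^S$. This says that, as isomorphisms $\ZZ W\to\FQSym^S$, one has $\bs=\bF\circ(\ )^{-1}$, hence $\bs^{-1}=(\ )^{-1}\circ\bF^{-1}$. The two central squares (those spanning the $(\ )^{-1}$ column) are literally the left and right squares of Proposition~\ref{prop:MR}, so no new work is needed for them; everything else reduces to showing that $\bF$ and $\bs$ intertwine the $\ZZ W$-maps with the $\FQSym$-maps.

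First I would treat the two right-hand squares, which assert that $\bF\colon\ZZ W\to\FQSym^S$ intertwines $\bar\mu_I^S,\bar\rho_I^S$ on $\ZZ W$ with $\mu_I^S=\bar\mu_I^S$ and $\rho_I^S=\bar\rho_I^S$ on $\FQSym$. For the induction square I would compute, for $u\in W_I$, that $\bF(\bar\mu_I^S(u))=\bF(u\cdot{}^IW)=\sum_{z\in{}^IW}\bF_{uz}^S$, while $\bar\mu_I^S(\bF_u^I)=\bF_{u\Phi_I^+}^S$; these agree precisely by Proposition~\ref{prop:shuffleW}. For the restriction square, $\bF(\bar\rho_I^S(w))=\bF_{_Iw}^I$ equals $\bar\rho_I^S(\bF_w^S)=\bF_{_Iw}^I$ directly from the definition of $\bar\rho_I^S$ on $\FQSym$. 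This is the step carrying the genuine combinatorial content, and Proposition~\ref{prop:shuffleW} is exactly what makes it go through.

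For the two left-hand squares I would verify directly (they could alternatively be obtained by stacking the right squares with Proposition~\ref{prop:MR} via the factorization $\bs=\bF\circ(\ )^{-1}$). For the induction square I would check that $\bs(\mu_I^S(u))=\sum_{x\in W^I}\bF_{(xu)^{-1}}^S=\sum_{x\in W^I}\bF_{u^{-1}x^{-1}}^S$ equals $\mu_I^S(\bs_u^I)=\bF_{u^{-1}\Phi_I^+}^S=\sum_{z\in{}^IW}\bF_{u^{-1}z}^S$ (again Proposition~\ref{prop:shuffleW}), using the reindexing $x\mapsto x^{-1}$ together with the fact that inversion is a bijection $W^I\to{}^IW$. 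For the restriction square I would check that $\bs(\rho_I^S(w))=\bF_{w_I^{-1}}^I$ equals $\rho_I^S(\bs_w^S)=\bF_{{}_I(w^{-1})}^I$, which reduces to the identity $w_I^{-1}={}_I(w^{-1})$; this follows by inverting the decomposition $w=w_I\cdot{}^Iw$ of Proposition~\ref{prop:parabolic} and invoking its uniqueness.

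Finally, the dual ($180^\circ$-rotated) diagram follows from the self-duality of $\FQSym^S$ under $\langle\bF_u^S,\bs_v^S\rangle=\delta_{u,v}$ together with the duality already recorded in Proposition~\ref{prop:MR}; equivalently, rerunning the above computations with $\mu,\rho$ and their barred partners interchanged gives it verbatim. The only place I expect any friction is keeping the inversion bookkeeping straight in the two left squares, namely the two elementary facts that inversion swaps $W^I$ and ${}^IW$ and that $w_I^{-1}={}_I(w^{-1})$; but neither is difficult, and everything substantive is already packaged in Propositions~\ref{prop:MR}, \ref{prop:parabolic}, and \ref{prop:shuffleW}.
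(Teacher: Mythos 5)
Your proof is correct and follows the same route as the paper, whose entire proof is ``Apply Proposition~\ref{prop:MR} and Proposition~\ref{prop:shuffleW}''; you have simply made explicit the square-by-square verification (including the inversion bookkeeping $w_I^{-1}={}_I(w^{-1})$ and the bijection $W^I\to{}^IW$) that the paper leaves to the reader.
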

\begin{proof}
Apply Proposition~\ref{prop:MR} and Proposition~\ref{prop:shuffleW}.
 \end{proof}

\begin{definition}
We define a representation $\mathcal FQSym$ of both the category $\mathcal Cox$ and its dual category $\mathcal Cox^{op}$ by sending each abstract finite Coxeter system $(W,S)$ to $\FQSym^S$ and sending an inclusion $(W_I,I)\hookrightarrow(W,S)$ to $\mu_I^S=\bar\mu_I^S: \FQSym^I\to\FQSym^S$ and a restriction $(W,S)\twoheadrightarrow (W_I,I)$ to $\rho_I^S=\bar\rho_I^S: \FQSym^S\to\FQSym^I$ whenever $I\subseteq S$. 
\end{definition}

Proposition~\ref{prop:CatFQSym} implies that $\mathcal FQSym$ is self-dual. 
It also shows that the isomorphisms $\bF:\ZZ W\to\FQSym^S$ and $\bs:\ZZ W\to\FQSym^S$ induce natural isomorphisms $\bF: \Omega^*\to\mathcal FQSym$ and $\bs:\Omega\to\mathcal FQSym$ of representations of categories.

Next, let $\NSym^S$ be the $\ZZ$-submodule of $\FQSym^S$ with two free $\ZZ$-bases consisting of
\[ \bs_I^S:= \sum_{w\in D_I(W)} \bs_w^S \qand  \bh_I^S:=\sum_{J\subseteq I} \bs_J^S =  \bF_{\Phi^+_{S\setminus I}}^S, \quad\forall I\subseteq S\]
where the last equality follows from Proposition~\ref{prop:shuffleW}.
One has $\imath:\NSym^S\hookrightarrow\FQSym^S$ by inclusion.
By Theorem~\ref{thm:FullDiagram} and Proposition~\ref{prop:CatFQSym}, $\mu_I^S: \FQSym^I \to \FQSym^S$ and $\rho_I^S: \FQSym^S\to\FQSym^I$ restrict to $\mu_I^S:\NSym^I\to\NSym^S$ and $\rho_I^S:\NSym^S\to\NSym^I$.

\begin{definition}
We define a representation $\mathcal NSym$ of both the category $\mathcal Cox$ and its dual category $\mathcal Cox^{op}$ by sending each abstract finite Coxeter system $(W,S)$ to $\NSym^S$ and sending an inclusion $(W_I,I)\hookrightarrow(W,S)$ to $\mu_I^S:\NSym^I\to\NSym^S$ and a restriction $(W,S)\twoheadrightarrow (W_I,I)$ to $\rho_I^S:\NSym^S\to\NSym^I$ whenever $I\subseteq S$.
\end{definition}

The isomorphism $\bs:\ZZ W\to\FQSym^S$ restricts to an isomorphism 
\[\begin{matrix}
\bs: & \Sigma(W) & \xrightarrow{\sim} & \NSym^S \\
& D_I(W) & \mapsto & \bs_I^S.
\end{matrix} \]
This induces a natural isomorphism of representations of categories 
\[\bs:\Sigma\to\mathcal NSym.\]

On the other hand, denote by $\QSym^S$ the dual of $\NSym^S$ with two bases $\{F_I^S:I\subseteq S\}$ and $\{M_I^S:I\subseteq S\}$ dual to $\{\bs_I^S:I\subseteq S\}$ and $\{\bh_I^S:I\subseteq S\}$, respectively.
Dual to the injection $\imath:\NSym^S\hookrightarrow \FQSym^S$ is a surjection 
\[ \begin{matrix}
\chi:& \FQSym^S & \twoheadrightarrow & \QSym^S, \\
 & \bF_w^S & \mapsto & F_w^S:=F_{D(w)}^S.
 \end{matrix}\]
By Theorem~\ref{thm:FullDiagram} and Proposition~\ref{prop:CatFQSym}, $\bar\mu_I^S:\FQSym^I\to\FQSym^S$ and $\bar\rho_I^S:\FQSym^S\to\FQSym^I$ descend to $\bar\mu_I^S:\QSym^I\to\QSym^S$ and $\bar\rho_I^S:\QSym^S\to\QSym^I$.

\begin{definition}
We define a representation $\mathcal QSym$ of both the category $\mathcal Cox$ and its dual category $\mathcal Cox^{op}$ by sending each abstract finite Coxeter system $(W,S)$ to $\QSym^S$ and sending an inclusion $(W_I,I)\hookrightarrow(W,S)$ to $\bar\mu_I^S:\QSym^I\to\QSym^S$ and a restriction $(W,S)\to (W_I,I)$ to $\bar\rho_I^S:\QSym^S\to\QSym^I$ whenever $I\subseteq S$.
\end{definition}

The isomorphism $\bF:\ZZ W\xrightarrow\sim\FQSym^S$ descends to an isomorphism 
\[\begin{matrix}
F: & \Sigma^*(W) & \xrightarrow{\sim} & \QSym^S \\
& D^*_I(W) & \mapsto & F_I^S.
\end{matrix} \]
This induces a natural isomorphism of representations of categories 
\[ F: \Sigma^*\to\mathcal QSym.\]

Finally, let $\Sym^S$ be the $\ZZ$-span of 
\[ s_I^S:=\chi(\bs_I^S) = \sum_{w\in D_I(W)} F_{w^{-1}}^S, \quad\forall I\subseteq S.\]
Another spanning set for $\Sym^S$ consists of $h_I^S:=\sum_{J\subseteq I} s_J^S$ for all $I\subseteq S$. One sees that the injection $\imath:\NSym^S\hookrightarrow\FQSym^S$ descends to $\imath:\Sym^S\hookrightarrow\QSym^S$ and the surjection $\chi:\FQSym^S\twoheadrightarrow\QSym^S$ restricts to $\chi:\NSym^S\twoheadrightarrow \Sym^S$.
By Corollary~\ref{cor:Sym}, $\bar\mu_I^S:\QSym^I\to\QSym^S$ and $\bar\rho_I^S:\QSym^S\to\QSym^I$ restrict to $\bar\mu_I^S:\Sym^I\to\Sym^S$ and $\bar\rho_I^S:\Sym^S\to\Sym^I$, and $\mu_I^S:\NSym^I\to\NSym^S$ and $\rho_I^S:\NSym^S\to\NSym^I$ also descend to $\bar\mu_I^S:\Sym^I\to\Sym^S$ and $\bar\rho_I^S:\Sym^S\to\Sym^I$.

\begin{definition} 
We define a representation $\mathcal Sym$ of both the category $\mathcal Cox$ and its dual category $\mathcal Cox^{op}$ by sending each abstract finite Coxeter system $(W,S)$ to $\Sym^S$ and sending an inclusion $(W_I,I)\hookrightarrow(W,S)$ to $\bar\mu_I^S:\Sym^I\to\Sym^S$ and a restriction $(W,S)\twoheadrightarrow (W_I,I)$ to $\bar\rho_I^S:\Sym^S\to\Sym^I$ whenenver $I\subseteq S$.
\end{definition}

One has an isomorphism 
\[\begin{matrix}
s: & \Lambda(W) & \xrightarrow{\sim} & \Sym^S \\
& \Lambda_I(W) & \mapsto & s_I^S
\end{matrix} \]
which is compatible with both $\bs: \Sigma(W) \xrightarrow{\sim} \NSym^S$ and $F: \Sigma^*(W) \xrightarrow{\sim} \QSym^S$. This induces a natural isomorphism of representations of categories
\[ s:\Lambda\to\mathcal Sym.\]

In summary, one has the following commutative diagram of representations of categories, which are in natural isomorphism with  \eqref{eq:DiamondCoxCat}.
\[ \xymatrix @R=15pt @C=20pt {
 & \mathcal FQSym \ar@{->>}[rd]^{\chi} \\
 \mathcal NSym \ar@{^(->}[ru]^{\imath} \ar@{->>}[rd]_{\chi} & & \mathcal QSym \ar@{<-->}[ll]^{\txt{\small dual}} \\
 & \mathcal Sym \ar@{^(->}[ru]_\imath } \] 

\subsection{Representation theory of 0-Hecke algebras}
Now we investigate connections of our previous results with the representation theory of 0-Hecke algebras. 

We first review some general results on representation theory of associative algebras~\cite[\S I]{ASS}. Let $\FF$ be an arbitrary field and let $A$ be a finite dimensional (unital associative) $\FF$-algebra. Let $M$ be a (left) $A$-module. If $M$ is nonzero and has no submodules except $0$ and itself, then $M$ is \emph{simple}. If $M$ is a direct sum of simple $A$-modules then $M$ is \emph{semisimple}. The algebra $A$ is \emph{semisimple} if it is semisimple as an $A$-module. Every module over a semisimple algebra is also semisimple. If an $A$-module $M$ cannot be written as a direct sum of two nonzero $A$-submodules, then $M$ is \emph{indecomposable}. If $M$ is a direct summand of a free $A$-module, then $M$ is \emph{projective}.

One can write $A$ as a direct sum of indecomposable $A$-modules $\P_1,\ldots,\P_k$. The \emph{top} of $\P_i$, denoted by $\C_i:={\rm top}(\P_i)$, is the quotient of $\P_i$ by its unique maximal submodule, and hence simple~\cite[Proposition~I.4.5~(c)]{ASS}. Every projective indecomposable $A$-module is isomorphic to some $\P_i$, and every simple $A$-module is isomorphic to some $\C_i$. 

The \emph{Grothendieck group $G_0(A)$ of the category of finitely generated $A$-modules} is defined as the abelian group $F/R$, where $F$ is the free abelian group on the isomorphism classes $[M]$ of finitely generated $A$-modules $M$, and $R$ is the subgroup of $F$ generated by the elements $[M]-[L]-[N]$ corresponding to all exact sequences $0\to L\to M\to N\to0$ of finitely generated $A$-modules. The \emph{Grothendieck group $K_0(A)$ of the category of finitely generated projective $A$-modules} is defined similarly.   We often identify a finitely generated (projective) $A$-module with the corresponding element in the Grothendieck group $G_0(A)$ ($K_0(A)$). If $L$, $M$, and $N$ are all projective $A$-modules, then $0\to L\to M\to N\to0$ is equivalent to $M\cong L\oplus N$. If $A$ is semisimple then $G_0(A)=K_0(A)$.

Suppose that $\{\C_1,\ldots,\C_\ell\}$ is a complete list of non-isomorphic simple $A$-modules, and $\{\P_1,\ldots,\P_\ell\}$ is a complete list of pairwise non-isomorphic projective indecomposable $A$-modules, labeled in such a way that $\C_i=\mathrm{top}(\P_i)$ for each $i$. Then $G_0(A)$ and $K_0(A)$ are free abelian groups with bases $\{\C_1,\ldots,\C_\ell\}$ and $\{\P_1,\ldots,\P_\ell\}$, respectively. One has
\[ \dim_\FF\Hom_A (\P_i,\C_j)=\dim_\FF \Hom_A (\C_i,\C_j)=\delta_{i,j}. \]
This defines a pairing between $G_0(A)$ and $K_0(A)$, denoted by $\langle -,-\rangle$.
One has $\langle P,C\rangle = \dim_\FF\Hom_A(P,C)$ for any finite dimensional projective $A$-module $P$ and any finite dimensional $A$-module $C$, since the hom functor $\Hom_A (P, -)$ is exact.

Let $B$ be a subalgebra of $A$. For any $A$-module $M$ and $B$-module $N$, the induction $N\uparrow\,_B^A$ of $N$ from $B$ to $A$ is defined as the $A$-module $A\otimes_B N$, and the restriction $M\downarrow\,_B^A$ of $M$ from $A$ to $B$ is defined as $M$ itself viewed as a $B$-module. The induction and restriction are both well defined for isomorphic classes of modules.
The following result is well known.

\vskip5pt\noindent\textbf{Frobenius Reciprocity.}\quad ${\rm Hom}_A(N\uparrow\,_B^A,M) \cong {\rm Hom}_B(N,M\downarrow\,_B^A)$.
\vskip5pt


Now recall that a finite Coxeter group $W$ is generated by a finite set $S$ with 
\begin{itemize}
\item
quadratic relations $s^2=1$ for all $s\in S$ and 
\item
braid relations $(sts\cdots)_{m_{st}}=(tst\cdots)_{m_{st}}$ for all $s,t\in S$.
\end{itemize}
We focus on the \emph{$0$-Hecke algebra} $H_W(0)$ of the Coxeter system $(W,S)$, which is a deformation of the group algebra of $W$. It is the $\FF$-algebra generated by $\{\pib_s:s\in S\}$ with 
\begin{itemize}
\item
quadratic relations $\pib_s^2=-\pib_s$ for all $s\in S$ and 
\item
braid relations $(\pib_s\pib_t\pib_s\cdots)_{m_{st}}=(\pib_t\pib_s\pib_t\cdots)_{m_{st}}$ for all $s,t\in S$.
\end{itemize}
If $w\in W$ has a reduced expression $w=s_1\cdots s_\ell$, where $s_1,\ldots,s_\ell\in S$, then $\pib_w:=\pib_{s_1}\cdots\pib_{s_\ell}$ is well defined. In fact, by the Word Property of $W$~\cite[Theorem~3.3.1]{BjornerBrenti} or \cite[Theorem~1.9]{Lusztig}, $\pib_w$ depends only on $w$, not on the choice of the above reduced expression of $w$. 
By this definition, for any $s\in S$ and $w\in S$,
\[ \pib_s\pib_w = \begin{cases}
\pib_{sw}, & \text{if } \ell(sw)>\ell(w), \\
-\pib_w, & \text{if } \ell(sw)<\ell(w).
\end{cases} \]
One can show that $\{\pib_w:w\in W\}$ is an $\FF$-basis for $H_W(0)$ using~\cite[proof of Proposition 3.3]{Lusztig} with some straightforward modifications.
See also Stembridge~\cite[Proposition~2.1]{Stembridge}.

Let $\pi_s:=\pib_s+1$ for each $s\in S$. Then $\pib_s\pi_s=\pi_s\pib_s=0$.
For any $u$ and $w$ in $W$, write $u\le w$ if some reduced expression $w$ contains a subword equal to $u$.
This is the well-known \emph{Bruhat order} of $W$.
If $w\in W$ has a reduced expression $w=s_1\cdots s_\ell$ then $\pi_w:=\pi_{s_1}\cdots\pi_{s_\ell}$ is well defined as Stembridge~\cite[Lemma~3.2]{Stembridge} showed that
\[ \pi_w = \sum_{u\le w} \pib_u \]
which does not depend on the choice of the reduced expression $w=s_1\cdots s_\ell$.
This implies that $\{\pi_s:s\in S\}$ is another generating set for $H_W(0)$ satisfying the quadratic relations $\pi_s^2=\pi_s$ for all $s\in S$ and the same braid relations as $\{\pib_s:s\in S\}$~\cite[Lemma~3.3]{Stembridge}.

Norton~\cite[4.12, 4.13]{Norton} obtained an $H_W(0)$-module decomposition
\[ H_W(0)=\bigoplus_{I\subseteq S} \P_I^S \]
where 
$ \P_I^S := H_W(0)\pib_{w_0(I)}\pi_{w_0(I^c)} $
is an indecomposable $H_W(0)$-module with an $\FF$-basis 
\[ \left\{\pib_w\pi_{w_0(I^c)}: w\in W,\ D(w)=I\right\}. \]
The top of $\P_I^S$, denoted by $\C_I^S$, is a one-dimensional simple $H_W(0)$-module on which $\pib_i$ acts by $-1$ if $i\in I$ or by $0$ if $i\notin I$. Thus $\{\P_I^S:I\subseteq S\}$ and $\{\C_I^S:I\subseteq S\}$ are complete lists of pairwise non-isomorphic projective indecomposable and simple $H_W(0)$-modules, respectively. 

If $w\in W$ then denote by $\C_w^S$ the simple $H_W(0)$-module indexed by $D(w)\subseteq S$. Sending $\C_w^S$ to $D^*_w(W)$ for all $w\in W$ gives an isomorphism $G_0(H_W(0))\cong \Sigma^*(W)$ of free $\ZZ$-modules. Similarly, sending $\P_I^S$ to $D_I(W)$ for all $I\subseteq S$ gives an isomorphism $K_0(H_W(0))\cong \Sigma(W)$ of free $\ZZ$-modules.

Let $I\subseteq S$. The \emph{parabolic subalgebra} $H_{W_I}(0)$ of the 0-Hecke algebra $H_W(0)$ is generated by $\left\{\, \pib_s:s\in I \,\right\}$ and it is also isomorphic to the $0$-Hecke algebra of the parabolic subgroup $W_{I}$ of $W$. 
By Proposition~\ref{prop:parabolic}, 
\begin{equation}\label{eq:S/I}
H_W(0)=\bigoplus_{w\in W^{I}} \pib_w H_{W_I}(0)
=\bigoplus_{w\in\,^{I}W} H_{W_I}(0)\pib_w.
\end{equation}
We define two linear maps
\begin{equation}\label{def:IndResG0}
\begin{matrix}
\bar\mu_I^S: & G_0(H_{W_I}(0)) & \to & G_0(H_W(0)) & \text{ and } & \bar\rho_I^S: & G_0(H_W(0)) & \to & G_0(H_{W_I}(0)) \\
& M & \mapsto &  M \uparrow\,_{H_{W_I}(0)}^{H_W(0)} & & & N & \mapsto & N\downarrow\,_{H_{W_I}(0)}^{H_W(0)}.
\end{matrix}
\end{equation}

\begin{proposition}\label{prop:IndResG0K0}
The linear maps $\bar\mu_{I}^S$ and $\bar\rho_{I}^S$ are well-defined by \eqref{def:IndResG0}. Moreover, they restrict to linear maps $\mu_I^S: K_0(H_{W_I}(0)) \to K_0(H_W(0))$ and $\rho_I^S: K_0(H_W(0)) \to K_0(H_{W_I}(0))$.
\end{proposition}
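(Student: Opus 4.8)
The plan is to establish well-definedness on $G_0$ first and then upgrade to $K_0$ by checking that both functors preserve projectivity. The decisive input throughout is the decomposition \eqref{eq:S/I}, which exhibits $H_W(0)$ as a \emph{free} module over the parabolic subalgebra $H_{W_I}(0)$ on both sides. A functor descends to a well-defined homomorphism on $G_0$ exactly when it is exact, since it then carries each short exact sequence $0\to L\to M\to N\to 0$ to a relation among classes. Restriction $\bar\rho_I^S$ is exact for formal reasons: an exact sequence of $H_W(0)$-modules stays exact when regarded over the subalgebra. For induction $\bar\mu_I^S = H_W(0)\otimes_{H_{W_I}(0)}(-)$, I would invoke the right-module half of \eqref{eq:S/I}, namely $H_W(0)=\bigoplus_{w\in W^I}\pib_w H_{W_I}(0)$, which shows $H_W(0)$ is free, hence flat, as a right $H_{W_I}(0)$-module; therefore the tensor functor is exact. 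This settles the first assertion.

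For the projective Grothendieck group $K_0$, I must show both functors send projectives to projectives; combined with the exactness already established (which on projectives reduces to additivity over direct sums), this yields well-defined maps $\mu_I^S$ and $\rho_I^S$ on $K_0$. Induction is the easy direction: a projective $H_{W_I}(0)$-module is a summand of some free module $H_{W_I}(0)^{\oplus n}$, and applying $H_W(0)\otimes_{H_{W_I}(0)}(-)$ turns this into a summand of $H_W(0)^{\oplus n}$, which is free over $H_W(0)$. For restriction I would use the other half of \eqref{eq:S/I}, the left-module decomposition $H_W(0)=\bigoplus_{w\in\,^IW} H_{W_I}(0)\pib_w$: it shows that $H_W(0)$ restricts to a free $H_{W_I}(0)$-module, so any free $H_W(0)$-module restricts to a free $H_{W_I}(0)$-module, and any projective $H_W(0)$-module, being a summand of a free one, restricts to a summand of a projective, hence is projective.

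I do not anticipate a serious obstacle, as the freeness statements in \eqref{eq:S/I} are already in hand. The one point genuinely requiring care is that restriction preserving projectivity is \emph{not} automatic for an arbitrary subalgebra—it relies on $H_W(0)$ being projective (here, free) as a \emph{left} $H_{W_I}(0)$-module—so I would be careful to cite the left-module half of \eqref{eq:S/I} explicitly rather than treat it as formal.
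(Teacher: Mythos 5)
Your argument is correct and is exactly the standard one that the paper outsources to its citation of Bergeron--Li: exactness of restriction is formal, exactness of induction follows from $H_W(0)$ being free as a right $H_{W_I}(0)$-module by \eqref{eq:S/I}, and preservation of projectivity in both directions follows from the two freeness statements in \eqref{eq:S/I} together with the summand-of-free characterization of projectives. You correctly flag the only non-formal point (restriction preserving projectivity needs the left-module half of \eqref{eq:S/I}), so your write-up simply supplies the details the paper leaves to the reference.
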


\begin{proof}
This can be proved similarly as Bergeron and Li~\cite[\S3]{BergeronLi}.
 \end{proof}

\begin{proposition}\label{prop:IndC}
If $I\subseteq S$ and $w\in W_I$ then $\bar\mu_I^S(\C_w^I) = \sum_{z\in\,{^I W}} \C_{wz}^S.$
\end{proposition}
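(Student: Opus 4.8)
The plan is to compute the class of the induced module $\C_w^I\uparrow\,_{H_{W_I}(0)}^{H_W(0)}$ in $G_0(H_W(0))$ by extracting its composition multiplicities and matching them against the claimed right-hand side. Write $a_K$ for the multiplicity of the simple module $\C_K^S$ in $\bar\mu_I^S(\C_w^I)$; since $\P_K^S$ is the projective cover of $\C_K^S$ and $\langle\P_K^S,\C_J^S\rangle=\delta_{K,J}$, we have $a_K=\dim_\FF\Hom_{H_W(0)}(\P_K^S,\C_w^I\uparrow)$. Because each $\C_{wz}^S=\C_{D(wz)}^S$ is one-dimensional, the asserted formula is equivalent to $a_K=\#\{z\in{}^IW:D(wz)=K\}$ for every $K\subseteq S$, so it suffices to verify this count. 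The conceptual core of the argument is that, under the isomorphisms $G_0(H_W(0))\cong\Sigma^*(W)$ (with $\C_w^S\mapsto D^*_w(W)$) and $K_0(H_W(0))\cong\Sigma(W)$ (with $\P_I^S\mapsto D_I(W)$), the $\Hom$-pairing becomes the natural pairing $\langle D_I(W),D^*_J(W)\rangle=\delta_{I,J}$, and the induction map $\bar\mu_I^S$ on $G_0$ is \emph{dual} to the restriction map $\rho_I^S$ on $K_0$. Granting this duality, $\bar\mu_I^S$ on $G_0\cong\Sigma^*$ is the transpose of $\rho_I^S$ on $\Sigma$, which Proposition~\ref{prop:ResDes} identifies as the map $D^*_u(W_I)\mapsto\sum_{z\in{}^IW}D^*_{uz}(W)$; applying it to $\C_w^I\leftrightarrow D^*_w(W_I)$ and translating back gives $\sum_{z\in{}^IW}\C_{wz}^S$.

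Spelled out at the level of multiplicities, the duality yields $a_K=\langle\P_K^S,\bar\mu_I^S(\C_w^I)\rangle_W=\langle\rho_I^S(\P_K^S),\C_w^I\rangle_{W_I}$, so $a_K$ is the coefficient of $D_{D(w)}(W_I)$ in $\rho_I^S(D_K(W))$. Proposition~\ref{prop:ResDes} computes this coefficient explicitly as
\[ a_K=\#\{z\in{}^ID_K:L(z,I,K)\subseteq D(w)\subseteq L'(z,I,K)\}. \]
By Lemma~\ref{lem:DesIJ}, taking $u=w$ so that $D(u)=D(w)$, the conditions ``$z\in{}^ID_K$ and $L(z,I,K)\subseteq D(w)\subseteq L'(z,I,K)$'' are exactly equivalent to ``$z\in{}^IW$ and $D(wz)=K$''. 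Hence $a_K=\#\{z\in{}^IW:D(wz)=K\}$, which is the predicted multiplicity, and summing over all $K\subseteq S$ gives the identity. Thus the only genuinely new ingredient beyond Proposition~\ref{prop:ResDes} and Lemma~\ref{lem:DesIJ} is the duality between induction on $G_0$ and restriction on $K_0$.

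The hard part will be justifying precisely that duality. The ``easy'' Frobenius reciprocity $\Hom_{H_W(0)}(P\uparrow,M)\cong\Hom_{H_{W_I}(0)}(P,M\downarrow)$, which expresses induction as the \emph{left} adjoint of restriction, only delivers the complementary adjunction $\langle\mu_I^S(P),M\rangle_W=\langle P,\bar\rho_I^S(M)\rangle_{W_I}$, pairing induction on $K_0$ against restriction on $G_0$. To obtain the pairing I actually need, I must use that restriction also admits induction as a \emph{right} adjoint, i.e. that $H_{W_I}(0)\subseteq H_W(0)$ is a Frobenius extension so that induction and coinduction coincide; concretely one wants the natural isomorphism $\Hom_{H_{W_I}(0)}(H_W(0),\C_w^I)\cong\C_w^I\uparrow$. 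This holds because $0$-Hecke algebras are Frobenius algebras and $H_W(0)$ is free as a one-sided $H_{W_I}(0)$-module by \eqref{eq:S/I}; it is the same circle of facts underlying Proposition~\ref{prop:IndResG0K0} and the Bergeron--Li formalism cited there. I would therefore either invoke that formalism directly or verify the coinduction isomorphism by hand using the two coset decompositions in \eqref{eq:S/I}. Once this adjointness is in place, every remaining step is bookkeeping already provided by Proposition~\ref{prop:ResDes} and Lemma~\ref{lem:DesIJ}, so no further difficult computation is required.
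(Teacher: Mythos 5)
Your reduction hinges on the ``wrong-way'' adjunction $\langle \P_K^S,\bar\mu_I^S(\C_w^I)\rangle_W=\langle \rho_I^S(\P_K^S),\C_w^I\rangle_{W_I}$, and this is precisely where the argument has a genuine gap. The paper itself flags this identity (in the remark following Theorem~\ref{thm:GrH0}) as \emph{not} being a consequence of Frobenius reciprocity; in the paper it is obtained only \emph{a posteriori}, by computing both sides independently --- the left side via Proposition~\ref{prop:IndC} itself and the right side via Proposition~\ref{prop:ResP}. So using it to prove Proposition~\ref{prop:IndC} is circular unless you supply an independent proof, and your proposed route does not do so. The assertion that $H_{W_I}(0)\subseteq H_W(0)$ is a Frobenius extension because both algebras are Frobenius and $H_W(0)$ is free over $H_{W_I}(0)$ is not a theorem: under those hypotheses one generally gets only a \emph{twisted} ($\beta$-)Frobenius extension, for which $\Hom_{H_{W_I}(0)}(H_W(0),N)\cong H_W(0)\otimes_{H_{W_I}(0)}{}^\beta N$ with $\beta$ a relative Nakayama automorphism that could a priori permute the simple $H_{W_I}(0)$-modules. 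You would need to identify $\beta$ and check it fixes each $\C_w^I$ up to isomorphism, or verify the coinduction isomorphism directly from the two coset decompositions in \eqref{eq:S/I}; neither is carried out, and neither is routine. Note also that Proposition~\ref{prop:IndResG0K0} and the Bergeron--Li formalism only give well-definedness of the four maps on Grothendieck groups, not this adjunction.

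A secondary inaccuracy: even granting the adjunction, passing from $\langle \P_K^S\!\downarrow,\C_w^I\rangle$ to ``the coefficient of $D_{D(w)}(W_I)$ in $\rho_I^S(D_K(W))$'' requires knowing that the restricted \emph{module} $\P_K^S\!\downarrow$ decomposes into projective indecomposables with the multiplicities predicted by the descent-algebra map, which is Proposition~\ref{prop:ResP}, not merely Proposition~\ref{prop:ResDes}; so the claim that the duality is ``the only genuinely new ingredient'' understates what is being used. By contrast, the paper's proof is short and self-contained: it filters $\C_w^I\!\uparrow$ by the length of the coset representatives $z\in W^I$ appearing in the basis $\{\pib_z\otimes_I a\}$ and checks, using Lemma~\ref{lem:Des}, that the subquotient attached to $z$ is the one-dimensional simple $\C_{wz^{-1}}^S$. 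I would recommend abandoning the dualization strategy for this statement and arguing directly on the induced module.
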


\begin{proof}
Fix $w\in W_I$ and write $\C_w^I = \FF a$. The $\pib_s(a)$ equals $-a$ if $s\in D(w)$ or $0$ otherwise.  Let $M=\bar\mu(\C_w^I)$.  Denote by $\otimes_I$ the tensor product over $H_{W_I}(0)$. By \eqref{eq:S/I}, $M$ has an $\FF$-basis $ \left\{\,\pib_z\otimes_I a: z\in W^{I}\,\right\}. $
Define $M_k$ to be the $\FF$-span of the elements $\pib_z\otimes_I a$ for all $z\in W^{I}$ with $\ell(z)\geq k$. Then one has a filtration $M=M_0\supseteq M_1\supseteq \cdots\supseteq M_\ell=0$ for some positive integer $\ell$. It suffices to show that, if $z\in W^{I}$ with $\ell(z)=k$, and if $s\in S$, then
\[
\begin{cases}
\pib_s(\pib_z \otimes_I a)=-\pib_z \otimes_I a, & {\rm if}\ s\in D(wz^{-1}),\\
\pib_s(\pib_z\otimes_I a)\in M_{k+1}, & {\rm if}\ s\in S\setminus D(wz^{-1}).
\end{cases}
\]
We distinguish the following cases.

If $s\in D(z^{-1})$ then one has $\pib_s(\pib_z\otimes_I a)=-\pib_z\otimes_I a$ and also $s\in D(wz^{-1})$ since
\[
\ell(wz^{-1}s)\leq\ell(w)+\ell(z^{-1}s) < \ell(w)+\ell(z^{-1})=\ell(wz^{-1}).
\]

If $s\notin D(z^{-1})$ and $sz\in W^{I}$, then one has $\pib_s(\pib_z\otimes_I a)=\pib_{sz}\otimes_I a\in M_{k+1}$ and also $s\notin D(wz^{-1})$ since
\[
\ell(wz^{-1}s)=\ell(w)+\ell(z^{-1}s)>\ell(w)+\ell(z^{-1})=\ell(wz^{-1}).
\]

If $s\notin D(z^{-1})$ and $sz\notin W^{I}$, then by Lemma~\ref{lem:Des} there exists a unique $r\in I$ such that $sz=zr$. 
Thus $\ell(zr)=\ell(sz)=\ell(z)+1$ and so $\pib_s\pib_z=\pib_{sz}=\pib_{zr}=\pib_z\pib_r$. If $r\in D(w)$ then 
\[
\pib_s(\pib_z\otimes_I a)=\pib_z\pib_r\otimes_I a = \pib_z\otimes_I \pib_r(a) = -\pib_z\otimes_I a \qand
\]
\[
\ell(wz^{-1}s)=\ell(wrz^{-1})=\ell(wr)+\ell(z^{-1})<\ell(w)+\ell(z^{-1})=\ell(wz^{-1}).
\]
Similarly, if $r\notin D(w)$ then $\pib_s(\pib_z\otimes_I a)=0$ and $\ell(wz^{-1}s)>\ell(wz^{-1})$. 
 \end{proof}

\begin{proposition}\label{prop:ResC}
Let $I, K\subseteq S$ and $w\in W$. Then $\bar\rho_I^S(\C_K^S) = \C_{I\cap K}^I$ and $\bar\rho_I^S(\C_w^S) = \C_{_Iw}^I$.
\end{proposition}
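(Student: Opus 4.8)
The plan is to unwind the definition of the restriction map $\bar\rho_I^S$ and identify the restricted module by inspecting how the generators act. First I would recall from the structure theory reviewed above that $\C_K^S$ is the one-dimensional $H_W(0)$-module, say $\FF a$, on which each generator $\pib_s$ acts by $-a$ when $s\in K$ and by $0$ when $s\notin K$. The map $\bar\rho_I^S$ on $G_0$ is induced by ordinary restriction of modules, so $\bar\rho_I^S(\C_K^S)$ is this same one-dimensional space $\FF a$ regarded as a module over the parabolic subalgebra $H_{W_I}(0)$, which is generated by $\{\pib_s : s\in I\}$ and is itself the $0$-Hecke algebra of $W_I$.

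Next I would simply read off the action of these generators: for $s\in I$, the element $\pib_s$ acts on $a$ by $-a$ exactly when $s\in K$, i.e.\ when $s\in I\cap K$, and by $0$ otherwise. Comparing with the classification of the simple $H_{W_I}(0)$-modules $\{\C_J^I : J\subseteq I\}$, this matches $\C_{I\cap K}^I$ on the nose, which gives the first formula. Since restriction of a one-dimensional module is again one-dimensional, there is no composition series to split and the identity already holds at the level of modules, hence a fortiori in $G_0(H_{W_I}(0))$.

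For the second formula I would use that $\C_w^S$ is by definition the simple module indexed by $D(w)$, that is $\C_w^S=\C_{D(w)}^S$. Applying the first formula with $K=D(w)$ yields $\bar\rho_I^S(\C_w^S)=\C_{I\cap D(w)}^I$, and then invoking Lemma~\ref{lem:DesRes}, which asserts $D(w)\cap I=D(\,_Iw)$, I would rewrite the index to obtain $\C_{I\cap D(w)}^I=\C_{D(\,_Iw)}^I=\C_{\,_Iw}^I$, as claimed.

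I do not expect any genuine obstacle here; the argument is a direct computation with the one-dimensional simples. The only point demanding a little care is making sure $H_{W_I}(0)$ is correctly identified with the $0$-Hecke algebra of the parabolic subgroup $W_I$, so that its simple modules are precisely the $\C_J^I$ with $J\subseteq I$ and the comparison in the second paragraph is legitimate; this identification was already recorded when the parabolic subalgebra was introduced. The rest is bookkeeping of generator actions together with a single appeal to Lemma~\ref{lem:DesRes}.
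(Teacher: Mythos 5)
Your proposal is correct and follows the same route as the paper: the first identity is read off directly from the definition of the one-dimensional simple module $\C_K^S$ and the generators of the parabolic subalgebra, and the second is deduced from the first via Lemma~\ref{lem:DesRes}. The paper's proof is just a terser version of exactly this argument.
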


\begin{proof}
The first equality follows immediately from the definition of $\C_K$. 
The second equality is equivalent to the first one since $D(\,_Iw)=D(w)\cap I$ by Lemma~\ref{lem:DesRes}.
 \end{proof}

Next, we consider the induction of projective modules. Let $I,J\subseteq S$. We define a cyclic $H_W(0)$-module $\P_{I,J}^S:= H_W(0) \pib_{w_0(I)} \pi_{w_0(J\setminus I)}$. We may assume $I\subseteq J\subseteq S$ without loss of generality, since $\P_{I,J}^S = \P_{I,I\cup J}^S$. One sees that $\P_{I,S}^S=\P_I$. In general, we showed the following result in~\cite[Theorem~3.2]{H0Tab}.

\begin{proposition}\label{prop:IndP}\cite[Theorem~3.2]{H0Tab}
If $I, J\subseteq S$ then $\P_{I,J}^S$ has a basis 
\[ \left\{ \pib_w\pi_{w_0(J\setminus I)}: w\in W,\ I\subseteq D(w)\subseteq (S\setminus J)\cup I \right\}.\]
If $I\subseteq J\subseteq S$ then
\[ \mu_J^S(\P_I^J) \cong \P_{I,J}^S\cong  \bigoplus_{K\subseteq S\setminus J} \P_{I\cup K}. \]
\end{proposition}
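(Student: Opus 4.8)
The plan is to treat the two assertions separately, first pinning down the $\pib$-basis of the cyclic module $\P_{I,J}^S=H_W(0)\pib_{w_0(I)}\pi_{w_0(J\setminus I)}$ by tracking how the $\pib$-basis of $H_W(0)$ transforms under right multiplication by the two factors. First I would handle right multiplication by $\pib_{w_0(I)}$. Since $D(w_0(I))=I$ and $w_0(I)$ is an involution, one has $\pib_s\pib_{w_0(I)}=-\pib_{w_0(I)}$ for every $s\in I$; writing $w=w^I\cdot{}_Iw$ and peeling off the factors of ${}_Iw\in W_I$ then gives $\pib_w\pib_{w_0(I)}=(-1)^{\ell({}_Iw)}\pib_{w^Iw_0(I)}$. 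As $w$ runs over $W$ the elements $w^Iw_0(I)$ run exactly over $\{v:I\subseteq D(v)\}=W^Iw_0(I)$, so $H_W(0)\pib_{w_0(I)}$ has $\pib$-basis $\{\pib_v:I\subseteq D(v)\}$.

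Writing $K:=J\setminus I$, it remains to right-multiply these $\pib_v$ by $\pi_{w_0(K)}$. Here I would use the vanishing $\pib_v\pi_s=0$ whenever $s\in D(v)$ (immediate from $\pib_v\pib_s=-\pib_v$): factoring $\pi_{w_0(K)}=\pi_s\,\pi_{sw_0(K)}$ for any $s\in D(v)\cap K$ forces $\pib_v\pi_{w_0(K)}=0$ as soon as $D(v)\cap K\neq\emptyset$. Hence the surviving indices are those with $I\subseteq D(v)$ and $D(v)\cap K=\emptyset$, which is precisely $D(v)\cap J=I$, i.e. $I\subseteq D(v)\subseteq(S\setminus J)\cup I$. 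For such $v$ one has $v\in W^K$, so every $x\le w_0(K)$ lies in $W_K$ and $\ell(vx)=\ell(v)+\ell(x)$; thus $\pib_v\pi_{w_0(K)}=\sum_{x\in W_K}\pib_{vx}$ is an unsigned sum supported on the coset $vW_K$. Distinct $v\in W^K$ give disjoint cosets, so these elements are linearly independent, and the claimed basis follows.

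For the isomorphism $\mu_J^S(\P_I^J)\cong\P_{I,J}^S$, set $e:=\pib_{w_0(I)}\pi_{w_0(J\setminus I)}\in H_{W_J}(0)$, so that $\P_I^J=H_{W_J}(0)e$ and $\P_{I,J}^S=H_W(0)e$. The plan is to invoke that $H_W(0)$ is free, hence flat, as a right $H_{W_J}(0)$-module by \eqref{eq:S/I}: applying $H_W(0)\otimes_{H_{W_J}(0)}-$ to the inclusion of left ideals $H_{W_J}(0)e\hookrightarrow H_{W_J}(0)$ yields an injection whose image is $H_W(0)e$, thereby identifying $\mu_J^S(\P_I^J)=H_W(0)\otimes_{H_{W_J}(0)}H_{W_J}(0)e$ with $\P_{I,J}^S$.

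Finally, $\P_{I,J}^S\cong\P_I^J\uparrow$ is projective by Proposition~\ref{prop:IndResG0K0}, hence determined up to isomorphism by the multiplicities $\dim_\FF\Hom_{H_W(0)}(\P_{I,J}^S,\C_L^S)$. By Frobenius Reciprocity this equals $\dim_\FF\Hom_{H_{W_J}(0)}(\P_I^J,\C_L^S\!\downarrow)$, and $\C_L^S\!\downarrow_{H_{W_J}(0)}=\C_{L\cap J}^J$ by Proposition~\ref{prop:ResC}; since $\P_I^J$ is the projective cover of $\C_I^J$, the multiplicity is $\delta_{I,L\cap J}$. As $L\cap J=I$ holds exactly for $L=I\cup K$ with $K\subseteq S\setminus J$, the Krull--Schmidt theorem yields $\P_{I,J}^S\cong\bigoplus_{K\subseteq S\setminus J}\P_{I\cup K}$. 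I expect the main obstacle to lie in the first part --- getting the sign law for $\pib_w\pib_{w_0(I)}$ and the exact vanishing locus of $\pib_v\pi_{w_0(K)}$ right, so that the index set is pinned down to $D(v)\cap J=I$ --- after which the two isomorphisms fall out cleanly from flatness and Frobenius reciprocity.
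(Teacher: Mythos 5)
Your argument is correct, and all three steps check out: the sign law $\pib_w\pib_{w_0(I)}=(-1)^{\ell({}_Iw)}\pib_{w^Iw_0(I)}$ follows from $\ell(w)=\ell(w^I)+\ell({}_Iw)$ together with the fact that every $s\in I$ is a left descent of the involution $w_0(I)$; the vanishing criterion $\pib_v\pi_{w_0(K)}=0\iff D(v)\cap K\neq\emptyset$ is exactly right (for the nonvanishing direction you correctly use $v\in W^K$ to get the disjointly supported sums $\sum_{x\in W_K}\pib_{vx}$); and the identification $(S\setminus J)\cup I=S\setminus(J\setminus I)$ converts your index set into the stated one. Note, however, that the paper itself gives no proof of this proposition --- it is quoted from \cite[Theorem~3.2]{H0Tab} --- so there is no in-paper argument to match against. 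Your treatment of the two isomorphisms is a clean module-theoretic route: freeness of $H_W(0)$ over $H_{W_J}(0)$ from \eqref{eq:S/I} gives $\P_I^J\uparrow\;\cong H_W(0)e=\P_{I,J}^S$ by flatness, and then Frobenius reciprocity plus $\C_L^S\downarrow=\C_{L\cap J}^J$ (Proposition~\ref{prop:ResC}) and Krull--Schmidt pins down the multiplicities, with $L\cap J=I$ parametrized by $L=I\cup K$, $K\subseteq S\setminus J$. This is arguably slicker than an explicit combinatorial decomposition of the basis into blocks realizing each $\P_{I\cup K}$, at the cost of being non-constructive about the summands; both yield the same dimension count $\dim\P_{I,J}^S=\sum_{K\subseteq S\setminus J}\#D_{I\cup K}(W)$. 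I see no gap.
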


Finally, we investigate the restriction of a projective indecomposable module.

\begin{proposition}\label{prop:ResP}
Let $I,K\subseteq S$. Write $L(z)=L(z,I,K)$ and $L'(z)=L'(z,I,K)$. Then 
\[ \rho_I^S(\P_K^S)\cong \bigoplus_{z\in\,{^{I}D_K}}\ \P_{L(z),\, I\setminus L'(z)}^I
\cong \bigoplus_{z\in\,{^{I}D_K}}\ \ \bigoplus_{ L(z) \subseteq K'\subseteq L'(z) } \P_{K'}^I.
\]
\end{proposition}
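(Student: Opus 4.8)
The plan is to compute the restriction of $\P_K^S$ to the parabolic subalgebra $H_{W_I}(0)$ directly, working with its basis $\{\pib_w\pi_{w_0(K^c)}: w\in W,\ D(w)=K\}$ and grouping the basis vectors according to the parabolic factorization $w=w_I\cdot{}^Iw$ of Proposition~\ref{prop:parabolic}. Writing $z:={}^Iw$, the set ${^{I}D_K}$ indexes the occurring values of $z$, and I would let $V_z$ be the span of those basis vectors with ${}^Iw=z$, giving a vector space decomposition $\P_K^S=\bigoplus_{z\in{^{I}D_K}}V_z$. The goal is then to upgrade this to a decomposition of $H_{W_I}(0)$-modules, to identify $V_z\cong\P_{L(z),\,I\setminus L'(z)}^I$, and to finish by applying the splitting $\P_{L(z),\,I\setminus L'(z)}^I\cong\bigoplus_{L(z)\subseteq K'\subseteq L'(z)}\P_{K'}^I$ coming from Proposition~\ref{prop:IndP} (applied to the Coxeter system $(W_I,I)$).

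To see that each $V_z$ is a submodule I would compute the action of a generator $\pib_s$, $s\in I$, on a basis vector $\pib_w\pi_{w_0(K^c)}$ with $D(w)=K$. Since $s\in W_I$, left multiplication by $s$ preserves the coset $W_I w$, so ${}^I(sw)=z$ and no other $z$-block can appear. When $\ell(sw)<\ell(w)$ one gets $-\pib_w\pi_{w_0(K^c)}\in V_z$. When $\ell(sw)>\ell(w)$ one gets $\pib_{sw}\pi_{w_0(K^c)}$, and here the crucial observation is the descent containment $K=D(w)\subseteq D(sw)$: for $r\in D(w)$ one has $\ell(swr)\le 1+\ell(wr)=\ell(w)<\ell(sw)$, so $r\in D(sw)$. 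Consequently $\pib_{sw}\pi_{w_0(K^c)}$ is either a basis vector with $D(sw)=K$, or it vanishes because $sw$ then has a right descent $t\in K^c$ and $\pib_t\pi_{w_0(K^c)}=0$ (as $\pib_t\pi_t=0$ and $\pi_{w_0(K^c)}=\pi_t\,\pi_{tw_0(K^c)}$). Either way the image lies in $V_z$, so $V_z$ is an $H_{W_I}(0)$-submodule and the vector space splitting is a module splitting. I expect this no-leakage step to be the main obstacle: one must guarantee that the action never produces a vector outside a single $z$-block nor outside the span of the descent-exactly-$K$ basis vectors, and it is precisely the containment $D(w)\subseteq D(sw)$ that delivers this.

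For the identification $V_z\cong\P_{L(z),\,I\setminus L'(z)}^I$ I would invoke Lemma~\ref{lem:DesIJ}, which for fixed $z\in{^{I}D_K}$ shows the basis of $V_z$ is $\{\pib_{uz}\pi_{w_0(K^c)}: u\in W_I,\ L(z)\subseteq D(u)\subseteq L'(z)\}$, while Proposition~\ref{prop:IndP} gives $\P_{L(z),\,I\setminus L'(z)}^I$ the parallel basis $\{\pib_u\pi_{w_0(I\setminus L'(z))}: u\in W_I,\ L(z)\subseteq D(u)\subseteq L'(z)\}$ (here $(I\setminus L'(z))\setminus L(z)=I\setminus L'(z)$ and $(I\setminus(I\setminus L'(z)))\cup L(z)=L'(z)$, using $L(z)\subseteq L'(z)$). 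I would define $\psi_z$ by matching these bases through the common index $u$ and check that it commutes with each $\pib_s$. On both sides $\pib_s$ acts by $-1$ when $\ell(su)<\ell(u)$, and when $\ell(su)>\ell(u)$ it carries the $u$-vector to the $su$-vector if $L(z)\subseteq D(su)\subseteq L'(z)$ and to $0$ otherwise. The lower containment $L(z)\subseteq D(su)$ is automatic from the same length estimate as above (now inside $W_I$), while the upper containment $D(su)\subseteq L'(z)$ is, on the $V_z$ side, exactly the condition $D((su)z)=K$ of Lemma~\ref{lem:DesIJ}, and on the $\P^I$ side exactly the absence of a right descent of $su$ in $I\setminus L'(z)$. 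Hence $\psi_z$ is an $H_{W_I}(0)$-module isomorphism.

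Assembling the pieces, $\rho_I^S(\P_K^S)=\bigoplus_{z\in{^{I}D_K}}V_z\cong\bigoplus_{z\in{^{I}D_K}}\P_{L(z),\,I\setminus L'(z)}^I$, and Proposition~\ref{prop:IndP} refines each summand into $\bigoplus_{L(z)\subseteq K'\subseteq L'(z)}\P_{K'}^I$, yielding both displayed isomorphisms. That the restriction is genuinely a direct sum of projectives, so that this is the correct computation at the level of $K_0$, is guaranteed by Proposition~\ref{prop:IndResG0K0}.
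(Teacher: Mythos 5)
Your proposal is correct and follows essentially the same route as the paper: the same vector-space decomposition of $\P_K^S$ into blocks indexed by $z\in{^{I}D_K}$, the same basis-matching map $\pib_{uz}\pi_{w_0(K^c)}\mapsto\pib_u\pi_{w_0(I\setminus L'(z))}$ justified by Lemma~\ref{lem:DesIJ} and Proposition~\ref{prop:IndP}, and the same case analysis on whether $s$ is a left descent of $u$. The only (harmless) divergence is in the vanishing case, where you argue via the containment $D(w)\subseteq D(sw)$ forcing a right descent in $K^c$, while the paper pins down the extra descent explicitly as $sw=uzt$ using Lemma~\ref{lem:Des}.
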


\begin{proof}
Since $\P_{K}^S$ has an $\FF$-basis $\{\pib_w\pi_{w_0(K^c)}: w\in D_K(W)\}$, one has a decomposition 
\[ \rho_I^S(\P_K^S)\ = \bigoplus_{z\in\,{^I}D_K} \P_{K,z} \]
of vector spaces, where each $\P_{K,z}$ has an $\FF$-basis 
\[ \left\{\pib_w\pi_{w_0(K^c)}: w\in D_K(W),\ ^Iw=z\, \right\}. \]
By Proposition~\ref{prop:ResDes} and Proposition~\ref{prop:IndP}, one has an isomorphism 
$ \phi: \P_{K,z} \xrightarrow\sim \P_{L(z),\, I\setminus L'(z)}^I$ of vector spaces by 
\begin{equation}\label{eq:iso}
\pib_w\pi_{w_0(K^c)}\mapsto \pib_{w_{I}}\pi_{w_0(I\setminus L'(z))}, \quad \forall w\in D_K(W)\ {\rm with}\ ^Iw=z.
\end{equation}
It remains to show that $\phi$ preserves $H_{W_I}(0)$-actions. Let $w\in D_K(W)$, $w_{I}=u$,  $^Iw=z$, and $s\in I$.

If $s\in D(u^{-1})$ then $s\in D(w^{-1})$ and thus $\pib_i$ acts by $-1$ on both sides of (\ref{eq:iso}). 

Next we assume $s\notin D(u^{-1})$ and $D(su)\subseteq L'(z)$. Then $\ell(sw)=\ell(su)+\ell(z)>\ell(w)$. Thus
\[ \pib_s\pib_w\pi_{w_0(K^c)}=\pib_{sw}\pi_{w_0(K^c)} \qand \pib_s\pib_u\pi_{w_0(I\setminus L'(z))}=\pib_{su}\pi_{w_0(I\setminus L'(z))}. \] 
One also has $(sw)_{I}=su$, $^I(sw)=z$, and $L(z)\subseteq D(u)\subseteq D(su)\subseteq L'(z)$. Hence $D(sw)=K$ by Lemma~\ref{lem:DesIJ}.

Finally assume $s\notin D(u^{-1})$ and $D(su)\not\subseteq L'(z)$. Lemma~\ref{lem:Des} implies that $D(su)\setminus L'(z) = \{r\}$ where $r=u^{-1}su$, i.e. $ur=su$. The definition of $L'(z)$ implies that $r\in D(tz^{-1})\cap I$ for some $t\in D(z)^c\setminus K$. Then applying Lemma~\ref{lem:Des} again gives $tz^{-1}=z^{-1}r$. Hence $sw=suz=urz=uzt$. It follows that 
\[ \pib_s\pib_w\pi_{w_0(K^c)}=\pib_{uz}\pib_t\pi_{w_0(K^c)}=0 \qand \pib_s\pib_u\pi_{w_0(I\setminus L'(z))}=\pib_u\pib_r\pi_{w_0(I\setminus L'(z))} = 0.\]

Therefore $\phi$ is indeed an isomorphism of $H_{W_I}(0)$-modules. This shows the first desired isomorphism. The second one follows immediately from Proposition~\ref{prop:IndP}.
 \end{proof}

\begin{theorem}\label{thm:GrH0}
The following two diagrams are commutative and dual to each other:
\[ \xymatrix @R=25pt @C=25pt {
G_0(H_{W_I}(0)) \ar[d]_{\bar\mu_I^S}  \ar@{<->}[r]^-{\sim} & \Sigma^*(W_I) \ar[d]^{\bar\mu_I^S} \\
G_0(H_W(0)) \ar[d]_{\bar\rho_I^S} \ar@{<->}[r]^-{\sim} & \Sigma^*(W) \ar[d]^{\bar\rho_I^S} \\
G_0(H_{W_I}(0)) \ar@{<->}[r]^-{\sim} & \Sigma^*(W_I)
}\qquad\qquad
\xymatrix @R=25pt @C=25pt {
K_0(H_{W_I}(0))  \ar@{<->}[r]^-{\sim} & \Sigma(W_I) \\
K_0(H_W(0)) \ar[u]^{\rho_I^S} \ar@{<->}[r]^-{\sim} & \Sigma(W) \ar[u]^{\rho_I^S} \\
K_0(H_{W_I}(0)) \ar[u]^{\mu_I^S} \ar@{<->}[r]^-{\sim} & \Sigma(W_I) \ar[u]^{\mu_I^S} 
} \]
\end{theorem}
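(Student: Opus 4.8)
The plan is to reduce the commutativity of each diagram to the explicit formulas already established for induction and restriction, and then to deduce the duality from the compatibility of the defining pairings. Recall the two horizontal isomorphisms: $G_0(H_W(0))\cong\Sigma^*(W)$ sends $\C_w^S\mapsto D^*_w(W)$, and $K_0(H_W(0))\cong\Sigma(W)$ sends $\P_I^S\mapsto D_I(W)$ (and likewise over $W_I$). Since these are isomorphisms of free $\ZZ$-modules carrying one distinguished basis to another, it suffices to verify each square on basis elements.

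For the left ($G_0$) diagram I would compare, square by square, the representation-theoretic maps with their counterparts on $\Sigma^*$. For the top square (induction), Proposition~\ref{prop:IndC} gives $\bar\mu_I^S(\C_w^I)=\sum_{z\in\,^IW}\C_{wz}^S$ while Proposition~\ref{prop:ResDes} gives $\bar\mu_I^S(D^*_w(W_I))=\sum_{z\in\,^IW}D^*_{wz}(W)$; under $\C_w^I\mapsto D^*_w(W_I)$ these agree term by term. For the bottom square (restriction), Proposition~\ref{prop:ResC} gives $\bar\rho_I^S(\C_w^S)=\C_{_Iw}^I$ and Proposition~\ref{prop:IndDes} gives $\bar\rho_I^S(D^*_w(W))=D^*_{_Iw}(W_I)$, which again match under the isomorphism. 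Thus the left diagram commutes.

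For the right ($K_0$) diagram I would carry out the same comparison with projective modules. For the lower (induction) square, relabeling Proposition~\ref{prop:IndP} to the inclusion $W_I\hookrightarrow W$ yields $\mu_I^S(\P_J^I)\cong\bigoplus_{K\subseteq S\setminus I}\P_{J\cup K}^S$, while Proposition~\ref{prop:IndDes} gives $\mu_I^S(D_J(W_I))=\sum_{J'\cap I=J}D_{J'}(W)$; the substitution $J'=J\cup K$ (equivalently $K=J'\setminus I$) is a bijection between the two index sets, so the images agree under $\P_I^S\mapsto D_I(W)$. For the upper (restriction) square, Proposition~\ref{prop:ResP} and Proposition~\ref{prop:ResDes} are indexed by exactly the same pairs $(z,K')$ with $z\in\,^ID_K$ and $L(z,I,K)\subseteq K'\subseteq L'(z,I,K)$, so they match directly. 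Hence the right diagram commutes.

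Finally, for duality I would note that the pairing $\langle\P_I^S,\C_J^S\rangle=\delta_{I,J}$ between $K_0$ and $G_0$ is carried by the two isomorphisms to the pairing $\langle D_I(W),D^*_J(W)\rangle=\delta_{I,J}$ between $\Sigma(W)$ and $\Sigma^*(W)$. Theorem~\ref{thm:FullDiagram} already records that, under this pairing, $\mu_I^S$ is dual to $\bar\rho_I^S$ and $\rho_I^S$ is dual to $\bar\mu_I^S$; transporting this back through the isomorphisms shows the two diagrams are dual. One of these adjunctions, $\langle\mu_I^S P,C\rangle=\langle P,\bar\rho_I^S C\rangle$, is immediate from Frobenius Reciprocity; the other is cleaner to obtain by transport than by a second reciprocity. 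I do not expect a genuine obstacle here, since the cited propositions do all the real work; the only delicate point is the combinatorial bookkeeping—matching module labels to descent sets, verifying the index-set bijection in the induction square, and handling the two dualities consistently—after which the verification is routine.
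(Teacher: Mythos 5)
Your proposal is correct and takes essentially the same route as the paper, whose entire proof is the instruction to compare Propositions~\ref{prop:IndC}, \ref{prop:ResC}, \ref{prop:IndP}, and \ref{prop:ResP} with Propositions~\ref{prop:IndDes} and \ref{prop:ResDes}; you have simply carried out that basis-by-basis comparison explicitly, including the correct bijection $J'\leftrightarrow J'\setminus I$ in the induction square for $K_0$. Your handling of the duality, by transporting the pairing $\langle \P_I^S,\C_J^S\rangle=\delta_{I,J}$ to $\langle D_I(W),D^*_J(W)\rangle=\delta_{I,J}$ and invoking Theorem~\ref{thm:FullDiagram}, matches the paper's intent as well.
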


\begin{proof}
Compare Proposition~\ref{prop:IndC}, \ref{prop:ResC},  \ref{prop:IndP}, and \ref{prop:ResP} with Proposition~\ref{prop:IndDes} and \ref{prop:ResDes}.
 \end{proof}

\begin{remark}
By Theorem~\ref{thm:GrH0}, if $J\subseteq I\subseteq S$ and $K\subseteq S$ then
\[ \left\langle \mu_I^S(\P_J^I),\C_K^S \right\rangle
=\left\langle \P_J^I, \bar\rho_I^S(\C_K^S) \right\rangle \qand 
\left\langle \P_K^S, \bar\mu_I^S(\C_J^I) \right\rangle
= \left\langle \rho_I^S(\P_K^S), \C_J^I \right\rangle. \]
Note that the first equality is also a consequence of the Frobenius reciprocity, but the second one is not.
\end{remark}

\begin{definition}
We define a representation $\mathcal G_0$ ($\mathcal K_0$ resp.) of both the category $\mathcal Cox$ and its dual category $\mathcal Cox^{op}$ by sending each abstract finite Coxeter system $(W,S)$ to the Grothendieck group $G_0(H_W(0))$ ($K_0(H_W(0))$ resp.) and sending an inclusion $(W_I,I)\hookrightarrow(W,S)$ to $\bar\mu_I^S$ ($\mu_I^S$ resp.) and sending a restriction $(W,S)\twoheadrightarrow(W_I,I)$ to $\bar\rho_I^S$ ($\rho_I^S$ resp.) whenever $I\subseteq S$.
\end{definition}

Theorem~\ref{thm:GrH0} gives a natural isomorphism between $\mathcal G_0$ ($\mathcal K_0$ resp.) and $\Sigma^*$ ($\Sigma$ resp.). Combining this with the natural isomorphism $F: \Sigma^* \to \mathcal QSym$ and $\bs: \Sigma\to\mathcal NSym$ one has two natural isomorphisms
\[ \Ch:  \mathcal G_0 \to  \mathcal QSym \qand
\bch:  \mathcal K_0 \to  \mathcal NSym \]
which are induced by the following characteristic maps 
\[ \begin{matrix}
\Ch: & G_0(H_W(0)) & \to & \QSym^S & \qand & \bch: & K_0(H_W(0)) & \to & \NSym^S \\
 & \C_w^S & \mapsto & F_w^S & &  & \P_I^S & \mapsto & \bs_I^S.
 \end{matrix} \]
 
Lastly, we provide a result on $\Ch(\P_I^S)$ for later use.

\begin{proposition}\label{prop:PtoC}
If $I\subseteq S$ then $\Ch(\P_I^S) = \chi(\bch(\P_I^S)) = s_I^S$.
\end{proposition}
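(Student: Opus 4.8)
The second equality is immediate from the definitions: by construction $\bch(\P_I^S)=\bs_I^S$, and $s_I^S$ was defined precisely as $\chi(\bs_I^S)$, so $\chi(\bch(\P_I^S))=\chi(\bs_I^S)=s_I^S$. It therefore remains to prove the first equality $\Ch(\P_I^S)=s_I^S$. Since $\Ch$ is additive on short exact sequences and sends each simple $\C_w^S=\C_{D(w)}^S$ to $F_{D(w)}^S$, evaluating $\Ch$ on the class $[\P_I^S]\in G_0(H_W(0))$ amounts to reading off the Jordan--H\"older multiplicities of $\P_I^S$. Unwinding $s_I^S=\chi(\bs_I^S)=\sum_{w\in D_I(W)}F_{D(w^{-1})}^S$, the claim $\Ch(\P_I^S)=s_I^S$ is thus equivalent to the composition-factor identity
\[ [\P_I^S]=\sum_{w\in D_I(W)}[\C_{D(w^{-1})}^S] \quad\text{in } G_0(H_W(0)), \]
i.e.\ the multiplicity of $\C_K^S$ in $\P_I^S$ equals $\#\{w\in W:D(w)=I,\ D(w^{-1})=K\}$.

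The plan is to prove this identity directly from Norton's basis $\{\,v_w:=\pib_w\pi_{w_0(I^c)}:w\in W,\ D(w)=I\,\}$ of $\P_I^S$ by filtering by length. Set $M_{\ge k}:=\mathrm{span}_\FF\{v_w:D(w)=I,\ \ell(w)\ge k\}$. The key computation is the left action of a generator $\pib_s$ on $v_w$. If $s\in D(w^{-1})$, then $\ell(sw)<\ell(w)$ and the quadratic relation gives $\pib_s v_w=-v_w$. If $s\notin D(w^{-1})$, then $\ell(sw)=\ell(w)+1$ and $\pib_s v_w=\pib_{sw}\pi_{w_0(I^c)}$; here a short length argument shows $I\subseteq D(sw)$ (for each $r\in I=D(w)$ one checks $\ell(swr)=\ell(w)<\ell(sw)$), so either $D(sw)=I$, giving $\pib_s v_w=v_{sw}$ with $\ell(sw)>\ell(w)$, or $D(sw)\supsetneq I$, in which case a right descent $r\in D(sw)\cap I^c$ exists and the relation $\pib_{sw}\pi_r=0$ forces $\pib_s v_w=0$. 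In every case $\pib_s v_w\in M_{\ge\ell(w)}$, so each $M_{\ge k}$ is an $H_W(0)$-submodule, and on the one-dimensional quotient $\FF\,\overline{v_w}$ the generator $\pib_s$ acts by $-1$ exactly when $s\in D(w^{-1})$ and by $0$ otherwise. That quotient is precisely $\C_{D(w^{-1})}^S$. Summing over $w\in D_I(W)$ yields the displayed identity and hence $\Ch(\P_I^S)=s_I^S$; note that since $\dim_\FF\P_I^S=\#D_I(W)$ equals the number of factors produced, no spurious composition factors can occur.

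I expect the main technical point to be the off-diagonal case $s\notin D(w^{-1})$, specifically the vanishing $\pib_{sw}\pi_{w_0(I^c)}=0$ when $I\subsetneq D(sw)$. This rests on the absorption fact that $\pib_u\pi_{w_0(I^c)}=0$ whenever $D(u)\cap I^c\ne\emptyset$: for such $u$ one picks $r\in D(u)\cap I^c$, uses that $r$ is a left descent of the longest element $w_0(I^c)$ of $W_{I^c}$ to factor $\pi_{w_0(I^c)}=\pi_r\,\pi_{r w_0(I^c)}$ with additive lengths, and then invokes $\pib_u\pi_r=\pib_u(\pib_r+1)=-\pib_u+\pib_u=0$. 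Combined with Lemma~\ref{lem:Des}, controlling how left multiplication by $s$ meets the right $W_{I^c}$-structure, this settles the action computation, after which the filtration argument goes through cleanly and the proposition follows.
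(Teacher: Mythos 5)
Your proof is correct and follows essentially the same route as the paper: filter Norton's basis $\{\pib_w\pi_{w_0(I^c)}:w\in D_I(W)\}$ of $\P_I^S$ by $\ell(w)$ and compute the three-case action of $\pib_s$, so that the composition factors are exactly $\C_{D(w^{-1})}^S$ for $w\in D_I(W)$. You supply more detail than the paper does (the check that $I\subseteq D(sw)$ and the absorption argument $\pib_{sw}\pi_{w_0(I^c)}=0$ when $D(sw)\supsetneq I$), but the argument is the same one.
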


\begin{proof}
Recall that $\P_I^S$ has a basis $\{\pib_w\pi_{w_0(I^c)}:w\in D_I(W)\}$. One has a filtration of $\P_I^S$ by the length of $w$ for all $w\in D_I(W)$. If $s\in S$ and $w\in D_I(W)$ then
\[ \pib_s\pib_w\pi_{w_0(I^c)} = 
\begin{cases}
-\pib_w\pi_{w_0(I^c)}, & \text{if } s\in D(w^{-1}),\\
0, & \text{if } s\notin D(w^{-1}),\ D(sw)\not\subseteq I,\\
\pib_{sw} \pi_{w_0(I^c)} , & \text{if } s\notin D(w^{-1}),\ D(sw)\subseteq I.
\end{cases}\]
It follows that $\Ch(\P_I^S)=\sum_{w\in D_I(W)} F_{w^{-1}}^S$. Hence the result holds.
 \end{proof}

\subsection{Connections to work of Aguiar and Mahajan}\label{sec:AguiarMahajan}

Let $(W,S)$ be a finite Coxeter system.
In this subsection we compare the following diagrams.
\begin{equation}\label{eq:DiamondKW}
\xymatrix @R=10pt @C=5pt {
 & \FQSym^S \ar@{=}[r] & \FQSym^S \ar@{->>}[rd]^{\chi} \\
 \NSym^S \ar@{^(->}[ru]^{\imath} \ar@{->>}[rd]_{\chi} & & & \QSym^S \ar@{<-->}[lll]^{\txt{\small dual}} \\
 & \Sym^S \ar@{=}[r] & \Sym^S \ar@{^(->}[ru]_{\imath} } 
\quad
\xymatrix @R=12pt @C=12pt {
 & \KK W \ar@{<->}^{s}[r] & \KK W^* \ar@{->>}[rd]^{\des} \\
 \KK\overline Q \ar@{^(->}[ru]^{\theta} \ar@{->>}[rd]_{\supp} & & & \KK\overline Q^* \ar@{<-->}[lll]^{\txt{\small dual}} \\
 & \KK \overline L \ar@{<->}^{\phi}[r] & \KK\overline L^* \ar@{^(->}[ru]_{\supp^*} } 
\end{equation}
The first diagram is a commutative diagram of free $\ZZ$-modules obtained from our results in Section~\ref{sec:FQSym} and is equivalent to the diagram~\eqref{eq:DiamondZW}.
The second diagram is a commutative diagram of vector spaces over a field $\KK$ of characteristic zero obtained by Aguiar and Mahajan~\cite[Theorem~5.7.1]{AguiarMahajan}, which can be generalized to \emph{left regular bands}.
The remaining of this subsection is devoted to the proof of the following result. 

\begin{proposition}\label{prop:DiagramIso}
In \eqref{eq:DiamondKW}, with an extension of scalars from $\ZZ$ to $\KK$, the first diagram becomes isomorphic to the second one. 
\end{proposition}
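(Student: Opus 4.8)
The plan is to produce a compatible family of $\KK$-linear isomorphisms between the four corners of the two diagrams in \eqref{eq:DiamondKW} and to check that they intertwine every arrow. By the natural isomorphisms established in Section~\ref{sec:FQSym} it is equivalent, and cleaner, to compare the Aguiar--Mahajan diagram with the diagram \eqref{eq:DiamondZW} built from $\ZZ W$, $\Sigma(W)$, $\Sigma^*(W)$, $\Lambda(W)$. I would begin by instantiating \cite[Theorem~5.7.1]{AguiarMahajan} for the reflection arrangement of $(W,S)$ and spelling out the index sets: the chambers are the elements of $W$, the $W$-orbits of faces $\overline Q$ are indexed by the subsets $I\subseteq S$, and the $W$-orbits of flats $\overline L$ are the $W$-conjugacy classes of parabolic subgroups $W_I$. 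Crucially I would rewrite each Aguiar--Mahajan map ($\theta$, $\supp$, $\des$, $s$, $\phi$, $\supp^*$) in terms of Coxeter data, namely descents, parabolic cosets, and the support (Jordan--H\"older) correspondence.

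For the three ``free'' corners the identifications are then forced. After extending scalars I send the chamber basis of $\KK W$ to $\{\bF_w^S\}$, identifying $\KK W\cong\FQSym^S$. The embedding $\theta\colon\KK\overline Q\hookrightarrow\KK W$ sends the face of type $I$ to the sum of the chambers adjacent to it, which by Proposition~\ref{prop:shuffleW} is exactly $\bh_{S\setminus I}^S=\bF_{\Phi^+_{I}}^S$; using the face basis $\{\bh_I^S\}$ of $\NSym^S$ this matches $\imath\colon\NSym^S\hookrightarrow\FQSym^S$ and gives $\NSym^S\cong\KK\overline Q$. Dualizing identifies $\QSym^S\cong\KK\overline Q^{*}$ and matches the descent map $\des$ with $\chi\colon\FQSym^S\twoheadrightarrow\QSym^S$, while the self-duality $s\colon\KK W\to\KK W^{*}$ is matched with the bilinear form $\langle\bF_u^S,\bs_v^S\rangle=\delta_{u,v}$ of $\FQSym^S$, equivalently with the map $(\ )^{-1}$ of \S\ref{sec:Coxeter}.

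The bottom corner is where the content lies. Working in the complete-homogeneous basis, $h_I^S=\chi(\bh_I^S)$ is the image of the face of type $I$, and the support map $\supp$ sends that face-type to the conjugacy class of its stabilizing parabolic; so I must show that $\chi\colon\NSym^S\twoheadrightarrow\Sym^S$ corresponds to $\supp\colon\KK\overline Q\twoheadrightarrow\KK\overline L$, i.e.\ that $\ker(\chi|_{\NSym^S})$ equals the span of the differences $\bh_I^S-\bh_J^S$ over conjugate pairs $W_I\sim W_J$. Equivalently, the elements $h_I^S$ must collapse \emph{exactly} along conjugacy of parabolic subgroups, so that the distinct $h_I^S$, one per class in $\overline L$, are linearly independent. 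Granting this, the isomorphism $\Sym^S\cong\KK\overline L$ is induced automatically by the corresponding surjections, and the self-duality of $\Sym^S$ coming from the form $c_{IJ}$ on $\Lambda(W)$ is matched with $\phi\colon\KK\overline L\to\KK\overline L^{*}$.

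The main obstacle is precisely this identification $\chi\leftrightarrow\supp$, together with the correct reading of $\supp$, $s$, and $\phi$ from \cite{AguiarMahajan}: unlike the three free corners it is not a relabeling of bases, since in the ribbon basis $\{\bs_I^S\}$ the two kernels already differ. What makes it work is the passage to the face/$h$ basis, where the kernel of $\supp$ becomes the conjugacy collapse; the genuine difficulty is then proving that $\{h_I^S\}$ has no further relations, i.e.\ that $\dim_\KK(\Sym^S\otimes\KK)$ equals the number of conjugacy classes of parabolic subgroups. This is not evident from our construction, and I expect to obtain it by transporting Aguiar--Mahajan's freeness of $\KK\overline L$ through the isomorphisms already fixed on the other three corners, so that the equality of kernels is deduced rather than assumed. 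As a byproduct this yields a free $\ZZ$-basis of $\Sym^S\cong\Lambda(W,S)$ indexed by the conjugacy classes of parabolic subgroups of $W$, the application advertised in the introduction.
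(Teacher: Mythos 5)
Your proposal is correct and follows essentially the same route as the paper: match the three free corners basis-by-basis ($\bs^S_I\leftrightarrow K_I$, $\bs^S_w\leftrightarrow K_w$, $\bF^S_w\leftrightarrow F_w$, $F^S_I\leftrightarrow F_I$, together with $\bh^S_I\leftrightarrow H_I$ and $M^S_I\leftrightarrow M_I$), then identify $\Sym^S_\KK$ with $\KK\overline L$ by showing that $h^S_I$ depends only on the conjugacy class of $W_{I^c}$ and that the resulting spanning set has cardinality $|\overline L|=\dim_\KK\Sym^S_\KK$, the latter obtained exactly as you propose by chasing $\chi\circ\imath=\des\circ s\circ\theta=\supp^*\circ\phi\circ\supp$ through the already-fixed corners. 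The only substantive difference is that the paper establishes the conjugacy collapse $h^S_I=h^S_J$ directly and integrally via the double-coset computation $\langle\bh^S_I,h^S_J\rangle=\left|W_{I^c}\backslash W/W_{J^c}\right|$ (Proposition~\ref{prop:Sym}) rather than deducing it from the transport, which is what makes the free $\ZZ$-basis byproduct immediate.
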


Recall that $\FQSym^S$ has dual bases $\{\bF^S_w:w\in W \}$ and $\{\bs^S_w=\bF^S_{w^{-1}}:w\in W\}$, and $\NSym$ and $\QSym$ have dual bases $\left\{\bs^S_I:I\subseteq S \right\}$ and $\left\{F^S_I :I\subseteq S\right\}$.
One has $\imath:\NSym^S\hookrightarrow \FQSym^S$ defined by $\bs_I^S=\sum_{w\in D_I(W)}\bs_w^S$ and $\chi:\FQSym^S\twoheadrightarrow\QSym^S$ defined by $\chi(\bF_w^S):=F_{D(w)}^S$.
We keep the superscript $S$ even though the Coxeter system $(W,S)$ is fixed in this subsection.

Write $U_\KK := U\otimes_\ZZ \KK$ for any $\ZZ$-module $U$.
By~\cite[\S5.7.1]{AguiarMahajan}, $\KK W$ and $\KK\overline Q$ have canonical bases $\{K_w:w\in W\}$ and $\{K_I:I\subseteq S\}$, and $\KK W^*$ and $\KK\overline Q^*$ have dual bases $\{F_w:w\in W\}$ and $\{F_I:I\subseteq S\}$.
There are dual bases $\{H_I:I\subseteq S\}$ and $\{M_I:I\subseteq S\}$ for $\KK\overline Q$ and $\KK\overline Q^*$ given by
\[ H_I = \sum_{J\subseteq I} K_J \qand F_I = \sum_{I\subseteq J} M_J. \]
There is no superscript $S$ in these bases. 
One has isomorphisms of vector spaces:
\[\begin{matrix}
\NSym^S_\KK & \cong & \KK\overline Q, \\
\bs^S_I & \mapsto & K_I,
\end{matrix} \qquad
\begin{matrix}
\FQSym^S_\KK & \cong & \KK W, \\ 
\bs_w^S & \mapsto & K_w,
\end{matrix} \qquad
\begin{matrix}
\FQSym^S_\KK & \cong & \KK W^*, \\
\bF_w^S & \mapsto & F_w,
\end{matrix} \qquad
\begin{matrix}
\QSym^S_\KK & \cong & \KK\overline Q^*, \\
F_I^S & \mapsto & F_I.
\end{matrix} \] 
By the definitions of $\theta$, $s$, and $\des$~\cite[\S5.7.4]{AguiarMahajan}, one has the following commutative diagram.
\begin{equation}\label{eq:top} \xymatrix @R=15pt @C=20pt {
\NSym^S_\KK \ar@{^(->}^\imath[r] \ar@{<->}^\wr[d] & \FQSym^S_\KK \ar@{=}[r] \ar@{<->}^\wr[d] & \FQSym^S_\KK \ar@{->>}[r]^{\chi} \ar@{<->}^\wr[d] & \QSym^S \ar@{<->}^\wr[d] \\
\KK\overline Q \ar@{^(->}^\theta[r]  & \KK W \ar@{<->}^{s}[r] & \KK W^* \ar@{->>}[r]^{\des} & \KK\overline Q^* }
\end{equation}
This shows that the top halves of the two diagrams in~\eqref{eq:DiamondKW} agree with each other.
  
Next we study the bottom halves of the two diagrams in~\eqref{eq:DiamondKW}.
We first define $\supp$ and $\overline L$.
Recall that $W$ can be realized as a group generated by a set $S$ of reflections of a Euclidean space $E=\mathbb R^n$.
The \emph{reflection arrangement} associated with $(W,S)$ consists of all hyperplanes such that the reflections across these hyperplanes belong to $W$.
This hyperplane arrangement gives rise a simplicial complex called the \emph{Coxeter complex} $\Sigma$ of $(W,S)$, which can be also constructed algebracially by partially ordering the parabolic cosets $wW_I$ for all $w\in W$ and all $I\subseteq S$ using reverse inclusion. 
A \emph{flat} is an intersection of a subset of reflection hyperplanes of $(W,S)$.
The \emph{intersection lattice} $L$ consists of all flats ordered by inclusion.
There is a sujection $\supp: \Sigma\twoheadrightarrow L$ sending a face of $\Sigma$ to its linear span, i.e., the intersection of the reflection hyperplanes containing this face.

The $W$-action on the Euclidean space $E$ induces $W$-actions on $\Sigma$ and $L$.
If $W$ acts on a vector space $V$ then $V^W:=\{ v\in V: w(v)=v,\ \forall w\in W\}$ is the \emph{$W$-fixed subspace of $V$}.
By Barcelo and Ihrig~\cite{BarceloIhrig}, we can identify the set $\{ wW_I w^{-1}: w\in W,\ I\subseteq S\}$ ordered by reserve inclusion with the intersection lattice $L$ via $wW_Iw^{-1}\mapsto E^{wW_Iw^{-1}}$, and the $W$-action on $L$ corresponds to the conjugate action of $W$ on parabolic subgroups.
The following algebraic interpretation of $\supp: \Sigma\twoheadrightarrow L$ has been generalized by Miller~\cite{Miller} to well-generated complex reflection groups:
\begin{equation}\label{eq:supp} 
\supp(wW_I) = E^{wW_Iw^{-1}}, \quad \forall w\in W,\ \forall I\subseteq S.
\end{equation}

Next, one sees that $(\KK\Sigma)^W$ has a basis consisting of 
\begin{equation}\label{eq:sigma} 
\sigma_I:= \sum_{w\in W^{I^c}} wW_{I^c},\quad\forall I\in S.
\end{equation}
Sending $H_I$ to $\sigma_I$ for all $I\subseteq S$ gives an isomorphism $\KK \overline Q\cong (\KK\Sigma)^W$.
On the other hand, let $\overline L$ be the set of the orbits of the $W$-action on $L$. 
Sending an orbit to the sum of its elements gives $\KK \overline L\cong (\KK L)^W$.
Then $\supp: \Sigma \twoheadrightarrow L$ induces $\supp: \KK\overline Q \twoheadrightarrow \KK \overline L$, as illustrated below.
\[ \xymatrix @R=15pt{ 
\KK \overline Q \cong (\KK \Sigma)^W \ar@{->>}[d]^\supp \ar@{^(->}[r] & \KK W \ar@{->>}[d]^\supp \\
\KK \overline L \cong (\KK L)^W \ar@{^(->}[r] & \KK L
} \]

Write $h_I:=\supp(H_I)$ for all $I\subseteq S$. Then $\KK\overline L$ is $\KK$-spanned by $\{h_I: I\subseteq S\}$.
We write $I\sim J$ if two parabolic subgroups $W_{I^c}$ and $W_{J^c}$ are conjugate in $W$.
This defines an equivalence relation on subsets of $S$ and we denote by $\Pi(W,S)$ the set of all equivalence classes.
Then $|\overline L|=|\Pi(W,S)|$.
By~\eqref{eq:supp} and \eqref{eq:sigma}, $\supp:\KK\overline Q\twoheadrightarrow \KK \overline L$ sends $H_I$ to the conjugacy class $\{wW_{I^c}w^{-1}:w\in W\}$ and thus $h_I = h_J\ \Leftrightarrow\ I\sim J$.
Hence $h_\lambda:=h_I$ is well defined if $\lambda\in\Pi(W,S)$ and $I\in\lambda$, and $\KK \overline L$ has a basis $\{h_\lambda:\lambda\in\Pi(W,S)\}$.

By~\cite[\S2.6]{AguiarMahajan}, $\KK\overline L$ has a bilinear form defined by 
\[ \langle h_I, h_J \rangle := \#\{w\in W: D(w)\subseteq I,\ \des(w^{-1})\subseteq J\}, \quad\forall I,J\subseteq S. \]
The dual space $\KK \overline L^*$ has a basis $\{m_\lambda:\lambda\in\Pi(W,S)\}$ dual to $\{h_\lambda:\lambda\in\Pi(W,S)\}$.
The map $\phi:\KK\overline L\to\KK L^*$ and the map $\supp^*:\KK \overline L^*\to\KK \overline Q^*$ are defined by 
\[ \phi(h_\lambda) := \sum_{\mu\in\Pi(W,S)} \langle h_\lambda,h_\mu\rangle m_\mu
\qand \supp^*(m_\lambda) := \sum_{I\in\lambda} M_I. \]

Next, we construct a free $\ZZ$-basis for $\Sym^S$.
Recall that $\NSym^S$ and $\QSym^S$ have dual bases $\{\bh_I^S:I\subseteq S\}$ and $\{M_I^S:I\subseteq S\}$ defined by $\bh_I^S = \sum_{J\subseteq I} \bs_J^S$ and $F^S_I = \sum_{I\subseteq J} M^S_J$, and $\Sym^S$ is the $\ZZ$-span of $\{h_I^S:I\subseteq S\}$ where $h_I^S:=\chi(\bh_I^S)$.

\begin{proposition}\label{prop:Sym}
The element $h_\lambda^S:=h_I^S$ is well defined whenever $\lambda\in\Pi(W,S)$ and $I\in\lambda$, and the set $\{h_\lambda^S:\lambda\in \Pi(W,S) \}$ is a free $\ZZ$-basis for $\Sym^S$.
\end{proposition}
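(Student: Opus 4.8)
The plan is to compute the pairing of $h_I^S$ against the dual basis of $\NSym^S$ explicitly, read off both the class-invariance and the $\ZZ$-rank of $\Sym^S$ from that computation, and then upgrade a spanning set of the correct size to a free $\ZZ$-basis.

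First I would prove well-definedness. Since $\bh_I^S=\bF^S_{\Phi^+_{S\setminus I}}=\sum_{w:\,D(w^{-1})\subseteq I}\bF_w^S$ by Proposition~\ref{prop:shuffleW} (taking parabolic subset $S\setminus I$ and $u=e$, and noting ${}^{S\setminus I}W=\{w:D(w^{-1})\subseteq I\}$), and since $\chi(\bF_w^S)=F^S_{D(w)}$, one gets $h_I^S=\chi(\bh_I^S)=\sum_{w:\,D(w^{-1})\subseteq I}F^S_{D(w)}$. Pairing against $\bh_K^S$ and using that $\langle\bh_K^S,F^S_L\rangle$ is $1$ when $L\subseteq K$ and $0$ otherwise yields
\[ \langle \bh_K^S,\,h_I^S\rangle=\#\{w\in W:\ D(w^{-1})\subseteq I,\ D(w)\subseteq K\}=\#\bigl(W_{I^c}\backslash W/W_{K^c}\bigr), \]
the last equality holding because an element with $D(w^{-1})\subseteq I$ and $D(w)\subseteq K$ is exactly a minimal-length $(W_{I^c},W_{K^c})$-double coset representative. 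The number of $(W_{I^c},W_{K^c})$-double cosets depends only on the conjugacy classes of $W_{I^c}$ and $W_{K^c}$, hence only on the classes of $I$ and $K$ in $\Pi(W,S)$. Thus if $I\sim J$ then $\langle\bh_K^S,h_I^S\rangle=\langle\bh_K^S,h_J^S\rangle$ for every $K\subseteq S$, and since $\{\bh_K^S:K\subseteq S\}$ is the basis of $\NSym^S$ dual to $\{M_K^S\}$, the pairing between $\NSym^S$ and $\QSym^S$ is perfect, forcing $h_I^S=h_J^S$. Together with the fact that $\{h_I^S:I\subseteq S\}$ spans $\Sym^S$, this shows $h_\lambda^S$ is well defined and $\{h_\lambda^S:\lambda\in\Pi(W,S)\}$ spans $\Sym^S$.

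Next I would prove $\QQ$-linear independence. Expanding in the basis $\{M_K^S\}$ dual to $\{\bh_K^S\}$ gives $h_\lambda^S=\sum_{\mu\in\Pi(W,S)}b_{\mu\lambda}\,m_\mu^S$, where $b_{\mu\lambda}:=\langle\bh_K^S,h_I^S\rangle$ for $K\in\mu$ and $I\in\lambda$ (well defined by the previous step) and $m_\mu^S:=\sum_{K\in\mu}M_K^S$. The $m_\mu^S$ have pairwise disjoint supports in $\{M_K^S\}$, so they are linearly independent; hence $\{h_\lambda^S\}$ is linearly independent exactly when $(b_{\mu\lambda})$ is nonsingular. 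Comparing with the Aguiar--Mahajan form $\langle h_I,h_J\rangle=\#\{w:D(w)\subseteq I,\ \des(w^{-1})\subseteq J\}$ on $\KK\overline L$, one sees $(b_{\mu\lambda})$ is precisely its Gram matrix in the basis $\{h_\lambda\}$; since the map $\phi$ in~\eqref{eq:DiamondKW} is an isomorphism by~\cite[Theorem~5.7.1]{AguiarMahajan}, this form is nondegenerate, so $(b_{\mu\lambda})$ is nonsingular. The nonsingularity of $(b_{\mu\lambda})$ is the main obstacle: it is the lower bound $\operatorname{rank}_\ZZ\Sym^S\ge|\Pi(W,S)|$, and everything else is formal. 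A self-contained alternative to citing $\phi$ is to observe $b_{\mu\lambda}=\langle\operatorname{Ind}_{W_{I^c}}^{W}\mathbf 1,\operatorname{Ind}_{W_{K^c}}^{W}\mathbf 1\rangle$ and to prove that the permutation characters induced from the trivial characters of the parabolic subgroups are linearly independent as $W_{I^c}$ ranges over representatives of the conjugacy classes of parabolics.

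Finally I would upgrade to a free $\ZZ$-basis. As a $\ZZ$-submodule of the free module $\QSym^S$, the module $\Sym^S$ is itself free, of rank $|\Pi(W,S)|$ by the previous step. The spanning set $\{h_\lambda^S:\lambda\in\Pi(W,S)\}$ has exactly $|\Pi(W,S)|$ elements, so the natural surjection $\ZZ^{\Pi(W,S)}\twoheadrightarrow\Sym^S$ is a surjection between free $\ZZ$-modules of equal finite rank, hence an isomorphism (a surjective endomorphism of a finitely generated module over a commutative ring is injective). Therefore $\{h_\lambda^S:\lambda\in\Pi(W,S)\}$ is a free $\ZZ$-basis for $\Sym^S$, completing the proof.
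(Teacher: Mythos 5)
Your proposal is correct and follows essentially the same route as the paper: the well-definedness argument (computing $\langle \bh_K^S,h_I^S\rangle$ as a double coset count $|W_{I^c}\backslash W/W_{K^c}|$ and using its invariance under conjugation of the parabolic subgroups) is identical, and your linear-independence step is just a reformulation of the paper's, which deduces $\dim_\KK \Sym^S_\KK=|\overline L|=|\Pi(W,S)|$ from the chain of isomorphisms through the Aguiar--Mahajan diagram. Phrasing that step as nonsingularity of the Gram matrix $(b_{\mu\lambda})$ of the form $\phi$ is equivalent to the paper's dimension count, since both rest on the invertibility of $\phi$ in \cite[Theorem~5.7.1]{AguiarMahajan}; your suggested self-contained alternative via linear independence of the parabolic permutation characters is a genuine (and valid) way to avoid that citation, but you leave it as a remark rather than carrying it out.
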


\begin{proof}
We first need an easy group theory result.
If $L$ and $R$ are two subgroups of $W$ then we write $L\backslash W/R:=\{LwR:w\in W\}$ for the set of all double $(L,R)$-cosets in $W$.
If $R'=uRu^{-1}$ for some $u\in W$ then $|L\backslash W/R| = |L\backslash W/R'|$ as there is a bijection between $L\backslash W/R$ and $L\backslash W/R'$ by $LwR\mapsto LwRu^{-1} = Lwu^{-1}R'$ for all $w\in W$.
Now if $I,J\subseteq S$ then
\[ \langle \bh^S_I, h^S_J\rangle  =  \sum_{K\subseteq I} \sum_{\substack{ w\in W \\ D(w)\subseteq J}} \langle \bs^S_K, F_{w^{-1}}^S \rangle = \left | \{w\in W: D(w^{-1})\subseteq I, \ D(w)\subseteq J \} \right|= \left| W_{I^c}\backslash W/W_{J^c} \right|. \]
If $J\sim J'$ then $\langle \bh^S_I, h^S_J\rangle = \langle \bh^S_I, h^S_{J'}\rangle$, i.e., $h^S_J$ and $h^S_{J'}$ have the same expansion in the basis $\{M_I:I\subseteq S\}$ for $\QSym^S$, and thus $h^S_J = h^S_{J'}$.
Then $h_\lambda^S:=h_I^S$ is well defined if $\lambda\in\Pi(W,S)$ and $I\in\lambda$.
This gives a spanning set $\left\{h_\lambda^S:\lambda\in\Pi(W,S) \right\}$ for $\Sym^S$. Since
\[ \Sym^S_\KK\cong\chi\circ\imath(\NSym^S_\KK) \cong \des\circ s\circ \theta(\KK\overline Q) \cong \supp(\KK\overline Q) \cong \KK \overline L\]
and $|\Pi(W,S)|=|\overline L|$, this spanning set must be linearly independent.
\end{proof}

Recall that $\Lambda(W)$ has a symmetric bilinear form defined by $\langle \Lambda_I(W), \Lambda_J(W) \rangle := c_{IJ}$.
Applying the isomorphism $s: \Lambda(W) \cong\Sym^S$ defined by $\Lambda_I(W) \mapsto s_I$ one has
\[ \langle h^S_I,h^S_J\rangle = \# \{ w\in W: D(w) \subseteq I,\ D(w^{-1})\subseteq J \}, \qquad \forall I,J\in S. \] 
This is compatible with the pairing between $\NSym^S$ and $\QSym^S$ by Proposition~\ref{prop:SymForm} and the isomorphisms $\Sigma(W)\cong\NSym^S$ and $\Sigma^*(W)\cong\QSym^S$.

There is a linearly independent set $\{m_{\lambda}^S: \lambda\in \Pi(W,S)\}$ in $\QSym^S$, where
\[m_{\lambda}^S := \sum_{I\in\lambda } M_I^S, \qquad \forall \lambda\in\Pi(W,S).\]

\begin{proposition}\label{prop:SymK}
$\Sym^S_\KK$ has dual $\KK$-bases $\left\{ h_\lambda^S:\lambda\in\Pi(W,S) \right\}$ and $\left\{m_\lambda^S: \lambda\in \Pi(W,S) \right\}$.
\end{proposition}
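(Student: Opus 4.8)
The plan is to reduce everything to a single pairing computation carried out in the standard duality between $\NSym^S$ and $\QSym^S$, and then transport it to $\Sym^S_\KK$ through the mutually dual maps $\chi:\NSym^S\twoheadrightarrow\Sym^S$ and $\imath:\Sym^S\hookrightarrow\QSym^S$. Recall from Proposition~\ref{prop:Sym} that $\{h_\lambda^S:\lambda\in\Pi(W,S)\}$ is already a free $\ZZ$-basis of $\Sym^S$, so after extending scalars it is a $\KK$-basis of $\Sym^S_\KK$ of cardinality $|\Pi(W,S)|$; thus it remains only to place each $m_\mu^S$ inside $\Sym^S_\KK$ and to verify the duality $\langle h_\lambda^S, m_\mu^S\rangle = \delta_{\lambda,\mu}$, after which $\{m_\mu^S\}$ is forced to be the dual basis, and hence a basis, by nondegeneracy of the form (Proposition~\ref{prop:SymForm}).

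First I would record the kernel of $\chi$. Since $\{\bh_I^S : I\subseteq S\}$ is a basis of $\NSym^S$ and $\chi(\bh_I^S) = h_I^S$, with $h_I^S = h_J^S$ exactly when $I\sim J$ (Proposition~\ref{prop:Sym}), the elements $\bh_I^S - \bh_J^S$ with $I\sim J$ lie in $\ker\chi$; counting within each $\sim$-class shows they span it, so $\dim\ker\chi = 2^{|S|} - |\Pi(W,S)|$. Because $\imath:\Sym^S\to\QSym^S$ is dual to $\chi$ (this is Proposition~\ref{prop:SymForm} transported along the isomorphisms $\bs:\Sigma\to\mathcal NSym$, $F:\Sigma^*\to\mathcal QSym$, $s:\Lambda\to\mathcal Sym$), its image is the annihilator of $\ker\chi$ with respect to the pairing between $\NSym^S$ and $\QSym^S$; a dimension count gives that this annihilator has dimension $|\Pi(W,S)|$ inside $\QSym^S_\KK$.

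The key computation is then immediate from $\langle\bh_I^S, M_J^S\rangle = \delta_{I,J}$: for any $\mu\in\Pi(W,S)$ and any $I\subseteq S$,
\[ \langle \bh_I^S, m_\mu^S\rangle = \sum_{J\in\mu} \langle \bh_I^S, M_J^S\rangle = [\,I\in\mu\,]. \]
Since $I\sim J$ implies $[\,I\in\mu\,]=[\,J\in\mu\,]$, the element $m_\mu^S$ annihilates every $\bh_I^S-\bh_J^S$ with $I\sim J$, hence lies in the annihilator of $\ker\chi$, which is $\imath(\Sym^S_\KK)$; that is, $m_\mu^S\in\Sym^S_\KK$. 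Finally, writing $h_\lambda^S = \chi(\bh_I^S)$ for any $I\in\lambda$ and invoking the adjunction $\langle\chi(x), y\rangle = \langle x,\imath(y)\rangle$ supplied by Proposition~\ref{prop:SymForm}, I would conclude
\[ \langle h_\lambda^S, m_\mu^S\rangle = \langle \bh_I^S, m_\mu^S\rangle = [\,I\in\mu\,] = \delta_{\lambda,\mu}, \]
which is the asserted duality.

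I expect the only delicate point to be the precise bookkeeping of the two pairings --- the standard pairing between $\NSym^S$ and $\QSym^S$ and the self-pairing on $\Sym^S$ --- together with the verification that $\chi$ and $\imath$ are genuinely adjoint with respect to them. This adjunction is exactly the content of Proposition~\ref{prop:SymForm} carried across the natural isomorphisms, so once it is invoked cleanly the remaining steps are routine, and no new combinatorial input beyond the equivalence $h_I^S = h_J^S \Leftrightarrow I\sim J$ is required.
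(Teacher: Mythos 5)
Your proof is correct, and it reaches the key fact --- that the $m_\mu^S$ actually live in $\Sym^S_\KK$ --- by the opposite inclusion from the one the paper uses. The paper shows $\Sym^S_\KK\subseteq\KK\text{-span}\{m_\lambda^S\}$: it expands $h_J^S$ in the $M_I^S$-basis and checks that the coefficients $\langle \bh_I^S,h_J^S\rangle$ are constant on $\sim$-classes of $I$, which requires rerunning the double-coset computation $\langle \bh_I^S,h_J^S\rangle=|W_{I^c}\backslash W/W_{J^c}|$ from Proposition~\ref{prop:Sym}; a dimension count then upgrades the inclusion to equality. You instead show $\{m_\mu^S\}\subseteq\imath(\Sym^S_\KK)$ by identifying $\imath(\Sym^S_\KK)$ with the annihilator of $\ker\chi$ (via the adjunction of Proposition~\ref{prop:SymForm} plus rank--nullity) and observing that $\ker\chi$ is spanned by the differences $\bh_I^S-\bh_J^S$ with $I\sim J$, against which $m_\mu^S$ trivially pairs to zero since $\langle\bh_I^S,m_\mu^S\rangle=[\,I\in\mu\,]$ is class-constant. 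Your route trades the paper's combinatorial input (invariance of double-coset counts under conjugating a parabolic) for a purely linear-algebraic characterization of the image of $\imath$; both arguments ultimately rest on the same two ingredients, namely Proposition~\ref{prop:Sym} (which pins down $\dim\Sym^S_\KK=|\Pi(W,S)|$ and the relation $h_I^S=h_J^S$ for $I\sim J$) and the compatibility of the pairings from Proposition~\ref{prop:SymForm}, and both close with the identical computation $\langle h_\lambda^S,m_\mu^S\rangle=\langle\bh_I^S,m_\mu^S\rangle=\delta_{\lambda,\mu}$. The one point worth making explicit in your write-up is the adjunction identity $\langle\chi(x),y\rangle=\langle x,\imath(y)\rangle$ itself, since the paper only states the duality of $\imath$ and $\chi'$ at the level of $\Lambda(W)$, $\Sigma(W)$, $\Sigma^*(W)$ and you are using it transported across the isomorphisms $\bs$, $F$, $s$ --- but you flag exactly this, and the transport is legitimate.
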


\begin{proof}
Let $I,I',J\subseteq S$ with $I\sim I'$.
One shows $\langle \bh_I^S, h_J^S \rangle = \langle \bh_{I'}^S, h_J^S \rangle$ similarly as the proof of Proposition~\ref{prop:Sym}.
Thus $h_J^S$ lies in the $\ZZ$-span of $\{m_\lambda^S: \lambda \in\Pi(W,S)\}$.  
It follows that the free $\ZZ$-basis $\{h_\lambda: \lambda\in\Pi(W,S)\}$ for $\Sym^S$ is contained in the $\ZZ$-span of the linearly independent set $\{ m_\lambda: \lambda\in\Pi(W,S) \}$. 
This implies that $\left\{m_\lambda^S: \lambda\in \Pi(W,S) \right\}$ is a $\KK$-basis for $\Sym^S_\KK$.
If $\lambda,\mu\in\Pi(W,S)$ and $I\in\lambda$ then 
\[ \langle h_\lambda^S, m_\mu^S \rangle = \langle H_I, \sum_{J\in\mu} M_J \rangle = \delta_{\lambda,\mu}.\]
Thus $\left\{ h_\lambda^S:\lambda\in\Pi(W,S) \right\}$ and $\{ m^S_\lambda: \lambda\in\Pi(W,S) \}$ are dual $\KK$-bases for $\Sym^S_\KK$.
\end{proof}

Now we have the following isomorphisms of vector spaces:
\[\begin{matrix}
\NSym^S_\KK & \cong & \KK\overline Q, \\
\bh_I^S & \mapsto & H_I,
\end{matrix} \qquad
\begin{matrix}
\Sym^S_\KK & \cong & \KK \overline L, \\ 
h_\lambda^S & \mapsto & h_\lambda,
\end{matrix} \qquad
\begin{matrix}
\Sym^S_\KK & \cong & \KK \overline L^*, \\
m_\lambda^S & \mapsto & m_\lambda,
\end{matrix} \qquad
\begin{matrix}
\QSym^S_\KK & \cong & \KK\overline Q^*, \\
M_I^S & \mapsto & M_I.
\end{matrix} \] 
Then one can verify the following commutative diagram.
\begin{equation}\label{eq:bottom} 
\xymatrix @R=15pt @C=20pt {
\NSym^S_\KK \ar@{->>}[r]^{\chi} \ar@{<->}^\wr[d] & \Sym^S_\KK \ar@{=}[r] \ar@{<->}^\wr[d] & \Sym^S_\KK  \ar@{^(->}^\imath[r]  \ar@{<->}^\wr[d] & \QSym^S_\KK \ar@{<->}^\wr[d] \\
\KK\overline Q \ar@{->>}[r]^{\supp} & \KK \overline L \ar@{<->}^{\phi}[r] & \KK \overline L^* \ar@{^(->}^{\supp*}[r] & \KK\overline Q^*
}\end{equation}
Combining \eqref{eq:top} and \eqref{eq:bottom} one proves Proposition~\ref{prop:DiagramIso}.

\begin{remark}\label{rem:Sym}
(i) By Geck and Pfeiffer~\cite[Chapter 2 and Appendix A]{GeckPfeiffer}, the conjugacy class of a parabolic subgroup $W_I$ of $W$ is determined by the type of the Coxeter system $(W_I, I)$ except that in type $D_{2m}$ there are two conjugacy classes of parabolic subgroups of type $A_{k_1}\times\cdots\times A_{k_r}$ with all $k_i$ odd such that $\sum_{i=1}^r (k_i+1) = 2m$ and in type $E_7$ there are two conjugacy classes of parabolic subgroups of type $A_1\times A_1\times A_1$, $A_1\times A_3$, and $A_5$.

\noindent (ii)
One can obtain dual orthogonal bases $\{q_\lambda^S:\lambda\in\Pi(W,S)\}$ and $\{p_\lambda^S:\lambda\in\Pi(W,S)\}$ for $\Sym^S_\KK$ using work of Aguiar and Mahajan~\cite[\S5.7]{AguiarMahajan}.
In fact, if $\lambda\in\Pi(W,S)$ and $I\in\lambda$ then 
\[ p_\lambda^S := \sum_{J\subseteq I} |\lambda(J)| m_{\lambda(J)} \]
where $\lambda(J)$ is the equivalent class in $\Pi(W,S)$ containing $J$ and $|\lambda(J)|$ is the quotient of $|W^{J^c}|$ by the size of the conjugacy class of the parabolic subgroup $W_{J^c}$ of $W$.
In type A this recovers the well-known power sum basis $\{p_\lambda\}$ for $\Sym_\KK$.

\noindent(iii) 
Though $\{m_\lambda^S:\lambda\in\Pi(W,S)\}$ is a $\KK$-basis for $\Sym^S_\KK$, we will show in type B that it is not a free $\ZZ$-basis for $\Sym^S$ because the $\ZZ$-span of $\{m_\lambda^S:\lambda\in\Pi(W,S)\}$ strictly contains $\Sym^S$.
\end{remark}

\section{Type A}\label{sec:A}

In this section we recover some well-known results in type A from our results in Section~\ref{sec:General}.

\subsection{Malvenuto--Reutenauer algebra and descent algebra}\label{sec:MRA}

The symmetric group $\SS_n$ consists of all permutations of the set $[n]:=\{1,2,\ldots,n\}$ and is generated by $\{s_1,\ldots,s_{n-1}\}$ where $s_i$ is the adjacent transposition $(i,i+1)$. This gives the finite irreducible Coxeter system of type $A_{n-1}$, whose Coxeter diagram is drawn below.
\[ \xymatrix @C12pt{
s_1 \ar@{-}[r] & s_2 \ar@{-}[r] & s_3 \ar@{-}[r] & \cdots \ar@{-}[r] & s_{n-2} \ar@{-}[r] & s_{n-1}
}  \]
Let $w$ be a permutation in $\SS_n$. The \emph{one-line notation} for $w$ is the word $w(1)\cdots w(n)$.
The descent set of $w$ is $\{ s_i: i\in[n-1],\ w(i)>w(i+1)\}$.
The length of $w$ equals $\inv(w)$, where 
\begin{equation}\label{eq:inv}
\inv(a_1\cdots a_n):=\#\{(i,j):1\le i<j\le n,\ a_i>a_j\}.
\end{equation}

It is often convenient to use compositions of $n$  to index subsets of the generating set $\{s_1,\ldots,s_{n-1}\}$. A \emph{composition} is a sequence $\alpha=(\alpha_1,\ldots,\alpha_\ell)$ of positive integers  $\alpha_1,\ldots,\alpha_\ell$. The \emph{length} of $\alpha$ is $\ell(\alpha):=\ell$ and the \emph{size} of $\alpha$ is $|\alpha|:=\alpha_1+\cdots+\alpha_\ell$. If the size of $\alpha$ is $n$ then say $\alpha$ is a composition of $n$ and write $\alpha\models n$. 
The only composition of $n=0$ is the \emph{empty composition} $\alpha=\varnothing$.
Assume $n\ge1$. The \emph{descent set} of $\alpha$ is
\[
D(\alpha):=\{\alpha_1,\alpha_1+\alpha_2,\ldots,\alpha_1+\cdots+\alpha_{\ell-1}\}.
\] 
The map $\alpha\mapsto D(\alpha)$ is a bijection between compositions of $n$ and subsets of $[n-1]$. The \emph{complement} of $\alpha$ is the composition $\alpha^c$ of $n$ with $D(\alpha^c)=[n-1]\setminus D(\alpha)$. 

The parabolic subgroup $\SS_\alpha\cong \SS_{\alpha_1}\times\cdots\times\SS_{\alpha_\ell}$ is generated by $\{s_i:i\in D(\alpha^c)\}$. 
The set of minimal representatives of left $\SS_\alpha$-cosets in $\SS_n$ is $\SS^\alpha:=\{w\in \SS_n: D(w)\subseteq D(\alpha)\}$.

A \emph{ribbon} is a connected skew Young diagram without $2\times2$ boxes. A composition $\alpha=(\alpha_1,\ldots,\alpha_\ell)$ can be identified with a ribbon whose rows have length $\alpha_1,\ldots,\alpha_\ell$ from bottom to top. See the example below.

{\scriptsize \[ \begin{matrix} 
 \ydiagram{3+1,3+1,1+3,2} & \qquad\qquad\qquad\qquad &  \ydiagram{1+3,1+1,2,1} \\ \\
\alpha=(2,3,1,1) & & \alpha^c=(1,2,1,3)
\end{matrix} \] }

If $\alpha=(\alpha_1,\ldots,\alpha_\ell)$ and $\beta=(\beta_1,\ldots,\beta_k)$ are two nonempty compositions then define
\begin{eqnarray*}
\alpha\cdot\beta &:=& (\alpha_1,\ldots,\alpha_\ell,\beta_1,\ldots,\beta_k) \qand \\
\alpha\rhd\beta &:=& (\alpha_1,\ldots,\alpha_{\ell-1},\alpha_\ell+\beta_1,\beta_2,\ldots,\beta_k).
\end{eqnarray*}
Let $\alpha\cdot\varnothing := \alpha$ and $\varnothing\cdot\beta:=\beta$. 
Also set $\alpha\rhd\varnothing$ and $\varnothing\rhd\beta$ undefined.
We write $\alpha\cleq\beta$ if $\alpha$ and $\beta$ are compositions of the same size and $D(\alpha)\subseteq D(\beta)$, that is, $\alpha$ is refined by $\beta$. 

The \emph{Malvenuto--Reutenauer algebra}~\cite{MR} is a self-dual graded Hopf algebra whose underlying space is the free $\ZZ$-module $\ZZ\SS:=\bigoplus_{n\ge0} \ZZ\SS_n$ with a basis $\SS:=\bigsqcup_{\,n\geq0}\SS_n$. We will first review its product and coproduct, and then recover them from the linear maps defined in Section~\ref{sec:Coxeter}. 

Let $a=a_1\cdots a_n\in\ZZ^n$ be a word of $n$ integers. Let $i$ and $j$ be two nonnegative integers. We write $a[i,j]:=a_i\cdots a_j$ and denote by $a|[i,j]$ the subword of $a$ obtained by keeping only the letters whose absolute values belong to the interval $[i,j]$. In case $i>j$ we define both $a[i,j]$ and $a|[i,j]$ to be the empty word $\varnothing$.

The \emph{standardization} $\st(a)$ of the word $a$ is the unique permutation $w\in\SS_n$ such that 
\begin{equation}\label{def:st}
w(i)<w(j) \Leftrightarrow a_i \le a_j \quad\text{whenever}\quad 1\le i<j\le n.
\end{equation}
One can obtain $\st(a)$ by reading the letters of $a$ from smallest to largest, breaking up ties from left to right. For example, one has $\st (3223625) =4125736$. 

We identify a set of permutations with the sum of its elements inside $\ZZ\SS$. 
Define
\[ u\shuffle v:= \left\{w\in\SS_{m+n}: w|[1,m]=u,\ \st(w|[m+1,m+n])=v \right\} \qand\]
\[ \unshuffle u := \sum_{0\leq i\leq m} \st(u[1,i])\otimes \st(u[i+1,m])\]
for all $u\in\SS_m$ and $v\in\SS_n$.
This gives a graded Hopf algebra $(\ZZ\SS,\shuffle,\unshuffle)$.
The product $\shuffle$ is often called the \emph{(shifted) shuffle product}. For example, one has 
\[ 21\shuffle {\color{red}12} = 21{\color{red}34} + 2{\color{red}3}1{\color{red}4} + {\color{red}3}21{\color{red}4} + 2{\color{red}34}1 + {\color{red}3}2{\color{red}4}1 + {\color{red}34}21 \qand \]
\[ \unshuffle(2431)=\varnothing\otimes 2431+1\otimes 321+12\otimes 21+132\otimes 1+2431\otimes\varnothing.\]
There is another graded Hopf algebra $(\ZZ\SS,\Cup,\Cap)$ defined by  
\[ u\Cup v := \left\{\,w\in\SS_{m+n}:\st(w[1,m])=u,\ \st(w[m+1,n])=v \,\right\},\]
\[ \Cap\, u := \sum_{0\leq i\leq m} u|[1,i]\otimes\st(u|[i+1,m]),\]
for all $u\in\SS_m$ and $v\in\SS_n$. 
For example, one has 
\[ 21 \Cup {\color{red}12}=21{\color{red}34}+31{\color{red}24}+32{\color{red}14}+41{\color{red}23}+42{\color{red}13}+43{\color{red}12} \qand \]
\[ \Cap\, (2431)=\varnothing\otimes2431+1\otimes132+21\otimes21+231\otimes1+2431\otimes\varnothing.\]

Let $u\in\SS_m$ and $v\in \SS_n$. An element $(u,v)\in\SS_m\times\SS_n$ can be identified with a permutation $u\times v\in\SS_{m+n}$ whose one-line notation is $u(1)\cdots u(m)(m+v(1))\cdots(m+v(n))$. This gives an isomorphism between $\SS_m\times\SS_n$ and the parabolic subgroup $\SS_{m,n}$ of $\SS_{m+n}$, and hence an embedding $\SS_m\times\SS_n\hookrightarrow\SS_{m+n}$. Let $w\in\SS_{m+n}$. Denote by $\unshuffle_m w$ the $m$-th term in the coproduct $\unshuffle w$, and similarly for $\Cap_m w$.
Using the linear maps defined in Definition~\ref{def:MapsZW}, with $S=\{s_1,\ldots,s_{m+n-1}\}$ and $I=S\setminus\{s_m\}$, one can check that
\[ u\shuffle v = \,\bar\mu_I^S(u\times v),\qquad \unshuffle_m w = \,\bar\rho_I^S(w), \]
\[ u\Cup v = \mu_I^S(u\times v),\qquad \Cap_m \,w = \rho_{I}^S(w).\]
Thus Proposition~\ref{prop:IJS} implies the associativity and coassociativity of $\shuffle, \unshuffle, \Cup, \Cap$. Moreover, Proposition~\ref{prop:MR} implies that the two graded Hopf algebras $(\ZZ\SS,\shuffle,\unshuffle)$ and $(\ZZ\SS,\Cup,\Cap)$ are isomorphic to each other via the map $w\mapsto w^{-1}$ for all $w\in\SS$, and dual to each other via the bilinear form $\langle u,v\rangle=\delta_{u,v}$, $\forall u,v\in\SS$. 

\begin{remark}
Our results in Section~\ref{sec:General} do not imply that $(\ZZ\SS,\shuffle,\unshuffle)$ and $(\ZZ\SS,\Cup,\Cap)$ are bialgebras. One needs to verify the compatibility of the products and coproducts directly. In fact, we will obtain analogues of $\shuffle$ and $\unshuffle$ in type B and D, which are not compatible with each other, and similarly for analogues of $\Cup$ and $\Cap$ in type B and D.
\end{remark}

Let $\Sigma(\SS):=\bigoplus_{n\geq0}\Sigma(\SS_n)$ where $\Sigma(\SS_n)$ is the free $\ZZ$-module with a basis consisting of descent classes 
\[ D_\alpha(\SS_n):=\{w\in\SS_n:D(w)=D(\alpha)\}, \qquad\forall \alpha\models n.\]
One has an embedding $\imath:\Sigma(\SS)\hookrightarrow \ZZ\SS$ by inclusion. If $\alpha\models m$ and $\beta\models n$ then  
\[ D_\alpha(\SS_m) \Cup D_\beta(\SS_n) = D_{\alpha\cdot\beta}(\SS_{m+n}) + D_{\alpha\rhd\beta}(\SS_{m+n})\]
by Proposition~\ref{prop:IndDes}, where the last term is treated as zero when $\alpha\rhd\beta$ is undefined.

Let $\Sigma^*(\SS):=\bigoplus_{n\ge0} \Sigma^*(\SS_n)$ where $\Sigma^*(\SS_n)$ is the dual of $\Sigma(\SS_n)$ with a dual basis $\{D^*_\alpha(\SS_n):\alpha\models n\}$. If $w\in\SS_n$, $\alpha\models n$, and $D(w)=D(\alpha)$, then define $D^*_w(\SS_n):=D^*_\alpha(\SS_n)$. Sending $w\in\SS_n$ to $D^*_w(\SS_n)$ gives a surjection $\chi:\ZZ\SS\twoheadrightarrow\Sigma^*(\SS)$ dual to the embedding $\imath:\Sigma(\SS)\hookrightarrow \ZZ\SS$.

Recall from \S\ref{sec:Coxeter} that $\Lambda(\SS_n)$ is the $\ZZ$-span of $\Lambda_\alpha(\SS_n):=\chi'(D_\alpha(\SS_n))$ for all $\alpha\models n$, where $\chi':=\chi\circ(\ )^{-1}$.
Let $\Lambda(\SS):=\bigoplus_{n\ge0}\Lambda(\SS_n)$. 
If $\alpha\models m$ and $\beta\models n$ then define 
\[ \langle \Lambda_\alpha(\SS_m),\Lambda_\beta(\SS_n)\rangle := \begin{cases}
\#\{w\in\SS_n: D(w^{-1}) = D(\alpha),\ D(w)=D(\beta) \}, & \text{if } m=n, \\
0, & \text{if } m\ne n.
\end{cases} \]
By Proposition~\ref{prop:SymForm}, this gives a well-defined nondegenerate symmetric bilinear form on $\Lambda(\SS)$ such that $\imath:\Lambda(\SS)\hookrightarrow\Sigma^*(\SS)$ and $\chi':\Sigma(\SS)\twoheadrightarrow\Lambda(\SS)$ are dual to each other.

Finally, restricting the commutative diagram \eqref{eq:DiamondCoxCat} of representations of categories to type A gives the following commutative diagram of graded Hopf algebras. 
\begin{equation}\label{eq:DiamondCoxA}
\xymatrix @R=16pt @C=7pt {
 & \ZZ\SS \ar@{->>}[rd]^{\chi'} \\
 \Sigma(\SS) \ar@{^(->}[ru]^{\imath} \ar@{->>}[rd]_{\chi'} & & \Sigma^*(\SS) \ar@{<-->}[ll]^{\txt{\small dual}} \\
 & \Lambda(\SS) \ar@{^(->}[ru]_\imath } 
\end{equation}
Reflecting this diagram across the vertical line through $\ZZ\SS$ and $\Lambda(\SS)$ gives a dual diagram. 

\subsection{Free quasisymmetric functions and related results}\label{sec:FQSymA}

Let $(W,S)$ be the Coxeter system of type $A_{n-1}$, where $W=\SS_n$ and $S=\{s_1,\ldots,s_{n-1}\}$. 
Let $E=\mathbb R^n$ be a Euclidean space whose standard basis is $\{e_1,\ldots,e_n\}$. 
One can realize $\SS_n$ as a reflection group of $E$ whose root system $\Phi=\Phi^+\sqcup\Phi^-$ is the disjoint union of $\Phi^+=\{e_j-e_i:1\leq i<j\leq n\}$ and $\Phi^-=-\Phi^+$. The set $\Delta=\{e_{i+1}-e_i:1\leq i\leq n-1\}$ of simple roots corresponds to the generating set $S$ of simple reflections. 

A parset $P$ in the root system $\Phi$ is equivalent to a partial order $P$ on $[n]$ defined by $i<_P j$ if $e_j-e_i\in P$. 
For $w\in\SS_n$, the parset $w\Phi^+$ is equivalent to the total order (linear order) $P_w:w(1)<_w w(2)<_w\cdots<_w w(n)$. The Jordan-H\"older set $\mathcal L(P)$ consists of all linear extensions of the partial order $P$. 
A $P$-partition is a function $f:[n]\to \ZZ$ satisfying 
$i<_P j\Rightarrow f(i)\leq f(j)$ and $(i<_P j,\ i>j) \Rightarrow f(i)<f(j).$ 

Let $\bX=\{\bx_i:i\in\ZZ\}$ be a set of noncommutative variables. For each $w\in\SS_n$ the generating function of $\mathcal A(P_w)$ is 
\[\bF_w:=\sum_{\substack{
f(w(1))\leq\cdots\leq f(w(n)) \\
i\in D(w)\Rightarrow f(w(i))<f(w(i+1)) }}
\bx_{f(1)}\cdots\bx_{f(n)}.\]
Let $\FQSym=\FQSym(\bX)$ be the free $\ZZ$-module with a basis $\{\bF_w:w\in\SS\}$. If $u\in\SS_m$ and $v\in\SS_{n-m}$ then $u\times v\in W_I\cong\SS_m\times\SS_{n-m}$ where $I=S\setminus\{s_m\}$, and thus Proposition~\ref{prop:shuffleW} implies
\[ \bF_u\cdot \bF_v = \bF_{(u\times v)\Phi^+_I}=\sum_{w\in u\shuffle v} \bF_w.\]
Thus $\FQSym$ is a graded algebra isomorphic to $(\ZZ\SS,\shuffle)$. It also has a coproduct defined by
\begin{equation}\label{eq:Delta bFw}
\Delta \bF_w :=\sum_{1\le i\le n} \bF_{\st(w[1,i])}\otimes \bF_{\st(w[i+1,n])},\quad\forall w\in\SS_n.
\end{equation}
Then $\FQSym$ becomes a self-dual graded Hopf algebra isomorphic to the Malvenuto--Reutenauer algebra $(\ZZ\SS,\shuffle,\unshuffle)$ via $\bF_w\mapsto w$ for all $w\in\SS$.

\begin{remark}
Replacing $\bX$ in the above definition of $\FQSym$ with any totally ordered countably infinite set $\mathbf Z$ of noncommutative variables gives a Hopf algebra $\FQSym(\mathbf Z)$ isomorphic to $\FQSym$. In fact, Duchamp, Hivert, and Thibon~\cite{NCSF_VI} used  $\bX_{>0}:=\{\bx_i:i\in\ZZ_{>0}\}$ when they introduced $\FQSym$. We will also use $\bX_{\ge0}:=\{\bx_i:i\in\ZZ_{\ge0}\}$ later.
\end{remark}

\begin{remark}
One can define~\cite[\S8.1]{GrinbergReiner} the coproduct of $\FQSym$ by $\Delta\bF:=\bF(\bX+\bY)$ for all $\bF\in\FQSym$, where $\bX+\bY$ is the union of two totally ordered countable sets of noncommutative variables $\bX=\{\bx_i:i\in\ZZ\}$ and $\bY=\{\by_i:i\in\ZZ\}$ with $\bx_i<\by_j$ and $\bx_i\by_j=\by_j\bx_i$ for all $i,j\in\ZZ$. To avoid technicality we simply use \eqref{eq:Delta bFw} as the definition of the coproduct of $\FQSym$, and will similarly deal with this issue in type B and D.
\end{remark}

Applying Proposition~\ref{prop:InverseP} to $\alpha=e_i-e_j$ for all pairs $i,j\in[n]$ satisfying $1\le i<j\le n$ shows that $f\in\mathcal A(w^{-1}\Phi^+)$ if and only if $\st(f)=w$ for all $w\in\SS_n$. Thus one has
\begin{equation}\label{eq:bs}
 \bs_w:=\bF_{w^{-1}} =\sum_{f\in\ZZ^n:\, \st(f)=w} \bx_f, \quad \forall w\in\SS_n.
\end{equation}
This leads to a Hopf algebra isomorphism $\FQSym\cong (\ZZ\SS,\Cup,\Cap)$ by $\bs_w\mapsto w$ for all $w\in\SS$.

The graded Hopf algebra $\NSym$ of noncommutative symmetric functions is the free associative algebra $\ZZ\langle\bh_1,\bh_2,\ldots\rangle$ generated by $\bh_k=\sum_{i_1\leq\cdots\leq i_k}\bx_{i_1}\cdots \bx_{i_k}$ for all $k\ge1$. It has a free $\ZZ$-basis consisting of the \emph{complete homogeneous noncommutative symmetric functions} $\bh_\alpha:=\bh_{\alpha_1}\cdots \bh_{\alpha_\ell}$ and another basis consisting of the \emph{noncommutative ribbon schur functions}
\[ \bs_\alpha := \sum_{\beta\cleq\alpha}(-1)^{\ell(\alpha)-\ell(\beta)}\bh_\beta\]
where $\alpha=(\alpha_1,\ldots,\alpha_\ell)$ runs through all compositions.
Note that $\bs_\varnothing=\bh_\varnothing:=1$. 

The product of $\NSym$ is determined by the following two equivalent formulas
\[ \bh_\alpha\bh_\beta=\bh_{\alpha\cdot\beta} \qand \bs_\alpha\bs_\beta=\bs_{\alpha\cdot\beta}+\bs_{\alpha\rhd\beta} \]
for all compositions $\alpha$ and $\beta$, where the last term is treated as zero when $\alpha\rhd\beta$ is undefined. 
The coproduct of $\NSym$ is defined by $\Delta \bh_k=\sum_{i=0}^k \bh_i\otimes\bh_{k-i}$, where $\bh_0:=1$. A more explicit coproduct formula was provided in our earlier work~\cite{H0Tab}.

Given a composition $\alpha$, a \emph{tableau} $\tau$ of shape $\alpha$ is a filling of the ribbon diagram of $\alpha$ with integers. Reading these integers from the bottom row to the top row and proceeding from left to right within each row gives the \emph{reading word} $w(\tau)$ of $\tau$. We call $\tau$ a \emph{semistandard tableau of shape $\alpha$} if each row is weakly increasing from left to right and each column is strictly increasing from top to bottom. One sees that $\bs_\alpha$ is the sum of $\bx_{w(\tau)}$ for all semistandard tableaux $\tau$ of shape $\alpha$. Moreover, each $f\in\ZZ^n$ corresponds to a unique tableau $\tau$ of shape $\alpha$ such that $w(\tau)=f$; one has $D(\st(f))=D(\alpha)$ if and only if $\tau$ is semistandard. Thus $\bs_\alpha$ equals the sum of $\bs_w$ for all $w$ in the descent class of $\alpha$. It follows that $\NSym$ is a Hopf subalgebra of $\FQSym$ isomorphic to the descent algebra $\Sigma(\SS)$.

Let $X_{>0}=\{x_1,x_2,\ldots\}$ be a set of commutative variables. The graded Hopf algebra of quasisymmetric functions is defined as $\QSym := \chi(\FQSym)$, where $\chi$ replaces $\bX$ with $X_{>0}$. If $\alpha=(\alpha_1,\ldots,\alpha_\ell)\models n$ then one has the \emph{monomial quasisymmetric function}
\[ M_\alpha:=\sum_{0< i_1<\cdots< i_\ell} x_{i_1}^{\alpha_1}\cdots x_{i_\ell}^{\alpha_\ell} \]
and the \emph{fundamental quasisymmetric function}
\[ F_\alpha:=\sum_{\alpha\cleq\beta} M_\beta = \sum_{ \substack{0<i_1\le \cdots\le i_n \\ j\in D(\alpha)\Rightarrow i_j<i_{j+1} }} x_{i_1} \cdots x_{i_n}.\]
Ones sees that $F_w:=\chi(\bF_w)$ equals $F_\alpha$ if $w$ is in the descent class of $\alpha$. Hence $\QSym$ has two bases $\{M_\alpha\}$ and $\{F_\alpha\}$ where $\alpha$ runs through all compositions. The product and coproduct of $F_\alpha$ can be easily obtained by applying $\chi$ to $\FQSym$. Thus $\QSym$ is isomorphic to the dual $\Sigma^*(\SS)$ of the descent algebra $\Sigma(\SS)$. The self-duality of $\FQSym$ induces the duality between $\QSym$ and $\NSym$ via $\langle \bh_\alpha,M_\beta\rangle =\langle \bs_\alpha,F_\beta\rangle:=\delta_{\alpha,\beta}$ for all compositions $\alpha$ and $\beta$.

A \emph{partition} $\lambda$ of $n$, denoted by $\lambda\vdash n$, is a weakly decreasing sequence of positive integers $\lambda=(\lambda_1,\ldots,\lambda_\ell)$ such that its \emph{size} $|\lambda|:=\lambda_1+\cdots+\lambda_\ell=n$. Denote by $\lambda(\alpha)$ be the partition obtained from a composition $\alpha$ by rearranging its parts. 
One sees that two parabolic subgroups $\SS_\alpha$ and $\SS_\beta$ are conjugate if and only if $\lambda(\alpha) = \lambda(\beta)$.

The graded Hopf algebra $\Sym:=\chi(\NSym)$ of symmetric functions is the $\ZZ$-span of the \emph{ribbon Schur functions} $s_\alpha:=\chi(\bs_\alpha)$ for all compositions $\alpha$, where $\chi:\NSym\twoheadrightarrow\Sym$ is a surjection of Hopf algebras defined by replacing $\bX$ with $X_{>0}$.
One sees that $\Sym$ is isomorphic to $\Lambda(\SS)$ defined in Section~\ref{sec:Coxeter}.
If $\lambda$ is a partition then the \emph{complete homogeneous symmetric function} $h_\lambda:=\chi(\bh_\alpha)$ is well defined for any composition $\alpha$ with $\lambda(\alpha)=\lambda$. 
Hence $\Sym$ has a free $\ZZ$-basis consisting of $h_\lambda$ for all partitions $\lambda$.
Another free $\ZZ$-basis for $\Sym$ consists of the \emph{monomial symmetric functions} $m_\lambda$ for all partitions $\lambda$, where $m_\lambda$ is the sum of $M_\alpha$ for all compositions $\alpha$ with $\lambda(\alpha)=\lambda$.
Hence $\Sym$ is a Hopf subalgebra of $\QSym$.
The product and coproduct of $\Sym$ can be obtained from $\QSym$ and $\NSym$ by Corollary~\ref{cor:Sym}.
The pairing between $\NSym$ and $\QSym$ induces a bilinear form on $\Sym$ such that $\langle h_\lambda,m_\lambda\rangle = \delta_{\lambda,\mu}$ for all partitions $\lambda$ and $\mu$.
With this bilinear form $\Sym$ becomes self-dual.
The inclusion $\imath: \Sym\hookrightarrow\QSym$ and the surjection $\chi:\NSym\twoheadrightarrow\Sym$ are also dual to each other.

In summary, one has the following commutative diagram of graded Hopf algebras isomorphic to \eqref{eq:DiamondCoxA}.
\begin{equation}\label{eq:DiamondFQSymA}
\DiamondFQSym{}
\end{equation}

\subsection{Representation theory}\label{sec:GroupA}

We first briefly review the (complex) representation theory of $\SS_n$; see, e.g., Stanley~\cite[Chapter 7]{EC2} for details.
The group algebra $\CC\SS_n$ is semisimple. The simple $\CC\SS_n$-modules $S_\lambda$ are indexed by partitions $\lambda\vdash n$. The Grothendieck group $G_0(\CC\SS_\bullet)$ of the tower $\CC\SS_0\hookrightarrow \CC\SS_1\hookrightarrow \CC\SS_2\hookrightarrow\cdots$ of algebras is a graded Hopf algebra, whose product and coproduct are given by induction and restriction along natural embeddings $\SS_m\times\SS_n\hookrightarrow \SS_{m+n}$. This Hopf algebra is self-dual under the bilinear form defined by $\langle S_\lambda,S_\mu\rangle :=\delta_{\lambda,\mu}$ for all partitions $\lambda$ and $\mu$. 
The \emph{Frobenius characteristic} is a graded Hopf algebra isomorphism $G_0(\CC\SS_\bullet) \cong \Sym$ defined by sending $S_\lambda$ to the Schur function $s_\lambda$ for all partitions $\lambda$. 

Now let $H_n(0)$ be the $0$-Hecke algebra of the Coxeter system $(W,S)$ of type $A_{n-1}$, where $W=\SS_n$ and $S=\{s_1,\ldots,s_{n-1}\}$. The generators $\pi_1,\ldots,\pi_{n-1}$ for $H_n(0)$ can be interpreted as the \emph{bubble-sorting operators}\,\!: $\pi_i$ swaps adjacent positions $a_i$ and $a_{i+1}$ in a word $a_1\cdots a_n\in\ZZ^n$ if $a_i<a_{i+1}$, or fixes the word otherwise. 

The projective indecomposable $H_n(0)$-modules and simple $H_n(0)$-modules are given by $\P_\alpha:=\P_I^S$ and $\C_\alpha:=\C_I^S$, respectively,  where $I=\{s_i:i\in D(\alpha)\}$, for all $\alpha\models n$. One can realize $\P_\alpha$ as the space of \emph{standard tableaux of ribbon shape} $\alpha$ with an appropriate $H_n(0)$-action~\cite{H0Tab}. 

The parabolic subalgebra $H_\alpha(0)$ of $H_n(0)$ is generated by $\{\pi_i:i\in [n-1]\setminus D(\alpha)\}$. One has an embedding $H_m(0)\otimes H_n(0)\cong H_{m,n}(0) \subseteq H_{m+n}(0)$ if $m$ and $n$ are nonnegative integers.

Associated with the tower of algebras $H_\bullet(0): H_0(0)\hookrightarrow H_1(0)\hookrightarrow H_2(0)\hookrightarrow \cdots$ are Grothendieck groups
\[ G_0(H_\bullet(0)):=\bigoplus_{n\geq0}G_0(H_n(0)) \qand K_0(H_\bullet(0)):=\bigoplus_{n\geq0}K_0(H_n(0)).\]
The Grothendieck groups $G_0(H_\bullet(0))$ and $K_0(H_\bullet(0))$ are both graded Hopf algebras, whose product $\odot$ and coproduct $\Delta$ are defined by 
\[ M \odot N := (M\otimes N) \uparrow\,_{H_{m,n}(0)}^{H_{m+n}(0)} \qand
 \Delta (M) := \sum_{0\le i\le m} M\downarrow\,_{H_{i,m-i}(0)}^{H_{m}(0)} \]
for all finitely generated (projective) modules $M$ and $N$ over $H_m(0)$ and $H_n(0)$, respectively.
Using the linear maps $\bar\mu_I^S:G_0(H_{W_I}(0))\to G_0(H_W(0))$ and $\bar\rho_I^S:G_0(H_W(0))\to G_0(H_{W_I}(0))$ defined in \eqref{def:IndResG0}, with $S=\{s_1,\ldots,s_{m+n-1}\}$ and $I=S\setminus\{s_m\}$, one has
\[ M\odot N = \bar\mu_I^S(M\otimes N) \qand \Delta_m (Q) := Q\downarrow\,_{H_{m,n}(0)}^{H_{m+n}(0)} = \bar\rho_I^S(Q) \]
where $Q$ is a finitely generated $H_{m+n}(0)$-module.
Also recall from Proposition~\ref{prop:IndResG0K0} that $\bar\mu_I^S$ and  $\bar\rho_I^S$ restrict to $\mu_I^S:K_0(H_{W_I}(0))\to K_0(H_W(0))$ and $\rho_I^S:K_0(H_W(0))\to K_0(H_{W_I}(0))$.
If $M$, $N$, and $Q$ are all projective then 
\[ M\odot N = \mu_I^S(M\otimes N) \qand \Delta_m (Q) = \rho_I^S(Q).\]


Specializing the natural isomorphisms $\Ch:\mathcal G_0\to \mathcal QSym$ and $\bch:\mathcal K_0\to \mathcal NSym$ of representations of the category $\mathcal Cox$ and its dual $\mathcal Cox^{op}$ to type A gives 
\[\begin{matrix}
\Ch: & G_0(H_\bullet(0)) & \xrightarrow\sim & \QSym & \qand & \bch: & K_0(H_\bullet(0)) & \xrightarrow\sim & \NSym \\
& \C_\alpha & \mapsto & F_\alpha & & & \P_\alpha & \mapsto & \bs_\alpha.
\end{matrix}\]
These two characteristic maps in type A were studied by Krob and Thibon~\cite{KrobThibon} and others.  They are isomorphisms of graded Hopf algebras by Theorem~\ref{thm:GrH0}.

Lastly, Proposition~\ref{prop:PtoC} implies a result of Krob and Thibon~\cite{KrobThibon}: $\Ch(\P_\alpha)=s_\alpha$, $\forall \alpha\models n$.

\section{Type B}\label{sec:B}

In this section we apply our results in Section~\ref{sec:General} to type B and get some new results.

\subsection{Malvenuto--Reutenauer algebra and descent algebra of type B}\label{sec:MRBA}

A \emph{signed permutation} $w$ of $[n]$ is a bijection of the set $[\pm n]:=\{\pm1,\ldots,\pm n\}$ onto itself such that $w(-i)=-w(i)$ for all $i\in[\pm n]$. 
We set $w(0)=0$. Since $w$ is determined by where it sends $1,\ldots,n$, one can identify $w$ with $w(1)\cdots w(n)$ or $[w(1),\ldots,w(n)]$, where a negative integer $-k<0$ is often written as $\bar k$.
This is the \emph{window notation} of $w$.
 
The \emph{hyperoctahedral group} $\B_n$ consists of all signed permutations of $[n]$.
It is generated by $s_0=s_0^B:=\bar12\cdots n$ and $s_i:=[1,\ldots,i-1,i+1,i,i+2,\ldots,n]$ for all $i\in[n-1]$.
The parabolic subgroup of $\B_n$ generated by $s_1,\ldots,s_{n-1}$ is isomorphic to $\SS_n$.
The pair $(\B_n,\{s_0,\ldots,s_{n-1}\})$ is the finite irreducible Coxeter system of type $B_n$ whose Coxeter diagram is below.
\[ \xymatrix @C=12pt{
s_0 \ar@{=}[r] & s_1 \ar@{-}[r] & s_2 \ar@{-}[r] & \cdots \ar@{-}[r] & s_{n-2} \ar@{-}[r] & s_{n-1}
} \]

Let $w\in \B_n$. The descent set of $w$ is $\{s_i: i\in\{0,1,\ldots,n-1\},\ w(i)>w(i+1)\}$. Given a word $a=(a_1,\ldots,a_n)\in\ZZ^n$, we define $\neg(a) := \#\Neg(a)$ and $\nsp(w):=\#\Nsp(w)$ where
\begin{eqnarray*}
\Neg(a) &:=& \{i\in[n]:a_i<0\} \qand \\
\Nsp(a) &:=& \{(i,j):1\leq i<j\leq n,\ a_i+a_j<0\}.
\end{eqnarray*}
Then the length of $w$ equals $\inv(w)+\neg(w)+\nsp(w)$. Note that $\inv(w)$ is defined by \eqref{eq:inv}.

A \emph{pseudo-composition of $n$} is a sequence $\alpha=(\alpha_1,\ldots,\alpha_\ell)$ of integers such that $\alpha_1\geq0$, $\alpha_2,\ldots,\alpha_\ell>0$, and the \emph{size} $|\alpha|:=\alpha_1+\cdots+\alpha_\ell$ equals $n$. This is denoted by $\alpha\modelsB n$. The \emph{length} of $\alpha$ is $\ell(\alpha):=\ell$. The \emph{descent set} of $\alpha$ is $D(\alpha):=\{\alpha_1,\alpha_1+\alpha_2,\ldots,\alpha_1+\cdots+\alpha_{\ell-1}\}.$ The map $\alpha\mapsto D(\alpha)$ is a bijection between pseudo-compositions of $n$ and the subsets of $\{0,1,\ldots,n-1\}$. Analogously to type A, it is convenient to index subsets of the generating set $\{s_0,s_1,\ldots,s_{n-1}\}$ of $\B_n$ by pseudo-compositions of $n$. 

If $\alpha=(\alpha_1,\ldots,\alpha_\ell)\modelsB n$ then the \emph{parabolic subgroup} $\B_\alpha\cong\B_{\alpha_1}\times\SS_{\alpha_2}\times\cdots\times\SS_{\alpha_\ell}$ of $\B_n$ is generated by $\{s_i:0\le i\le n-1,\ i\notin D(\alpha)\}$.
The minimal representatives for left $\B_\alpha$-cosets in $\B_n$ form the set $(\B)^\alpha:=\{w\in\B_n:D(w)\subseteq D(\alpha)\}$.

A pseudo-composition $\alpha=(\alpha_1,\ldots,\alpha_\ell)$ can be identified with a \emph{pseudo-ribbon} in the following way. If $\alpha_1>0$ then $\alpha$ can be viewed as a composition which corresponds to a ribbon, and we draw an extra 0-box to the left of the bottom row of this ribbon. If $\alpha_1=0$ then $(\alpha_2,\ldots,\alpha_\ell)$ corresponds to a ribbon, and we draw an extra 0-box below the leftmost column of this ribbon. Some examples are below.

{\scriptsize \[ {(2,3,1,1) \quad \leftrightarrow \quad} \raisebox{-12pt}{\young(::::\hfill,::::\hfill,::\hfill\hfill\hfill,0\hfill\hfill)}
\qquad\qquad\qquad 
(0,2,3,1,1) \quad \leftrightarrow \quad \raisebox{-15pt}{\young(:::\hfill,:::\hfill,:\hfill\hfill\hfill,\hfill\hfill,0)} \] }

If $\alpha=(\alpha_1,\ldots,\alpha_\ell)\modelsB m\ge1$ and $\beta=(\beta_1,\ldots,\beta_k)\models n\ge1$ then define
\begin{eqnarray*}
\alpha\cdot\beta &:=& (\alpha_1,\ldots,\alpha_\ell,\beta_1,\ldots,\beta_k) \qand \\
\alpha\rhd\beta &:=& (\alpha_1,\ldots,\alpha_{\ell-1},\alpha_\ell+\beta_1,\beta_2,\ldots,\beta_k).
\end{eqnarray*}
Let $\alpha\cdot\varnothing:=\alpha$ and $\varnothing\cdot\beta:=\beta$.
Also set $\alpha\rhd\varnothing$ and $\varnothing\rhd\beta$ undefined.
If $\alpha$ and $\beta$ are pseudo-compositions of the same size and $D(\alpha)\subseteq D(\beta)$ then write $\alpha\cleq\beta$. 

We define the \emph{signed standardization} of a word $a\in \ZZ^n$, denoted by $\stB(a)$, to be the unique signed permutation $w\in \B_n$ such that $\Neg(a)=\Neg(w)$ and the following holds whenever $1\leq i<j\leq n$:
\begin{equation}\label{def:bst}
|w(i)|<|w(j)| \Leftrightarrow |a_i|<|a_j| \textrm{ or } a_i=a_j \geq 0 \textrm{ or } a_i=-a_j<0.
\end{equation}
Proposition~\ref{prop:bsB} provides an interpretation of the signed standardization by P-partition theory.

One can obtain $\stB(a)$ by reading the letters in $a$ in the increasing order of their absolute values, breaking up ties first from right to left for the negative letters and then from left to right for the positive ones, and finally inserting negative signs at the same positions as in $a$. For example, one has $\stB (2\bar43\bar2020\bar2) = 5\bar87\bar4162\bar3$.	

Let $a=a_1\cdots a_n\in\ZZ^n$. Suppose that $\Neg(a)=\{i_1,\ldots,i_k\}$, where $i_1<\cdots<i_k$, and $[n]\setminus\Neg(a)=\{j_1,\ldots,j_{n-k}\}$, where $j_1<\cdots<j_{n-k}$. We define 
\[ \hat a:=\overline{a_{i_k}}\cdots\overline{a_{i_1}}\ a_{j_1}\cdots a_{j_{n-k}}.\]
One sees that $\stB(a)=w$ if and only if $\Neg(a)=\Neg(w)$ and $\st(\hat a)=\hat w$.

An element $(u,v)\in \B_m\times\SS_n$ can be identified with a signed permutation $u\times v\in\B_{m+n}$ defined by the window notation $[u(1),\ldots,u(m),m+v(1),\ldots,m+v(n)]$. 
This gives an isomorphism between $\B_m\times\SS_n$ and the parabolic subgroup $\B_{m,n}$ of $\B_{m+n}$ generated by the set $\{s_i:0\le i\le m+n-1, i\ne m\}$.
Thus one has an embedding $\B_m\times\SS_n\hookrightarrow \B_{m+n}$. The set of minimal representatives for left $\B_{m,n}$-cosets in $\B_{m+n}$ is 
\[ (\B)^{m,n}:=\{ z\in\B_{m+n}: 0<z(1)<\cdots<z(m),\ z(m+1)<\cdots<z(m+n)\}.\]

\begin{proposition}\label{prop:BA}
Every element $w\in\B_{m+n}$ can be written uniquely as $w=(u\times v)z$ where $u\in \B_m$, $v\in \SS_n$, and $z^{-1}\in(\B)^{m,n}$. Moreover, one has 
\[\begin{matrix} 
u & = & w|[1,m],\quad & v & = & \st (\hat{w}|[m+1,m+n]),\\
u^{-1} & = & \stB(w^{-1}[1,m]),\quad & v^{-1} & = & \st(w^{-1}[m+1,m+n]).
\end{matrix}\]
\end{proposition}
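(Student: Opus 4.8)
The plan is to deduce existence and uniqueness from the parabolic factorization of Proposition~\ref{prop:parabolic}, and then to pin down the four explicit formulas by tracking how the minimal coset representative $z$ permutes absolute values and signs. First I would set $S=\{s_0,s_1,\dots,s_{m+n-1}\}$ and $I=S\setminus\{s_m\}$, so that $W_I=\B_{m,n}\cong\B_m\times\SS_n$ is exactly the set of elements $u\times v$ with $u\in\B_m$ and $v\in\SS_n$. A short descent computation shows $W^I=\{x:D(x)\subseteq\{s_m\}\}=(\B)^{m,n}$, hence $z^{-1}\in(\B)^{m,n}$ is equivalent to $z\in{}^IW$. Thus the asserted factorization $w=(u\times v)z$ is precisely the decomposition $w=w_I\cdot{}^Iw$ of Proposition~\ref{prop:parabolic}, with $w_I=u\times v$ and $z={}^Iw$, and existence and uniqueness follow immediately.

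Next I would record the structural feature of $y=z^{-1}\in(\B)^{m,n}$: since $0<y(1)<\cdots<y(m)$ and $y(m+1)<\cdots<y(m+n)$, the map $y$ is strictly order-preserving and sign-preserving on $\{\pm1,\dots,\pm m\}$ and strictly order-preserving on $\{m+1,\dots,m+n\}$. To obtain the inverse-word formulas I would write $w^{-1}=z^{-1}(u^{-1}\times v^{-1})$, so that the first $m$ letters of $w^{-1}$ are $z^{-1}$ applied to the window of $u^{-1}$ and the last $n$ letters are $z^{-1}$ applied to the shifted window of $v^{-1}$. Because $z^{-1}$ preserves relative order and signs on the relevant ranges while $u^{-1}$ and $v^{-1}$ have windows with distinct absolute values, the standardizations recover the original permutations: $\stB(w^{-1}[1,m])=u^{-1}$ and $\st(w^{-1}[m+1,m+n])=v^{-1}$.

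For the remaining two formulas I would instead use $w(i)=(u\times v)(z(i))$ together with the fact that $u\times v$ preserves the classes $\{\,|\cdot|\le m\,\}$ and $\{\,|\cdot|>m\,\}$, so $|w(i)|\le m$ iff $|z(i)|\le m$. From $z^{-1}\in(\B)^{m,n}$ the small absolute values of $z$ occur in the window only as the positive values $1,\dots,m$, located at positions $z^{-1}(1)<\cdots<z^{-1}(m)$ (the value $-k$ sits at the negative position $-z^{-1}(k)$, outside the window); reading $w$ at these positions gives $u(1),\dots,u(m)$, so $u=w|[1,m]$. For $v$ I would first check that $\hat w|[m+1,m+n]=\widehat{w|[m+1,m+n]}$, since restriction by absolute value commutes with the sign-grouping defining the hat, and then show this word equals $(m+v(1),\dots,m+v(n))$: the large values of $z$ split according to the sign of $z^{-1}(m+k)$, the negatives sitting at positions $-z^{-1}(m+k)$ and the positives at $z^{-1}(m+k)$, and using the monotonicity $z^{-1}(m+1)<\cdots<z^{-1}(m+n)$ one sees that the negated-reversed negative block followed by the positive block reassembles into $(m+v(1),\dots,m+v(n))$, whence $v=\st(\hat w|[m+1,m+n])$.

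The routine parts are the descent identification of $(\B)^{m,n}$ and the order-preservation arguments yielding the $w^{-1}$ formulas. The main obstacle is the $v$-formula: the hat operation reorders entries by sign, so one must verify that the interplay between the reversal built into the hat and the monotone but possibly negative sequence $z^{-1}(m+1)<\cdots<z^{-1}(m+n)$ correctly reconstitutes the increasing window of $v$. I expect this sign-and-position bookkeeping to be the only genuinely delicate step.
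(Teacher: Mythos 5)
Your proposal is correct and follows essentially the same route as the paper: existence and uniqueness via Proposition~\ref{prop:parabolic} applied to the parabolic subgroup $\B_{m,n}$, the formulas for $u$ and $v$ by reading $w$ at the positions $z^{-1}(1)<\cdots<z^{-1}(m)$ and $z^{-1}(m+1)<\cdots<z^{-1}(m+n)$ with the same sign bookkeeping for the hat operation, and the inverse formulas from $w^{-1}=z^{-1}(u^{-1}\times v^{-1})$ using that $z^{-1}$ preserves signs and relative order on the relevant ranges. Your extra remarks (the descent identification of $(\B)^{m,n}$ with $W^I$ and the commutation of the hat with restriction) only make explicit what the paper leaves implicit.
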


\begin{proof}
Applying Proposition~\ref{prop:parabolic} to the parabolic subgroup $\B_{m,n}$ shows that every element $w\in \B_{m+n}$ can be written uniquely as $w=(u\times v)z$ where $u\in \B_m$, $v\in \SS_n$, and $z^{-1}\in(\B)^{m,n}$.

Since $w(z^{-1}(i))=u(i)$ for all $i\in[m]$ and $0<z^{-1}(1)<\cdots<z^{-1}(m)$, one can obtain $u(1),\cdots,u(m)$ by reading from left to right those letters in $w(1),\ldots,w(m+n)$ with absolute values in $[m]$. This implies $u=w|[1,m]$. Similarly, since $w(z^{-1}(m+j)) = m+v(j)$ for all $j\in[n]$ and $z^{-1}(m+1)<\cdots<z^{-1}(m+n)$, one can obtain $m+v(1),\ldots,m+v(n)$ by reading those letters in $w(1),\ldots, w(m+n)$ with absolute values in $[m+1,m+n]$, beginning with the negative ones from right to left, then followed by the positive ones from left to right. This implies $\st(\hat{w}|[m+1,m+n])=v$.

On the other hand, one has $w^{-1}=z^{-1}(u^{-1}\times v^{-1})$. Since $0<z^{-1}(1)<\cdots<z^{-1}(m)$, one sees that $w^{-1}(i)=z^{-1}(u^{-1}(i))$ and $u^{-1}(i)$ have the same sign for all $i\in[m]$, and if $1\le i<j\le n$ then
\[ |w^{-1}(i)|<|w^{-1}(j)| \Leftrightarrow |u^{-1}(i)|<|u^{-1}(j)|.\]
Thus $u^{-1}=\stB(w^{-1}[1,m])$. 
Similarly, one has $w^{-1}(m+i)=z^{-1}(m+v^{-1}(i))$ for all in $[n]$. It follows from $z^{-1}(m+1)<\cdots<z^{-1}(m+n)$ that
\[ w^{-1}(m+i)<w^{-1}(m+j) \Leftrightarrow v^{-1}(i)<v^{-1}(j)\]
whenever $1\le i<j\le n$. Hence $v^{-1}=\st(w^{-1}[m+1,m+n])$. 
 \end{proof}

Let $\B:=\bigsqcup_{\,n\ge0}\B_n$. We use Proposition~\ref{prop:BA} to realize $\ZZ\B=\bigoplus_{n\geq0}\ZZ\B_n$ as a dual graded right module and comodule over the Malvenuto--Reutenauer algebra $\ZZ\SS$. We define
\[ u\shuffleB v := \left\{ w\in\B_{m+n}: w|[1,m]=u,\ \st (\hat w|[m+1,m+n] )=v \right\}, \]
\[ u\CupB v := \left\{ w\in\B_{m+n}:\stB ( w[1,m] )=u,\ \st( w[m+1,m+n] )=v \right\}, \]
\[ \unshuffleB u :=\sum_{0\le i\le m} \stB (u[1,i])\otimes\st (u[i+1,m] ), \]
\[ \CapB u := \sum_{0\le i\le m} u|[1,i]\otimes \st( \hat{u}|[i+1,m] ) \]
for all $u\in \B_m$ and $v\in\SS_n$. For example, one has
\[ 
\bar1\shuffleB {\color{red}21} =
\bar1{\color{red}32}+{\color{red}3}\bar1{\color{red}2}+{\color{red}32}\bar1
+\bar1{\color{red}\bar32}+{\color{red}\bar3}\bar1{\color{red}2}+{\color{red}\bar32}\bar1
+\bar1{\color{red}2\bar3}+{\color{red}2}\bar1{\color{red}\bar3}+{\color{red}2\bar3}\bar1
+\bar1{\color{red}\bar2\bar3}+{\color{red}\bar2}\bar1{\color{red}\bar3}+{\color{red}\bar2\bar3}\bar1,
\]
\[
\bar1\CupB {\color{red}21} = 
\bar1{\color{red}32}+\bar2{\color{red}31}+{\bar3}{\color{red}21}
+\bar1{\color{red}3\bar2}+{\bar2}{\color{red}3\bar1}+{\bar3}{\color{red}2\bar1}
+{\bar1}{\color{red}2\bar3}+{\bar2}{\color{red}1\bar3}+{\bar3}{\color{red}1\bar2}
+{\bar1}{\color{red}\bar2\bar3}+{\bar2}{\color{red}\bar1\bar3}+{\bar3}{\color{red}\bar1\bar2},
\]
\[
\unshuffleB 2\bar4\bar31=\varnothing\otimes4123 +1\otimes123
+1\bar2\otimes12 +1\bar3\bar2\otimes1 +2\bar4\bar31\otimes\varnothing,
\]
\[
\CapB 2\bar4\bar31 = \varnothing\otimes3421+1\otimes231+
21\otimes12+2\bar31\otimes1+2\bar4\bar31\otimes\varnothing.
\]

Let $u\in\B_m$, $v\in\SS_n$, and $w\in\B_{m+n}$. Denote by $\unshuffle^B_m w$ the $m$-th term in $\unshuffleB\! w$, and similarly for $\Cap^B_m w$.  
Using Proposition~\ref{prop:BA} and the linear maps defined in Definition~\ref{def:MapsZW}, with $S=\{s_0,\ldots,s_{m+n-1}\}$ and $I=S\setminus\{s_m\}$, one has
\[ u\shuffleB v = \,\bar\mu_I^S(u\times v),\qquad \unshuffle^B_m w = \,\bar\rho_I^S(w), \]
\[ u\CupB v = \mu_I^S(u\times v),\qquad \Cap^B_m w = \rho_{I}^S(w).\]

\begin{proposition}\label{prop:MRBA}
(i) $(\ZZ\B,\shuffleB\!,\unshuffleB\! )$ is a graded right module and comodule over the graded Hopf algebra $(\ZZ\SS,\shuffle,\unshuffle)$.

\noindent(ii) $(\ZZ\B,\CupB,\CapB)$ is a graded right module and comodule over the graded Hopf algebra $(\ZZ\SS,\Cup,\Cap)$.

\noindent(iii) $(\ZZ\B,\shuffleB,\unshuffleB )$ is dual to $(\ZZ\B,\CupB,\CapB)$ via the pairing  $\langle u,v\rangle:=\delta_{u,v}$, $\forall u,v\in\B$.

\noindent(iv) Sending $w$ to $w^{-1}$ gives an isomorphism between $(\ZZ\B,\shuffleB,\unshuffleB )$ and $(\ZZ\B,\CupB,\CapB)$.
\end{proposition}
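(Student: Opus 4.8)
The plan is to deduce all four parts from the two structural results already established for the general maps of Definition~\ref{def:MapsZW}, namely the functoriality Proposition~\ref{prop:IJS} and the duality Proposition~\ref{prop:MR}, via the dictionary recorded just before the statement: with $S=\{s_0,\ldots,s_{m+n-1}\}$ and $I=S\setminus\{s_m\}$ one has $u\shuffleB v=\bar\mu_I^S(u\times v)$, $\unshuffle^B_m w=\bar\rho_I^S(w)$, $u\CupB v=\mu_I^S(u\times v)$, and $\Cap^B_m w=\rho_I^S(w)$, together with the type A identities $u\shuffle v=\bar\mu_I^S(u\times v)$ and $\unshuffle_m w=\bar\rho_I^S(w)$ from \S\ref{sec:MRA}. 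Thus $\shuffleB,\unshuffleB$ are the $\bar\mu,\bar\rho$ maps and $\CupB,\CapB$ are the $\mu,\rho$ maps, and the type A operations on $\ZZ\SS$ are themselves $\bar\mu,\bar\rho$ maps.

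For the module axiom in (i), I would fix $u\in\B_\ell$, $v\in\SS_m$, $w\in\SS_n$ and show $(u\shuffleB v)\shuffleB w=u\shuffleB(v\shuffle w)$ by realizing both sides as $\bar\mu_I^S(u\times v\times w)$, where now $S=\{s_0,\ldots,s_{\ell+m+n-1}\}$ and $I=S\setminus\{s_\ell,s_{\ell+m}\}$, so that $W_I\cong\B_\ell\times\SS_m\times\SS_n$. Taking the chain $I\subseteq J_1\subseteq S$ with $J_1=S\setminus\{s_{\ell+m}\}$ (so $W_{J_1}\cong\B_{\ell+m}\times\SS_n$) gives $\bar\mu_I^S=\bar\mu_{J_1}^S\circ\bar\mu_I^{J_1}$ by Proposition~\ref{prop:IJS}, and the inner map realizes $\shuffleB$ on the first two factors while fixing the $\SS_n$ factor, so the composite computes the left-hand side; taking instead $J_2=S\setminus\{s_\ell\}$ (so $W_{J_2}\cong\B_\ell\times\SS_{m+n}$) gives $\bar\mu_I^S=\bar\mu_{J_2}^S\circ\bar\mu_I^{J_2}$, whose inner map realizes the type A shuffle on the last two factors while fixing $u$, computing the right-hand side. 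The unit axiom is immediate. The comodule coassociativity $(\unshuffleB\otimes\id)\circ\unshuffleB=(\id\otimes\unshuffle)\circ\unshuffleB$ is proved by the same device applied to the identity $\bar\rho_I^{J}\circ\bar\rho_J^S=\bar\rho_I^S$ of Proposition~\ref{prop:IJS}, which records the two ways of restricting $\B_{\ell+m+n}$ to $W_I$ through the intermediate parabolics $W_{J_1}$ and $W_{J_2}$. Part (ii) is the verbatim analogue with $\mu,\rho$ in place of $\bar\mu,\bar\rho$, or, more economically, it follows from (i) by transport of structure through the isomorphism of (iv), since on $\ZZ\SS$ the map $(\ )^{-1}$ intertwines $\shuffle\leftrightarrow\Cup$ and $\unshuffle\leftrightarrow\Cap$.

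Parts (iii) and (iv) are then immediate consequences of Proposition~\ref{prop:MR}. For (iii), the form $\langle u,v\rangle=\delta_{u,v}$ is the standard self-dual pairing on $\ZZ\B$, and Proposition~\ref{prop:MR} says $\bar\mu_I^S$ is dual to $\rho_I^S$ and $\mu_I^S$ is dual to $\bar\rho_I^S$; translating through the dictionary gives $\langle u\shuffleB v,\,w\rangle=\langle u\otimes v,\,\Cap^B_m w\rangle$ and $\langle \unshuffle^B_m w,\,u\otimes v\rangle=\langle w,\,u\CupB v\rangle$, which together with the type A duality of \S\ref{sec:MRA} say exactly that the module of one structure is dual to the comodule of the other. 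For (iv), the commuting squares of Proposition~\ref{prop:MR} give $(u\CupB v)^{-1}=u^{-1}\shuffleB v^{-1}$ and $(\Cap^B_m w)^{-1}=\unshuffle^B_m(w^{-1})$, so $w\mapsto w^{-1}$ intertwines $(\CupB,\CapB)$ with $(\shuffleB,\unshuffleB)$; being an involution, it is the desired isomorphism.

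The one genuinely non-formal point, and where I expect to spend the most care, is the claim that the intermediate maps $\bar\mu_I^{J_1}$ and $\bar\mu_I^{J_2}$ (and their $\rho,\bar\rho,\mu$ analogues) factor as a single type B or type A operation tensored with the identity on the complementary factor. This rests on the fact that, for a direct product $W_J\cong A\times B$ with $W_I\cong A'\times B$ and $A'\le A$ a parabolic subgroup differing from $A$ by a single generator, the minimal coset representatives are supported entirely in the $A$ factor, because length is additive across the product. This is precisely where type B differs in spirit from type A: the parabolic $\B_{m,n}\cong\B_m\times\SS_n$ mixes a type B factor with a type A factor, yet because the right-hand factor is genuinely of type A and the type A operations $\shuffle,\unshuffle$ are themselves the maps $\bar\mu,\bar\rho$, the two towers interface cleanly and Proposition~\ref{prop:IJS} applies without modification.
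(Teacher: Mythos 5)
Your proposal is correct and follows essentially the same route as the paper: both reduce the (co)associativity to the functoriality $\mu_J^S\circ\mu_I^J=\mu_I^S$ (and its barred/starred variants) from Proposition~\ref{prop:IJS} applied through an intermediate parabolic, and both obtain parts (iii) and (iv) directly from the duality and inversion squares of Proposition~\ref{prop:MR}. The only cosmetic difference is that the paper verifies the module axiom first for $\CupB$ (also giving an explicit description of the triple product as a set of signed permutations) and then transports to $\shuffleB$, $\unshuffleB$, $\CapB$ via $(\ )^{-1}$ and duality, whereas you start from $\shuffleB$; the underlying mechanism, including your correctly identified key point that minimal coset representatives are supported in a single direct factor, is identical.
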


\begin{proof}
It is clear that $u\CupB\varnothing=u$ for any $u\in\B$, where $\varnothing\in\SS_0$ is the empty permutation. Let $u\in\B_m$, $v\in\SS_n$, and $r\in\SS_k$. Proposition~\ref{prop:IJS} implies $(u\CupB v)\CupB r = u\CupB(v\Cup r)$. In fact, one can show that $(u\CupB v)\CupB r$ and $u\CupB(v\Cup r)$ both equal  
\[
\{w\in\B_{m+n+k}: \stB(w[1,m])=u,\ \st(w[m+1,m+n])=v,\ \st(w[m+n+1,m+n+k])=r\}.
\]
Hence $(\ZZ\B,\CupB)$ is a graded right $(\ZZ\SS,\Cup)$-module. The remaining results follow from Proposition~\ref{prop:MR}: The dual of  $(\ZZ\B,\CupB)$ is the $(\ZZ\SS,\unshuffle)$-comodule $(\ZZ\B,\unshuffleB)$, and applying $w\mapsto w^{-1}$ to $(\ZZ\B,\CupB)$ and $(\ZZ\B,\unshuffleB)$ gives the $(\ZZ\SS,\shuffle)$-module $(\ZZ\B,\shuffleB)$ and the $(\ZZ\SS,\Cap)$-comodule $(\ZZ\B,\CapB)$, respectively. 
 \end{proof}

\begin{remark}
One can check that $\unshuffleB (1\shuffleB 1) \ne (\unshuffleB  1)\shuffleB (\unshuffle 1)$. Hence $(\ZZ\B,\shuffleB,\unshuffleB )$ is \emph{not} a $(\ZZ\SS,\shuffle,\unshuffle)$-Hopf module.
Consequently, the dual $(\ZZ\B,\CupB,\CapB)$ is \emph{not} a $(\ZZ\SS,\Cup,\Cap)$-Hopf module either.
\end{remark}

Let $\Sigma(\B):=\bigoplus_{n\ge0}\Sigma(\B_n)$ where $\Sigma(\B_n)$ is the free $\ZZ$-module with a basis consisting of descent classes
\[ D_\alpha(\B_n):= \left\{w\in\B_n: D(w)=D(\alpha) \right\},\quad \forall \alpha\modelsB n. \]
One has an embedding $\imath:\Sigma(\B)\hookrightarrow \ZZ\B$ by inclusion. If $\alpha\modelsB m$ and $\beta\models n$ then  
\[ D_\alpha(\B_m) \Cup D_\beta(\SS_n) = D_{\alpha\cdot\beta}(\B_{m+n}) + D_{\alpha\rhd\beta}(\B_{m+n})\]
by Proposition~\ref{prop:IndDes}, where the last term is treated as zero when $\alpha\rhd\beta$ is undefined.

Let $\Sigma^*(\B):=\bigoplus_{n\ge0} \Sigma^*(\B_n)$ where $\Sigma^*(\B_n)$ is the dual of $\Sigma(\B_n)$ with a dual basis $\left\{D^*_\alpha(\B_n): \alpha\modelsB n \right\}$.
Dual to $\imath:\Sigma(\B)\hookrightarrow \ZZ\B$ is a surjection $\chi:\ZZ\B\twoheadrightarrow\Sigma^*(\B)$ sending each $w\in\B_n$ to $D^*_w(\B_n):=D^*_\alpha(\B_n)$, where $\alpha\modelsB n$ satisfies $D(w)=D(\alpha)$.

Recall from Section~\ref{sec:Coxeter} that $\Lambda(\B_n)$ is the $\ZZ$-span of $\Lambda_\alpha(\B_n):=\chi'(D_\alpha(\B_n))$ for all $\alpha\modelsB n$, where $\chi':=\chi\circ(\ )^{-1}$. 
Let $\Lambda(\B):=\bigoplus_{n\ge0}\Lambda(\B_n)$. 
For $\alpha\modelsB m$ and $\beta\modelsB n$ define 
\begin{equation}\label{eq:cab}
\langle \Lambda_\alpha(\B_m),\Lambda_\beta(\B_n) \rangle :=
\begin{cases} \#\{w\in\B_n: D(w^{-1}) = D(\alpha),\ D(w)=D(\beta) \}, &\text{if } m=n, \\
0, & \text{if } m\ne n.
\end{cases} 
\end{equation}
By Proposition~\ref{prop:SymForm}, this gives a well-defined symmetric nondegenerate bilinear form on $\Lambda(\B)$ such that $\imath:\Lambda(\B)\hookrightarrow\Sigma^*(\B)$ and $\chi':\Sigma(\B)\twoheadrightarrow\Lambda(\B)$ are dual to each other. 

\begin{theorem}\label{thm:DesB}
The following diagram is commutative with each entry being a graded right module and comodule over the corresponding type A Hopf algebra in \eqref{eq:DiamondCoxA}.
\begin{equation}\label{eq:DiamondCoxB}
\xymatrix @R=16pt @C=7pt {
 & \ZZ\B \ar@{->>}[rd]^{\chi'} \\
 \Sigma(\B) \ar@{^(->}[ru]^{\imath} \ar@{->>}[rd]_{\chi'} & & \Sigma^*(\B) \ar@{<-->}[ll]^{\txt{\small dual}} \\
 & \Lambda(\B) \ar@{^(->}[ru]_\imath }
\end{equation}
Reflecting it across the vertical line through $\ZZ\B$ and $\Lambda(\B)$ gives a dual diagram. 
\end{theorem}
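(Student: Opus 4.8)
The plan is to read \eqref{eq:DiamondCoxB} as the direct sum over $n\ge0$ of the general diamond \eqref{eq:DiamondZW} attached to the type $B$ system $(\B_n,\{s_0,\dots,s_{n-1}\})$, so that commutativity and the reflection-duality are inherited directly from Section~\ref{sec:General}, and then to supply the genuinely new ingredient: the graded right module and comodule structures over type A coming from the parabolic subgroups $\B_m\times\SS_n$. First I would settle commutativity and duality. For each fixed $n$ the diagram \eqref{eq:DiamondZW} for $\B_n$ is commutative by Theorem~\ref{thm:FullDiagram} and the construction following it; summing over $n$ gives commutativity of \eqref{eq:DiamondCoxB}. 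For the reflection statement, recall that Theorem~\ref{thm:FullDiagram} and Corollary~\ref{cor:Sym} make $\imath$ dual to $\chi'$ and $\Sigma(\B)$ dual to $\Sigma^*(\B)$; since reflecting across the vertical axis fixes the self-dual objects $\ZZ\B$ (under $\langle u,v\rangle=\delta_{u,v}$) and $\Lambda(\B)$ (self-dual by Proposition~\ref{prop:SymForm} applied to each $\B_n$) while interchanging $\Sigma(\B)$ with $\Sigma^*(\B)$, the reflected diagram is exactly the dual of \eqref{eq:DiamondCoxB}.

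The core of the proof is the module and comodule structure, which I would obtain from the inclusions $\B_m\times\SS_n\hookrightarrow\B_{m+n}$, i.e. the case $I=S\setminus\{s_m\}$ with $S=\{s_0,\dots,s_{m+n-1}\}$, using Proposition~\ref{prop:BA}. On $\ZZ\B$ these yield the right action $\CupB=\mu_I^S$ and coaction $\CapB=\rho_I^S$ (and dually $\shuffleB=\bar\mu_I^S$, $\unshuffleB=\bar\rho_I^S$); Proposition~\ref{prop:MRBA} already records that these make $\ZZ\B$ a graded right module and comodule over $\ZZ\SS$. I would then transport these down the diagram. Proposition~\ref{prop:IndDes} and Proposition~\ref{prop:ResDes} show that $\mu_I^S$ and $\rho_I^S$ restrict to $\Sigma(\B)$, making it a module and comodule over $\Sigma(\SS)$ (the action being the formula $D_\alpha(\B_m)\Cup D_\beta(\SS_n)=D_{\alpha\cdot\beta}(\B_{m+n})+D_{\alpha\rhd\beta}(\B_{m+n})$ recorded after the definition of $\Sigma(\B)$), and that $\bar\mu_I^S,\bar\rho_I^S$ descend to $\Sigma^*(\B)$, making it one over $\Sigma^*(\SS)$. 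Corollary~\ref{cor:Sym} does the same for $\Lambda(\B)$ over $\Lambda(\SS)$.

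It then remains to check the axioms and compatibilities. Associativity and coassociativity of each action follow from Proposition~\ref{prop:IJS} applied to the chains $\B_\ell\times\SS_m\times\SS_n\hookrightarrow\B_{\ell+m}\times\SS_n\hookrightarrow\B_{\ell+m+n}$ and their duals, exactly as in the proof of Proposition~\ref{prop:MRBA}; gradings are preserved since each parabolic inclusion appends a fixed block of simple reflections. That $\imath$, $\chi$, and $\chi'$ are morphisms over the corresponding type A maps is immediate from the commutativity asserted in Theorem~\ref{thm:FullDiagram} and Corollary~\ref{cor:Sym}, which is precisely the statement that $\mu_I^S,\rho_I^S,\bar\mu_I^S,\bar\rho_I^S$ commute with $\imath,\chi,\chi'$. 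Finally, the reflected diagram carries the dual structures: the module/comodule duality between the $\CupB$- and $\shuffleB$-pictures is Proposition~\ref{prop:MRBA}(iii), and their exchange under reflection is realized by $w\mapsto w^{-1}$ as in Proposition~\ref{prop:MRBA}(iv).

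I expect the main obstacle to be bookkeeping rather than any hard estimate. One must keep track that the left half of the diagram, $\ZZ\B$ and $\Sigma(\B)$, carries the structures given by $\mu_I^S$ and $\rho_I^S$ (matching $\ZZ\SS$ and $\Sigma(\SS)$), whereas the right half, $\Sigma^*(\B)$ and $\Lambda(\B)$, carries those given by $\bar\mu_I^S$ and $\bar\rho_I^S$ (matching $\Sigma^*(\SS)$ and $\Lambda(\SS)$), the two halves being linked through $(\ )^{-1}$; and one must verify that all four actions and four coactions are simultaneously respected by $\imath$, $\chi$, $\chi'$ and correctly interchanged under reflection. Once the results of Section~\ref{sec:General} and Proposition~\ref{prop:MRBA} are in place, each individual check is routine, so the real work lies in assembling them into one coherent statement.
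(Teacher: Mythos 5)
Your proposal is correct and takes essentially the same route as the paper, whose entire proof is ``Apply Theorem~\ref{thm:FullDiagram} to $(\ZZ\B,\shuffleB,\unshuffleB,\CupB,\CapB)$ and then use Corollary~\ref{cor:Sym}.'' You have simply unpacked that one-line argument: specializing the general diamond \eqref{eq:DiamondZW} to each $(\B_n,\{s_0,\dots,s_{n-1}\})$, taking the parabolic $I=S\setminus\{s_m\}$ to get the actions and coactions, invoking Proposition~\ref{prop:MRBA} for the module axioms, and Corollary~\ref{cor:Sym} for $\Lambda(\B)$.
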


\begin{proof}
Apply Theorem~\ref{thm:FullDiagram} to $(\ZZ\B,\shuffleB,\unshuffleB,\CupB,\CapB)$ and then use Corollary~\ref{cor:Sym}.
 \end{proof}

\subsection{Free quasisymmetric functions of type B and related results}\label{sec:FQSymB}
In this subsection we obtain the following commutative diagram.
\begin{equation}\label{eq:DiamondFQSymB}
\DiamondFQSym{B}
\end{equation}
It is isomorphic to the diagram \eqref{eq:DiamondCoxB} with each entry being a graded right module and comodule over the corresponding type A Hopf algebra in \eqref{eq:DiamondFQSymA}. Reflecting it across the vertical line through $\FQSym^B$ and $\Sym^B$ gives a dual diagram of graded modules and comodules.

\subsubsection{Free quasisymmetric functions of type B}
Let $(W,S)$ be the Coxeter system of type $B_n$, where $W=\B_n$ and $S=\{s_0=s^B_0,s_1,\ldots,s_{n-1}\}$. Let $E=\mathbb R^n$ be a Euclidean space with standard basis $\{e_1,\ldots,e_n\}$. The hyperoctahedral group $\B_n$ can be realized as a reflection group of $E$ whose root system $\Phi$ is the disjoint union of $\Phi^+=\{e_i,e_j\pm e_i: 1\leq i<j\leq n\}$ and $\Phi^-=-\Phi^+$. The set of simple roots is $\Delta=\{e_1,e_2-e_1,\ldots,e_n-e_{n-1}\}$, corresponding to the generating set $S$ of simple reflections. 

Let $\bX=\{\bx_i:i\in\ZZ\}$ be a set of noncommutative variables. 
Define $\FQSym^B_n$ to be the $\ZZ$-span of the generating functions $\bF_P^S$ for all parsets $P$ of $\Phi$. 
Let $\FQSym^B:=\bigoplus_{n\geq0}\FQSym^B_n$.
By Proposition~\ref{prop:FQSymW}, $\FQSym^B_n$ has free $\ZZ$-bases $\{\bFB_w:w\in\B_n\}$ and $\{\bsB_w:w\in\B_n\}$, where $\bFB_w$ is the generating function of the parset $w\Phi^+$ and $\bsB_w:=\bFB_{w^{-1}}$. Applying the definition of $f\in\mathcal A(w\Phi^+)$ to $w\alpha$ for all $\alpha\in\Delta$ gives
\begin{equation}\label{eq:BFw}
\bFB_w = \sum_{ \substack{ 
f(w(0))\leq f(w(1))\leq\cdots\leq f(w(n)) \\
i\in D(w) \Rightarrow f(w(i))<f(w(i+1)) }}
\bx_{f(1)}\cdots \bx_{f(n)},\qquad\forall w\in\B_n.
\end{equation}
Here we set $w(0)=0$, $f(0)=0$, and $f(-i)=-f(i)$ for all $i\in[n]$ by convention.

\begin{proposition}\label{prop:bsB}
Let $w\in\B_n$ and let $f\in\ZZ^n$. Then $f\in\mathcal A(w^{-1}\Phi^+) \Leftrightarrow \stB(f)=w$ and thus 
\[ \bsB_w = \sum_{f\in\ZZ^n:\,\stB(f)=w} \bx_f. \]
\end{proposition}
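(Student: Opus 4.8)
The plan is to reduce the whole statement to Proposition~\ref{prop:InverseP}. By definition $\bsB_w=\bFB_{w^{-1}}$ is the generating function $\sum_{f\in\mathcal A(w^{-1}\Phi^+)}\bx_f$ of the parset $w^{-1}\Phi^+$, so the displayed formula for $\bsB_w$ follows at once from the equivalence $f\in\mathcal A(w^{-1}\Phi^+)\Leftrightarrow\stB(f)=w$. Thus the entire task is to prove this equivalence, and I would do so by translating membership in $\mathcal A(w^{-1}\Phi^+)$ into an explicit system of inequalities and matching it clause-by-clause with the defining conditions \eqref{def:bst} of signed standardization.

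First I would record the linear action of $\B_n$ on $E=\mathbb R^n$, namely $w\cdot e_i=\operatorname{sign}(w(i))\,e_{|w(i)|}$ (which one checks on the generators $s_0,\ldots,s_{n-1}$), together with the inner products $(f,e_k)=f(k)$ and $(f,e_j\pm e_i)=f(j)\pm f(i)$. Applying Proposition~\ref{prop:InverseP} to the positive roots $\alpha=e_k$ for $1\le k\le n$ gives $w(e_k)>0\Leftrightarrow w(k)>0$, hence the condition $f(k)\ge0\Leftrightarrow w(k)>0$; this is exactly $\Neg(f)=\Neg(w)$, the first clause of \eqref{def:bst}.

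Next I would treat the roots $e_j-e_i$ and $e_j+e_i$ for each pair $i<j$, splitting into four cases according to the signs of $w(i)$ and $w(j)$ and computing the sign of $w(e_j-e_i)$ and of $w(e_j+e_i)$ from the action above. The pattern is clean: when $w(i)$ and $w(j)$ have the same sign, the condition coming from $e_j+e_i$ is automatically satisfied given $\Neg(f)=\Neg(w)$ and the ``active'' root is $e_j-e_i$; when they have opposite signs, the condition from $e_j-e_i$ is automatic and the active root is $e_j+e_i$. Reading off the inequality that Proposition~\ref{prop:InverseP} imposes for the active root and using $\Neg(f)=\Neg(w)$ to rewrite $f(i),f(j)$ in terms of $|f(i)|,|f(j)|$, I would show in each case that it is precisely the comparison of $|w(i)|$ with $|w(j)|$ prescribed by \eqref{def:bst}.

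The delicate point, and the step I expect to be the main obstacle, is the boundary behaviour forced by the asymmetry in Proposition~\ref{prop:InverseP} between the weak inequality $(f,\alpha)\ge0$ (when $w\alpha>0$) and the strict inequality $(f,\alpha)<0$ (when $w\alpha<0$). This asymmetry is exactly what produces the two tie-breaking clauses $a_i=a_j\ge0$ and $a_i=-a_j<0$ in \eqref{def:bst}: a tie $|f(i)|=|f(j)|$ coming from $f(i)=f(j)$ of the same sign is resolved oppositely to a tie coming from $f(i)=-f(j)$, matching the rule that ties among positive letters break left-to-right while ties among negative letters break right-to-left. Verifying that every borderline configuration (in particular $f(i)=0$ and the vanishing inner products) lands on the correct side of these clauses is the only real content; once checked, the system of inequalities from Proposition~\ref{prop:InverseP} coincides with \eqref{def:bst}, giving $f\in\mathcal A(w^{-1}\Phi^+)\Leftrightarrow\stB(f)=w$ and hence the stated formula for $\bsB_w$.
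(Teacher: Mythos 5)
Your proposal is correct, and its computational core is the same as the paper's: apply Proposition~\ref{prop:InverseP} to the three families of positive roots $e_k$, $e_j-e_i$, $e_j+e_i$, yielding the conditions $\Neg(f)=\Neg(w)$, $w(i)<w(j)\Leftrightarrow f(i)\le f(j)$, and $w(i)+w(j)>0\Leftrightarrow f(i)+f(j)\ge0$, and then match these against \eqref{def:bst} case by case according to the signs of $w(i)$ and $w(j)$ (your observation that exactly one of $e_j\pm e_i$ is ``active'' in each case, and that the weak/strict asymmetry is what produces the two tie-breaking clauses, is exactly right and does check out). The one place you diverge is that you prove the biconditional directly, which forces you to verify the clause-by-clause equivalence in both directions. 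The paper instead invokes Lemma~\ref{lem:Z^n}: since $\ZZ^n$ is the disjoint union of the sets $\mathcal A(u^{-1}\Phi^+)$ over $u\in\B_n$, and $\stB$ assigns to each $f$ a unique signed permutation, it suffices to prove the single implication $f\in\mathcal A(w^{-1}\Phi^+)\Rightarrow\stB(f)=w$ --- the converse then comes for free from uniqueness on both sides. This halves the case-checking and in particular lets you skip verifying that every borderline configuration of \eqref{def:bst} really does land back inside $\mathcal A(w^{-1}\Phi^+)$, which is the part you correctly identify as the delicate step. Your route is self-contained and does not need the partition lemma, at the cost of that extra verification; both are valid.
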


\begin{proof}
By Lemma~\ref{lem:Z^n}, it suffices to show that $f\in\mathcal A(w^{-1}\Phi^+)$ implies $\stB(f)=w$. Applying Proposition~\ref{prop:InverseP} to $\alpha=e_i$ for all $i\in[n]$ gives $\Neg(f)=\Neg(w)$. Let $1\leq i<j\leq n$ below. Apply Proposition~\ref{prop:InverseP} to $\alpha=e_j-e_i$ gives $w(i)<w(j) \Leftrightarrow f(i)\leq f(j)$, which implies \eqref{def:bst} when $w(i)$ and $w(j)$ have the same sign. Applying Proposition~\ref{prop:InverseP} to $\alpha=e_j+e_i$ gives $w(j)+w(i)>0 \Leftrightarrow f(j)+f(i)\geq0$, which implies (\ref{def:bst}) when $w(i)$ and $w(j)$ have the opposite signs. Hence $\stB(f)=w$.
 \end{proof}

\begin{corollary}
If $w\in \SS_n$ then $\bs_w$ equals the sum of $\bsB_u$ for all $u\in\B_n$ with $\st(u)=w$. Consequently $\FQSym\subseteq\FQSym^B$.
\end{corollary}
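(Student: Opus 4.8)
The plan is to reduce the displayed identity to a single statement comparing the two standardization maps, and then verify that statement by a short case analysis on signs. The containment $\FQSym\subseteq\FQSym^B$ will follow immediately.

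First I would expand both sides as explicit sums of noncommutative monomials. By \eqref{eq:bs} the left-hand side is $\bs_w=\sum_{f\in\ZZ^n:\,\st(f)=w}\bx_f$, while by Proposition~\ref{prop:bsB} each $\bsB_u=\sum_{f\in\ZZ^n:\,\stB(f)=u}\bx_f$. Summing the latter over all $u\in\B_n$ with $\st(u)=w$, where $\st(u)$ denotes the type~A standardization of the window word $u(1)\cdots u(n)$, regroups the monomials into $\sum_{f\in\ZZ^n:\,\st(\stB(f))=w}\bx_f$. Thus the claimed identity holds for every $w\in\SS_n$ if and only if $\st(\stB(f))=\st(f)$ for all $f\in\ZZ^n$, and this is the key lemma to establish.

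To prove the lemma, recall from \eqref{def:st} that the permutation $\st(a)$ is characterized by $\st(a)(i)<\st(a)(j)\Leftrightarrow a_i\le a_j$ for all $i<j$; consequently two words of equal length have the same standardization precisely when they induce the same comparisons $a_i\le a_j$ over all pairs $i<j$. Writing $w=\stB(f)$, it therefore suffices to show $f_i\le f_j\Leftrightarrow w(i)\le w(j)$ for all $1\le i<j\le n$. Here I would use two properties of $\stB$: that $\Neg(f)=\Neg(w)$, so $f_i$ and $w(i)$ have the same sign for each $i$; and the defining equivalence \eqref{def:bst} relating $|w(i)|<|w(j)|$ to the comparisons of $|f_i|$ and $|f_j|$. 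Splitting into the four cases by the signs of $f_i$ and $f_j$, the two mixed-sign cases are immediate since the entry with a negative value is the smaller one in both words, so the two $\le$-relations have matching truth values. In the two equal-sign cases the claim follows from \eqref{def:bst}, using that $|w(1)|,\ldots,|w(n)|$ are distinct so exactly one of $|w(i)|<|w(j)|$, $|w(i)|>|w(j)|$ holds, and, for negative entries, translating $\le$ into $\ge$ on absolute values.

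The only delicate point, and the step I expect to require the most care, is the all-negative case: there $f_i\le f_j$ is equivalent to $|f_i|\ge|f_j|$ and $w(i)\le w(j)$ to $|w(i)|\ge|w(j)|$, so one must check that the tie-breaking clauses $f_i=f_j\ge0$ and $f_i=-f_j<0$ in \eqref{def:bst}, both vacuous when $f_i,f_j<0$, do not disturb the reduced equivalence $|w(i)|<|w(j)|\Leftrightarrow|f_i|<|f_j|$. Once the lemma is in hand, the final assertion is immediate: $\{\bs_w:w\in\SS_n\}$ is a basis of the degree-$n$ component of $\FQSym$, and we have written each $\bs_w$ as a sum of elements $\bsB_u\in\FQSym^B$, whence $\FQSym\subseteq\FQSym^B$.
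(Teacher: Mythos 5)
Your proposal is correct and follows essentially the same route as the paper: both reduce the identity, via \eqref{eq:bs} and Proposition~\ref{prop:bsB}, to the key lemma $\st(\stB(f))=\st(f)$ for all $f\in\ZZ^n$, and both verify that lemma by a case analysis on the signs of $f_i,f_j$ using $\Neg(f)=\Neg(\stB(f))$ and the defining equivalence \eqref{def:bst}. Your extra care in the all-negative case (checking that the tie-breaking clauses of \eqref{def:bst} are vacuous there) is sound and consistent with the paper's argument.
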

\begin{proof}
The result follows from \eqref{eq:bs} and Proposition~\ref{prop:bsB} if we can prove $\st(\stB(a))=\st(a)$ for all $a\in\ZZ^n$. Let $\stB(a) = u \in \B_n$. Then $\Neg(a)=\Neg(u)$. Assume $1\leq i<j\leq n$ below.

If $a_i$ and $a_j$ are both positive then so are $u(i)$ and $u(j)$, and thus $u(i)<u(j) \Leftrightarrow a_i\leq a_j$ by (\ref{def:bst}).

If $a_i$ and $a_j$ are both negative then so are $u(i)$ and $u(j)$, and thus $u(i)<u(j) \Leftrightarrow a_i\leq a_j$ by (\ref{def:bst}).

If $a_i<0\leq a_j$ then $u(i)<0<u(j)$. Similarly, if $a_j<0\leq a_i$ then $u(j)<0<u(i)$.

Therefore $u(i)<u(j)$ if and only if $a_i\leq a_j$. This implies $\st(u) = \st(a)$ and completes the proof.
 \end{proof}

For example, one has $\bs_{12}=\bsB_{12}+\bsB_{\bar12}+\bsB_{\bar21}+\bsB_{\bar2\bar1}$ and $\bs_{21}=\bsB_{21}+\bsB_{2\bar1}+\bsB_{1\bar2}+\bsB_{\bar1\bar2}$.

Now we define an action and a coaction of $\FQSym$ on $\FQSym^B$.
Proposition~\ref{prop:shuffleW} implies 
\begin{equation}\label{eq:Action bFB}
\bFB_u \cdot \bF_v = \sum_{w\in\, u\shuffleB v} \bFB_w, \quad \forall u\in\B_m,\ \forall v\in\SS_n.
\end{equation}
This gives a right action of $\FQSym$ on $\FQSym^B$, which is denoted by ``$\odotB$''. Note that $\bF_v$ is $\bF_v(\bX)$ instead of $\bF_v(\bX_{>0})$ by our convention in this paper. If $u\in\B_m$ then define
\begin{equation}\label{eq:Coaction bFB}
\DeltaB(\bFB_u) := \sum_{0\leq i\leq m} \bFB_{\,\stB(u[1,i])}\otimes\bF_{\,\st(u[i+1,n])}.
\end{equation}
Also define a bilinear form on $\FQSym^B$ by $\langle \bFB_u,\bsB_v\rangle := \delta_{u,v}$ for all $u,v\in\B$.

\begin{proposition}\label{prop:FQSymB}
$(\FQSym^B,\odotB,\DeltaB)$ is a self-dual graded right module and comodule over $\FQSym$ isomorphic to $(\ZZ\B,\shuffleB,\unshuffleB )$ via $\bFB_w\mapsto w$ and to $(\ZZ\B,\CupB,\CapB)$ via $\bsB_w\mapsto w$.
\end{proposition}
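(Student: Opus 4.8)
The plan is to prove all four assertions by transport of structure along the two basis-relabeling maps, reducing everything to facts about $\ZZ\B$ already recorded in Proposition~\ref{prop:MRBA}. First I would note that, by Proposition~\ref{prop:FQSymW}, both $\{\bFB_w:w\in\B\}$ and $\{\bsB_w:w\in\B\}$ are free $\ZZ$-bases of $\FQSym^B$, so each relabeling $\bFB_w\mapsto w$ and $\bsB_w\mapsto w$ is a graded $\ZZ$-module isomorphism $\FQSym^B\xrightarrow{\sim}\ZZ\B$. I would also recall from Section~\ref{sec:FQSymA} that $\bF_v\mapsto v$ identifies $\FQSym$ with $(\ZZ\SS,\shuffle,\unshuffle)$ and $\bs_v\mapsto v$ identifies it with $(\ZZ\SS,\Cup,\Cap)$.

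The key observation is that the defining formulas \eqref{eq:Action bFB} and \eqref{eq:Coaction bFB} were chosen precisely so that $\bFB_w\mapsto w$ intertwines the pair $(\odotB,\DeltaB)$ with $(\shuffleB,\unshuffleB)$ once $\FQSym$ is identified with $(\ZZ\SS,\shuffle,\unshuffle)$ via $\bF_v\mapsto v$. Indeed \eqref{eq:Action bFB} reads $\bFB_u\cdot\bF_v=\sum_{w\in u\shuffleB v}\bFB_w$, which under the relabeling is exactly the definition of $u\shuffleB v$; and \eqref{eq:Coaction bFB} matches the definition $\unshuffleB u=\sum_{0\le i\le m}\stB(u[1,i])\otimes\st(u[i+1,m])$ term by term. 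Granting this matching, Proposition~\ref{prop:MRBA}(i) transports to the statement that $(\FQSym^B,\odotB,\DeltaB)$ is a graded right module and comodule over $\FQSym$, and the isomorphism $\bFB_w\mapsto w$ is established.

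The second isomorphism then comes for free: since $\bsB_w=\bFB_{w^{-1}}$, the relabeling $\bsB_w\mapsto w$ is the composite of $\bFB_v\mapsto v$ with the inverse map $(\ )^{-1}\colon\ZZ\B\to\ZZ\B$, and by Proposition~\ref{prop:MRBA}(iv) the latter is an isomorphism $(\ZZ\B,\shuffleB,\unshuffleB)\xrightarrow{\sim}(\ZZ\B,\CupB,\CapB)$; composing carries $(\FQSym^B,\odotB,\DeltaB)$ isomorphically onto $(\ZZ\B,\CupB,\CapB)$ via $\bsB_w\mapsto w$. For self-duality I would observe that the form $\langle\bFB_u,\bsB_v\rangle=\delta_{u,v}$ transports under these two relabelings to the pairing $\langle u,v\rangle=\delta_{u,v}$ of Proposition~\ref{prop:MRBA}(iii), which exhibits $(\ZZ\B,\shuffleB,\unshuffleB)$ as the dual of $(\ZZ\B,\CupB,\CapB)$; combined with part~(iv) this shows $\FQSym^B$ is isomorphic to its own dual as a module-comodule.

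There is essentially no hard step: the whole argument is bookkeeping built on Proposition~\ref{prop:MRBA}. The only place requiring genuine care is the term-by-term matching in the second paragraph, namely confirming that the signed and ordinary standardizations $\stB$ and $\st$ appearing in \eqref{eq:Coaction bFB} and \eqref{eq:Action bFB} line up with those in the definitions of $\unshuffleB$ and $\shuffleB$ (keeping in mind that $\bF_v$ here means $\bF_v(\bX)$, as noted after \eqref{eq:Action bFB}). Once that matching is confirmed, all four claims follow immediately.
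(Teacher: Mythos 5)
Your argument is correct and follows essentially the same route as the paper, which likewise deduces the result from \eqref{eq:Action bFB}, \eqref{eq:Coaction bFB}, and Proposition~\ref{prop:MRBA}(iv); you have simply spelled out the transport-of-structure bookkeeping that the paper leaves implicit.
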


\begin{proof}
This follows from (\ref{eq:Action bFB}), (\ref{eq:Coaction bFB}), and Proposition~\ref{prop:MRBA} (iv). 
 \end{proof}

It follows that if $u\in\B_m$ and $v\in\SS_n$ then
\begin{equation}\label{eq:Coaction bsB}
\bsB_u \cdot \bs_v = \sum_{w\in\, u\CupB v}\bsB_w \qand
\DeltaB(\bsB_u) = \sum_{0\leq i\leq m} \bsB_{u|[1,i]}\otimes\bs_{\st(\hat u|[i+1,m])}.
\end{equation}

\subsubsection{Noncommutative symmetric functions of Type B}
In our earlier work~\cite{H0Tab} we defined a type B analogue $\NSym^B$ of $\NSym$ with two free $\ZZ$-bases $\{\bhB_\alpha\}$ and $\{\bsB_\alpha\}$, where $\alpha$ runs through all pseudo-compositions. If $\alpha=(\alpha_1,\ldots,\alpha_\ell)$ is a pseudo-composition then $\bhB_\alpha$ and $\bsB_\alpha$ are defined by 
\[ \bhB_\alpha = \bhB_{\alpha_1}\cdot\bh_{\alpha_2} \cdots\bh_{\alpha_\ell} = \sum_{\beta\cleq\alpha}\bsB_\beta \]
where \begin{equation}\label{eq:bhB}
\bhB_k = \bsB_k := \sum_{0\le i_1\le\cdots\le i_k} \bx_{i_1}\cdots \bx_{i_k} 
= \sum_{0\leq i\leq k} \bx_0^i \cdot \bh_{k-i}, \quad \forall k\ge0.
\end{equation}
It follows that $\NSym^B$ is a free right $\NSym$-module with a basis $\{\bhB_k:k\geq0\}$, and one has 
\[ \bhB_\alpha\cdot\bh_\beta = \bhB_{\alpha\cdot\beta} \qand \bsB_\alpha\cdot\bs_\beta=\bsB_{\alpha\cdot\beta}+\bsB_{\alpha\rhd\beta} \]
for all $\alpha\modelsB m$ and $\beta\models n$, where the last term is treated as zero when $\alpha\rhd\beta$ is undefined.
This right $\NSym$-action on $\NSym^B$, denoted by $\odotB$ for consistency of notation, was used by Chow~\cite{Chow} to define $\NSym^B$ abstractly (without a power series realization). We next provide an embedding of $\NSym^B$ into $\FQSym^B$, which will recover the above $\NSym$-module structure and also induce a $\NSym$-comodule structure on $\NSym^B$.

Let $\tau$ be a \emph{tableau} of pseudo-ribbon shape $\alpha$, i.e., a filling of the pseudo-ribbon $\alpha$ with integers. Reading these integers from the bottom row to the top row and proceeding from left to right within each row, \emph{excluding the extra 0} in the pseudo-ribbon $\alpha$, gives the \emph{reading word} $w(\tau)$ of $\tau$. We call $\tau$ \emph{(type B) semistandard} if each row is weakly increasing from left to right and each column is strictly increasing from top to bottom, \emph{including the extra 0}. We showed in~\cite{H0Tab} that $\bsB_\alpha$ is the sum of $\bx_{w(\tau)}$ for all semistandard tableaux $\tau$ of pseudo-ribbon shape $\alpha$.

\begin{proposition}\label{prop:bsBa}
If $\alpha\modelsB n$ then $\bsB_\alpha$ equals the sum of $\bsB_w$ for all $w\in\B_n$ with $D(w)=D(\alpha)$.
\end{proposition}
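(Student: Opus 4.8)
The plan is to mirror the type~A argument recalled just before Proposition~\ref{prop:bsBa}, reducing the identity to a compatibility between the type~B semistandard condition on pseudo-ribbon fillings and the descent set of the signed standardization. Recall from \cite{H0Tab} that $\bsB_\alpha$ is the sum of $\bx_{w(\tau)}$ over all type~B semistandard tableaux $\tau$ of pseudo-ribbon shape $\alpha$. Each word $f=f_1\cdots f_n\in\ZZ^n$ fills the pseudo-ribbon of shape $\alpha$ in a unique way whose reading word is $f$ (the extra box is forced to contain $0$), so it suffices to prove that this filling is type~B semistandard if and only if $D(\stB(f))=D(\alpha)$. Granting this, one computes
\[ \bsB_\alpha = \sum_{\substack{f\in\ZZ^n\\ D(\stB(f))=D(\alpha)}} \bx_f = \sum_{\substack{w\in\B_n\\ D(w)=D(\alpha)}} \ \ \sum_{\substack{f\in\ZZ^n\\ \stB(f)=w}} \bx_f = \sum_{\substack{w\in\B_n\\ D(w)=D(\alpha)}} \bsB_w, \]
where the last equality is Proposition~\ref{prop:bsB}.

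First I would read off the local inequalities defining the semistandard condition from the shape. At each horizontal step $i\notin D(\alpha)$ with $1\le i\le n-1$ the row rule gives $f_i\le f_{i+1}$, while at each vertical step $i\in D(\alpha)$ the column rule gives $f_i>f_{i+1}$. For the extra $0$-box I would split into the two cases defining the pseudo-ribbon: if $\alpha_1>0$ the $0$-box sits to the left of $f_1$ in the bottom row, forcing $f_1\ge 0$, and here $0\notin D(\alpha)$; if $\alpha_1=0$ the $0$-box sits directly below $f_1$, forcing $f_1<0$, and here $0\in D(\alpha)$.

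The heart of the matter is the following lemma about $w=\stB(f)$: for $1\le i\le n-1$ one has $w(i)>w(i+1)$ if and only if $f_i>f_{i+1}$, and $w(1)<0$ if and only if $f_1<0$. The second statement is immediate from $\Neg(f)=\Neg(w)$ and settles the $s_0$ case, since $s_0\in D(w)\Leftrightarrow w(1)<0$. The first statement I expect to be the main obstacle: it requires a case analysis on the signs of $f_i$ and $f_{i+1}$, each case fed through the defining condition~\eqref{def:bst} of $\stB$. When $f_i,f_{i+1}$ have opposite signs the conclusion is forced directly by $\Neg(f)=\Neg(w)$; when they have the same sign one rewrites $w(i)>w(i+1)$ as a comparison of $|w(i)|$ and $|w(i+1)|$ and applies \eqref{def:bst}, the tie cases $f_i=f_{i+1}$ (positive versus negative) being precisely what the tie-breaking clauses of \eqref{def:bst} are designed to resolve.

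Combining these, the step-$i$ semistandard inequality holds exactly when $s_i\in D(w)\Leftrightarrow i\in D(\alpha)$, uniformly for each $i\in\{0,1,\ldots,n-1\}$; hence $\tau$ is semistandard if and only if $D(w)=D(\alpha)$, which is exactly the reduction needed above. The only genuine work is the signed comparison lemma; the rest is bookkeeping on the pseudo-ribbon shape together with the application of Proposition~\ref{prop:bsB}.
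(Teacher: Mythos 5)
Your proposal is correct and follows exactly the paper's route: identify each $f\in\ZZ^n$ with the unique filling of the pseudo-ribbon $\alpha$ having reading word $f$, show that this filling is type~B semistandard precisely when $D(\stB(f))=D(\alpha)$, and conclude via Proposition~\ref{prop:bsB}. The paper leaves the middle equivalence as "one sees"; your sign-by-sign verification that $w(i)>w(i+1)\Leftrightarrow f_i>f_{i+1}$ (and the $s_0$ case via $\Neg(f)=\Neg(w)$) is a correct expansion of that step.
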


\begin{proof}
 Each $f\in\ZZ^n$ corresponds to a unique tableau $\tau$ of shape $\alpha$ such that $w(\tau)=f$. One sees that $D(\stB(f))=D(\alpha)$ if and only if $\tau$ is semistandard. Thus the result follows from Proposition~\ref{prop:bsB}.
 \end{proof}

It follows that there is an injection $\imath:\NSym^B\hookrightarrow \FQSym^B$ by inclusion.

\begin{proposition}\label{prop:NSymB}
The graded right module and comodule $\FQSym^B$ over $\FQSym$ restricts to a graded right module and comodule $(\NSym^B,\odotB,\DeltaB)$ over $\NSym$, which is isomorphic to the graded right module and comodule $(\Sigma(\B),\CupB,\CapB)$ over $(\Sigma(\SS),\Cup,\Cap)$ via the map $\bsB_\alpha\mapsto D_\alpha(\B_m)$, $\forall\alpha\modelsB m$, $\forall m\ge0$. 
\end{proposition}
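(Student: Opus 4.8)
The plan is to prove both assertions by transport of structure through the isomorphism between $\ZZ\B$ and $\FQSym^B$ given by $w\mapsto\bsB_w$, which is exactly the inverse of the module-comodule isomorphism of Proposition~\ref{prop:FQSymB}. That proposition already tells us this map carries $(\ZZ\B,\CupB,\CapB)$ over $(\ZZ\SS,\Cup,\Cap)$ onto $(\FQSym^B,\odotB,\DeltaB)$ over $\FQSym$, where the base algebras are identified by the type A isomorphism $\bs\colon\ZZ\SS\xrightarrow{\sim}\FQSym$, $w\mapsto\bs_w$. So the only point to settle is the behaviour of this isomorphism on the distinguished submodule $\Sigma(\B)\subseteq\ZZ\B$.

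First I would compute the image of a descent class. Since $D_\alpha(\B_m)=\sum_{D(w)=D(\alpha)}w$ in $\ZZ\B$, linearity sends it to $\sum_{D(w)=D(\alpha)}\bsB_w$, which is precisely $\bsB_\alpha$ by Proposition~\ref{prop:bsBa}. Hence the isomorphism restricts to a $\ZZ$-module isomorphism $\Sigma(\B)\xrightarrow{\sim}\NSym^B$ with $D_\alpha(\B_m)\mapsto\bsB_\alpha$ for every $\alpha\modelsB m$, and the named map $\bsB_\alpha\mapsto D_\alpha(\B_m)$ is its inverse.

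Next I would invoke Theorem~\ref{thm:DesB} (which itself rests on Proposition~\ref{prop:IndDes} and Proposition~\ref{prop:ResDes}) to conclude that $\Sigma(\B)$ is a graded right submodule and subcomodule of $(\ZZ\B,\CupB,\CapB)$ over $(\Sigma(\SS),\Cup,\Cap)$; closure under the action is witnessed by the formula $D_\alpha(\B_m)\Cup D_\beta(\SS_n)=D_{\alpha\cdot\beta}(\B_{m+n})+D_{\alpha\rhd\beta}(\B_{m+n})$ and under the coaction by its dual. Since $\bs$ simultaneously carries $\Sigma(\SS)$ onto $\NSym$ with $D_\alpha(\SS_n)\mapsto\bs_\alpha$, transporting the submodule and subcomodule $\Sigma(\B)$ along $w\mapsto\bsB_w$ shows at once that its image $\NSym^B$ is closed under $\odotB$ and $\DeltaB$, so that the $\FQSym$-structure on $\FQSym^B$ restricts to a graded right $\NSym$-module and $\NSym$-comodule structure on $\NSym^B$, and that this isomorphism intertwines the two structures. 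This proves both claims simultaneously.

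The only substantive ingredient is Proposition~\ref{prop:bsBa}, the identity $\bsB_\alpha=\sum_{D(w)=D(\alpha)}\bsB_w$; everything else is purely formal, so I anticipate no genuine obstacle. The one point worth a line of verification is that the restricted operations $\odotB$ and $\DeltaB$ on $\NSym^B$ agree with the abstract right $\NSym$-action of Chow recorded earlier, such as $\bsB_\alpha\cdot\bs_\beta=\bsB_{\alpha\cdot\beta}+\bsB_{\alpha\rhd\beta}$; this is immediate once the displayed $\Cup$ formula is transported through $w\mapsto\bsB_w$.
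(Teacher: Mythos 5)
Your proposal is correct and follows essentially the same route as the paper: the paper's proof is exactly ``restrict the isomorphism $(\FQSym^B,\odotB,\DeltaB)\cong(\ZZ\B,\CupB,\CapB)$ of Proposition~\ref{prop:FQSymB} to $\NSym^B$,'' with Proposition~\ref{prop:bsBa} supplying the identification $D_\alpha(\B_m)\mapsto\bsB_\alpha$ and Theorem~\ref{thm:DesB} supplying closure of $\Sigma(\B)$ under the action and coaction. You have merely made explicit the two ingredients the paper leaves implicit.
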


\begin{proof}
By Proposition~\ref{prop:FQSymB}, there is an isomorphism $(\FQSym^B,\odotB,\DeltaB)\cong (\ZZ\B,\CupB,\CapB)$ via $\bsB_w\mapsto w$, $\forall w\in\B$.
Restricting this isomorphism to $\NSym^B$ gives the result.
 \end{proof}

\begin{remark}
If $k$ is a nonnegative integer then it follows from \eqref{eq:Coaction bsB} and Proposition~\ref{prop:bsBa} that
\[ \DeltaB(\bhB_k) = \DeltaB(\bsB_{12\cdots k}) = \sum_{0\le i\le k} \bsB_{12\cdots i} \otimes \bs_{\st(i+1,\ldots, k)} = \sum_{0\le i\le k} \bhB_i \otimes \bh_{k-i}. \]
We do not have any explicit formula for $\DeltaB(\bhB_\alpha)$ or $\Delta(\bsB_\alpha)$ for an arbitrary $\alpha\modelsB n$.
\end{remark}

\subsubsection{Quasisymmetric functions of type B}

Let $X_{\ge0}=\{x_0,x_1,x_2,\ldots\}$ be a totally ordered set of commutative variables.  Chow~\cite{Chow} introduced a type B analogue $\QSym^B$ of $\QSym$, which admits two free $\ZZ$-bases $\{\FB_\alpha\}$ and $\{\MB_\alpha\}$, where $\alpha$ runs through all pseudo-compositions. If $\alpha=(\alpha_1,\ldots,\alpha_\ell)$ is a pseudo-composition of $n$ then one has the \emph{type B monomial quasisymmetric function}
\begin{equation}\label{eq:MB}
\MB_\alpha:=\sum_{0<i_2<\cdots<i_\ell} x_0^{\alpha_1}x_{i_2}^{\alpha_2}\cdots x_{i_\ell}^{\alpha_\ell} 
= x_0^{\alpha_1} \cdot M_{(\alpha_2,\dots,\alpha_\ell)} 
\end{equation}
and the \emph{type B fundamental quasisymmetric function} (with $i_0:=0$)
\begin{equation}\label{eq:FB}
\FB_\alpha:=\sum_{\alpha\cleq\beta}\MB_\beta = \sum_{ \substack{ 0\le i_1\le \cdots\le i_n\\ j\in D(\alpha) \Rightarrow i_j<i_{j+1} }} x_{i_1}\cdots x_{i_n} .
\end{equation}
There exist unique $\alpha_{\le i} \modelsB i$ and $\alpha_{>i}\models n-i$ such that $\alpha\in\{\alpha_{\le i}\cdot \alpha_{>i},\alpha_{\le i}\rhd\alpha_{>i}\}$ for each $i\in\{0,1,\ldots,n\}$, and one can show that 
$\FB_\alpha = \sum_{0\leq i\leq \alpha_1} x_0^i\cdot F_{\alpha_{>i}}.$
By \eqref{eq:MB}, one also has
\[ \QSym^B = \ZZ[x_0]\cdot \QSym \cong \ZZ[x_0] \otimes_\ZZ \QSym.\]




We define an algebra map $\chiB:\ZZ\langle \bX\rangle \to \ZZ[X_{\ge0}]$ by $\bx_i \mapsto x_{|i|}$ for all $i\in\ZZ$. If $w\in\B_n$ then 
\[ \FB_w := \chiB(\bFB_w) = \sum_{\substack{ 0\leq f(w(1))\leq\cdots\leq f(w(n)) \\ j\in D(w)\Rightarrow f(w(j))<f(w(j+1)) }} x_{f(w(1))}\cdots x_{f(w(n))} \]
by (\ref{eq:BFw}).
One sees that $\FB_w=\FB_\alpha$ if $w\in\B_n$, $\alpha\modelsB n$, and $D(w)=D(\alpha)$. 
This gives a surjection $\chiB: \FQSym^B\twoheadrightarrow \QSym^B$. 

If $u\in\B_m$ and $v\in\SS_n$ then applying the algebra homomorphism $\chiB$ to (\ref{eq:Action bFB}) gives 
\begin{equation}\label{eq:Action FB}
\FB_u\odotB F_v:=\FB_u\cdot\chiB(\bF_v)=\sum_{w\in u\shuffleB v} \FB_w.
\end{equation}
This defines a right $\QSym$-action on $\QSym^B$, which is believed to be new. 

Chow~\cite{Chow} introduced a right coaction of $\QSym$ on $\QSym^B$ by $\DeltaB \FB:=\FB(X_{\ge0}+Y_{>0})$, $\forall\FB\in\QSym^B$, where $X_{\ge0}+Y_{>0}=\{x_0,x_1,x_2,\ldots,y_1,y_2,\ldots\}$ is a totally ordered set of commutative variables. 
One can check that if $\alpha=(\alpha_1,\ldots,\alpha_\ell)\modelsB n$ then
\[ \DeltaB \MB_\alpha =\sum_{1\le j\le\ell} \MB_{(\alpha_1,\ldots,\alpha_j)}\otimes M_{(\alpha_{j+1},\ldots,\alpha_\ell)} 
\qand \DeltaB \FB_\alpha = \sum_{0\leq i\leq n} \FB_{\alpha{\le i}}\otimes F_{\alpha{>i}}. \]
The second equality is equivalent to 
\begin{equation}\label{eq:Coaction FB}
\DeltaB \FB_w = \sum_{0\leq i\leq n} \FB_{\,\stB(w[1,i])}\otimes F_{\,\st(w[i+1,n])},\quad \forall w\in\B_n.
\end{equation}
Therefore this coaction is preserved by the surjections $\chiB: \FQSym^B\twoheadrightarrow\QSym^B$ and $\chi: \FQSym\twoheadrightarrow \QSym$. 
We observe that this coaction can also be obtained by applying the coproduct of $\QSym$ to the second tensor component of $\QSym^B\cong\ZZ[x_0] \otimes_\ZZ \QSym$.

\begin{proposition}\label{prop:QSymB}
$(\QSym^B,\odotB,\DeltaB)$ is a graded right module and comodule over $\QSym$ isomorphic to the graded right module and comodule $(\Sigma^*(\B),\shuffleB,\unshuffleB )$ over $(\Sigma^*(\SS),\shuffle,\unshuffle)$.
The map $\chiB$ induces a surjection from the graded right module and comodule $\FQSym^B$ over $\FQSym$ onto $\QSym^B$.
\end{proposition}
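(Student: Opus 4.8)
The plan is to obtain the whole statement by transport of structure along the natural isomorphism $F\colon\Sigma^*\to\mathcal QSym$ built in Section~\ref{sec:General}, combined with the defining formulas \eqref{eq:Action FB} and \eqref{eq:Coaction FB}. Specialized to type B, $F$ is the graded $\ZZ$-module isomorphism $F\colon\Sigma^*(\B)\to\QSym^B$ sending $D^*_\alpha(\B_n)\mapsto\FB_\alpha$ for every $\alpha\modelsB n$. By Theorem~\ref{thm:DesB}, $(\Sigma^*(\B),\shuffleB,\unshuffleB)$ is a graded right module and comodule over $(\Sigma^*(\SS),\shuffle,\unshuffle)$, its two structure maps being the functorial maps $\bar\mu_I^S$ and $\bar\rho_I^S$ attached to the parabolic inclusion $I=S\setminus\{s_m\}$ of $\B_m\times\SS_n$ into $\B_{m+n}$. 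I would first record that the same inclusion, read on the $\mathcal QSym$ side, produces exactly $\odotB$ and $\DeltaB$: comparing \eqref{eq:Action FB} with \eqref{eq:Action bFB} shows that $\FB_u\odotB F_v=\chiB(\bFB_u\odotB\bF_v)$ is the image under $\chiB$ of the $\FQSym^B$-action, hence equals $\bar\mu_I^S(\FB_u)$, and comparing \eqref{eq:Coaction FB} with \eqref{eq:Coaction bFB} identifies the coaction $\DeltaB$ on $\QSym^B$ with $\bar\rho_I^S$.

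Granting this identification, the module and comodule isomorphism is immediate from naturality. Since $F\colon\Sigma^*\to\mathcal QSym$ commutes with every morphism of $\mathcal Cox$ and $\mathcal Cox^{op}$, it commutes in particular with $\bar\mu_I^S$ and $\bar\rho_I^S$ for $I=S\setminus\{s_m\}$, so
\[ F(\xi\shuffleB\eta)=F(\xi)\odotB F(\eta)\qand (F\otimes F)\bigl(\unshuffleB\xi\bigr)=\DeltaB\bigl(F(\xi)\bigr) \]
for all $\xi\in\Sigma^*(\B)$ and $\eta\in\Sigma^*(\SS)$, where on the right $F(\eta)\in\QSym$ via the type A instance of $F$. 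Because $(\Sigma^*(\B),\shuffleB,\unshuffleB)$ already satisfies the module and comodule axioms, these axioms transport across $F$ to $(\QSym^B,\odotB,\DeltaB)$, and $F$ becomes the asserted isomorphism over $\Sigma^*(\SS)\cong\QSym$. For the final sentence I would simply note that \eqref{eq:Action FB} and \eqref{eq:Coaction FB} say precisely that $\chiB$ intertwines the $\FQSym^B$-structures with the $\QSym^B$-structures over the algebra/coalgebra map $\chi\colon\FQSym\twoheadrightarrow\QSym$; since $\QSym^B=\chiB(\FQSym^B)$ by definition, $\chiB$ is the desired surjection of graded right modules and comodules.

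The main obstacle is the bookkeeping in the first paragraph: one must verify that the analytic definitions \eqref{eq:Action FB} and \eqref{eq:Coaction FB}, phrased through signed standardization and the window/hat operations, genuinely coincide with the categorical maps $\bar\mu_I^S,\bar\rho_I^S$ rather than with merely similar-looking maps. This coincidence is already secured at the group-algebra level in Section~\ref{sec:MRBA} (the identities $u\shuffleB v=\bar\mu_I^S(u\times v)$ and $\unshuffle^B_m w=\bar\rho_I^S(w)$) and at the free-quasisymmetric level in Proposition~\ref{prop:FQSymB}; the only genuinely new point is that applying $\chiB$ and $\chi$ carries these identities down compatibly, i.e.\ that the square $F\circ\chi=\chiB\circ\bF^{-1}$ commutes. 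As a consistency check one can run the dual route: $\QSym^B=(\NSym^B)^*$ under the pairing $\langle\bsB_\alpha,\FB_\beta\rangle=\delta_{\alpha,\beta}$ inherited from the self-dual $\FQSym^B$, and dualizing the module--comodule isomorphism $\NSym^B\cong\Sigma(\B)$ of Proposition~\ref{prop:NSymB} sends the action $\CupB$ and coaction $\CapB$ on $\Sigma(\B)$ to the coaction $\unshuffleB$ and the action $\shuffleB$ on $\Sigma^*(\B)$ (using Proposition~\ref{prop:MRBA}(iii)), which matches the statement.
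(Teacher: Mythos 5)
Your argument is correct and is essentially the paper's proof: the paper simply cites \eqref{eq:Action FB} and \eqref{eq:Coaction FB}, which --- exactly as you observe --- identify $\odotB$ and $\DeltaB$ on the basis $\{\FB_w\}$ with $\shuffleB$ and $\unshuffleB$ on the basis $\{D^*_w(\B_n)\}$ of $\Sigma^*(\B)$, and simultaneously exhibit $\chiB$ as intertwining the $\FQSym^B$-structures with the $\QSym^B$-structures over $\chi:\FQSym\twoheadrightarrow\QSym$. One minor notational slip: the commuting square you want is $F\circ\chi=\chiB\circ\bF$ (equivalently $F\circ\chi\circ\bF^{-1}=\chiB$), not $F\circ\chi=\chiB\circ\bF^{-1}$, which does not typecheck.
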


\begin{proof}
The result follows from (\ref{eq:Action FB}) and (\ref{eq:Coaction FB}).
 \end{proof}

Define a pairing between $\NSym^B$ and $\QSym^B$ by $\langle \bsB_\alpha,\FB_\beta\rangle = \langle \bhB_\alpha, \MB_\beta\rangle:= \delta_{\alpha,\beta}$ for all pseudo-compositions $\alpha$ and $\beta$.

\begin{corollary}
The embedding $\imath:\NSym^B\hookrightarrow \FQSym^B$ and the surjection $\chiB:\FQSym^B\twoheadrightarrow\QSym^B$ are dual morphisms of graded modules and comodules.
\end{corollary}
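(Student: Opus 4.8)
The plan is to recognize that nearly all the work has already been done in the preceding propositions, so that the corollary reduces to one bookkeeping verification. First I would observe that both maps are already known to be morphisms of graded right modules and comodules: Proposition~\ref{prop:NSymB} exhibits $\imath:\NSym^B\hookrightarrow\FQSym^B$ as the restriction of the module-comodule structure on $\FQSym^B$ over $\FQSym$ to the submodule $\NSym^B$ over $\NSym$, and Proposition~\ref{prop:QSymB} exhibits $\chiB:\FQSym^B\twoheadrightarrow\QSym^B$ as a surjection of modules and comodules, intertwining the $\FQSym$-structure with the $\QSym$-structure along $\chi:\FQSym\twoheadrightarrow\QSym$. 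Since $\imath:\NSym^S\hookrightarrow\FQSym^S$ and $\chi:\FQSym^S\twoheadrightarrow\QSym^S$ are already dual to each other in general (hence in type A), the only genuinely new point to check is that $\imath$ and $\chiB$ are dual to each other as linear maps, with respect to the self-dual pairing on $\FQSym^B$ and the pairing between $\NSym^B$ and $\QSym^B$.

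For this duality I would fix a pseudo-composition $\alpha\modelsB m$ and an element $w\in\B_m$ and compare the two pairings directly on basis elements, the cross-degree cases vanishing by grading. On one side, Proposition~\ref{prop:bsBa} expands $\imath(\bsB_\alpha)=\sum_{v\in\B_m,\ D(v)=D(\alpha)} \bsB_v$, and pairing against $\bFB_w$ via $\langle\bFB_w,\bsB_v\rangle=\delta_{w,v}$ collapses the sum to the indicator of the condition $D(w)=D(\alpha)$. On the other side, $\chiB(\bFB_w)=\FB_w=\FB_\beta$, where $\beta\modelsB m$ is the unique pseudo-composition with $D(\beta)=D(w)$, so $\langle\bsB_\alpha,\chiB(\bFB_w)\rangle=\langle\bsB_\alpha,\FB_\beta\rangle=\delta_{\alpha,\beta}$ is again the indicator of $D(\alpha)=D(w)$. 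The two expressions agree, and bilinearity then yields $\langle\imath(x),y\rangle=\langle x,\chiB(y)\rangle$ for all $x\in\NSym^B$ and $y\in\FQSym^B$.

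I do not expect a serious obstacle; the entire difficulty is in keeping the various pairings straight. The one point deserving care is confirming that the defining pairing $\langle\bsB_\alpha,\FB_\beta\rangle=\delta_{\alpha,\beta}$ between $\NSym^B$ and $\QSym^B$ is precisely the one induced through $\imath$ and $\chiB$ by the self-dual form on $\FQSym^B$, rather than some unrelated pairing, and this is exactly what the computation above certifies. Once that identity is in hand, combining the module-comodule morphism properties from Propositions~\ref{prop:NSymB} and~\ref{prop:QSymB} with the self-duality of $\FQSym^B$ established in Proposition~\ref{prop:FQSymB} shows that $\imath$ and $\chiB$ are dual morphisms of graded modules and comodules, which completes the proof.
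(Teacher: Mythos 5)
Your proof is correct and follows essentially the same route as the paper, which simply cites Theorem~\ref{thm:DesB} together with Propositions~\ref{prop:NSymB} and~\ref{prop:QSymB}. The only difference is that you verify the basis-level identity $\langle\imath(\bsB_\alpha),\bFB_w\rangle=\langle\bsB_\alpha,\chiB(\bFB_w)\rangle$ by direct computation (both sides being the indicator of $D(w)=D(\alpha)$), whereas the paper obtains it by transporting the definitionally dual pair $\imath:\Sigma(\B)\hookrightarrow\ZZ\B$ and $\chi:\ZZ\B\twoheadrightarrow\Sigma^*(\B)$ through the isomorphisms $\NSym^B\cong\Sigma(\B)$, $\FQSym^B\cong\ZZ\B$, and $\QSym^B\cong\Sigma^*(\B)$; your expansion makes explicit exactly the compatibility of pairings that the paper leaves implicit.
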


\begin{proof} 
This follows from Theorem~\ref{thm:DesB}, Proposition~\ref{prop:NSymB}, and Proposition~\ref{prop:QSymB}.
 \end{proof}

\subsubsection{Symmetric functions of type B}
We next investigate $\Sym^B:=\chi^B(\NSym^B) \subseteq \QSym^B$, which is the $\ZZ$-span of
$\sB_\alpha:= \chiB(\bsB_\alpha)$ for all pseudo-compositions $\alpha$.
A representation theoretic interpretation for $\sB_\alpha$ will be provided in Proposition~\ref{prop:ChPB}. 
Another spanning set for $\Sym^B$ consists of
\[ \hB_\alpha:=\chiB(\bhB_\alpha) = \sum_{\beta\cleq\alpha} \chiB(\bsB_\beta) = \sum_{\beta\cleq\alpha} \sB_\beta\]
for all pseudo-compositions $\alpha$.
We have an isomorphism $\Lambda(\B)\cong\Sym^B$ via $\Lambda_\alpha(\B_n) \mapsto \bsB_\alpha$, $\forall \alpha\modelsB n$, $\forall n\ge0$.
This and \eqref{eq:cab} give a nondegenerate symmetric bilinear form on $\Sym^B$.

A \emph{pseudo-partition} $\lambda$ of $n$, denoted by $\lambda\vdash^B n$, is a sequence of integers $\lambda=(\lambda_1,\ldots,\lambda_\ell)$ such that $\lambda_1\ge0$, $\lambda_2\ge\cdots\ge\lambda_\ell\ge1$, and its \emph{size} $|\lambda|:=\lambda_1+\cdots+\lambda_\ell$ equals $n$. 
Given a pseudo-composition $\alpha=(\alpha_1,\ldots,\alpha_\ell)$, denote by $\lambda^B(\alpha)$ the pseudo-partition obtained from $\alpha$ by rearranging $\alpha_2,\ldots,\alpha_\ell$. 

\begin{proposition}\label{prop:BasisSymB}
If $\lambda\vdash^B n$ then $\hB_\lambda:=\hB_\alpha$ is well defined for any $\alpha\modelsB n$ with $\lambda^B(\alpha)=\lambda$.
Moreover, there is a free $\ZZ$-basis $\{\hB_\lambda:\lambda\vdash^B n,\ n\ge0\}$ for $\Sym^B$.
\end{proposition}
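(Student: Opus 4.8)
The plan is to obtain Proposition~\ref{prop:BasisSymB} as the type B instance of the uniform Proposition~\ref{prop:Sym}. First I would record the dictionary between the two notations: for a pseudo-composition $\alpha\modelsB n$ and the subset $I=D(\alpha)\subseteq S$, Proposition~\ref{prop:bsBa} together with $\bhB_\alpha=\sum_{\beta\cleq\alpha}\bsB_\beta$ gives $\bhB_\alpha=\bh_I^S$ (using $\beta\cleq\alpha\Leftrightarrow D(\beta)\subseteq D(\alpha)$), and applying $\chiB=\chi$ yields $\hB_\alpha=h_I^S$ with $I=D(\alpha)$. Thus $\Sym^B$ is exactly the direct sum over $n$ of the uniform $\Sym^S$ for $(W,S)=(\B_n,\{s_0,\dots,s_{n-1}\})$, and the whole statement reduces to identifying the index set $\Pi(\B_n,S)$ of Proposition~\ref{prop:Sym} with the pseudo-partitions of $n$.

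The heart of the argument is therefore the combinatorial claim that for $\alpha,\beta\modelsB n$ the parabolic subgroups $\B_\alpha=W_{D(\alpha)^c}$ and $\B_\beta=W_{D(\beta)^c}$ are conjugate in $\B_n$ if and only if $\lambda^B(\alpha)=\lambda^B(\beta)$; this sets up a bijection between $\Pi(\B_n,S)$ and pseudo-partitions $\lambda\vdash^B n$ carrying the class of $D(\alpha)$ to $\lambda^B(\alpha)$. Since type $B_n$ is neither $D_{2m}$ nor $E_7$, Remark~\ref{rem:Sym}(i) tells us that conjugacy of parabolic subgroups is governed by their type, and what remains is to read off that the type of $\B_\alpha\cong\B_{\alpha_1}\times\SS_{\alpha_2}\times\cdots\times\SS_{\alpha_\ell}$ is recorded faithfully by $\lambda^B(\alpha)$. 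For the easy direction I would exhibit, when $\lambda^B(\alpha)=\lambda^B(\beta)$, a conjugating element inside $\SS_n\subseteq\B_n$ that fixes the first block $\{1,\dots,\alpha_1\}$ pointwise and permutes the remaining blocks so as to carry the block decomposition of $\alpha$ to that of $\beta$; this is possible precisely because $\alpha_1=\beta_1$ and the tails $(\alpha_2,\dots,\alpha_\ell)$ and $(\beta_2,\dots)$ are rearrangements of one another.

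The converse, and the main obstacle, is to show that conjugacy forces $\lambda^B(\alpha)=\lambda^B(\beta)$. The subtle point is the coincidence $B_1=A_1$ of abstract Coxeter types: a rank-one ``$B$-factor'' $\langle s_0\rangle$ must be separated from a rank-one ``$A$-factor'' $\langle s_i\rangle$ with $i\ge1$. I would resolve this geometrically in the realization of $\B_n$ with $\Delta=\{e_1,e_2-e_1,\dots,e_n-e_{n-1}\}$, where the reflections split into the two $\B_n$-conjugacy classes of short roots $\pm e_i$ (the class of $s_0$) and long roots $\pm e_i\pm e_j$ (the class of $s_1$). The number of reflections of $\B_\alpha$ lying in the class of $s_0$ equals $\alpha_1$ (these are exactly the single-coordinate sign changes inside the first block, the $\SS$-factors contributing none), and this count is a conjugacy invariant; hence $\alpha_1=\beta_1$. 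Removing the common $B$-factor, the complementary part is a product of symmetric groups whose isomorphism type determines the multiset of factor sizes $\ge2$ among $\alpha_2,\dots,\alpha_\ell$, and the number of trivial $\SS_1$ factors is then forced since $|\alpha|=n$ is fixed; together these give $\lambda^B(\alpha)=\lambda^B(\beta)$.

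With the bijection $\Pi(\B_n,S)\cong\{\lambda\vdash^B n\}$ in hand, the well-definedness of $\hB_\lambda$ is immediate: $\lambda^B(\alpha)=\lambda^B(\beta)$ places $D(\alpha)$ and $D(\beta)$ in the same class, so $h_{D(\alpha)}^S=h_{D(\beta)}^S$, that is, $\hB_\alpha=\hB_\beta$. Finally, for each $n$ Proposition~\ref{prop:Sym} provides the free $\ZZ$-basis $\{h_\mu^S:\mu\in\Pi(\B_n,S)\}$ of $\Sym^S$; transporting it through the bijection identifies it with $\{\hB_\lambda:\lambda\vdash^B n\}$, and summing over $n\ge0$ yields the claimed free $\ZZ$-basis $\{\hB_\lambda:\lambda\vdash^B n,\ n\ge0\}$ of $\Sym^B$.
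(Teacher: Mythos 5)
Your proof is correct and takes essentially the same route as the paper: the paper's entire proof is the reduction to Proposition~\ref{prop:Sym} via the fact, quoted from Remark~\ref{rem:Sym}(i) (Geck--Pfeiffer), that two parabolic subgroups $\B_\alpha$ and $\B_\beta$ are conjugate in $\B_n$ if and only if $\lambda^B(\alpha)=\lambda^B(\beta)$. The only difference is that you prove this conjugacy criterion directly (counting short-root reflections to pin down $\alpha_1$, then matching the $A$-factors), whereas the paper simply cites the classification; your attention to the $B_1$ versus $A_1$ coincidence is a genuine point that the cited remark glosses over, though the "cancel the common $B$-factor" step is cleanest if phrased via the induced isomorphism of root subsystems rather than abstract group isomorphism.
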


\begin{proof}
According to Remark~\ref{rem:Sym} (i), two parabolic subgroups $\SS^B_\alpha$ and $\SS^B_\beta$ are conjugate in $\SS^B_n$ if and only if $\lambda^B(\alpha) = \lambda^B(\beta)$.
Thus the result follows from Proposition~\ref{prop:Sym}.
\end{proof}

\begin{proposition}\label{prop:TensorSymB}
We have $\Sym^B\subseteq \ZZ[x_0]\cdot\Sym\cong\ZZ[x_0]\otimes_\ZZ\Sym$ and if $\KK$ be a field of characteristic zero then $\Sym^B_\KK = \KK[x_0]\cdot \Sym_\KK \cong\KK[x_0] \otimes_\KK \Sym_\KK.$
\end{proposition}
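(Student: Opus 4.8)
The plan is to prove the integral inclusion directly from the multiplicative description of $\NSym^B$, and then to promote it to an equality after extension of scalars by a graded dimension count. First I would show $\Sym^B\subseteq\ZZ[x_0]\cdot\Sym$. By definition $\Sym^B$ is the $\ZZ$-span of $\{\sB_\alpha\}$, equivalently of $\{\hB_\alpha\}$, since $\hB_\alpha=\sum_{\beta\cleq\alpha}\sB_\beta$ is a unitriangular change of basis. As $\chiB$ is an algebra homomorphism and $\bhB_\alpha=\bhB_{\alpha_1}\cdot\bh_{\alpha_2}\cdots\bh_{\alpha_\ell}$, applying $\chiB$ yields $\hB_\alpha=\hB_{\alpha_1}\cdot h_{\alpha_2}\cdots h_{\alpha_\ell}$, where each $h_{\alpha_j}=\chiB(\bh_{\alpha_j})\in\Sym$ and, by \eqref{eq:bhB},
\[ \hB_{\alpha_1}=\sum_{0\le i\le\alpha_1}x_0^i\,h_{\alpha_1-i}\in\ZZ[x_0]\cdot\Sym. \]
Hence $\hB_\alpha\in\ZZ[x_0]\cdot\Sym$ for every pseudo-composition $\alpha$, giving the inclusion. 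The isomorphism $\ZZ[x_0]\cdot\Sym\cong\ZZ[x_0]\otimes_\ZZ\Sym$ (and its base change to $\KK$) follows because $\Sym$ only involves $x_1,x_2,\ldots$ while $x_0$ is a distinct variable, so the products $x_0^d m_\lambda$ of monomial basis elements are linearly independent; this is consistent with the already established identity $\QSym^B\cong\ZZ[x_0]\otimes_\ZZ\QSym$ together with $\Sym^B\subseteq\QSym^B$.

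For the equality over $\KK$ I would extend scalars, so that the inclusion reads $\Sym^B_\KK\subseteq\KK[x_0]\cdot\Sym_\KK$, and then compare graded dimensions. In degree $n$, Proposition~\ref{prop:BasisSymB} gives $\dim_\KK(\Sym^B_\KK)_n$ equal to the number of pseudo-partitions of $n$, while the tensor decomposition gives $\dim_\KK(\KK[x_0]\otimes_\KK\Sym_\KK)_n=\sum_{d=0}^n p(n-d)$, where $p$ denotes the ordinary partition function. A pseudo-partition of $n$ is precisely a first part $\lambda_1\ge0$ together with an ordinary partition of $n-\lambda_1$, so the two counts agree. An inclusion of finite-dimensional $\KK$-vector spaces of equal dimension is an equality in every degree, whence $\Sym^B_\KK=\KK[x_0]\cdot\Sym_\KK$.

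The main obstacle is the passage from $\ZZ$ to $\KK$: over $\ZZ$ both modules are free of the same finite rank in each degree, but equal rank together with an inclusion does not force equality of lattices, so the counting argument only delivers the reverse inclusion once $\ZZ$ is replaced by a field (indeed the dimension count itself needs $\KK$ to be a field, though characteristic zero is not essential for this particular step). In fact the reverse inclusion holds integrally as well: the generating-function identity $\sum_k\hB_k t^k=(1-x_0t)^{-1}\sum_m h_m t^m$ inverts to $x_0^d=\sum_{j}(-1)^j e_j\,\hB_{d-j}$ with $e_j\in\Sym$, and $\Sym^B$ is stable under multiplication by $\Sym$ because $\hB_\alpha\cdot h_\beta=\hB_{\alpha\cdot\beta}$; but only the field version is needed in what follows.
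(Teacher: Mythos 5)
Your overall strategy --- express $\hB_\alpha$ as a product landing in $\ZZ[x_0]\cdot\Sym$, then upgrade the inclusion to an equality over $\KK$ by comparing graded dimensions against Proposition~\ref{prop:BasisSymB} --- is exactly the paper's argument, and your dimension count is carried out correctly. However, there is a genuine error in the multiplicative step: you assert that $\chiB(\bh_{\alpha_j})=h_{\alpha_j}\in\Sym$. This is false. The map $\chiB$ sends $\bx_i\mapsto x_{|i|}$ (it is not the type A map $\chi$, which discards the nonpositively indexed variables), so
\[ \chiB(\bh_k)=\sum_{i_1\le\cdots\le i_k} x_{|i_1|}\cdots x_{|i_k|}=\sum_{\substack{a,b,c\ge0\\ a+b+c=k}} h_a\,x_0^b\,h_c, \]
which involves $x_0$ and is not the complete homogeneous symmetric function $h_k=\chi(\bh_k)$. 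Your inclusion $\Sym^B\subseteq\ZZ[x_0]\cdot\Sym$ survives this slip, because $\chiB(\bh_k)$ still lies in the subring $\ZZ[x_0]\cdot\Sym$ and hence so does $\hB_\alpha=\chiB(\bhB_{\alpha_1})\chiB(\bh_{\alpha_2})\cdots\chiB(\bh_{\alpha_\ell})$; this is precisely how the paper argues, via its displayed formula for $\chiB(\bh_k)$.

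The same confusion makes your closing aside false. The identity $\hB_{\alpha\cdot\beta}=\hB_\alpha\cdot\chiB(\bh_\beta)$ does \emph{not} say that $\Sym^B$ is stable under multiplication by $h_\beta\in\Sym$: the module action $\hB_\alpha\odotB h_\beta$ is by definition multiplication by $\chiB(\bh_\beta)$, not by $h_\beta$. Consequently your inversion $x_0^d=\sum_j(-1)^j e_j\,\hB_{d-j}$ does not place $x_0^d$ in $\Sym^B$, and in fact the integral reverse inclusion fails: the remark immediately following this proposition in the paper computes, in degree $2$, that $x_0^2=\tfrac83\hB_2-\tfrac43\hB_{11}-\tfrac43\hB_{02}+\hB_{011}$, so $\Sym^B\subsetneq\ZZ[x_0]\cdot\Sym$ over $\ZZ$. (One can also check directly that $h_1\cdot\hB_1=x_0h_1+h_{11}$ is not an integral combination of $\hB_2,\hB_{11},\hB_{02},\hB_{011}$.) Since the proposition only asserts equality after extension of scalars to $\KK$, this does not affect its truth, but you should delete the claim of integral equality and correct the identification of $\chiB(\bh_k)$.
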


\begin{proof}
For any nonnegative integer $k$ it follows from \eqref{eq:bhB} and the definition of $\bh_k$ that
\begin{equation}\label{eq:hB}
\hB_k = \chiB(\bhB_k) =  \sum_{0\leq i\leq k} x_0^i \cdot h_{k-i} \quad\text{and}
\end{equation}
\begin{equation}\label{eq:chiBh}
\chiB(\bh_k) = \sum_{ i_1\le\cdots\le i_k} x_{|i_1|}\cdots x_{|i_k|} = \sum_{\substack{a, b,c\ge0 \\ a+b+c=k}} h_a \cdot x_0^b \cdot h_c.
\end{equation}
Hence $\hB_\alpha = \chiB(\bhB_{\alpha_1})\cdot\chiB(\bh_{\alpha_2})\cdots\chiB(\bh_{\alpha_\ell}) \in\ZZ[x_0]\cdot\Sym$ for all $\alpha=(\alpha_1,\ldots,\alpha_\ell)\modelsB n$.
This implies $\Sym^B\subseteq \ZZ[x_0]\cdot\Sym$.
By Proposition~\ref{prop:BasisSymB}, the homogeneous component $\Sym^B_n$ of degree $n$ of $\Sym^B$ is a free $\ZZ$-module of rank equal to the sum of the numbers of partitions of $k$ for all $k=0,1,2,\ldots,n$.
Thus the equality between $\Sym^B_\KK$ and $\KK[x_0]\cdot \Sym_\KK$ follows from a comparison of the dimensions of their homogeneous components.
\end{proof}

If $\lambda=(\lambda_1,\ldots,\lambda_\ell)$ is a pseudo-partition then define $\mB_\lambda := \sum_{\lambda(\alpha)=\lambda} \MB_\alpha$.
We have
\begin{equation}\label{eq:mB}
\mB_\lambda = x_0^{\lambda_1}\cdot \sum_{\lambda(\alpha_2,\ldots,\alpha_\ell)=(\lambda_2,\ldots,\lambda_\ell)} M_{(\alpha_2,\ldots,\alpha_\ell)} = x_0^{\lambda_1} \cdot m_{(\lambda_2,\ldots,\lambda_\ell)}.
\end{equation}
Hence $\{\mB_\lambda\}$ is a free $\ZZ$-basis for $\ZZ[x_0]\cdot\Sym$, where $\lambda$ runs through all pseudo-partitions.

\begin{proposition}
One has dual $\KK$-bases $\{\hB_\lambda\}$ and $\{\mB_\lambda\}$ for $\Sym^B_\KK$, where $\lambda$ runs through all pseudo-partitions.
\end{proposition}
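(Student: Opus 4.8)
The plan is to obtain this as a direct specialization of the general statement Proposition~\ref{prop:SymK} to the finite Coxeter system $(\B_n,S)$ with $S=\{s_0,\ldots,s_{n-1}\}$; the only real task is to match the general indexing and bases with the type B notation of this subsection. First I would record the dictionary coming from the bijection $\alpha\mapsto D(\alpha)$ between pseudo-compositions of $n$ and subsets $I\subseteq S$: for $I=D(\alpha)$ one has $W_{I^c}=\B_\alpha$, $\bh_I^S=\bhB_\alpha$, and $M_I^S=\MB_\alpha$ (the last because $\{\MB_\alpha\}$ and $\{\bhB_\alpha\}$ are dual bases, as are $\{M_I^S\}$ and $\{\bh_I^S\}$), so that $\NSym^S=\NSym^B$, $\QSym^S=\QSym^B$, $\Sym^S=\Sym^B$, and $h_I^S=\chi(\bh_I^S)=\chiB(\bhB_\alpha)=\hB_\alpha$.

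Next I would identify the index set $\Pi(\B_n,S)$ with the set of pseudo-partitions of $n$. By Proposition~\ref{prop:Sym}, $h_I^S=h_J^S$ holds precisely when $W_{I^c}=\B_\alpha$ and $W_{J^c}=\B_\beta$ are conjugate, and by the type B conjugacy classification used in Proposition~\ref{prop:BasisSymB} (via Remark~\ref{rem:Sym}(i)) this happens if and only if $\lambda^B(\alpha)=\lambda^B(\beta)$. Thus $\alpha\mapsto\lambda^B(\alpha)$ gives a bijection from $\Pi(\B_n,S)$ onto the set of pseudo-partitions of $n$, under which $h_\lambda^S=\hB_\lambda$, while summing $M_I^S=\MB_\alpha$ over the subsets in a single class and using \eqref{eq:mB} gives $m_\lambda^S=\sum_{I\in\lambda}M_I^S=\mB_\lambda$. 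Finally, the bilinear form on $\Sym^B$ built from $\Lambda(\B)$ and \eqref{eq:cab} is exactly the form that Proposition~\ref{prop:SymK} pairs against, since both descend from the pairing between $\NSym$ and $\QSym$. Proposition~\ref{prop:SymK} then states precisely that $\{\hB_\lambda\}$ and $\{\mB_\lambda\}$ are dual $\KK$-bases for $\Sym^B_\KK$.

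I expect no analytic difficulty, since the argument is a translation of notation together with one already established input. The single point needing care is the conjugacy classification of the parabolic subgroups $\B_\alpha$, in particular why the first part $\alpha_1$ must be kept separate in $\lambda^B$: the sign-change generator $s_0$ is a short-root reflection and is never conjugate in $\B_n$ to the long-root reflections $s_1,\ldots,s_{n-1}$, so a $\B_1$ factor cannot be absorbed into an $\SS_2$ factor. Since type B carries none of the exceptional coincidences flagged in Remark~\ref{rem:Sym}(i), this is exactly the input already used for the $\hB_\lambda$-basis, and nothing new is required. Should a self-contained proof be preferred, one could instead pair $\hB_\lambda$ with $\mB_\mu$ directly and reduce to counting double cosets $W_{I^c}\backslash\B_n/W_{J^c}$ as in the proof of Proposition~\ref{prop:Sym}, but routing through Proposition~\ref{prop:SymK} avoids repeating that computation.
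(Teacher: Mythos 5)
Your proof is correct, but it reaches the statement by a different route than the paper. You specialize the general Proposition~\ref{prop:SymK} to $(\B_n,S)$, which requires exactly the dictionary you set up: $\bhB_\alpha=\bh^S_{D(\alpha)}$ and $\MB_\alpha=M^S_{D(\alpha)}$ (via Proposition~\ref{prop:bsBa} and duality), plus the identification of $\Pi(\B_n,S)$ with pseudo-partitions through the conjugacy classification of the parabolic subgroups $\B_\alpha$. The paper instead argues directly in type B: it quotes Proposition~\ref{prop:BasisSymB} for the $\{\hB_\lambda\}$ basis, uses the decomposition $\Sym^B_\KK=\KK[x_0]\cdot\Sym_\KK$ of Proposition~\ref{prop:TensorSymB} together with the factorization $\mB_\lambda=x_0^{\lambda_1}\cdot m_{(\lambda_2,\ldots,\lambda_\ell)}$ from \eqref{eq:mB} to see that $\{\mB_\lambda\}$ is a $\KK$-basis, and then computes $\langle \hB_\lambda,\mB_\mu\rangle=\langle \bhB_\alpha,\sum_{\lambda(\beta)=\mu}\MB_\beta\rangle=\delta_{\lambda,\mu}$ straight from the defining pairing $\langle\bhB_\alpha,\MB_\beta\rangle=\delta_{\alpha,\beta}$. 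The two arguments rest on the same underlying facts: your appeal to Proposition~\ref{prop:SymK} reuses the dimension count already carried out in the general setting, whereas the paper's route yields the additional structural information that $\{\mB_\lambda\}$ spans all of $\KK[x_0]\otimes_\KK\Sym_\KK$. The two points you flag as needing care — that the bilinear form on $\Sym^B$ inherited from $\Lambda(\B)$ and \eqref{eq:cab} is the restriction of the $\NSym^B$--$\QSym^B$ pairing, and that a $\B_1$ factor is never conjugate to an $\SS_2$ factor so $\alpha_1$ must be kept separate in $\lambda^B(\alpha)$ — are precisely what makes the specialization legitimate, and both are already established in the paper (Proposition~\ref{prop:SymForm} and Remark~\ref{rem:Sym}(i), as used in Proposition~\ref{prop:BasisSymB}). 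One minor caveat: Proposition~\ref{prop:Sym} only shows that conjugacy of $W_{I^c}$ and $W_{J^c}$ implies $h_I^S=h_J^S$, not the converse, but your argument never needs the converse, so this does not affect the proof.
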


\begin{proof}
We already know that $\{\hB_\lambda\}$ is a free $\ZZ$-basis for $\Sym^B$, and hence a $\KK$-basis for $\Sym^B_\KK$.
By Proposition~\ref{prop:TensorSymB} and \eqref{eq:mB}, $\{\mB_\lambda\}$ is a $\KK$-basis for $\Sym^B_\KK=\KK[x_0]\cdot \Sym_\KK$. 
Let $\lambda$ and $\mu$ be two pseudo-partitions, and let $\alpha$ be a pseudo-composition with $\lambda(\alpha)=\lambda$. 
Then
\[ \langle \hB_\lambda,\mB_\mu \rangle = \langle \bhB_\alpha,\sum_{\lambda(\beta)=\mu} \MB_\beta \rangle = \delta_{\lambda,\mu}.\]
Thus $\{\hB_\lambda\}$ and $\{\mB_\lambda\}$ are dual $\KK$-bases for $\Sym^B_\KK$.
\end{proof}

\begin{remark}
(i) Proposition~\ref{prop:bsBa} implies that  $\sB_{0\cdot1^k} = s_{1^k}$.

\noindent (ii) 
We know that $\{\hB_\lambda\}$ is a free $\ZZ$-basis for $\Sym^B$ and $\{\mB_\lambda\}$ is a free $\ZZ$-basis for $\ZZ[x_0]\cdot\Sym$.
However, $\{\mB_\lambda\}$ is not a free $\ZZ$-basis for $\Sym^B$, since $\Sym^B \subsetneq\ZZ[x_0]\cdot\Sym$ by the following calculation using \eqref{eq:hB} and \eqref{eq:chiBh}.
\[ \arraycolsep=2pt \def\arraystretch{1.2}
\begin{matrix}
\hB_2 &=& x_0^2 & + & x_0h_1 & + & h_2  \\
 \hB_{11} &=& x_0^2 & +&  3x_0h_1 & + & & & 2h_{11} \\
 \hB_{02} &=& x_0^2 & +& 2x_0h_1 & +& 2h_2 & +& h_{11} \\
 \hB_{011} &=& x_0^2 &+& 4x_0h_1 & + & & & 4h_{11}
\end{matrix} 
\qquad\Rightarrow\qquad
\arraycolsep=2pt  \def\arraystretch{1.2} \begin{matrix}
x_0^2 &=& \frac83 \hB_2 & - & \frac43 \hB_{11} &- &  \frac43 \hB_{02} &+& \hB_{011}\\
x_0h_1 &=& -\frac43 \hB_2 & + &  \frac53 \hB_{11} & + & \frac23 \hB_{02} & - & \hB_{011}  \\
h_2 &=& -\frac13 \hB_2 & - &  \frac13 \hB_{11} & + & \frac23 \hB_{02} \\
 h_{11} &=& \frac23 \hB_2 & - & \frac43\hB_{11} &  - & \frac13 \hB_{02} & + & \hB_{011}
\end{matrix}\]
\end{remark}

For any pseudo-composition $\alpha$ and composition $\beta$, applying $\chiB$ to $\bhB_\alpha\cdot\bh_\beta=\bhB_{\alpha\cdot\beta}$ gives
\[ \hB_{\alpha}\odotB h_{\beta} := \hB_\alpha \cdot \chiB(\bh_\beta) = \hB_{\alpha\cdot\beta}.\]
Also define
\[ \DeltaB(\sB_\alpha) = \DeltaB(\chiB(\bsB_\alpha)) := (\chiB\otimes\chi)(\DeltaB(\bsB_\alpha)).\]

\begin{proposition}\label{prop:SymB}
$(\Sym^B,\odotB,\DeltaB)$ is a graded right module and comodule over $\Sym$ isomorphic to the graded right module and comodule $\Lambda(\B)$ over $\Lambda(\SS)$. The injection $\imath:\Sym^B\hookrightarrow\QSym^B$ and the surjection $\chi^B:\NSym^B\twoheadrightarrow\Sym^B$ are dual morphisms of graded right modules and comodules.
There is a free basis $\{\hB_k:k\ge0\}$ for $\Sym^B$ as a right $\Sym$-module.
\end{proposition}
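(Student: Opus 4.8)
The plan is to deduce every assertion from the abstract framework of Section~\ref{sec:General}, transported to type B through the isomorphisms already in place, rather than computing $\odotB$ and $\DeltaB$ on $\Sym^B$ by hand. Recall that Proposition~\ref{prop:NSymB} identifies $(\NSym^B,\odotB,\DeltaB)$ with $(\Sigma(\B),\CupB,\CapB)$ as a module and comodule over $\NSym\cong\Sigma(\SS)$, and Proposition~\ref{prop:QSymB} identifies $(\QSym^B,\odotB,\DeltaB)$ with $(\Sigma^*(\B),\shuffleB,\unshuffleB)$ over $\QSym\cong\Sigma^*(\SS)$; both sit inside the type B diagram~\eqref{eq:DiamondCoxB} of Theorem~\ref{thm:DesB}, whose entries are modules and comodules over the type A Hopf algebras of \eqref{eq:DiamondFQSymA}.

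First I would note that the natural isomorphism $s:\Lambda\to\mathcal Sym$ of Section~\ref{sec:General}, which is compatible with $\bs$ and $F$, specializes to a $\ZZ$-module isomorphism $s:\Lambda(\B)\to\Sym^B$ sending $\Lambda_\alpha(\B_n)$ to $\sB_\alpha$. The module and comodule structures on $\Lambda(\B)$ over $\Lambda(\SS)$ are those attached by the representations $\Lambda$ and its dual to the inclusions $\B_m\times\SS_n\hookrightarrow\B_{m+n}$ and the corresponding restrictions, namely the maps $\bar\mu_I^S$ and $\bar\rho_I^S$. By Corollary~\ref{cor:Sym} these are exactly the descents along $\chi'$ of the structure maps of $\Sigma(\B)$ and the restrictions along $\imath$ of those of $\Sigma^*(\B)$; since $\bs,s,F$ are compatible natural isomorphisms, $s$ intertwines the two structures. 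Hence $(\Sym^B,\odotB,\DeltaB)$ is a graded right module and comodule over $\Sym$ isomorphic to $\Lambda(\B)$ over $\Lambda(\SS)$, and under $\bs,s,F$ the maps $\chi':\Sigma(\B)\to\Lambda(\B)$ and $\imath:\Lambda(\B)\to\Sigma^*(\B)$ become $\chiB:\NSym^B\to\Sym^B$ and $\imath:\Sym^B\to\QSym^B$, both morphisms of modules and comodules.

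For the duality claim I would combine Proposition~\ref{prop:SymForm}, which makes $\chi':\Sigma(W)\twoheadrightarrow\Lambda(W)$ and $\imath:\Lambda(W)\hookrightarrow\Sigma^*(W)$ dual to one another, with the self-duality of the module and comodule structures recorded in the ``dual diagram'' half of Theorem~\ref{thm:DesB}; transporting along $\bs,s,F$ then shows that $\imath:\Sym^B\hookrightarrow\QSym^B$ and $\chiB:\NSym^B\twoheadrightarrow\Sym^B$ are dual morphisms of graded right modules and comodules.

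Finally, for the free basis I would use the concrete product rule $\hB_\alpha\odotB h_\beta=\hB_{\alpha\cdot\beta}$ obtained by applying the algebra map $\chiB$ to $\bhB_\alpha\cdot\bh_\beta=\bhB_{\alpha\cdot\beta}$. Writing a pseudo-partition as $\lambda=(\lambda_1,\lambda_2,\ldots,\lambda_\ell)$ with $\lambda_1\ge0$ and $(\lambda_2,\ldots,\lambda_\ell)$ an ordinary partition gives $\hB_\lambda=\hB_{\lambda_1}\odotB h_{(\lambda_2,\ldots,\lambda_\ell)}$; as $\lambda$ ranges over all pseudo-partitions this exhibits the free $\ZZ$-basis $\{\hB_\lambda:\lambda\vdash^B n,\ n\ge0\}$ of $\Sym^B$ (Proposition~\ref{prop:BasisSymB}) as $\{\hB_k\odotB h_\mu:k\ge0,\ \mu\text{ a partition}\}$, where $\{h_\mu\}$ is the free $\ZZ$-basis of $\Sym$, so $\{\hB_k:k\ge0\}$ is a free basis for $\Sym^B$ as a right $\Sym$-module. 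The main obstacle is the first step: one must verify that the operations $\odotB$ and $\DeltaB$ defined concretely on $\Sym^B$ really do descend through $\chiB$ and agree with the structure transported from $\Lambda(\B)$ --- equivalently, that they are well defined as a $\Sym$-action and $\Sym$-coaction. This is precisely what Corollary~\ref{cor:Sym} guarantees, so the verification reduces to tracking the compatibility of $\bs,s,F$ with the maps $\bar\mu_I^S$ and $\bar\rho_I^S$.
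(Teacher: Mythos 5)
Your proposal is correct and follows essentially the same route as the paper, whose proof simply cites Theorem~\ref{thm:DesB} and Propositions~\ref{prop:NSymB}, \ref{prop:QSymB}, and~\ref{prop:BasisSymB}; you have merely made explicit the transport of structure from $\Lambda(\B)$ via Corollary~\ref{cor:Sym} and Proposition~\ref{prop:SymForm}, and spelled out how $\hB_\lambda=\hB_{\lambda_1}\odotB h_{(\lambda_2,\ldots,\lambda_\ell)}$ converts the free $\ZZ$-basis of Proposition~\ref{prop:BasisSymB} into the free $\Sym$-module basis $\{\hB_k:k\ge0\}$. No gaps.
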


\begin{proof}
Apply Theorem~\ref{thm:DesB} and Propositions~\ref{prop:NSymB},~\ref{prop:QSymB}, and~\ref{prop:BasisSymB}.
 \end{proof}

\subsection{Representation theory} 
Now we study the connections of $\NSym^B$ and $\QSym^B$ with the representation theory of 0-Hecke algebras of type B. 
Let $(W,S)$ be the finite Coxeter system of type $B_n$, where $W=\B_n$ and $S=\{s_0=s^B_0,s_1,\ldots,s_{n-1}\}$. 
Let $\FF$ be a field. The 0-Hecke algebra $\HB_n(0)$ of $(W,S)$ is an $\FF$-algebra with two generating sets $\{\pib_i:0\le i\le n-1\}$ and $\{\pi_i:0\le i\le n-1\}$. One can realize $\pi_0,\pi_1,\ldots,\pi_{n-1}$ as \emph{signed bubble-sorting operators} on $\ZZ^n$: if $(a_1,\ldots,a_n)\in \ZZ^n$ then
\[
\pi_i(a_1,\ldots,a_n):=
\begin{cases}
(-a_1,a_2,\ldots, a_n), & \text{if } i=0,\ a_1>0,\\
(a_1,\ldots,a_{i+1},a_i,\ldots,a_n), & \text{if } 1\leq i\leq n-1,\ a_i<a_{i+1},\\
(a_1,\ldots,a_n), & {\rm otherwise}.
\end{cases}
\]

The projective indecomposable $\HB_n(0)$-modules and simple $\HB_n(0)$-modules are given by $\PB_\alpha:=\P_I^S$ and $\CB_\alpha:=\C_I^S$, respectively, where $I=\{s_i:i\in D(\alpha)\}$, for all $\alpha\modelsB n$. One can realize $\PB_\alpha$ as the $\FF$-space of \emph{standard tableaux of pseudo-ribbon shape} $\alpha$ with an appropriate $\HB_n(0)$-action~\cite{H0Tab}. 

The parabolic subalgebra $\HB_\alpha(0)$ of $\HB_n(0)$ is generated by $\{\pi_i: i\in\{0,1,\ldots,n-1\}\setminus D(\alpha)\}$. 
If $m$ and $n$ are nonnegative integers one has $\HB_m(0)\otimes H_n(0)\cong \HB_{m,n}(0) \subseteq \HB_{m+n}(0)$, giving an embedding $\HB_m(0)\otimes H_n(0)\hookrightarrow\HB_{m+n}(0)$.

Associated with the tower of algebras $\HB_\bullet(0): \HB_0(0)\hookrightarrow \HB_1(0)\hookrightarrow \HB_2(0)\hookrightarrow\cdots$ are Grothendieck groups
\[ G_0(\HB_\bullet(0)):=\bigoplus_{n\geq0}G_0(\HB_n(0)) \qand K_0(\HB_\bullet(0)):=\bigoplus_{n\geq0}K_0(\HB_n(0)). \]
Let $M$ and $N$ be finitely generated modules over $\HB_m(0)$ and $H_n(0)$, respectively. Define
\[ M \odotB N := (M\otimes N)\uparrow\,_{\HB_{m,n}(0)}^{\HB_{m+n}(0)} \qand 
\DeltaB (M) := \sum_{0\le i\le m} M \downarrow\,_{\HB_{i,m-i}(0)}^{\HB_{m}(0)}. \] 
Using the linear maps $\bar\mu_I^S:G_0(H_{W_I}(0))\to G_0(H_W(0))$ and $\bar\rho_I^S:G_0(H_W(0))\to G_0(H_{W_I}(0))$ defined in \eqref{def:IndResG0}, with $S=\{s_0,\ldots,s_{m+n-1}\}$ and $I=S\setminus\{s_m\}$, one has
\[ M\odotB N = \bar\mu_I^S(M\otimes N) \qand \DeltaB_m (Q) := \downarrow\,_{\HB_{m,n}(0)}^{\HB_{m+n}(0)} = \bar\rho_I^S(Q) \]
where $Q$ is a finitely generated $\HB_{m+n}(0)$-module.
Also recall from Proposition~\ref{prop:IndResG0K0} that $\bar\mu_I^S$ and $\bar\rho_I^S$ restrict to $\mu_I^S:K_0(H_{W_I}(0))\to K_0(H_W(0))$ and $\rho_I^S:K_0(H_W(0))\to K_0(H_{W_I}(0))$.
If $M$, $N$, and $Q$ are all projective then 
\[ M \odotB N = \mu_I^S(M\otimes N) \qand \DeltaB_m(Q) = \rho_I^S(Q).\]
Define the following characteristic maps, where $\alpha$ runs through all pseudo-compositions: 
\[ \begin{matrix}
\mathrm{Ch}: & G_0(\HB_\bullet(0)) & \to & \QSym^B & \qand & \bch: & K_0(\HB_\bullet(0)) & \to & \NSym^B \\  
& \CB_\alpha & \mapsto & \FB_\alpha & & & \PB_\alpha & \mapsto & \bsB_\alpha.
\end{matrix}\]

\begin{theorem}
(i) $(G_0(\HB_\bullet(0)),\odotB,\DeltaB)$ is a graded right module and comodule over the Hopf algebra $G_0(H_\bullet(0))$.

\noindent (ii) $(K_0(\HB_\bullet(0)),\odotB,\DeltaB)$ is a graded right module and comodule over the Hopf algebra $K_0(H_\bullet(0))$.

\noindent (iii) $(G_0(\HB_\bullet(0)),\odotB,\DeltaB)$ is dual to $(K_0(\HB_\bullet(0)),\odotB,\DeltaB)$ via the pairing $\langle \PB_\alpha, \CB_\beta \rangle:=\delta_{\alpha,\beta}$.

\noindent(iv) Both $\Ch$ and $\bch$ are isomorphisms of graded modules and comodules.
\end{theorem}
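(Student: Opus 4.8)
The plan is to prove all four parts by transporting the module and comodule structures of $\Sigma^*(\B)$ and $\Sigma(\B)$ built in Theorem~\ref{thm:DesB} across the representation-theoretic identifications furnished by Theorem~\ref{thm:GrH0}. First I would record the type~B specialization of Theorem~\ref{thm:GrH0}: the characteristic maps give isomorphisms of graded $\ZZ$-modules $G_0(\HB_\bullet(0))\cong\Sigma^*(\B)$ and $K_0(\HB_\bullet(0))\cong\Sigma(\B)$, sending $\CB_\alpha$ to $D^*_\alpha(\B_n)$ and $\PB_\alpha$ to $D_\alpha(\B_n)$ for each pseudo-composition $\alpha$ of $n$, and these intertwine induction with $\bar\mu_I^S$ (resp.\ $\mu_I^S$ on projectives) and restriction with $\bar\rho_I^S$ (resp.\ $\rho_I^S$). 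The one link to make explicit is that the operations $\odotB$ and $\DeltaB$ defining the asserted structures are exactly these maps: with $S=\{s_0,\ldots,s_{m+n-1}\}$ and $I=S\setminus\{s_m\}$, the subalgebra $\HB_{m,n}(0)\cong\HB_m(0)\otimes H_n(0)$ is the one indexed by the inclusion $\B_{m,n}\hookrightarrow\B_{m+n}$, so induction from and restriction to it realize $\bar\mu_I^S,\bar\rho_I^S$ (resp.\ $\mu_I^S,\rho_I^S$), which by Section~\ref{sec:MRBA} are the operations $\shuffleB,\unshuffleB$ carried by $\Sigma^*(\B)$ and $\CupB,\CapB$ carried by $\Sigma(\B)$.

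Granting this, parts~(i) and~(ii) follow at once from Theorem~\ref{thm:DesB}, which asserts that $\Sigma^*(\B)$ and $\Sigma(\B)$ are graded right modules and comodules over $\Sigma^*(\SS)$ and $\Sigma(\SS)$; since $G_0(H_\bullet(0))\cong\Sigma^*(\SS)$ and $K_0(H_\bullet(0))\cong\Sigma(\SS)$ as type~A Hopf algebras, the structures transport verbatim. For part~(iv) I would compose the displayed isomorphisms with $\Sigma^*(\B)\cong\QSym^B$, sending $D^*_\alpha(\B_n)\mapsto\FB_\alpha$ (Proposition~\ref{prop:QSymB}), and $\Sigma(\B)\cong\NSym^B$, sending $D_\alpha(\B_n)\mapsto\bsB_\alpha$ (Proposition~\ref{prop:NSymB}); the composites are precisely $\Ch$ and $\bch$, hence these are isomorphisms, and they respect the module and comodule structures because every factor does.

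I expect part~(iii) to be the only delicate point. The $\Hom$-pairing between $K_0(\HB_\bullet(0))$ and $G_0(\HB_\bullet(0))$ satisfies $\langle\PB_\alpha,\CB_\beta\rangle=\dim_\FF\Hom(\PB_\alpha,\CB_\beta)=\delta_{\alpha,\beta}$, which under the identifications is the pairing between $\NSym^B$ and $\QSym^B$ given by $\langle\bsB_\alpha,\FB_\beta\rangle=\delta_{\alpha,\beta}$. What must be checked is that the $\odotB$-action on each Grothendieck group is adjoint to the $\DeltaB$-coaction on the other in \emph{both} directions. One direction is Frobenius reciprocity, but, as noted in the Remark after Theorem~\ref{thm:GrH0}, the other is not; rather than arguing by reciprocity I would invoke the self-contained duality of Theorem~\ref{thm:DesB} (reflecting the diagram~\eqref{eq:DiamondCoxB} gives a dual diagram), equivalently Proposition~\ref{prop:MRBA}(iii), whose proof ultimately rests on the explicit formulas for $\bar\mu_I^S(\CB_\alpha)$ and $\rho_I^S(\PB_\alpha)$ from Propositions~\ref{prop:IndC} and~\ref{prop:ResP}.
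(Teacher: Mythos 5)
Your proposal is correct and follows essentially the same route as the paper, whose proof is simply to combine Theorem~\ref{thm:GrH0}, Theorem~\ref{thm:DesB}, Proposition~\ref{prop:NSymB}, and Proposition~\ref{prop:QSymB} — precisely the results you invoke. Your extra care on part~(iii), deriving the duality from Theorem~\ref{thm:DesB} rather than from Frobenius reciprocity alone, is exactly the right reading of the paper's remark following Theorem~\ref{thm:GrH0}.
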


\begin{proof}
Apply Theorem~\ref{thm:GrH0}, Theorem~\ref{thm:DesB}, Proposition~\ref{prop:NSymB}, and Proposition~\ref{prop:QSymB}.
 \end{proof}

\begin{proposition}\label{prop:ChPB}
If $\alpha$ is a pseudo-composition of $n$ then ${\rm Ch}(\,\PB_\alpha) = \sB_\alpha$. 
\end{proposition}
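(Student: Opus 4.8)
The plan is to recognize Proposition~\ref{prop:ChPB} as the type B instance of the general Proposition~\ref{prop:PtoC}, which states that $\Ch(\P_I^S)=\chi(\bch(\P_I^S))=s_I^S$ for every finite Coxeter system $(W,S)$ and every $I\subseteq S$. First I would set $I=\{s_i:i\in D(\alpha)\}\subseteq S=\{s_0,\ldots,s_{n-1}\}$, so that by definition $\PB_\alpha=\P_I^S$ and $\CB_\alpha=\C_I^S$ for $W=\B_n$. Under the natural isomorphisms established in Section~\ref{sec:FQSymB}, the type B characteristic maps $\Ch$ and $\bch$ are precisely the general ones specialized to $\B_n$: on projectives $\bch(\PB_\alpha)=\bsB_\alpha$, and the abstract surjection $\chi\colon\FQSym^B\twoheadrightarrow\QSym^B$ is realized concretely by Chow's specialization $\chiB\colon\bx_i\mapsto x_{|i|}$, so that $\sB_\alpha=\chiB(\bsB_\alpha)$ plays the role of $s_I^S=\chi(\bs_I^S)$.

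With this dictionary in place, the main step is a direct appeal to Proposition~\ref{prop:PtoC}: applying it to $W=\B_n$ and the above $I$ gives $\Ch(\PB_\alpha)=\Ch(\P_I^S)=s_I^S$. It then remains only to identify $s_I^S$ with $\sB_\alpha$. Here $\bch(\P_I^S)=\bs_I^S$, and by Proposition~\ref{prop:bsBa} together with $\bsB_w=\bFB_{w^{-1}}$ one has $\bs_I^S=\sum_{w\in D_\alpha(\B_n)}\bsB_w=\bsB_\alpha$; applying $\chiB$ yields $s_I^S=\chiB(\bsB_\alpha)=\sB_\alpha$, as required.

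Should a self-contained argument be preferred over citing Proposition~\ref{prop:PtoC}, I would instead rerun its proof verbatim inside $\HB_n(0)$. Using the basis $\{\pib_w\pi_{w_0(I^c)}:w\in D_\alpha(\B_n)\}$ of $\PB_\alpha$, filtering by the length $\ell(w)$, and recording the action of each generator $\pib_s$ exactly as in the three cases of Proposition~\ref{prop:PtoC}, one reads off the composition factors of $\PB_\alpha$ and obtains $\Ch(\PB_\alpha)=\sum_{w\in D_\alpha(\B_n)}\FB_{w^{-1}}$, using $\Ch(\C_w^S)=\FB_w$. Proposition~\ref{prop:bsBa}, combined with $\chiB(\bFB_{w^{-1}})=\FB_{w^{-1}}$, then shows this sum equals $\chiB(\bsB_\alpha)=\sB_\alpha$. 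In either route the only work is bookkeeping---keeping the abstract maps of Section~\ref{sec:General} aligned with Chow's concrete type B realizations---so I do not expect a genuine obstacle; the single point requiring care is the verification that the general $\chi$ coincides with $\chiB$ on $\FQSym^B$.
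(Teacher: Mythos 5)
Your proposal is correct and follows exactly the paper's route: the paper's proof of this proposition is a direct appeal to Proposition~\ref{prop:PtoC} applied to $W=\B_n$ with $I=\{s_i:i\in D(\alpha)\}$. The bookkeeping you supply---identifying the abstract $s_I^S=\chi(\bs_I^S)$ with $\sB_\alpha=\chiB(\bsB_\alpha)$ via Proposition~\ref{prop:bsBa}---is precisely what the paper leaves implicit.
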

\begin{proof}
This follows from Proposition~\ref{prop:PtoC}.
 \end{proof}

\subsection{Other results}\label{sec:HopfB}

Our results in type B are based on the embedding $\B_m\times\SS_n\hookrightarrow\B_{m+n}$ which identifies $\B_m\times\SS_n$ with the parabolic subgroup $\B_{m,n}$ of $\B_{m+n}$. 
There is another embedding $\B_m\times\B_n\hookrightarrow\B_{m+n}$ whose image is a non-parabolic subgroup of $\B_{m+n}$ generated by $\{s_0,\ldots,s_{m-1}, s'_m, s_{m+1},\ldots,s_{m+n-1}\}$ where 
\[s'_m:=s_{m}\cdots s_1s_0s_1\cdots s_{m} = 1\cdots \overline{(m+1)}\cdots(m+n).\]
It does not induce an embedding $\HB_m(0)\otimes \HB_n(0)\hookrightarrow \HB_{m+n}(0)$, as
$ \pi'_m:=\pi_m\cdots \pi_1\pi_0\pi_1\cdots\pi_m $ does not satisfy the relation $(\pi'_m)^2=\pi'_m$. For example, in $\HB_{1+1}(0)$ one has $\pi'_1=\pi_1\pi_0\pi_1$ and $(\pi'_1)^2=\pi_0\pi_1\pi_0\pi_1 \ne \pi'_1$.

Although the embedding $\B_m\times\B_n\hookrightarrow\B_{m+n}$ does not fit into our general theory in Section~\ref{sec:General}, it leads to a self-dual graded Hopf algebra structure on $\ZZ\B$.
In fact, an element $(u,v)\in \B_m\times\B_n$ can be identified with $u\times v\in\B_{m+n}$ whose window notation is $[u(1),\ldots,u(m),v(1)^{+m},\ldots,v(n)^{+m}]$ where $a^{+m} := a + \operatorname{sgn}(a)\cdot m$. This gives an embedding $\B_m\times\B_n\hookrightarrow\B_{m+n}$. One shows that every $w\in \B_{m+n}$ can be written uniquely as $w=z(u\times v)$, where $u\in \B_m$, $v\in \B_n$, and $z\in\B_{m+n}$ satisfying 
\[ 0<z(1)<\cdots<z(m) \qand 0<z(m+1)<\cdots<z(m+n). \]
Given such an expression, one can check that  
\[ \begin{matrix}
u & = & \stB(w[1,m]),\quad & v & = & \stB(w[m+1,m+n]),\\
u^{-1} & = & w^{-1}|[1,m],\quad & v^{-1} & = & \stB(w^{-1}|[m+1,m+n]).
\end{matrix} \]
If $u\in\B_m$ and $v\in\B_n$ then we define 
\[ u\shuffle v := \left\{w\in\B_{m+n}:w|[1,m]=u,\ \stB(w|[m+1,m+n])=v \right\}, \]
\[ u \Cup v := \left\{w\in\B_{m+n}:\stB(w[1,m])=u,\ \stB(w[m+1,m+n])=v \right\},\]
\[ \unshuffle(u):=\sum_{0\leq i\leq m}\stB(u[1,i])\otimes\stB(u[i+1,m]), \]
\[ \Cap\,(u):=\sum_{0\leq i\leq m} u|[1,i]\otimes \stB(u|[i+1,m]).\]
This is very similar to the Malvenuto-Reutenauer algebra defined in Section~\ref{sec:MRA} and hence we use the same notation.
For example, one has 
\[ \bar21 \shuffle {\color{red}1\bar 2} = \bar21{\color{red}3\bar 4} + \bar2{\color{red}3}1{\color{red}\bar4} + {\color{red}3}\bar21{\color{red}\bar4} + \bar2{\color{red}3\bar4}1 + {\color{red}3}\bar2{\color{red}\bar4}1 + {\color{red}3\bar4}\bar21, \]
\[ \bar21 \Cup {\color{red}1\bar 2} = \bar21{\color{red}3\bar 4} + \bar31{\color{red}2\bar 4} + \bar32{\color{red}1\bar 4} + \bar41{\color{red}2\bar 3} + \bar42{\color{red}1\bar 3} + \bar43{\color{red}1\bar 2}, \]
\[ \unshuffle(\bar24\bar31)=\varnothing\otimes \bar24\bar31+\bar1\otimes 3\bar21+\bar12\otimes \bar21+\bar13\bar2\otimes 1+\bar24\bar31\otimes\varnothing,\]
\[ \Cap\,(\bar24\bar31) = \varnothing\otimes\bar24\bar31 + 1\otimes\bar13\bar2 + \bar21\otimes2\bar1 + \bar2\bar31\otimes1 + \bar24\bar31\otimes\varnothing.\]

\begin{proposition}\label{prop:MRB}
$(\ZZ\B,\shuffle,\unshuffle)$ and $(\ZZ\B,\Cup,\Cap)$ are graded Hopf algebras isomorphic to each other via $w\mapsto w^{-1}$, $\forall w\in\B$, and dual to each other via $\langle u,v\rangle:=\delta_{u,v}$, $\forall u,v\in\B$.
\end{proposition}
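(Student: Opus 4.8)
The plan is to mirror the construction of the type A Malvenuto--Reutenauer algebra in Section~\ref{sec:MRA}, with the essential caveat that the embedding $\B_m\times\B_n\hookrightarrow\B_{m+n}$ used here is \emph{non-parabolic}. Consequently neither the functoriality Proposition~\ref{prop:IJS} nor the duality Proposition~\ref{prop:MR} is available, and the required properties must be extracted directly from the unique factorization $w=z(u\times v)$ together with the explicit formulas $u=\stB(w[1,m])$, $v=\stB(w[m+1,m+n])$, $u^{-1}=w^{-1}|[1,m]$, $v^{-1}=\stB(w^{-1}|[m+1,m+n])$ recorded just before the statement. The single combinatorial tool underlying everything is a \emph{windowing lemma} for signed standardization: $\stB$ depends only on the type B order pattern of \eqref{def:bst}, so it is transitive across nested windows, meaning that $\stB$ of a sub-window of $\stB(a)$ equals $\stB$ of the corresponding sub-window of $a$ (in both the position version $a[i,j]$ and the value version $a|[i,j]$). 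I would first isolate and prove this lemma.

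Next I would establish associativity, coassociativity, duality, and the isomorphism, all of which are formal consequences of the windowing lemma. Using Proposition~\ref{prop:bsB}, which gives $\bsB_w=\sum_{\stB(f)=w}\bx_f$, the identity $\bsB_u\cdot\bsB_v=\sum_{w\in u\Cup v}\bsB_w$ follows by concatenating generating functions and regrouping by $\stB$: the windowing lemma shows the conditions $\stB(w[1,m])=u$ and $\stB(w[m+1,m+n])=v$ depend only on $w$, not on the representative $f$. Thus $\Cup$ is realized as the ordinary concatenation product on these $P$-partition generating functions, so associativity is immediate; coassociativity of $\Cap$ is the dual value-window statement (or, equivalently, the iterated triple description exactly as in the proof of Proposition~\ref{prop:MRBA}). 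The isomorphism $w\mapsto w^{-1}$ is then immediate, since the explicit inverse formulas exchange ``standardize a window of positions'' with ``keep the letters whose absolute values lie in a window,'' that is, they exchange $\Cup\leftrightarrow\shuffle$ and $\Cap\leftrightarrow\unshuffle$. The duality is equally immediate: the definitions give $\langle u\Cup v,\,w\rangle=\langle u\otimes v,\,\unshuffle\,w\rangle$ (both sides record whether $\stB(w[1,m])=u$ and $\stB(w[m+1,m+n])=v$), and dually $\shuffle$ pairs with $\Cap$; nondegeneracy is built into $\langle u,v\rangle=\delta_{u,v}$.

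The one genuinely non-formal point is the \emph{bialgebra compatibility} $\unshuffle(u\shuffle v)=(\unshuffle u)\shuffle(\unshuffle v)$ together with its dual, which — as the remark following the type A construction in Section~\ref{sec:MRA} stresses — does not follow from the factorization alone; this is the step I expect to be the main obstacle. I plan to obtain it cleanly by a power-series realization, parallel to defining the type A coproduct via $\bF(\bX+\bY)$: realize $\ZZ\B\cong\FQSym^B$ so that $\shuffle$ is the ordinary product of generating functions, and realize $\unshuffle$ as the substitution $\bFB\mapsto\bFB(\bX+\bY)$ for a suitably ordered doubled \emph{signed} alphabet. Because substitution of variables is an algebra homomorphism for the concatenation product, this makes $\unshuffle$ an algebra map and delivers both compatibility and coassociativity at once; the technical heart is the lemma identifying $\bFB_w(\bX+\bY)$ with $\sum_{0\le i\le m}\bFB_{\stB(w[1,i])}\otimes\bFB_{\stB(w[i+1,m])}$, proved by tracking how a type B $P$-partition whose values lie in the doubled alphabet splits at the order-threshold separating the two alphabets. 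A direct cut-and-merge bijection between $\unshuffle(u\shuffle v)$ and $(\unshuffle u)\shuffle(\unshuffle v)$ is an equally valid alternative. Since $(\ZZ\B,\shuffle,\unshuffle)$ is graded and connected, the antipode exists automatically, so it is a Hopf algebra; transporting along the isomorphism $w\mapsto w^{-1}$ endows $(\ZZ\B,\Cup,\Cap)$ with the Hopf structure, and the duality statement completes the proof. In particular I would verify compatibility only once, for $(\ZZ\B,\shuffle,\unshuffle)$, since the other structure is both isomorphic and dual to it.
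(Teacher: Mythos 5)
Your proposal is correct, and its skeleton matches the paper's: establish associativity via the windowing/transitivity property of $\stB$ (the paper does this by checking that the two triple products both equal the set of $w$ with $w|[1,m]=u$, $\stB(w|[m+1,m+n])=v$, $\stB(w|[m+n+1,m+n+k])=r$, which is exactly your windowing lemma in its value-window form), obtain $(\ZZ\B,\Cap)$ by duality and $(\ZZ\B,\Cup)$, $(\ZZ\B,\unshuffle)$ via $w\mapsto w^{-1}$, verify the bialgebra compatibility separately, and invoke graded connectedness for the antipode. Your generating-function route to associativity, via $\bsB_u\cdot\bsB_v=\sum_{w\in u\Cup v}\bsB_w$, is precisely the computation the paper records as \eqref{eq:Prod bsB} immediately after this proposition, so nothing is lost there. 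The one genuine divergence is the compatibility step: the paper proves $\unshuffle(u\shuffle v)=(\unshuffle u)\shuffle(\unshuffle v)$ by showing both sides equal the explicit double sum over cut points $(i,j)$ --- the ``cut-and-merge'' identity you name as your fallback --- whereas your primary plan is an alphabet-doubling realization of $\unshuffle$. That route does work, but note the doubled alphabet cannot be ordered as in type A with $\bX<\bY$; because the chain in \eqref{eq:BFw} is anchored at $f(0)=0$ and closed under negation, the second alphabet must symmetrically \emph{surround} the first (every $\by_{\pm i}$ farther from zero than every $\bx_j$), which is exactly the kind of technicality the paper's remark in Section~\ref{sec:FQSymA} says it is deliberately avoiding by taking the combinatorial coproduct formula as the definition. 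So your approach buys a conceptual explanation of why $\unshuffle$ is an algebra map (substitution is multiplicative), at the cost of setting up signed $P$-partitions over an extended signed alphabet; the paper's direct double-sum verification is shorter but purely computational. Either way the proof closes, and your treatment of the duality and of the inverse map is the same as the paper's.
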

\begin{proof}
The definition of $\shuffle$ implies $w\shuffle\varnothing=\varnothing\shuffle w=w$ for all $w\in \B$.
If $u\in\B_m$, $v\in\B_n$, and $r\in\B_k$ then one can check that $(u\shuffle v)\shuffle w$ and $u\shuffle(v\shuffle w)$ both equal the sum of all $w\in\B_{m+n+k}$ such that
\[ w|[1,m]=u,\quad \stB(w|[m+1,m+n])=v,\qand \stB(w|[m+n+1,m+n+k])=r. \]
Thus $(\ZZ\B,\shuffle)$ is a graded algebra, whose dual is the graded coalgebra $(\ZZ\B,\Cap)$. 
Applying $w\mapsto w^{-1}$ to $(\ZZ\B,\shuffle)$ and $(\ZZ\B,\Cap)$ gives the isomorphic graded algebra $(\ZZ\B,\Cup)$ and its dual graded coalgebra $(\ZZ\B,\unshuffle)$.
One checks that $\unshuffle(u\shuffle v)$ and $(\unshuffle u)\shuffle(\unshuffle v)$ both equal
\[\sum_{0\leq i\leq m}\sum_{0\leq j\leq n} \left( \stB(u[1,i])\shuffle\stB(v[1,j]) \right) \otimes \left( \stB(u[i+1,m]) \shuffle \stB(v[j+1,n]) \right) \]
where the shuffle product $(\unshuffle u)\shuffle(\unshuffle v)$ is defined  tensor-component-wise. 
It follows that $(\ZZ\B,\shuffle,\unshuffle)$ is a Hopf algebra and so is the isomorphic $(\ZZ\B,\Cup,\Cap)$.
 \end{proof}

Next, let $ab$ be the concatenation of two words $a$ and $b$ in $\ZZ^n$. Assume $\stB(ab)=w$ and $u=\stB(w[1,m])$. Since $|w(1)|,\ldots,|w(m)|$ are distinct, one sees that $|u(i)|<|u(j)|$ if and only if $|w(i)|<|w(j)|$ whenever $i,j\in[m]$. Thus  $\stB(a)=u=\stB(w[1,m])$ by restricting the definition of $\stB(ab)=w$ to $i,j\in [m]$. Similarly one has $\stB(b)=\stB(w[m+1,m+n])$. Hence for all $u,v\in\B$, Proposition~\ref{prop:bsB} implies
\begin{equation}\label{eq:Prod bsB}
\bsB_u\cdot\bsB_v=\sum_{\stB(f)=u}\sum_{\stB(g)=v} \bx_{f(1)}\cdots \bx_{f(m)}\bx_{g(1)}\cdots\bx_{g(n)}=\sum_{w\in\, u\,\Cup\, v}\bsB_w. 
\end{equation}
If $w\in \B_n$ then define
\begin{equation}\label{eq:Delta bsB}
\Delta\,\bsB_w :=\sum_{0\leq i\leq n}  \bsB_{\,w|[1,i]}\otimes\bsB_{\,\stB(w|[i+1,n])}.
\end{equation}

\begin{proposition}
There are Hopf algebra isomorphisms $(\FQSym^B,\cdot,\Delta)\cong(\ZZ\B,\shuffle,\unshuffle)$ via $\bFB_w\mapsto w$, $\forall w\in\B$, and $(\FQSym^B,\cdot,\Delta)\cong(\ZZ\B,\Cup,\Cap)$ via $\bsB_w\mapsto w$, $\forall w\in\B$.
\end{proposition}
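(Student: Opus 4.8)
The plan is to establish the isomorphism carried by $\bsB_w\mapsto w$ first, since both its product and coproduct identities are essentially already recorded above, and then to deduce the isomorphism carried by $\bFB_w\mapsto w$ formally by composing with the inversion isomorphism of Proposition~\ref{prop:MRB}. Here $\cdot$ denotes the concatenation product inherited from the free associative algebra $\ZZ\langle\bX\rangle$, under which $\FQSym^B$ is closed by \eqref{eq:Prod bsB}, and $\Delta$ is the coproduct defined in \eqref{eq:Delta bsB}.

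First I would note that $\theta\colon\bsB_w\mapsto w$ is a bijection from the free $\ZZ$-basis $\{\bsB_w:w\in\B\}$ of $\FQSym^B$ (the type~B instance of Proposition~\ref{prop:FQSymW}) onto the basis $\B$ of $\ZZ\B$, hence a $\ZZ$-module isomorphism $\FQSym^B\to\ZZ\B$. For the product, \eqref{eq:Prod bsB} gives $\bsB_u\cdot\bsB_v=\sum_{w\in\,u\,\Cup\,v}\bsB_w$, so $\theta(\bsB_u\cdot\bsB_v)=u\Cup v=\theta(\bsB_u)\Cup\theta(\bsB_v)$; thus $\theta$ intertwines $\cdot$ with $\Cup$. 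For the coproduct, a term-by-term comparison of the definition \eqref{eq:Delta bsB} of $\Delta\bsB_w$ with $\Cap(w)=\sum_{0\le i\le n}w|[1,i]\otimes\stB(w|[i+1,n])$ shows at once that $(\theta\otimes\theta)(\Delta\bsB_w)=\Cap(w)$, so $\theta$ intertwines $\Delta$ with $\Cap$. Since $\theta$ also matches the units and counits (both supported on $\B_0$) and since $(\ZZ\B,\Cup,\Cap)$ is already a graded Hopf algebra by Proposition~\ref{prop:MRB}, every structure axiom---coassociativity, bialgebra compatibility, and the antipode---transports along $\theta$. Hence $\theta$ is a Hopf algebra isomorphism $(\FQSym^B,\cdot,\Delta)\cong(\ZZ\B,\Cup,\Cap)$.

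To obtain the remaining isomorphism I would use the relation $\bFB_w=\bsB_{w^{-1}}$ from the definition of $\FQSym^B$. The assignment $\bFB_w\mapsto w$ then equals the composite $(\ )^{-1}\circ\theta$, since $\theta(\bFB_w)=\theta(\bsB_{w^{-1}})=w^{-1}$ and $(w^{-1})^{-1}=w$. By Proposition~\ref{prop:MRB} the involution $(\ )^{-1}$ is a Hopf algebra isomorphism $(\ZZ\B,\Cup,\Cap)\to(\ZZ\B,\shuffle,\unshuffle)$, so the composite is a Hopf algebra isomorphism $(\FQSym^B,\cdot,\Delta)\cong(\ZZ\B,\shuffle,\unshuffle)$ carried by $\bFB_w\mapsto w$, as claimed. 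I do not expect a genuinely hard step: the two points needing care are verifying that the indexing set of \eqref{eq:Delta bsB} coincides on the nose with that of $\Cap$, and keeping the direction of inversion straight---so that, should one prefer to check the product compatibility of $\bFB_w\mapsto w$ directly rather than via $\theta$, one correctly applies $(u^{-1}\Cup v^{-1})^{-1}=u\shuffle v$ from Proposition~\ref{prop:MRB}.
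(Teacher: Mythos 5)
Your proposal is correct and follows essentially the same route as the paper: the paper likewise obtains the $\bsB_w\mapsto w$ isomorphism by comparing \eqref{eq:Prod bsB} and \eqref{eq:Delta bsB} with the definition of $(\ZZ\B,\Cup,\Cap)$, and then derives the $\bFB_w\mapsto w$ isomorphism by applying $w\mapsto w^{-1}$. Your write-up merely spells out the basis-matching and the transport of the Hopf axioms in more detail than the paper's two-line argument.
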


\begin{proof}
Comparing (\ref{eq:Prod bsB}) and (\ref{eq:Delta bsB}) with the definition of $(\ZZ\B,\Cup,\Cap)$ gives the second isomorphism. 
Applying $w\mapsto w^{-1}$ gives the first one.
 \end{proof}

\begin{remark}
Novelli and Thibon~\cite{NovelliThibon} generalized $\FQSym$ to a family of graded Hopf algebras $\FQSym^{(\ell)}$ using an $\ell$-colored standardization. The Hopf algebra $(\FQSym^B,\cdot,\Delta)$ is isomorphic to $\FQSym^{(2)}$, but the dual bases $\{\bFB_w\}$ and $\{\bsB_w\}$ for $\FQSym^B$ are different from the dual bases for $\FQSym^{(2)}$ provided in~\cite{NovelliThibon}. In fact, $\bFB_w$ is the generating function of $\mathcal A(w\Phi^+)$, and this forces $\bsB_w:=\bFB_{w^{-1}}$ to involve the signed standardization, which is not a $2$-colored standardization defined in~\cite{NovelliThibon}.
\end{remark}

By the duality between $(\ZZ\B,\shuffle,\unshuffle)$ and $(\ZZ\B,\Cup,\Cap)$, if $u\in\B_m$ and $v\in\B_n$ then
\[ \bFB_u\cdot\bFB_v=\sum_{w\in u\shuffle v}\bFB_w \qand
\Delta\,\bFB_u=\sum_{0\le i\le m} \bFB_{\,\stB(u[1,i])}\otimes\bFB_{\,\stB(u[i+1,n])}.\]
One can also directly verify the first equation.
Applying $\chiB:\FQSym^B\twoheadrightarrow\QSym^B$ gives 
\[\FB_u\cdot \FB_v=\sum_{w\in u\shuffle v} \FB_w, \qquad \forall u, v\in\B.\]
This recovers a graded algebra structure for $\QSym^B$ studied by Chow~\cite{Chow}.

Next, we apply the embeddings $\B_m\otimes \B_n\hookrightarrow \B_{m+n}$ and $\B_m\otimes \SS_n\hookrightarrow \B_{m+n}$ to the (complex) representation theory of hyperoctahedral groups. 
We first review the representation theory of the hyperoctahedral group $\B_n$ from Geissinger and Kinch~\cite{GeissingerKinch}. Let $E(n)$ be the subgroup of $\B_n$ generated by the signed permutations $e(n,i):=1\cdots\bar i\cdots n$ for all $i\in[n]$. Then $\B_n$ is a semidirect product of $\SS_n$ and $E(n)$. For every $k\in[n]$ there is a one-dimensional $E(n)$-module $\chi(n,k)$ on which $e(n,i)$ acts by $-1$ if $1\leq i\leq k$ or by $1$ if $k<i\leq n$. Let $\B_{k,n-k}$ be the subgroup of $\B_n$ consisting of those signed permutations $w\in\B_{n}$ satisfying $|w(i)|\leq k$ for all $i\in[k]$ and $|w(i)|>k$ for all $i\in[k+1,n]$. Then $\B_{k,n-k}\cong\B_k\times\B_{n-k}$. One sees that $\B_{k,n-k}$ is the semidirect product of $\SS_{k,n-k}\cong\SS_k\times\SS_{n-k}$ and $E(n)$. Hence one can define 
\[
S_{\mu,\nu} := 
\left( \chi(n,k)\otimes S_\mu\otimes S_\nu \right)\,\uparrow\,_{\B_{k,n-k}}^{\B_n}
\]
where $(\mu,\nu)$ is a \emph{double partition} of $n$, i.e. an ordered pair of partitions $(\mu,\nu)$ such that $|\mu|+|\nu|=n$. A complete list of pairwise non-isomorphic simple $\CC\B_n$-modules are given by $S_{\mu,\nu}$ for all double partitions of $(\mu,\nu)$ of $n$.

The Grothendieck group $G_0(\CC\B_\bullet)$ of the tower $\CC\B_0\hookrightarrow \CC\B_1\hookrightarrow \CC\B_2\hookrightarrow\cdots$ of algebras has a product and a coproduct given by the induction and restriction along the embeddings $\B_k\times \B_{n-k}\hookrightarrow\B_n$. This gives a Hopf algebra structure on $G_0(\CC\B_\bullet)$, which turns out to be isomorphic to $G_0(\CC\SS_\bullet)\otimes G_0(\CC\SS_\bullet)$ via $\phi: S_\mu\otimes S_\nu\mapsto S_{\mu,\nu}$ for all double partitions $(\mu,\nu)$~\cite[\S4.5]{GrinbergReiner}. The self-duality of the Hopf algebra $G_0(\CC\B_\bullet)$ is given by the pairing $\langle S_{\mu,\nu},S_{\zeta,\eta} \rangle:= \delta_{\mu,\zeta}\delta_{\nu,\eta}$ for all double partitions $(\mu,\nu)$ and $(\zeta,\eta)$. We denote by $\cdot$ and $\Delta$ the product and coproduct of both $G_0(\CC\SS_\bullet)$ and $G_0(\CC\B_\bullet)$.

Now we define a right action and coaction of $G_0(\CC\SS_\bullet)$ on $G_0(\CC\B_\bullet)$ by
\[ S_{\mu,\nu}\odotB S_\lambda := (S_{\mu,\nu}\otimes S_{\lambda}) \uparrow\,_{\B_{m}\times\SS_{n}}^{\B_{m+n}} \qand
\DeltaB(S_{\mu,\nu}) := \bigoplus_{0\leq i\leq m}\ S_{\mu,\nu}\downarrow\,_{\B_i\times\SS_{m-i}}^{\B_{m}} \]
for all double partitions $(\mu,\nu)$ of $m$ and all partitions $\lambda$ of $n$.

\begin{proposition}
$(G_0(\CC\B_\bullet),\odotB,\DeltaB)$ is a dual graded right module and comodule over $G_0(\CC\SS_\bullet)$ such that if $(\mu,\nu)$ is a double partition and $\lambda$ is a partition then
 \[ S_{\mu,\nu}\odotB S_\lambda = S_{\mu,\nu}\cdot \phi(\Delta(S_\lambda)).\]
\end{proposition}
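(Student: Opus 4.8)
The plan is to dispose first of the structural claims---that $(G_0(\CC\B_\bullet),\odotB,\DeltaB)$ is a graded right module and comodule over $G_0(\CC\SS_\bullet)$, and that the two are mutually dual---and then to concentrate on the explicit formula, which carries the substance. The module and comodule axioms involve only the parabolic embeddings $\B_m\times\SS_n\hookrightarrow\B_{m+n}$ together with the product and coproduct of $G_0(\CC\SS_\bullet)$, so they follow from transitivity of induction and of restriction exactly as in Proposition~\ref{prop:IJS}: coassociativity of $\DeltaB$ and the right-module axiom $(M\odotB S_\lambda)\odotB S_\kappa = M\odotB(S_\lambda\cdot S_\kappa)$ both reduce to identifying two chains of parabolic subgroups that induce, respectively restrict, to the same module. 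Since $\CC\B_n$ and $\CC\SS_n$ are semisimple, the pairings $\langle S_{\mu,\nu},S_{\zeta,\eta}\rangle=\delta_{\mu,\zeta}\delta_{\nu,\eta}$ and $\langle S_\mu,S_\nu\rangle=\delta_{\mu,\nu}$ make induction and restriction adjoint, so Frobenius reciprocity for $\B_m\times\SS_n\subseteq\B_{m+n}$ gives $\langle M\odotB S_\lambda, N\rangle = \langle M\otimes S_\lambda, \DeltaB N\rangle$ in matching degrees, which is precisely the assertion that the module and comodule structures are dual.

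For the formula I would argue by induction in stages. The key observation is that the parabolic embedding $\B_m\times\SS_n\hookrightarrow\B_{m+n}$ factors through the non-parabolic Hopf embedding as $\B_m\times\SS_n \hookrightarrow \B_m\times\B_n \hookrightarrow \B_{m+n}$, where the first map is $\id$ on the first factor and the sign-free inclusion $\SS_n\hookrightarrow\B_n$ on the second: indeed, restricting the window notation $[u(1),\dots,u(m),v(1)^{+m},\dots,v(n)^{+m}]$ of the Hopf embedding to $v\in\SS_n$ recovers $[u(1),\dots,u(m),m+v(1),\dots,m+v(n)]$, the parabolic embedding of Section~\ref{sec:MRBA}. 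Transitivity of induction then gives
\[ S_{\mu,\nu}\odotB S_\lambda = \bigl(S_{\mu,\nu}\otimes S_\lambda\bigr)\uparrow\,_{\B_m\times\SS_n}^{\B_{m+n}}
= \Bigl(S_{\mu,\nu}\otimes\bigl(S_\lambda\uparrow\,_{\SS_n}^{\B_n}\bigr)\Bigr)\uparrow\,_{\B_m\times\B_n}^{\B_{m+n}}, \]
and the outer induction along $\B_m\times\B_n\hookrightarrow\B_{m+n}$ is by definition the product $\cdot$ of $G_0(\CC\B_\bullet)$. Thus everything comes down to the single identity $S_\lambda\uparrow\,_{\SS_n}^{\B_n} = \phi(\Delta(S_\lambda))$.

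This identity is where the real work lies, and I expect it to be the main obstacle. Writing $c^\lambda_{\alpha\beta}$ for the Littlewood--Richardson coefficients, the coproduct of $G_0(\CC\SS_\bullet)$ gives $\Delta(S_\lambda)=\sum_{\alpha,\beta}c^\lambda_{\alpha\beta}\,S_\alpha\otimes S_\beta$, hence $\phi(\Delta(S_\lambda))=\sum_{\alpha,\beta}c^\lambda_{\alpha\beta}\,S_{\alpha,\beta}$. I would compute $S_\lambda\uparrow\,_{\SS_n}^{\B_n}$ against the orthonormal basis $\{S_{\alpha,\beta}\}$ via Frobenius reciprocity, which converts the problem into the restriction $S_{\alpha,\beta}\downarrow\,_{\SS_n}^{\B_n}$. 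The standard wreath-product fact behind Geissinger--Kinch~\cite{GeissingerKinch} is that, under the Frobenius characteristic, restriction from $\B_n$ to $\SS_n$ corresponds to the multiplication $s_\alpha\otimes s_\beta\mapsto s_\alpha s_\beta$, so that $S_{\alpha,\beta}\downarrow\,_{\SS_n}^{\B_n}=\sum_\mu c^\mu_{\alpha\beta}S_\mu$. Frobenius reciprocity then yields $\langle S_\lambda\uparrow\,_{\SS_n}^{\B_n},S_{\alpha,\beta}\rangle = c^\lambda_{\alpha\beta}$, which matches the coefficients of $\phi(\Delta(S_\lambda))$ and proves the identity. Once this restriction rule is established or correctly cited, the displayed formula and, with it, the self-duality of the module and comodule follow from the bookkeeping above.
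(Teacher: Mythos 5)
Your proposal is correct and follows essentially the same route as the paper: both reduce the displayed formula to the identity $S_\lambda\uparrow\,_{\SS_n}^{\B_n}=\phi(\Delta(S_\lambda))$ via transitivity of induction through $\B_m\times\SS_n\subseteq\B_m\times\B_n\subseteq\B_{m+n}$, and both prove that identity by Frobenius reciprocity against the restriction rule $S_{\zeta,\eta}\downarrow\,_{\SS_n}^{\B_n}=S_\zeta\cdot S_\eta$. The only notable differences are that you cite this restriction rule as a standard wreath-product fact whereas the paper derives it in one line from Mackey's formula (there being a single $(\SS_n,\B_{k,n-k})$ double coset in $\B_n$), and that the paper verifies the right-module axiom from the formula itself using that $\phi\circ\Delta$ is an algebra map, while you verify it directly by transitivity of induction; both verifications are valid.
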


\begin{proof}
Let $(\zeta,\eta)$ be a double partition of $n$ with $|\zeta|=k$. By Mackey's formula~\cite[\S4.1.5]{GrinbergReiner}, 
\[ \left( \chi(n,k)\otimes S_\zeta\otimes S_\eta \right)\,\uparrow\,_{\B_{k,n-k}}^{\B_n} \downarrow\,_{\SS_n}^{\B_n} 
\cong \left( \chi(n,k)\otimes S_\zeta\otimes S_\eta \right) \downarrow\,_{\SS_{k,n-k}}^{\B_{k,n-k}} \uparrow\,_{\SS_{k,n-k}}^{\SS_n}
\]
since there is only one double $(\SS_n,\B_{k,n-k})$-coset in $\B_n$. Hence
\[
S_{\zeta,\eta} \downarrow\,_{\SS_n}^{\B_n} \cong (S_\zeta \otimes S_\eta) \uparrow\,_{\SS_{k,n-k}}^{\SS_n} \cong S_\zeta\cdot S_\eta.
\]
Using Frobenius reciprocity, the self-duality of $G_0(\CC\SS_\bullet)$, and the Hopf algebra isomorphism $\phi$, one obtains
\[
\langle S_\lambda \uparrow\,_{\SS_n}^{\B_n}, S_{\zeta,\eta} \rangle =
\langle S_\lambda, S_\zeta\cdot S_\eta \rangle = 
\langle \Delta(S_\lambda), S_\zeta\otimes S_\eta \rangle = 
\langle \phi(\Delta(S_\lambda)), S_{\zeta,\eta} \rangle
\]
for every partition $\lambda$ of $n$. Therefore for any double partition $(\mu,\nu)$ of $m$ one has
\[
S_{\mu,\nu}\odotB S_\lambda :=
(S_{\mu,\nu}\otimes S_{\lambda}) \uparrow\,_{\B_{m}\times\SS_{n}}^{\B_{m+n}} 
= S_{\mu,\nu}\cdot (S_{\lambda} \uparrow\,_{\SS_{n}}^{\B_{n}})
= S_{\mu,\nu}\cdot \phi(\Delta(S_\lambda)).
\]
Using this formula we show that $(G_0(\CC\B_\bullet),\odotB)$ is a graded right $G_0(\CC\SS_\bullet)$-module. It is clear that $S_{\mu,\nu}\odotB S_\varnothing = S_{\mu,\nu}$ where $\varnothing$ is the empty partition. If $\xi$ is another partition then \begin{eqnarray*}
S_{\mu,\nu}\odotB (S_\lambda \cdot S_\xi) &=&
S_{\mu,\nu}\cdot \phi(\Delta(S_\lambda \cdot S_\xi)) \\
&=& S_{\mu,\nu}\cdot \phi(\Delta(S_\lambda )) \cdot \phi(\Delta(S_\xi))\\
&=& (S_{\mu,\nu}\odotB S_\lambda) \odotB S_\xi. 
\end{eqnarray*}
Hence $(G_0(\CC\B_\bullet),\odotB)$ is a graded right module over $G_0(\CC\SS_\bullet)$.
Finally, by the Frobenius reciprocity, $(G_0(\CC\B_\bullet),\DeltaB)$ is dual to $(G_0(\CC\B_\bullet),\odotB)$, and hence a graded right comodule over $G_0(\CC\SS_\bullet)$. 
 \end{proof}

Recall from Proposition~\ref{prop:SymB} that $\Sym^B$ is a graded module and comodule over $\Sym$. Even though there is a Hopf algebra isomorphism $\Sym\cong G_0(\CC\SS_\bullet)$, there is no $\ZZ$-module isomorphism between $G_0(\CC\B_\bullet)$ and $\Sym^B$. One can also check that $(G_0(\CC\B_\bullet),\odotB,\DeltaB)$ is \emph{not} a Hopf module over the Hopf algebra $G_0(\CC\SS_\bullet)$.


\section{Type D}\label{sec:D}

In this section we apply  our results in Section~\ref{sec:General} to type D and obtain some new results.

\subsection{Malvenuto--Reutenauer algebra and descent algebra of type D}\label{sec:MRDA}

The hyperoctahedral group $\B_n$ admits a subgroup $\D_n$ consisting of all signed permutations $w\in\B_n$ with $\neg(w)$ even. 
When $n\ge2$ this subgroup $\D_n$ has a generating set $S$ consisting of $s_0^D:=\bar2\bar13\cdots n$ and $s_i:=[1,\ldots,i-1,i+1,i,i+2,\ldots,n]$ for all $i\in[n-1]$. We write $s_0=s_0^D$ and $s_0^B=\bar12\cdots n$ in this section. The pair $(\D_n,S)$ is the finite irreducible Coxeter system of type $D_n$ whose Coxeter diagram is illustrated below.
\[ \xymatrix @R=2pt @C=12pt{
s_0 \ar@{-}[rd] \\
& s_2 \ar@{-}[r] & s_3 \ar@{-}[r] & \cdots \ar@{-}[r] & s_{n-2} \ar@{-}[r] & s_{n-1} \\
s_1 \ar@{-}[ru]
} \]

Let $w\in\D_n$ and set $w(0):=-w(2)$. The descent set of $w$ consists of all $s_i$ such that $i\in\{0,1,\ldots,n-1\}$ and $w(i)>w(i+1)$. 
The length of $w$ equals $\inv(w)+\nsp(w)$.
Subsets of $S=\{s_0,s_1,\ldots,s_{n-1}\}$ are indexed by pseudo-compositions of $n$. 
For consistency of notation we write $\alpha\modelsD n$ for a pseudo-composition $\alpha$ of $n$ in this section.
If $\alpha\modelsD n$ then the parabolic subgroup $\D_\alpha$ of $\D_n$ is generated by $\{s_i:0\le i\le n-1,\ i\notin D(\alpha)\}$ and the set of minimal representatives for left $\D_\alpha$-cosets in $\D_n$ is 
\[ (\D)^\alpha:=\{ w\in D_n: D(w)\subseteq D(\alpha)\}.\]

We define two \emph{type D standardizations} $\Dst(a)$ and $\stD(a)$ of a word $a\in \ZZ^n$ as follows. If $\stB(a)\in \D_n$ then let $\Dst(a)=\stD(a):=\stB(a)$; otherwise let $\Dst(a):=s_0^B\stB(a)$ and $\stD:=\stB(a)s_0^B$. Then $\Dst(a)$ and $\stD(a)$ are both elements of $\D_n$. For example, one has
\[ \stB(211\bar32\bar1) = \Dst(211\bar32\bar1) =  \stD(211\bar32\bar1) = 423\bar65\bar1 \qand \]
\[ \stB(21\bar1\bar32\bar1) = 43\bar2\bar65\bar1, \quad \Dst(21\bar1\bar32\bar1) = 43\bar2\bar65{\color{red}1},\quad \stD(21\bar1\bar32\bar1) = {\color{red}\bar4}3\bar2\bar65\bar1. \]

Assume $m\ge2$ and $n\ge0$. 
One sees that the embedding $\B_m\times\SS_n\hookrightarrow \B_{m+n}$ restrict to an embedding $\D_m\times\SS_n\hookrightarrow\SS_{m+n}$, which identifies $(u,v)\in \D_m\times\SS_n$ with an element $u\times v$ of $\D_{m+n}$ whose window notation is $u\times v := [u(1),\ldots,u(m),m+v(1),\ldots,m+v(n)]$.
The image of this embedding is the parabolic subgroup $\D_{m,n}$ of $\D_{m+n}$ generated by $\{s_i:0\le i\le n-1, i\ne m \}$.
The set of minimal representatives for left $\D_{m,n}$-cosets in $\D_{m+n}$ is 
\[ (\D)^{m,n}:=\{ z\in\D_{m+n}: -z(2)<z(1)<\cdots<z(m),\ z(m+1)<\cdots<z(m+n)\}.\]

\begin{proposition}\label{prop:DA}
Assume $m\ge2$ and $n\ge0$. Then every element $w\in\D_{m+n}$ can be written uniquely as $w=(u\times v)z$ where $u\in \D_m$, $v\in \SS_n$, and $z^{-1}\in(\D)^{m,n}$. Moreover, one has 
\[\begin{matrix}
u & = & \stD(w|[1,m]),\quad & v & = & \st (\hat{w}|[m+1,m+n]), \\
u^{-1} & = & \Dst(w^{-1}[1,m]),\quad & v^{-1} & = & \st(w^{-1}[m+1,m+n]).
\end{matrix}\]
\end{proposition}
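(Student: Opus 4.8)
The plan is to follow the proof of Proposition~\ref{prop:BA} in type B, adapting it to account for the single extra sign freedom present in type D. The hypothesis $m\ge2$ guarantees that $\D_m$ is a genuine finite Coxeter system of type $D$ and that $\D_{m,n}\cong\D_m\times\SS_n$ is the parabolic subgroup of $\D_{m+n}$ with minimal coset representatives $(\D)^{m,n}$. First I would obtain the factorization and its uniqueness by applying Proposition~\ref{prop:parabolic} to $w^{-1}$ with respect to $\D_{m,n}$, exactly as in type B; this immediately gives that every $w\in\D_{m+n}$ is uniquely $w=(u\times v)z$ with $u\times v\in\D_{m,n}$ and $z^{-1}\in(\D)^{m,n}$.

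Writing $\zeta:=z^{-1}\in(\D)^{m,n}$, the defining inequalities read $-\zeta(2)<\zeta(1)<\zeta(2)<\cdots<\zeta(m)$ and $\zeta(m+1)<\cdots<\zeta(m+n)$. The essential difference from type B is that $(\B)^{m,n}$ forces $\zeta(1)>0$, whereas here $\zeta(1)$ may be negative. The key preliminary observation I would record is that at most one of $\zeta(1),\ldots,\zeta(m)$ is negative, necessarily $\zeta(1)$, and that when $\zeta(1)<0$ the inequality $-\zeta(2)<\zeta(1)$ forces $|\zeta(1)|<\zeta(2)<\cdots<\zeta(m)$, so that $j\mapsto|\zeta(j)|$ is strictly increasing on $[m]$ in every case. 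This lone possible sign change at the ``$1$-end'' is exactly what the two type D standardizations are built to absorb.

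To compute $u$, I would use $w(\zeta(\ell))=u(\ell)$ for $\ell\in[m]$ to read off the subword $w|[1,m]$: in increasing order of position, the letters of the window of $w$ with absolute value in $[m]$ are $u(1),\ldots,u(m)$ when $\zeta(1)>0$, and $-u(1),u(2),\ldots,u(m)$ when $\zeta(1)<0$. In the first case $\stB(w|[1,m])=u\in\D_m$, so $\stD(w|[1,m])=u$; in the second case $\stB(w|[1,m])=u\,s_0^B\notin\D_m$, and since $\stD$ flips the sign of the first entry we get $\stD(w|[1,m])=u\,s_0^B\,s_0^B=u$. Dually, from $w^{-1}=\zeta(u^{-1}\times v^{-1})$ one has $w^{-1}(\ell)=\zeta(u^{-1}(\ell))$ for $\ell\in[m]$; because $j\mapsto|\zeta(j)|$ is increasing, $\stB(w^{-1}[1,m])$ matches $u^{-1}$ in every position except that, when $\zeta(1)<0$, the entry of absolute value $1$ is sign-flipped, giving $\stB(w^{-1}[1,m])=s_0^B\,u^{-1}\notin\D_m$; applying $\Dst$, which flips the sign of the entry of absolute value $1$, then restores $u^{-1}$. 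Thus $u=\stD(w|[1,m])$ and $u^{-1}=\Dst(w^{-1}[1,m])$.

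Finally, the identities $v=\st(\hat w|[m+1,m+n])$ and $v^{-1}=\st(w^{-1}[m+1,m+n])$ concern only the block $\{m+1,\ldots,m+n\}$, where the inequalities defining $(\D)^{m,n}$ coincide with those defining $(\B)^{m,n}$; I would therefore transcribe the corresponding argument from the proof of Proposition~\ref{prop:BA} verbatim, using the reordering $w\mapsto\hat w$ and the standardization $\st$ to handle the possibly negative positions $\zeta(m+j)$. The main obstacle is the bookkeeping of the single possible sign change at $\zeta(1)$ and, above all, matching it to the correct standardization: verifying that $\stD$ (flip the first entry) delivers $u$ while $\Dst$ (flip the entry of absolute value $1$) delivers $u^{-1}$, in both the $\zeta(1)>0$ and $\zeta(1)<0$ cases. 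This pairing between the two type D standardizations is the crux of the argument.
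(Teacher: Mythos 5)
Your proposal is correct and follows essentially the same route as the paper: the paper likewise gets existence and uniqueness from Proposition~\ref{prop:parabolic} applied to $\D_{m,n}$ and then reduces the formulas to the type B case, splitting on whether $w|[1,m]$ lies in $\D_m$ (equivalently, your condition $\zeta(1)<0$) and correcting by $s_0^B$ on the appropriate side, which is exactly the $\stD$/$\Dst$ pairing you identify as the crux. The only cosmetic difference is that the paper invokes the statement of Proposition~\ref{prop:BA} and inserts $s_0^Bs_0^B$ into the type B factorization, whereas you re-derive the same facts directly from the inequalities defining $(\D)^{m,n}$.
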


\begin{proof}
Applying Proposition~\ref{prop:parabolic} to the parabolic subgroup $\D_{m,n}$ shows that every element $w\in \D_{m+n}$ can be written uniquely as $w=(u\times v)z$ where $u\in \D_m$, $v\in \SS_n$, and $z^{-1}\in(\D)^{m,n}$. By Proposition~\ref{prop:BA} one also has $w= (u' \times v')z'$ where 
\[ u'=w|[1,m] \in \B_m,\quad v'=\st(\hat w|[m+1,m+n])\in\SS_n,\qand (z')^{-1}\in (\B)^{m,n}. \]

First assume $u'\in \D_m$. 
Since $\neg(z')\equiv \neg(u')\equiv0$ (mod $2$) and $-(z')^{-1}(2)<0<(z')^{-1}(1)$, one has $z'\in (\D)^{m,n}$.
It follows that 
\[ u=u'=\stD(w|[1,m]), \quad v=v', \qand z=z'.\]
Since $\neg((u')^{-1})=\neg(u')$, Proposition~\ref{prop:BA} implies
\[ u^{-1} 
=\stB(w^{-1}[1,m])=\Dst(w^{-1}[1,m]) 
\qand v^{-1} 
=\st(w^{-1}[m+1,m+n]).\]

Next assume $u'\notin \D_m$. Then $w=(u's^B_0\times v)s^B_0z'$. Since $\neg(u')\equiv \neg(z')\equiv1$ (mod 2), one has $u's^B_0\in\D_m$ and $s^B_0z'\in \D_{m+n}$. Since $(s^B_0z')^{-1}(1) = -(z')^{-1}(1)$ and $(s^B_0z')^{-1}(i) = (z')^{-1}(i)$ if $2\le i\le m+n$, one sees that $(z')^{-1}\in(\B)^{m,n}$ implies $(s^B_0z')^{-1}\in(\D)^{m,n}$. Hence
\[ u=u's^B_0=\stD(w|[1,m]),\quad v=v', \qand z=s^B_0z'.\] 
Since $\neg((u')^{-1})=\neg(u')\equiv 1$ (mod $2$), Proposition~\ref{prop:BA} implies
\[ u^{-1}=s^B_0(u')^{-1}= \Dst(w^{-1}[1,m]) \qand v^{-1}=\st(w^{-1}[m+1,m+n]).\]
This completes the proof.
 \end{proof}

For example, $w= 2\bar51\bar34 = (\bar21\bar3\times21)\cdot\bar1\bar4235$ and $w^{-1}=31\bar45\bar2=\bar134\bar25\cdot(2\bar1\bar3\times21)$.\vskip3pt

Let $\D:=\bigsqcup_{\,n\ge2}\D_n$. We give $\ZZ\D=\bigoplus_{n\geq2}\ZZ\D_n$ a dual graded right module and comodule structure over the Malvenuto--Reutenauer algebra $\ZZ\SS$. 
Assume $m\ge2$ and $n\ge0$. Let $u\in \D_m$ and $v\in\SS_n$. We define
\[ u\shuffleD v := \left\{ w\in\D_{m+n}: \stD(w|[1,m])=u,\ \st (\hat w|[m+1,m+n] )=v \right\}, \]
\[ u\CupD v := \left\{ w\in\D_{m+n}:\Dst ( w[1,m] )=u,\ \st( w[m+1,m+n] )=v \right\}, \]
\[ \unshuffleD u :=\sum_{2\le i\le m} \Dst (u[1,i])\otimes\st (w[i+1,m] ), \]
\[ \CapD u := \sum_{2\le i\le m} \stD(u|[1,i]) \otimes \st( \hat{u}|[i+1,m] ). \]
For example, one has
\[ {\color{blue}\bar2}3\bar1\shuffleD {\color{red}1} =
{\color{blue}\bar2}3\bar1{\color{red}4}+{\color{blue}\bar2}3{\color{red}4}\bar1
+{\color{blue}\bar2}{\color{red}4}3\bar1+{\color{red}4}{\color{blue}\bar2}3\bar1
+{\color{blue}2}3\bar1{\color{red}\bar4}+{\color{blue}2}3{\color{red}\bar4}\bar1
+{\color{blue}2}{\color{red}\bar4}3\bar1+{\color{red}\bar4}{\color{blue}2}3\bar1,\]
\[ \bar23{\color{blue}\bar1}\CupD {\color{red}1} = 
\bar23{\color{blue}\bar1}{\color{red}4}+\bar24{\color{blue}\bar1}{\color{red}3}
+\bar34{\color{blue}\bar1}{\color{red}2}+\bar34{\color{blue}\bar2}{\color{red}1}
+\bar23{\color{blue}1}{\color{red}\bar4}+\bar24{\color{blue}1}{\color{red}\bar3}
+\bar34{\color{blue}1}{\color{red}\bar2}+\bar34{\color{blue}2}{\color{red}\bar1},\]
\[ \unshuffleD 2\bar4\bar31 = 
\bar1\bar2\otimes12 +1\bar3\bar2\otimes1 +2\bar4\bar31\otimes\varnothing,\]
\[ \CapD 2\bar4\bar31 = 
21\otimes12+\bar2\bar31\otimes1+2\bar4\bar31\otimes\varnothing.
\]
Using Proposition~\ref{prop:DA} and the linear maps defined in Definition~\ref{def:MapsZW}, with $S=\{s_0,\ldots,s_{m+n-1}\}$ and $I=S\setminus\{s_m\}$, one has
\[ u\shuffleD v = \,\bar\mu_I^S(u\times v),\qquad \unshuffle^D_m w = \,\bar\rho_I^S(w), \]
\[ u\CupD v = \mu_I^S(u\times v),\qquad \Cap^D_m w = \rho_{I}^S(w).\]
where $w\in\D_{m+n}$, $\unshuffle^D_m w$ is the $m$-th term in $\unshuffleD\! w$, and similarly for $\Cap^D_m w$.

\begin{proposition}\label{prop:MRDA}
(i) $(\ZZ\D\!,\shuffleD\!,\unshuffleD\!)$ is a graded right module and comodule over the graded Hopf algebra $(\ZZ\SS,\shuffle,\unshuffle)$.

\noindent(ii) $(\ZZ\D,\CupD,\CapD)$ is a graded right module and comodule over the graded Hopf algebra $(\ZZ\SS,\Cup,\Cap)$.

\noindent(iii) $(\ZZ\D,\shuffleD,\unshuffleD)$ is dual to $(\ZZ\D,\CupD,\CapD)$ via the pairing $\langle u,v\rangle:=\delta_{u,v}$, $\forall u,v\in\D$.

\noindent(iv) $(\ZZ\D,\shuffleD,\unshuffleD)$ is isomorphic to $(\ZZ\D,\CupD,\CapD)$ via $w\mapsto w^{-1}$, $\forall w\in\SS^D\cup\SS$.
\end{proposition}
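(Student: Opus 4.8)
The plan is to follow the same strategy used for the type B analogue in Proposition~\ref{prop:MRBA}, substituting Proposition~\ref{prop:DA} for Proposition~\ref{prop:BA} as the structural input. The key observation is that, by the displayed identities immediately preceding the statement, the four operations $\CupD$, $\CapD$, $\shuffleD$, $\unshuffleD$ are exactly the maps $\mu_I^S$, $\rho_I^S$, $\bar\mu_I^S$, $\bar\rho_I^S$ of Definition~\ref{def:MapsZW} attached to the parabolic subgroup $\D_{m,n}$, i.e.\ to $S=\{s_0,\ldots,s_{m+n-1}\}$ and $I=S\setminus\{s_m\}$. Consequently, every associativity, coassociativity, duality, and inversion relation can be transported from the abstract statements Proposition~\ref{prop:IJS} and Proposition~\ref{prop:MR}, rather than reverified by direct manipulation of the type D standardizations.

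First I would prove (ii). The unit axiom $u\CupD\varnothing=u$ is immediate from the definition. For the associativity axiom, given $u\in\D_m$ (with $m\ge2$), $v\in\SS_n$, and $r\in\SS_k$, I would show that both $(u\CupD v)\CupD r$ and $u\CupD(v\Cup r)$ equal the single set
\[ \{w\in\D_{m+n+k}:\Dst(w[1,m])=u,\ \st(w[m+1,m+n])=v,\ \st(w[m+n+1,m+n+k])=r\}. \]
Passing through the identification above, each composite is $\mu_I^S(u\times v\times r)$, where $I\subseteq S$ corresponds to the parabolic subgroup $\D_m\times\SS_n\times\SS_k$ of $\D_{m+n+k}$; the two factorizations agree by Proposition~\ref{prop:IJS} applied to the two intermediate inclusions. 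Dually, coassociativity of $\CapD$ is the matching compatibility of the restriction maps $\rho$, again Proposition~\ref{prop:IJS}, so $(\ZZ\D,\CupD,\CapD)$ is a graded right module and comodule over $(\ZZ\SS,\Cup,\Cap)$.

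Next I would derive the remaining parts from Proposition~\ref{prop:MR}. Part (iii) holds because $\mu_I^S$ is dual to $\bar\rho_I^S$ and $\rho_I^S$ is dual to $\bar\mu_I^S$ under the form $\langle u,v\rangle=\delta_{u,v}$; translating back, $(\ZZ\D,\CupD)$ is dual to the comodule $(\ZZ\D,\unshuffleD)$ and $(\ZZ\D,\CapD)$ is dual to the module $(\ZZ\D,\shuffleD)$, which is exactly the asserted duality between $(\ZZ\D,\shuffleD,\unshuffleD)$ and $(\ZZ\D,\CupD,\CapD)$. Part (iv) holds because $(\ )^{-1}$ interchanges $\mu_I^S\leftrightarrow\bar\mu_I^S$ and $\rho_I^S\leftrightarrow\bar\rho_I^S$ by Proposition~\ref{prop:MR}; since inversion is defined on all of $\D\cup\SS$ and, by the inverse formulas of Proposition~\ref{prop:DA}, intertwines $\Dst$ with $\stD$, it carries $(\CupD,\CapD)$ to $(\shuffleD,\unshuffleD)$. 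Part (i) is then immediate: being isomorphic to $(\ZZ\D,\CupD,\CapD)$, the triple $(\ZZ\D,\shuffleD,\unshuffleD)$ inherits the structure of a graded right module and comodule over $(\ZZ\SS,\shuffle,\unshuffle)$.

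The main obstacle I anticipate is the bookkeeping forced by the rank restriction $m\ge2$, which has no counterpart in types A and B. Because $\D_m$ exists only for $m\ge2$, the parabolic factorization of Proposition~\ref{prop:DA} is available only when the type D factor has rank at least $2$, which is precisely why the coproducts $\CapD$ and $\unshuffleD$ sum over $2\le i\le m$ rather than $0\le i\le m$. The point requiring care is that the iterated type D factors appearing in the coassociativity square never need rank below $2$, so excluding these boundary indices loses no terms and the square still commutes; this, together with the fact that repairing $\stB(a)\notin\D_n$ by a factor $s_0^B$ (as in the proof of Proposition~\ref{prop:DA}) is compatible with restriction to subwords, is the one place where the type D argument genuinely diverges from the type B case rather than following formally from Proposition~\ref{prop:IJS}.
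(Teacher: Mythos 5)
Your proposal is correct and follows essentially the same route as the paper, which simply says the proof is similar to that of Proposition~\ref{prop:MRBA}: establish the unit and associativity axioms for $\CupD$ by identifying both iterated products with a single set via Proposition~\ref{prop:IJS}, then obtain duality and the inversion isomorphism from Proposition~\ref{prop:MR}. Your additional attention to the rank restriction $m\ge2$ and the truncated summation range $2\le i\le m$ is a worthwhile point that the paper leaves implicit.
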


\begin{proof}
This is similar to the proof of Proposition~\ref{prop:MRBA}.
 \end{proof}

Let $\Sigma(\D):=\bigoplus_{n\ge2}\Sigma(\D_n)$ where $\Sigma(\D_n)$ is a free $\ZZ$-module with a basis consisting of descent classes
\[ D_\alpha(\D_n):= \left\{w\in\D_n: D(w)=D(\alpha) \right\},\quad \forall \alpha\modelsD n\ge2. \]
One has an embedding $\imath:\Sigma(\D)\hookrightarrow \ZZ\D$ by inclusion.  
If $\alpha\modelsD m\ge2$ and $\beta\models n\ge0$ then 
\[ D_\alpha(\D_m) \Cup D_\beta(\SS_n) = D_{\alpha\cdot\beta}(\D_{m+n}) + D_{\alpha\rhd\beta}(\D_{m+n})\]
by Proposition~\ref{prop:IndDes}, where the last term is treated as zero when $\alpha\rhd\beta$ is undefined.

Let $\Sigma^*(\D):=\bigoplus_{n\ge2} \Sigma^*(\D_n)$ where $\Sigma^*(\D_n)$ is the dual of $\Sigma(\D_n)$ with a dual basis $\{D^*_\alpha(\D_n): \alpha\modelsD n\}$.
Dual to $\imath:\Sigma(\D)\hookrightarrow \ZZ\D$ is a surjection $\chi:\ZZ\D\twoheadrightarrow\Sigma^*(\D)$ defined by sending each $w\in\D_n$ to $D^*_w(\D_n):=D^*_\alpha(\D_n)$, where $\alpha\modelsD n$ satisfies $D(w)=D(\alpha)$.

Recall from Section~\ref{sec:Coxeter} that $\Lambda(\D_n)$ be the $\ZZ$-span of $\Lambda_\alpha(\D_n):=\chi'(D_\alpha(\D_n))$ for all $\alpha\modelsD n$, where $\chi':=\chi\circ(\ )^{-1}$.
For $\alpha\modelsD m$ and $\beta\modelsD n$ we define 
\[ \langle \Lambda_\alpha(\D_m),\Lambda_\beta(\D_n) \rangle := 
\begin{cases} \#\{w\in\D_n: D(w^{-1}) = D(\alpha),\ D(w)=D(\beta) \}, &\text{if } m=n, \\
0, & \text{if } m\ne n.
\end{cases} \]
By Proposition~\ref{prop:SymForm}, this gives a well-defined nondegenerate symmetric bilinear form on the free $\ZZ$-module $\Lambda(\D):=\bigoplus_{n\ge2}\Lambda(\D_n)$ such that $\imath:\Lambda(\D)\hookrightarrow\Sigma^*(\D)$ and $\chi':\Sigma(\D)\twoheadrightarrow\Lambda(\D)$ are dual to each other. 

\begin{theorem}\label{thm:DesD}
The following diagram is commutative with each entry being a graded right module and comodule over the corresponding type A Hopf algebra in \eqref{eq:DiamondCoxA}. 
\begin{equation}\label{eq:DiamondCoxD}
\xymatrix @R=16pt @C=7pt {
 & \ZZ\D \ar@{->>}[rd]^{\chi'} \\
 \Sigma(\D) \ar@{^(->}[ru]^{\imath} \ar@{->>}[rd]_{\chi'} & & \Sigma^*(\D) \ar@{<-->}[ll]^{\txt{\small dual}} \\
 & \Lambda(\D) \ar@{^(->}[ru]_\imath }
\end{equation}
Reflecting it across the vertical line through $\ZZ\D$ and $\Lambda(\D)$ gives a dual diagram. 
\end{theorem}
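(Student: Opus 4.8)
The plan is to treat Theorem~\ref{thm:DesD} exactly as the type D mirror of Theorem~\ref{thm:DesB}: all of the genuine work has already been packaged into the uniform machinery of Section~\ref{sec:General}, and the task is to specialize it along the sequence of parabolic embeddings $\D_m\times\SS_n\hookrightarrow\D_{m+n}$ for $m\ge2$ and $n\ge0$. First I would invoke Proposition~\ref{prop:DA}, which shows that the image of this embedding is precisely the parabolic subgroup $\D_{m,n}$ of $\D_{m+n}$ and supplies the unique factorization $w=(u\times v)z$ together with explicit formulas for $u,v,u^{-1},v^{-1}$ in terms of $\stD$ and $\Dst$. This identifies each such embedding with an inclusion $(W_I,I)\hookrightarrow(W,S)$ in the category $\mathcal Cox$, where $W=\D_{m+n}$ and $I=S\setminus\{s_m\}$, so that $W_I\cong\D_m\times\SS_n$ with a complementary factor of type A.

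With these embeddings recognized as $\mathcal Cox$-morphisms, the bulk of the argument is formal. Proposition~\ref{prop:MRDA} translates the abstract maps $\mu_I^S,\bar\mu_I^S,\rho_I^S,\bar\rho_I^S$ of Definition~\ref{def:MapsZW} into the explicit operations $\CupD,\shuffleD,\CapD,\unshuffleD$ and establishes that $\ZZ\D$ is a graded right module and comodule over $\ZZ\SS$; moreover parts (iii) and (iv) record the required duality and the $(\ )^{-1}$-isomorphism. I would then apply Theorem~\ref{thm:FullDiagram} with $W=\D_{m+n}$ and $I=S\setminus\{s_m\}$: its restrictions of $\mu_I^S,\rho_I^S$ to the descent algebras and of $\bar\mu_I^S,\bar\rho_I^S$ to their duals, assembled over all $m\ge2$ and $n\ge0$, make $\Sigma(\D)$ a module and comodule over $\Sigma(\SS)$ and $\Sigma^*(\D)$ a module and comodule over $\Sigma^*(\SS)$, and yield commutativity of the upper triangle of \eqref{eq:DiamondCoxD} along with the compatibility of $\imath$ and $\chi'$. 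The associativity, coassociativity, and the module and comodule axioms all reduce to the functoriality identities $\mu_J^S\circ\mu_I^J=\mu_I^S$ (and their companions) of Proposition~\ref{prop:IJS} applied to chains of nested parabolic subgroups. Finally Corollary~\ref{cor:Sym} supplies the $\Lambda(\D)$ corner, showing that the maps descend to $\Lambda$ and closing the diamond; the reflected dual diagram then comes for free from the dual statements already built into Theorem~\ref{thm:FullDiagram}, Corollary~\ref{cor:Sym}, and Proposition~\ref{prop:MRDA}(iii).

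The only place demanding real care is the interface between the uniform framework and the explicit type D combinatorics, that is, the proof of Proposition~\ref{prop:MRDA} through Proposition~\ref{prop:DA}. The main obstacle is that the base factor here is of type D rather than type A, so two points differ from the Hopf-algebra situation of type A: the summations defining $\unshuffleD$ and $\CapD$ must run over $2\le i\le m$ because $\D_i$ is undefined for $i<2$, and the type D standardizations $\stD,\Dst$ genuinely differ from $\stB$ by a possible factor of $s_0^B$ exactly when $\stB$ of the relevant window lands outside $\D$. I expect the bookkeeping of this sign-flip---verifying that $\stD$ and $\Dst$ reproduce precisely the parabolic factors of Proposition~\ref{prop:DA}---to be the delicate step. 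Once that is in place the diagram chase is identical to the type B case and produces, as anticipated in the introduction, a module and comodule rather than a Hopf algebra, since only the appended factor $\SS_n$ is of type A.
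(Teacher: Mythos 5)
Your proposal is correct and follows essentially the same route as the paper: the paper's proof is literally ``Apply Theorem~\ref{thm:FullDiagram} to $(\ZZ\D,\shuffleD,\unshuffleD,\CupD,\CapD)$ and then use Corollary~\ref{cor:Sym},'' with the interface to type D combinatorics supplied, exactly as you say, by Proposition~\ref{prop:DA} and Proposition~\ref{prop:MRDA}. Your identification of the $\stD$/$\Dst$ bookkeeping (the possible $s_0^B$ twist) as the one genuinely type-D-specific step is also where the paper concentrates its effort, in the proof of Proposition~\ref{prop:DA}.
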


\begin{proof}
Apply Theorem~\ref{thm:FullDiagram} to $(\ZZ\D,\shuffleD,\unshuffleD,\CupD,\CapD)$ and then use Corollary~\ref{cor:Sym}.
 \end{proof}


\subsection{Free quasisymmetric functions of type D and related results}\label{sec:FQSymD}
In this subsection we obtain the following commutative diagram.
\begin{equation}\label{eq:DiamondFQSymD}
\DiamondFQSym{D}
\end{equation}
It is isomorphic to the diagram \eqref{eq:DiamondCoxD}, with each entry being a graded right module and comodule over the corresponding type A Hopf algebra in \eqref{eq:DiamondFQSymA}. Reflecting it across the vertical line through $\FQSym^D$ and $\Sym^D$ gives a dual diagram of graded modules and comodules.

\subsubsection{Free quasisymmetric functions of type D}
Let $(W,S)$ be the Coxeter system of type $D_n$, where $W=\D_n$ and $S=\{s_0=s^D_0,s_1,\ldots,s_{n-1}\}$, with $n\ge2$. Let $E=\mathbb R^n$ be a Euclidean space with a standard basis $\{e_1,\ldots,e_n\}$. The group $\D_n$ can be realized as a reflection group of $E$ whose root system $\Phi$ is the disjoint union of $\Phi^+ =\{e_j\pm e_i: 1\leq i<j\leq n\}$ and $\Phi^-=-\Phi^+$. The set $\Delta=\{e_1+e_2,e_2-e_1,\ldots,e_n-e_{n-1}\}$ of simple roots corresponds to the generating set $S$ of simple reflections. 

Let $\bX=\{\bx_i:i\in\ZZ\}$ be a set of noncommutative variables. 
We define $\FQSym^D_n$ to be the $\ZZ$-span of $\bF_P^S$ for all parsets $P$ of $\Phi$ and define $\FQSym^D:=\bigoplus_{n\geq2}\FQSym^D_n$. 
By Proposition~\ref{prop:FQSymW}, $\FQSym^D$ has free $\ZZ$-bases $\{\bFD_w:w\in\D\}$ and $\{\bsD_w:w\in\D\}$, where $\bFD_w$ is the generating function of $w\Phi^+$ and $\bsD_w:=\bFD_{w^{-1}}$. Applying the definition of $f\in\mathcal A(w\Phi^+)$ to $w\alpha$ for all $\alpha\in\Delta$ one sees that  
\begin{equation}\label{eq:bFD}
\bFD_w = \sum_{ \substack{ 
f(w(0))\leq f(w(1))\leq\cdots\leq f(w(n)) \\
i\in D(w) \Rightarrow f(w(i))<f(w(i+1)) }}
\bx_{f(1)}\cdots \bx_{f(n)},\qquad\forall w\in\D_n.
\end{equation}
Here we set $w(0)=-w(2)$ and $f(-i)=(i)$ for all $i\in[n]$ by convention.

\begin{proposition}\label{prop:bsD}
If $w\in\D_n$ ($n\ge2$) and $f\in\ZZ^n$ then $f\in\mathcal A(w^{-1}\Phi^+) \Leftrightarrow \Dst(f)=w$ and thus 
\[ \bsD_w = \sum_{f\in\ZZ^n:\Dst(f)=w} \bx_f. \]
\end{proposition}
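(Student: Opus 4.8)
The plan is to mimic the proof of the type B analogue, Proposition~\ref{prop:bsB}, reducing everything to a single combinatorial statement about signed permutations. Since $\bsD_w = \bFD_{w^{-1}}$ is by definition the generating function $\sum_{f\in\mathcal A(w^{-1}\Phi^+)}\bx_f$, the displayed formula is immediate from the equivalence $f\in\mathcal A(w^{-1}\Phi^+)\Leftrightarrow\Dst(f)=w$, so only this equivalence needs proof. As in type B, I would invoke Lemma~\ref{lem:Z^n} for $\D_n$: both $\{\mathcal A(w^{-1}\Phi^+):w\in\D_n\}$ and $\{\{f\in\ZZ^n:\Dst(f)=w\}:w\in\D_n\}$ are partitions of $\ZZ^n$ indexed by $\D_n$ (the former after reindexing $w\mapsto w^{-1}$, the latter because $\Dst$ is a well-defined map $\ZZ^n\to\D_n$). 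Hence it suffices to prove the single implication $f\in\mathcal A(w^{-1}\Phi^+)\Rightarrow\Dst(f)=w$.

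First I would extract order relations. Applying Proposition~\ref{prop:InverseP} to the type D positive roots $e_j-e_i$ and $e_j+e_i$ (for $1\le i<j\le n$) exactly as in the proof of Proposition~\ref{prop:bsB} gives, for $f\in\mathcal A(w^{-1}\Phi^+)$, the relations (D1) $w(i)<w(j)\Leftrightarrow f(i)\le f(j)$ and (D2) $w(i)+w(j)>0\Leftrightarrow f(i)+f(j)\ge0$. The only difference from type B is the absence of the short roots $e_i$, so no constraint of the form $\Neg(f)=\Neg(w)$ is produced. Writing $u:=\stB(f)$, Proposition~\ref{prop:bsB} supplies these same two relations with $u$ in place of $w$; combining the two pairs eliminates $f$ and yields (R1) $w(i)<w(j)\Leftrightarrow u(i)<u(j)$ and (R2) $w(i)+w(j)>0\Leftrightarrow u(i)+u(j)>0$ for all $1\le i<j\le n$.

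The crux, and the main obstacle, is then a purely combinatorial lemma: (R1) and (R2) force $u\in\{w,s^B_0w\}$. I would argue as follows. By (R1), $w$ and $u$ have the same sorting permutation $\sigma$, and the increasing rearrangement of the window of a signed permutation is completely determined by the set $N\subseteq[n]$ of magnitudes carrying a negative sign (the values $-k$ for $k\in N$ in decreasing magnitude, followed by the $+k$ for $k\notin N$ in increasing magnitude). Thus it is enough to compare $N_w$ and $N_u$. The key observation for (R2) is that for two window entries of distinct magnitudes $k<l$ the sign of their sum equals the sign of the entry of larger magnitude $l$ (because $|$smaller$|<|$larger$|$). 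Hence for every magnitude $l\ge2$, pairing it with any smaller magnitude shows that (R2) determines the sign of the magnitude-$l$ entry, leaving only the magnitude-$1$ entry free. Therefore $N_w$ and $N_u$ agree on $\{2,\dots,n\}$, so $N_u\in\{N_w,\,N_w\triangle\{1\}\}$. Toggling whether $1\in N$ flips the sign of the $\pm1$ entry while keeping it at the same rank (it always sits between the negatives and positives of magnitude $\ge2$, so $\sigma$ is unchanged), which is precisely left multiplication by $s^B_0$; hence $u=w$ or $u=s^B_0w$.

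Finally I would unwind the definition of $\Dst$: since $w\in\D_n$, if $u=w$ then $u\in\D_n$ and $\Dst(f)=u=w$, while if $u=s^B_0w\notin\D_n$ then $\Dst(f)=s^B_0u=w$ (using $(s^B_0)^2=1$); either way $\Dst(f)=w$, completing the reduction. I expect the order and sum relations to be routine given the type B proof, with essentially all the content residing in the sum-sign lemma; the one point demanding care is the verification that toggling the sign of the magnitude-$1$ entry preserves the common sorting order $\sigma$, since this is exactly what pins the two surviving possibilities down to $w$ and $s^B_0w$ rather than two unrelated signed permutations.
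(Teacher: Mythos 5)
Your overall architecture is sound and is genuinely different from the paper's. The paper argues in the opposite direction: given $f$, it sets $u=\stB(f)$, notes that $\Phi^+_D\subseteq\Phi^+_B$ so Proposition~\ref{prop:bsB} already yields the type~D inequalities for $w=u$, and when $u\notin\D_n$ it observes that $s^B_0$ permutes the roots $e_j\pm e_1$ and fixes the remaining $e_j\pm e_i$, so the same set of inequalities is the one attached to $w=s^B_0u=\Dst(f)$; uniqueness from Lemma~\ref{lem:Z^n} then finishes. This completely avoids the rigidity lemma that carries all the weight in your version. Your route instead assumes $f\in\mathcal A(w^{-1}\Phi^+)$, extracts (R1)--(R2) relating $w$ to $u=\stB(f)$, and must then prove that these relations pin $u$ down to $\{w,s^B_0w\}$. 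That lemma is true, and your reduction to comparing the negative-magnitude sets $N_w,N_u$ (given the common sorting permutation from (R1)) is correct, as is your closing observation that toggling $1\in N$ preserves all ranks and amounts to left multiplication by $s^B_0$.

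The gap is in the sentence ``for every magnitude $l\ge2$, pairing it with any smaller magnitude shows that (R2) determines the sign of the magnitude-$l$ entry.'' Relation (R2) compares sums at pairs of \emph{positions}, and the sign of $w(i)+w(j)$ is the sign of whichever of $w(i),w(j)$ has larger magnitude --- but a priori the position carrying magnitude $l$ in $w$ need not carry magnitude $l$ in $u$. So if you pick $i$ with $|w(i)|=l$ and $j$ with $|w(j)|<l$, you learn $\operatorname{sgn}(w(i))=\operatorname{sgn}\bigl(\text{the larger-magnitude one of }u(i),u(j)\bigr)$, which is not yet the sign of the magnitude-$l$ entry of $u$. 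In other words, you are implicitly assuming that the magnitude arrangements of $w$ and $u$ coincide, which is part of what must be proved. The repair is a downward induction on $l$: for $l=n$, apply (R2) to the two extreme ranks to see that $n\in N_w\Leftrightarrow n\in N_u$ and that $\pm n$ sits at the same rank in both sorted windows; assuming magnitudes $>l$ agree in sign and rank, the entries of magnitude $\le l$ occupy a common interval of ranks $[p_0,q_0]$ of length $l\ge2$ in both sorted windows, one endpoint of which carries $\pm l$, and (R2) applied to the pair $(p_0,q_0)$ forces the sign (hence the rank) of the magnitude-$l$ entry to agree. With that inductive bookkeeping inserted, your proof goes through; without it, the key step is unjustified. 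You flagged the $s^B_0$-toggle as the delicate point, but that part is fine --- the real delicate point is the one above.
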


\begin{proof}
Let $f\in\ZZ^n$.
By Lemma~\ref{lem:Z^n}, there exists a unique $w\in\D_n$ such that $f\in\mathcal A(w^{-1}\Phi^+)$, i.e., the following holds for all $\alpha=e_j\pm e_i$, $1\le i<j\le n$:
\begin{equation}\label{eq:bsD}
\begin{cases}
(f,w^{-1}\alpha)\geq0, & {\rm if}\ w^{-1}\alpha>0,\\
(f,w^{-1}\alpha)>0, & {\rm if}\ w^{-1}\alpha<0.
\end{cases} 
\end{equation}
Let $u=\stB(f)$. Since a positive root of $\D_n$ is also a positive root of $\B_n$, Proposition~\ref{prop:bsB} implies  \eqref{eq:bsD} if $w=u$. 
Thus if $u\in\D_n$ then \eqref{eq:bsD} holds for $w=u=\Dst(f)$. 
On the other hand, if $u\notin\D_n$ then \eqref{eq:bsD} still holds for $w=s^B_0u=\Dst(f)$, since if $\alpha=e_j\pm e_1$, where $1<j\le n$, then $w^{-1}(\alpha) = u^{-1}(e_j\mp e_1)$, and if $\alpha=e_j\pm e_i$, where $1<i<j\le n$, then $w^{-1}(\alpha)= u^{-1}(\alpha)$. Hence the result holds. 
 \end{proof}

\begin{corollary}\label{cor:bFD}
If $w\in\D_n$ ($n \ge2$) then $\bsD_w = \bsB_w+\bsB_{s_0^Bw}$ and $\bFD_w = \bFB_w+\bFB_{ws_0^B}$. 
Consequently, one has $\FQSym^D \subseteq \FQSym^B$.
\end{corollary}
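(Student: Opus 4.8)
The plan is to prove the two displayed identities by comparing monomial expansions, after which the inclusion of algebras follows formally. The essential inputs are Proposition~\ref{prop:bsB} and Proposition~\ref{prop:bsD}, which write $\bsB_w$ and $\bsD_w$ as sums of $\bx_f$ over the fibers of the standardization maps $\stB$ and $\Dst$. So I would begin from
\[ \bsD_w = \sum_{f\in\ZZ^n:\,\Dst(f)=w} \bx_f \]
and partition the index set $\{f:\Dst(f)=w\}$ according to the defining case split of $\Dst$.

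Recall that $\Dst(f)=\stB(f)$ when $\stB(f)\in\D_n$, and $\Dst(f)=s_0^B\stB(f)$ otherwise. Fixing $w\in\D_n$, the equation $\Dst(f)=w$ then holds precisely when either $\stB(f)=w$ (the side condition $\stB(f)\in\D_n$ being automatic since $w\in\D_n$), or $\stB(f)=s_0^B w$ (with side condition $\stB(f)\notin\D_n$). The one point needing care is a parity count: left multiplication by $s_0^B$ flips the sign in exactly the single position of the window carrying $\pm1$, so $\neg(s_0^B w)=\neg(w)\pm1$ and hence $s_0^B w\notin\D_n$ whenever $w\in\D_n$. Consequently the side condition $\stB(f)\notin\D_n$ in the second case is automatically met, and the two cases are disjoint because $s_0^B\ne\id$ forces $w\ne s_0^B w$. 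This gives $\{f:\Dst(f)=w\}=\{f:\stB(f)=w\}\sqcup\{f:\stB(f)=s_0^B w\}$, and summing $\bx_f$ over each piece via Proposition~\ref{prop:bsB} yields the first identity $\bsD_w=\bsB_w+\bsB_{s_0^B w}$.

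For the second identity I would not redo the count from the generating-function formula~\eqref{eq:bFD} (where the convention $w(0)=-w(2)$ makes a direct comparison awkward); instead I would transport the first identity through the inverse map. Since $\bFD_w=\bsD_{w^{-1}}$ and $\bFB_v=\bsB_{v^{-1}}$, applying the first identity to $w^{-1}\in\D_n$ and using that $s_0^B$ is an involution gives
\[ \bFD_w=\bsD_{w^{-1}}=\bsB_{w^{-1}}+\bsB_{s_0^B w^{-1}}=\bFB_w+\bFB_{w s_0^B}, \]
the last step using $(s_0^B w^{-1})^{-1}=w s_0^B$.

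Finally, for $\FQSym^D\subseteq\FQSym^B$: both spaces sit inside the common ambient algebra $\ZZ\langle\bX\rangle$ by their definitions, and in each degree $n\ge2$ the space $\FQSym^D_n$ is spanned by $\{\bFD_w:w\in\D_n\}$. The identity just proved expresses each such generator as an integer combination of $\bFB_w,\bFB_{ws_0^B}\in\FQSym^B_n$, so $\FQSym^D_n\subseteq\FQSym^B_n$ for every $n\ge2$, hence $\FQSym^D\subseteq\FQSym^B$. I expect no genuine obstacle in this argument; the only step that warrants attention is the parity computation guaranteeing that the second branch of $\Dst$ lands exactly on the fiber of $\stB$ over $s_0^B w$, which is what makes the two-term expansion come out cleanly.
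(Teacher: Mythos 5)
Your proof is correct and follows essentially the same route as the paper: both decompose the fiber $\{f:\Dst(f)=w\}$ into the two $\stB$-fibers over $w$ and $s_0^Bw$ using Proposition~\ref{prop:bsD} and Proposition~\ref{prop:bsB}, and then obtain the $\bF$-identity by applying $w\mapsto w^{-1}$. The only difference is that you spell out the parity check ($s_0^Bw\notin\D_n$ when $w\in\D_n$) and the disjointness of the two fibers, which the paper leaves implicit.
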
  

\begin{proof}
Let $w\in\D_n$ ($n \ge2$). It follows from Proposition~\ref{prop:bsD} that
\[ \bsD_w = \sum_{\Dst(f)=w} \bx_f = \sum_{\stB(f)=w} \bx_f + \sum_{\stB(f)=s_0^Bw} \bx_f  = \bsB_w+\bsB_{s_0^Bw}\]
and applying $w\mapsto w^{-1}$ gives $\bFD_w = \bFB_w+\bFB_{ws_0^B}$. Hence $\FQSym^D \subseteq \FQSym^B$.
 \end{proof}

Let $m\ge2$ and $n\ge0$. 
By Proposition~\ref{prop:shuffleW}, if $u\in\D_m$ and $v\in \SS_n$ then 
\begin{equation}\label{eq:Action bFD}
\bFD_u \cdot \bF_v = \sum_{w\in\, u\shuffleD v} \bFD_w.
\end{equation}
This gives a right action of $\FQSym$ on $\FQSym^D$. For consistency of notation we denote this action by ``$\odotD$''. Note that $\bF_v$ is $\bF_v(\bX)$ instead of $\bF_v(\bX_{>0})$ by our convention in this paper. If $u\in\D_m$ then define
\begin{equation}\label{eq:Coaction bFD}
\Delta^D(\bFD_u) := \sum_{2\leq i\leq m} \bFD_{\Dst(u[1,i])}\otimes\bF_{\st(u[i+1,n])}.
\end{equation}
This gives a right coaction of $\FQSym$ on $\FQSym^D$. 

\begin{proposition}\label{prop:FQSymD}
$(\FQSym^D,\odotD,\Delta^D)$ is a self-dual graded right module and comodule over $\FQSym$ isomorphic to $(\ZZ\D,\shuffleD,\unshuffleD )$ via $\bFD_w\mapsto w$ and to $(\ZZ\D,\CupD,\CapD)$ via $\bsD_w\mapsto w$. 
\end{proposition}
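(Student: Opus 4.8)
The plan is to prove the proposition by transporting the module, comodule, and duality structure already established on $\ZZ\D$ in Proposition~\ref{prop:MRDA} across the two linear isomorphisms $\ZZ\D\xrightarrow{\sim}\FQSym^D$ given by $w\mapsto\bFD_w$ and $w\mapsto\bsD_w$; these are isomorphisms of free $\ZZ$-modules by Proposition~\ref{prop:FQSymW}. This is the exact analogue of the argument proving Proposition~\ref{prop:FQSymB} in type B. First I would record the two identities that do the work. The action formula (\ref{eq:Action bFD}) shows that $w\mapsto\bFD_w$ intertwines the right $\SS$-module structure $\shuffleD$ on $\ZZ\D$ with the right $\FQSym$-action $\odotD$ on $\FQSym^D$, and the coaction formula (\ref{eq:Coaction bFD}) shows that the same map intertwines the right $\SS$-comodule structure $\unshuffleD$ with the coaction $\Delta^D$. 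Since Proposition~\ref{prop:MRDA}(i) makes $(\ZZ\D,\shuffleD,\unshuffleD)$ a graded right module and comodule over $(\ZZ\SS,\shuffle,\unshuffle)\cong\FQSym$, it follows at once that $(\FQSym^D,\odotD,\Delta^D)$ is a graded right module and comodule over $\FQSym$ and that $\bFD_w\mapsto w$ is the claimed isomorphism onto $(\ZZ\D,\shuffleD,\unshuffleD)$.

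For the second isomorphism I would use the relation $\bsD_w=\bFD_{w^{-1}}$, which says that $w\mapsto\bsD_w$ factors as the composite of $(\ )^{-1}$ with $w\mapsto\bFD_w$. Proposition~\ref{prop:MRDA}(iv) gives the isomorphism $(\ )^{-1}\colon(\ZZ\D,\CupD,\CapD)\xrightarrow{\sim}(\ZZ\D,\shuffleD,\unshuffleD)$, and composing it with the first isomorphism yields the isomorphism $\bsD_w\mapsto w$ between $(\FQSym^D,\odotD,\Delta^D)$ and $(\ZZ\D,\CupD,\CapD)$. For self-duality I would endow $\FQSym^D$ with the pairing $\langle\bFD_u,\bsD_v\rangle:=\delta_{u,v}$, the type-D counterpart of the form used on $\FQSym^B$, and transport the duality of Proposition~\ref{prop:MRDA}(iii): under the two isomorphisms above the self-pairing $\langle u,v\rangle=\delta_{u,v}$ on $\ZZ\D$ becomes exactly $\langle\bFD_u,\bsD_v\rangle=\delta_{u,v}$, so $(\FQSym^D,\odotD,\Delta^D)$ is self-dual.

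The proposition is therefore a formal transport of structure, and no step of the present argument is difficult once Proposition~\ref{prop:MRDA} and the formulas (\ref{eq:Action bFD}), (\ref{eq:Coaction bFD}) are in hand. The genuinely type-D content, and the part deserving the most care, is the justification of those two formulas: they rely on Proposition~\ref{prop:DA} for the parabolic factorization $w=(u\times v)z$ and on the $P$-partition identity of Proposition~\ref{prop:shuffleW}, together with the handling of the two distinct standardizations $\Dst$ and $\stD$ and the correction factor $s_0^B$ recorded in Corollary~\ref{cor:bFD}. The main obstacle is thus to confirm that (\ref{eq:Action bFD}) and (\ref{eq:Coaction bFD}) reproduce the combinatorial definitions of $\shuffleD$ and $\unshuffleD$ from Section~\ref{sec:MRDA}; granting this, the module, comodule, duality, and isomorphism claims follow by the transport argument above.
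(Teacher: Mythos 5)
Your argument is correct and is essentially the paper's own proof: the paper likewise deduces the proposition from \eqref{eq:Action bFD}, \eqref{eq:Coaction bFD}, and Proposition~\ref{prop:MRDA}(iv), with the transport of structure and the pairing $\langle \bFD_u,\bsD_v\rangle=\delta_{u,v}$ (already fixed in general on $\FQSym^S$ in Section~\ref{sec:FQSym}) left implicit. Your additional remarks on where the real type-D content lies (Propositions~\ref{prop:DA} and~\ref{prop:shuffleW} underlying the two displayed formulas) are accurate but not part of the proof of this proposition itself.
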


\begin{proof}
This follows from \eqref{eq:Action bFD}, \eqref{eq:Coaction bFD} and Proposition~\ref{prop:MRDA} (iv).
 \end{proof}

It follows that if $u\in\D_m$ ($m\ge2$) and $v\in\SS_n$ then
\begin{equation}\label{eq:Coaction bsD}
\bsD_u\cdot \bs_v = \sum_{w\in u\CupD v}\bsD_w \qand
\Delta^D(\bsD_u) = \sum_{2\leq i\leq m} \bsD_{\stD(u|[1,i])}\otimes\bs_{\st(\hat u|[i+1,n])}.
\end{equation}

\begin{remark}
Given $u\in\D_m$ ($m\ge2$), it follows from \eqref{eq:Coaction bFB}, \eqref{eq:Coaction bsB}, and Corollary~\ref{cor:bFD} that
\[ \DeltaB(\bFD_u) = \sum_{2\le i\le m} \bFD_{\stD(u[1,i])} \otimes\bF_{\st(u[i+1,m])} \qand \DeltaB(\bsD_u) = \sum_{2\le i\le m} \bsD_{\Dst(u|[1,i])} \otimes\bs_{\st(\hat u|[i+1,m])}. \]
This does not give the desired coaction of $\FQSym$ on $\FQSym^D$.
\end{remark}

\subsubsection{Noncommutative symmetric functions of type D} 
Let $\alpha\modelsD n\ge2$. A tableau $\tau$ of  pseudo-ribbon shape $\alpha$ is \emph{type D semistandard} if each row is weakly increasing from left to right and each column is strictly increasing from top to bottom, \emph{with the extra 0-entry interpreted as $-w(\tau)(2)$}. 
Let $\bsD_\alpha$ be the sum of $\bx_{w(\tau)}$ for all type D semistandard tableaux $\tau$ of pseudo-ribbon shape $\alpha$ and define $\bhD_\alpha:= \sum_{\beta\cleq\alpha}\bsD_\beta$.

In our earlier work~\cite{H0Tab} we defined a type D analogue $\NSym^D$ of $\NSym$, which has two free $\ZZ$-bases consisting of $\bsD_\alpha$ and $\bhD_\alpha$, respectively, for all $\alpha\modelsD n$ and all $n\ge2$. We showed 
\[ \bhD_\alpha\cdot\bh_\beta = \bhD_{\alpha\cdot\beta} \qand
 \bsD_\alpha\cdot\bs_\beta=\bsD_{\alpha\cdot\beta}+\bsD_{\alpha\rhd\beta} \]
for $\alpha\modelsD n\ge2$ and $\beta\models n\ge0$, where the last term is treated as zero when $\alpha\rhd\beta$ is undefined. 
This gives a right $\NSym$-action on $\NSym^D$, which is denoted by $\odotD$ for consistency of notation.
Now we study the relation between $\NSym^D$ and $\FQSym^D$, and use it to obtain a $\NSym$-coaction on $\NSym^D$.
 
\begin{proposition}\label{prop:bsDa}
Let $\alpha\modelsD n\ge2$. Then $\bsD_\alpha$ equals the sum of $\bsD_w$ for all $w\in D_\alpha(\D_n)$.
\end{proposition}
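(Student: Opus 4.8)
The plan is to mirror the proof of Proposition~\ref{prop:bsBa}, reducing the statement to a combinatorial description of the descent set of the type D standardization. First I would note that each $f\in\ZZ^n$ corresponds to a unique tableau $\tau_f$ of pseudo-ribbon shape $\alpha$ with reading word $w(\tau_f)=f$ (place $f(1),\dots,f(n)$ into the non-zero boxes in reading order). Then $\bsD_\alpha=\sum\bx_f$ summed over those $f$ for which $\tau_f$ is type D semistandard, while Proposition~\ref{prop:bsD} gives $\sum_{w\in D_\alpha(\D_n)}\bsD_w=\sum\bx_f$ summed over those $f$ with $D(\Dst(f))=D(\alpha)$. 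Thus everything reduces to the claim that, for $f\in\ZZ^n$, the tableau $\tau_f$ is type D semistandard if and only if $D(\Dst(f))=D(\alpha)$.

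To prove this claim I would compute $D(\Dst(f))$ directly, avoiding any case split on the parity of $\neg(f)$ hidden in the definition of $\Dst$. Writing $w=\Dst(f)$, Proposition~\ref{prop:bsD} says $f\in\mathcal A(w^{-1}\Phi^+)$, and for each simple reflection $s=s_i$ one has $s\in D(w)\Leftrightarrow w(\alpha_s)<0$ by Proposition~\ref{prop:ws}, which by Proposition~\ref{prop:InverseP} (applied to the positive root $\alpha_s$) is equivalent to $(f,\alpha_s)<0$. Using the type D simple roots $\alpha_{s_0}=e_1+e_2$ and $\alpha_{s_i}=e_{i+1}-e_i$ for $i\ge1$, this yields $s_0\in D(w)\Leftrightarrow f(1)+f(2)<0$ and, for $i\ge1$, $s_i\in D(w)\Leftrightarrow f(i)>f(i+1)$.

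It then remains to match these conditions with type D semistandardness of $\tau_f$. For the regular boxes the analysis is identical to type A: a row is weakly increasing and a shared column is strictly increasing from top to bottom, so for $i\ge1$ semistandardness forces $f(i)\le f(i+1)$ when $i\notin D(\alpha)$ and $f(i)>f(i+1)$ when $i\in D(\alpha)$, i.e. $i\in D(\alpha)\Leftrightarrow f(i)>f(i+1)$, matching the descent condition above. The genuinely new point, and the main obstacle, is the extra $0$-box, whose value is $-w(\tau_f)(2)=-f(2)$: when $\alpha_1>0$ it sits to the left of the bottom row, so the row condition reads $-f(2)\le f(1)$, i.e. $f(1)+f(2)\ge0$; when $\alpha_1=0$ it sits below the leftmost column, so the column condition reads $f(1)<-f(2)$, i.e. $f(1)+f(2)<0$. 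Since $0\in D(\alpha)$ exactly when $\alpha_1=0$, in both cases the $0$-box condition is precisely $0\in D(\alpha)\Leftrightarrow f(1)+f(2)<0$, matching the $s_0$-descent condition. Combining the two matchings gives that $\tau_f$ is type D semistandard if and only if $D(\Dst(f))=D(\alpha)$, and the proposition follows. I expect the careful placement and interpretation of the $0$-box (together with correctly pinning down $\alpha_{s_0}=e_1+e_2$) to be the only delicate step; everything else is formal.
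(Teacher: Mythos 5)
Your proof is correct, and its overall skeleton is the same as the paper's: both arguments identify each $f\in\ZZ^n$ with a filling $\tau_f$ of the pseudo-ribbon $\alpha$ and reduce the proposition, via Proposition~\ref{prop:bsD}, to the single equivalence ``$\tau_f$ is type D semistandard $\Leftrightarrow D(\Dst(f))=D(\alpha)$.'' Where you genuinely diverge is in how that equivalence is proved. The paper splits on the parity of $\neg(f)$: when $\stB(f)\in\D_n$ it asserts the equivalence by analogy with the type B argument of Proposition~\ref{prop:bsBa}, and when $\stB(f)\notin\D_n$ it notes that replacing $\stB(f)$ by $s_0^B\stB(f)$ only negates the entry $\pm1$, which (having minimal absolute value) changes no descent. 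You instead compute $D(\Dst(f))$ directly from the geometric realization: combining Propositions~\ref{prop:ws} and~\ref{prop:InverseP} with $f\in\mathcal A(w^{-1}\Phi^+)$ gives $s\in D(\Dst(f))\Leftrightarrow(f,\alpha_s)<0$, and you then match $(f,e_{i+1}-e_i)<0$ and $(f,e_1+e_2)<0$ against the row/column inequalities of $\tau_f$, including the $0$-box with entry $-f(2)$ in both of its possible positions. Your route avoids the parity case split and makes fully explicit the $0$-box bookkeeping that the paper's ``one sees'' leaves to the reader; the paper's route is shorter because it recycles the type B argument wholesale. Both are valid proofs.
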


\begin{proof}
Each $f\in\ZZ^n$ corresponds to a unique tableau $\tau$ of shape $\alpha$ such that $w(\tau)=f$. Let $u=\stB(f)$ and $w=\Dst(f)$. By Proposition~\ref{prop:bsD}, it suffices to show that 
\begin{equation}\label{eq:bsDa}
\text{$D(w)=D(\alpha)$ if and only if $\tau$ is type D semistandard.}
\end{equation}
One sees that \eqref{eq:bsDa} holds when $w=u\in\D_n$. If $u\notin D_n$ then $w=s^B_0u$ and one still has \eqref{eq:bsDa} as negating $\pm1$ does not change any descent. 
 \end{proof}

It follows that there is an injection $\imath:\NSym^D\hookrightarrow \FQSym^D$ by inclusion.

\begin{proposition}\label{prop:NSymD}
The graded right module and comodule $\FQSym^D$ over $\FQSym$ restricts to a graded right module and comodule $(\NSym^D,\odotD,\DeltaD)$ over $\NSym$, which is isomorphic to the graded right module and comodule $(\Sigma(\D),\CupD,\CapD)$ over $(\Sigma(\SS),\Cup,\Cap)$ via the map $\bsD_\alpha\mapsto D_\alpha(\D_m)$, $\forall \alpha\modelsD m$, $\forall m\ge2$.
\end{proposition}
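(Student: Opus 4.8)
The plan is to follow the short argument used for the type B analogue, Proposition~\ref{prop:NSymB}: everything is obtained from a single ambient isomorphism by restriction. The central tool is Proposition~\ref{prop:FQSymD}, which provides an isomorphism of graded right modules and comodules $(\FQSym^D,\odotD,\DeltaD)\cong(\ZZ\D,\CupD,\CapD)$ via $\bsD_w\mapsto w$, compatible with the type A isomorphism $(\FQSym,\cdot,\Delta)\cong(\ZZ\SS,\Cup,\Cap)$ given by $\bs_w\mapsto w$ together with the identification $\NSym\cong\Sigma(\SS)$ sending $\bs_\alpha$ to $D_\alpha(\SS_n)$. I would show that this ambient isomorphism carries $\NSym^D$ onto $\Sigma(\D)$ and respects the algebraic structure, so that restricting it yields the asserted isomorphism.

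First I would record that the isomorphism $\bsD_w\mapsto w$ restricts to a bijection of the relevant bases. By Proposition~\ref{prop:bsDa} one has $\bsD_\alpha=\sum_{w\in D_\alpha(\D_m)}\bsD_w$ for every $\alpha\modelsD m$, so the isomorphism sends $\bsD_\alpha$ to $\sum_{w\in D_\alpha(\D_m)}w=D_\alpha(\D_m)$, where the descent class is identified with the sum of its elements in $\ZZ\D$. Since $\{\bsD_\alpha\}$ and $\{D_\alpha(\D_m)\}$ are the distinguished free $\ZZ$-bases of $\NSym^D$ and $\Sigma(\D)$ respectively, the isomorphism $\bsD_w\mapsto w$ restricts to a $\ZZ$-module isomorphism $\NSym^D\xrightarrow{\sim}\Sigma(\D)$, $\bsD_\alpha\mapsto D_\alpha(\D_m)$, as required.

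It then remains to see that the module and comodule structures genuinely restrict, i.e.\ that $\NSym^D$ is stable under the $\NSym$-action $\odotD$ and that $\DeltaD(\NSym^D)\subseteq\NSym^D\otimes\NSym$. The action side is transparent: the explicit rule $\bsD_\alpha\odotD\bs_\beta=\bsD_{\alpha\cdot\beta}+\bsD_{\alpha\rhd\beta}$ already shows $\NSym^D$ is $\NSym$-stable, matching the formula $D_\alpha(\D_m)\CupD D_\beta(\SS_n)=D_{\alpha\cdot\beta}(\D_{m+n})+D_{\alpha\rhd\beta}(\D_{m+n})$ deduced from Proposition~\ref{prop:IndDes}. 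The comodule side is the main obstacle, since there is no closed formula for $\DeltaD(\bsD_\alpha)$ (as already noted in the type B remark). I would not compute it directly; instead I would invoke Theorem~\ref{thm:DesD}, equivalently Proposition~\ref{prop:ResDes}, which guarantees that $\CapD$ maps $\Sigma(\D)$ into $\Sigma(\D)\otimes\Sigma(\SS)$ because $\rho_I^S$ restricts to the descent algebras. Transporting this stability back across the isomorphism $\bsD_w\mapsto w$ gives $\DeltaD(\NSym^D)\subseteq\NSym^D\otimes\NSym$.

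The one point requiring care in this last step is to confirm that the tensor decomposition used by $\CapD$, namely $\Sigma(\D_{m,n})\cong\Sigma(\D_m)\otimes\Sigma(\SS_n)$ for the parabolic $\D_{m,n}\cong\D_m\times\SS_n$ with $I=S\setminus\{s_m\}$, is exactly the decomposition $\NSym^D\otimes\NSym$ appearing in the target of $\DeltaD$. This is automatic, since $\CapD$ is defined precisely as $\rho_I^S$ for this $I$ (so that $\Cap^D_m w=\rho_I^S(w)$ already lands in $\ZZ\D_{m,n}=\ZZ\D_m\otimes\ZZ\SS_n$), and the identification rests only on the factorization of descent classes over a direct product of Coxeter systems. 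Once this is in place, the restricted isomorphism intertwines $(\NSym^D,\odotD,\DeltaD)$ over $\NSym$ with $(\Sigma(\D),\CupD,\CapD)$ over $(\Sigma(\SS),\Cup,\Cap)$, completing the argument.
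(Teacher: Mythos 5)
Your proposal is correct and follows essentially the same route as the paper: the paper's proof simply invokes Proposition~\ref{prop:FQSymD} for the ambient isomorphism $(\FQSym^D,\odotD,\DeltaD)\cong(\ZZ\D,\CupD,\CapD)$ via $\bsD_w\mapsto w$ and restricts it to $\NSym^D$, with Proposition~\ref{prop:bsDa} identifying $\bsD_\alpha$ with $D_\alpha(\D_m)$ and the stability under $\CupD$ and $\CapD$ coming from Theorem~\ref{thm:DesD}. You have merely made explicit the details the paper leaves implicit.
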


\begin{proof}
By Proposition~\ref{prop:FQSymD}, there is an isomorphism $(\FQSym^D,\odotD,\DeltaD)\cong (\ZZ\D,\CupD,\CapD)$ via $\bsD_w\mapsto w$, $\forall w\in\D$.
Restricting this isomorphism to $\NSym^D$ gives the result.
 \end{proof}

\begin{remark}
If $k$ is a nonnegative integer then it follows from \eqref{eq:Coaction bsD} and Proposition~\ref{prop:bsDa} that
\[ \DeltaD(\bhD_k) = \DeltaD(\bsD_{12\cdots k}) = \sum_{0\le i\le k} \bsD_{12\cdots i} \otimes \bs_{\st(i+1,\ldots, k)} = \sum_{0\le i\le k} \bhD_i \otimes \bh_{k-i}. \]
We do not have any explicit formula for $\DeltaD(\bhD_\alpha)$ or $\Delta(\bsD_\alpha)$ for an arbitrary $\alpha\modelsD n\ge2$. 
\end{remark}

\subsubsection{Quasisymmetric functions of type D}
Let $X=\{x_i:i\in\mathbb Z\}$ be a totally ordered set of commutative variables. If $\alpha=(\alpha_1,\ldots,\alpha_\ell)$ is a pseudo-composition of $n\ge2$ and $i_0:=-i_2$ then define
\begin{eqnarray} 
\MD_\alpha &:=& \sum_{\substack{-i_2\leq i_1\leq\cdots\leq i_n \\ j\in D(\alpha)\Leftrightarrow i_j<i_{j+1} }} x_{i_1}x_{i_2}\cdots x_{i_n} \qand \label{eq:MD} \\
\FD_\alpha &:=& \sum_{\substack{-i_2\leq i_1\leq\cdots\leq i_n \\ j\in D(\alpha)\Rightarrow i_j<i_{j+1} }} x_{i_1}x_{i_2}\cdots x_{i_n} = \sum_{\alpha\cleq\beta} \MD_\beta. \label{eq:FD}
\end{eqnarray}
One can check that $\MD_\alpha = \MB_\alpha$ if $1\notin D(\alpha)$. In fact, one has
\[ \MD_\alpha = 
\begin{cases} 
 x_0^{\alpha_1}M_{(\alpha_2,\ldots,\alpha_\ell)} = \MB_\alpha, & \text{if }
\alpha_1\ge2,\\
 \sum_{0<j_2<\cdots<j_\ell} x_{\overline{j_2}}x_{j_2}^{\alpha_2}\cdots x_{j_\ell}^{\alpha_\ell}, & \text{if } \alpha_1=1, \\
  M_{(\alpha_2,\ldots,\alpha_\ell)} = \MB_\alpha, & \text{if } \alpha_1=0 \text{ and } \alpha_2\ge2, \\
\sum_{-j_3<j_2<j_3<\cdots<j_\ell} x_{j_2}x_{j_3}^{\alpha_3}\cdots x_{j_\ell}^{\alpha_\ell}, & \text{if } \alpha_1=0 \text{ and } \alpha_2=1.
\end{cases} \] 

In our earlier work~\cite{H0Tab} we defined a type D analogue $\QSym^D$ of $\QSym$, which admits two free $\ZZ$-bases consisting of $\MD_\alpha$ and $\FD_\alpha$, respectively, for all $\alpha\modelsD n$ and all $n\ge2$. 

Recall that $\chiB:\FQSym^B\twoheadrightarrow \NSym^B$ is defined by $\bx_i\mapsto x_{|i|}$ for all $i\in\ZZ$. If $f\in\ZZ^n$ one can write $\chiB(\bx_f)=x_{i_1}\cdots x_{i_n}$ such that $0\le i_1\le \cdots\le i_n$, and we define $\chiD(\bx_f):=x_{\pm i_1} x_{i_2}\cdots x_{i_n}$ where the sign of $i_1$ is the same as $(-1)^{\neg(f)}$. This gives a linear maps $\chiD:\mathbb Z\langle \bX\rangle \to \mathbb Z[X]$, which is not an algebra homomorphism, as $\chiD(\bx_{\bar2}\bx_1) = x_{\bar1}x_2 \ne x_{\bar2}x_1 = \chiD(\bx_{\bar2})\chiD(\bx_{1})$.

\begin{proposition}
Let $w\in\D_n$ and $\alpha\modelsD n\ge2$ with $D(w)=D(\alpha)$. Then
$\FD_w:=\chiD(\bFD_w)=\FD_\alpha$. 
\end{proposition}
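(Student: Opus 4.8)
The plan is to expand both sides as sums of commutative monomials and to match them term by term through a sign-tracking bijection. First I would use \eqref{eq:bFD} to write $\bFD_w$ as the sum of $\bx_{f(1)}\cdots\bx_{f(n)}$ over all $f\in\ZZ^n$ whose $w$-rearrangement $j_k:=f(w(k))$ ($k=1,\ldots,n$) satisfies the chain $-j_2\le j_1\le j_2\le\cdots\le j_n$ with a strict inequality at each $i\in D(w)$; the initial term arises because $w(0)=-w(2)$ gives $f(w(0))=-f(w(2))=-j_2$. Since $D(w)=D(\alpha)$, this is exactly the indexing set of $\FD_\alpha$ in \eqref{eq:FD} under $i_k\mapsto j_k$ (with $i_0=-i_2$). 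As $w$ is a signed permutation, $f\mapsto(j_1,\ldots,j_n)$ is a bijection of $\ZZ^n$, and it restricts to a bijection between the support of $\bFD_w$ and the index set of $\FD_\alpha$.

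It then suffices to prove the term-by-term identity $\chiD(\bx_{f(1)}\cdots\bx_{f(n)})=x_{j_1}x_{j_2}\cdots x_{j_n}$. I would first note that the chain forces $j_2\ge|j_1|\ge0$, hence $j_2,\ldots,j_n\ge0$, so the multiset $\{|f(1)|,\ldots,|f(n)|\}=\{|j_1|,j_2,\ldots,j_n\}$ is already weakly increasing as $|j_1|\le j_2\le\cdots\le j_n$. Thus in the notation defining $\chiD$ one has $i_1=|j_1|$ and $i_k=j_k$ for $k\ge2$, giving $\chiD(\bx_f)=x_{\epsilon|j_1|}\,x_{j_2}\cdots x_{j_n}$ with $\epsilon=(-1)^{\neg(f)}$. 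The entire content of the statement is therefore the single sign identity $\epsilon\,|j_1|=j_1$, trivial when $j_1=0$ and otherwise equivalent to $(-1)^{\neg(f)}=\operatorname{sign}(j_1)$. This sign identity is the main obstacle, and it is precisely where the type~D hypothesis $w\in\D_n$ must be used.

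To settle it, I would assume $j_1\ne0$, so that $j_2\ge|j_1|\ge1$ and hence $j_k>0$ for every $k\ge2$. Writing $\sigma_k:=\operatorname{sign}(w(k))$ and using $f(-i)=-f(i)$, one gets $f(|w(k)|)=\sigma_k j_k$; since the indices $|w(k)|$ range over $[n]$, this yields $\neg(f)=\#\{k:\sigma_k j_k<0\}$. For $k\ge2$ this occurs exactly when $\sigma_k=-1$, so (with $[\,\cdot\,]$ the Iverson bracket)
\[ \neg(f)=\bigl(\neg(w)-[\sigma_1=-1]\bigr)+[\sigma_1 j_1<0]. \]
Because $\neg(w)$ is even for $w\in\D_n$, I would conclude
\[ (-1)^{\neg(f)}=(-1)^{[\sigma_1=-1]}\,(-1)^{[\sigma_1 j_1<0]}=\sigma_1\cdot\sigma_1\operatorname{sign}(j_1)=\operatorname{sign}(j_1), \]
using $(-1)^{[\sigma_1=-1]}=\sigma_1$ and $(-1)^{[\sigma_1 j_1<0]}=\operatorname{sign}(\sigma_1 j_1)=\sigma_1\operatorname{sign}(j_1)$. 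This gives $\epsilon|j_1|=j_1$, hence the term-by-term identity, and summing over the bijection produces $\chiD(\bFD_w)=\FD_\alpha$. One could instead try to route through Corollary~\ref{cor:bFD} and the algebra map $\chiB$, but in type~B the parities no longer cancel, so the direct monomial computation above is the cleaner path.
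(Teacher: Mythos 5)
Your proposal is correct and follows essentially the same route as the paper's own proof: expand $\bFD_w$ via \eqref{eq:bFD}, match the index chains term by term with those of \eqref{eq:FD} using $D(w)=D(\alpha)$, and check that the sign $\chiD$ attaches to the smallest-absolute-value entry agrees with $f(w(1))$ because $\neg(w)$ is even. The only difference is that you work out the sign identity $(-1)^{\neg(f)}=\operatorname{sign}(f(w(1)))$ in full detail, whereas the paper merely asserts it.
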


\begin{proof}
Suppose that $f\in\mathcal A(w\Phi^+)$. Then $-f(w(2))\le f(w(1))\le f(w(2))\le \cdots\le f(w(n))$. This implies $0\le |f(w(1)|\le f(w(2))\le \cdots\le f(w(n))$. Since $\neg(w)$ is even, it follows that $f(w(1))$ has the same sign as $(-1)^{\neg(f)}$. Therefore $\chiD(\bx_f)=x_{f(w(1))}\cdots x_{f(w(n))}$. 
Then comparing \eqref{eq:bFD} with \eqref{eq:FD} gives the result.
 \end{proof}

Let $X+Y_{>0}=\{\ldots,x_{-2},x_{-1},x_0,x_1,x_2,\ldots,y_1,y_2,\ldots\}$ be a totally ordered set of commutative variables. We defined in~\cite{H0Tab} a right coaction $\DeltaD$ of $\QSym$ on $\QSym^D$ by 
\[ \QSym^D \to\QSym^D(X+Y_{>0})\twoheadrightarrow\QSym^D\otimes\QSym.\] 
The second map above is induced by the canonical projection
\[ \ZZ[X+Y_{>0}]\cong\ZZ[X]\otimes\ZZ[Y_{>0}]\twoheadrightarrow \ZZ[X]_{\ge2}\otimes\ZZ[Y_{>0}]\]
where $\ZZ[X]_{\ge2}$ is the $\ZZ$-span of those polynomials in X with degree at least 2. If $\alpha=(\alpha_1,\ldots,\alpha_\ell)\modelsD n\ge2$ and $k$ is the smallest integer such that $\alpha_1+\cdots+\alpha_k\ge2$ then 
\begin{equation}\label{eq:Coaction MD FD}
\DeltaD \MD_\alpha = \sum_{k\le j\le\ell} \MD_{(\alpha_1,\ldots,\alpha_j)}\otimes M_{(\alpha_{j+1},\ldots,\alpha_\ell)} \qand
\DeltaD \FD_\alpha = \sum_{2\leq i\leq n} \FD_{\alpha_{\le i}}\otimes F_{\alpha_{>i}}.
\end{equation}
The second equality is equivalent to
\begin{equation}\label{eq:Coaction FDw}
\DeltaD \FD_w = \sum_{2\leq i\leq n} \FD_{\,\Dst(w[1,i])}\otimes F_{\,\st(w[i+1,n])}.
\end{equation}

Now we define a $\QSym$-action on $\QSym^D$. Let $u\in\D_m$ ($m\ge2$) and $v\in\SS_n$. Applying $\chiD$ to \eqref{eq:Action bFD} gives 
\begin{equation}\label{eq:Action FDw}
\FD_u \odotD F_v := \chiD(\bFD_u\cdot \bF_v)=\sum_{w\in u\shuffleD v} \FD_w.
\end{equation}

\begin{proposition}\label{prop:QSymD}
$(\QSym^D,\odotD,\DeltaD)$ is a graded right module and comodule over the Hopf algebra $\QSym$ isomorphic to the graded right module and comodule $(\Sigma^*(\D),\shuffleD,\unshuffleD)$ over $(\Sigma^*(\SS),\shuffle,\unshuffle)$. The map $\chiD$ induces a surjection from the graded right module and comodule $\FQSym^D$ over $\FQSym$ onto $\QSym^D$.
\end{proposition}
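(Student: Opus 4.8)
The plan is to mirror the one-line proof of Proposition~\ref{prop:QSymB} by transporting structure across the isomorphism $F\colon\Sigma^*(\D)\xrightarrow{\sim}\QSym^D$ supplied by the general theory of Section~\ref{sec:FQSym}, together with the identification $F\colon\Sigma^*(\SS)\xrightarrow{\sim}\QSym$ in type A. The crucial preliminary observation is that, by the preceding proposition (which gives $\FD_w=\FD_{D(w)}$), the concrete map $\chiD$ coincides on the $\bFD$-basis with the abstract quotient $\chi\colon\FQSym^D\twoheadrightarrow\QSym^D$ of Section~\ref{sec:FQSym}; here $\QSym^D=\chiD(\FQSym^D)$ is the concrete dual of $\NSym^D$, with basis $\{\FD_\alpha\}$ dual to $\{\bsD_\alpha\}$. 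Under $F$ the basis element $D^*_\alpha(\D_n)$ corresponds to $\FD_\alpha$, so $F$ is already a graded $\ZZ$-module isomorphism $\Sigma^*(\D)\cong\QSym^D$ (graded over $n\ge2$), and it remains only to match the module and comodule structures.

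First I would check that $\chiD$ corresponds, under $\bFD_w\mapsto w$ (Proposition~\ref{prop:FQSymD}) and $\FD_\alpha\mapsto D^*_\alpha(\D_n)$, to the canonical surjection $\chi\colon\ZZ\D\twoheadrightarrow\Sigma^*(\D)$ of Section~\ref{sec:Coxeter}. Indeed $\chiD(\bFD_w)=\FD_w=\FD_{D(w)}$ matches $\chi(w)=D^*_w(\D_n)=D^*_{D(w)}(\D_n)$, so the square commutes on the $\bFD$-basis and hence everywhere by linearity. Since Theorem~\ref{thm:DesD} already asserts that $\chi\colon\ZZ\D\twoheadrightarrow\Sigma^*(\D)$ is a morphism of graded right modules and comodules over the type A Hopf algebra, this commuting square immediately upgrades $\chiD$ to a surjective morphism of modules and comodules (the last assertion of the proposition) and forces the structure on $\QSym^D$ to be exactly the one carried over from $(\Sigma^*(\D),\shuffleD,\unshuffleD)$.

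It then remains to verify that this transported structure agrees with the explicit operations $\odotD$ and $\DeltaD$ defined in~\eqref{eq:Action FDw} and~\eqref{eq:Coaction FDw}. For the action this is immediate: applying the linear map $\chiD$ to~\eqref{eq:Action bFD} gives $\FD_u\odotD F_v=\sum_{w\in u\shuffleD v}\FD_w$, which is the image under $\chiD$ of the $\shuffleD$-action on $\ZZ\D$. For the coaction I would apply $\chiD\otimes\chi$ to~\eqref{eq:Coaction bFD}, where $\chi\colon\FQSym\twoheadrightarrow\QSym$ is the type A quotient acting on the right tensor factor; using $\FD_w=\FD_{D(w)}$ on the left and $F_v=F_{D(v)}$ on the right recovers~\eqref{eq:Coaction FDw} and confirms that the coaction lands in $\QSym^D\otimes\QSym$.

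The main obstacle is that $\chiD$ is \emph{not} an algebra homomorphism (as $\chiD(\bx_{\bar2}\bx_1)\ne\chiD(\bx_{\bar2})\chiD(\bx_1)$ shows), so I cannot push a product formula through it and can only apply it to the already-expanded linear combinations in~\eqref{eq:Action bFD} and~\eqref{eq:Coaction bFD}. The genuine content hidden behind this is the well-definedness of $\odotD$ and $\DeltaD$ as operations on $\QSym^D$, i.e.\ that the right-hand sides of~\eqref{eq:Action FDw} and~\eqref{eq:Coaction FDw} depend only on the descent classes of $u$ and $w$ and not on the chosen representatives. This descent-invariance is precisely what Theorem~\ref{thm:DesD} provides (ultimately through the descent-class formulas of Propositions~\ref{prop:IndDes} and~\ref{prop:ResDes}); once it is in hand, every remaining step is a routine comparison of the displayed formulas.
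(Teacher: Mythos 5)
Your proposal is correct and takes essentially the same approach as the paper, whose proof simply says to compare \eqref{eq:Action FDw} and \eqref{eq:Coaction FDw} with the definition of $(\Sigma^*(\D),\shuffleD,\unshuffleD)$ and with \eqref{eq:Action bFD} and \eqref{eq:Coaction bFD}. Your extra remarks on the failure of $\chiD$ to be an algebra homomorphism and on the descent-class well-definedness supplied by Theorem~\ref{thm:DesD} correctly spell out the content that the paper's one-line comparison leaves implicit.
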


\begin{proof}
Compare (\ref{eq:Coaction FDw}) and (\ref{eq:Action FDw}) with the definition of $(\Sigma^*(\D),\shuffleD,\unshuffleD)$ and with \eqref{eq:Action bFD} and \eqref{eq:Coaction bFD}.
 \end{proof}

\begin{corollary}
 The embedding $\imath:\NSym^D\hookrightarrow \FQSym^D$ and the surjection $\chiD:\FQSym^D\twoheadrightarrow\QSym^D$ are dual morphisms of graded modules and comodules.
\end{corollary}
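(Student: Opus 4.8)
The plan is to mirror the proof of the corresponding type B corollary, transporting the statement into the descent-algebra picture of Theorem~\ref{thm:DesD}, where the relevant duality has already been established. By Proposition~\ref{prop:NSymD}, the inclusion $\imath:\NSym^D\hookrightarrow\FQSym^D$ is carried, under the isomorphisms $\bsD_\alpha\mapsto D_\alpha(\D_m)$ and $\bsD_w\mapsto w$, onto the inclusion $\imath:\Sigma(\D)\hookrightarrow\ZZ\D$; and by Proposition~\ref{prop:QSymD}, the surjection $\chiD:\FQSym^D\twoheadrightarrow\QSym^D$ is carried onto the surjection $\chi:\ZZ\D\twoheadrightarrow\Sigma^*(\D)$. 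Hence it suffices to prove the statement for the pair $\imath:\Sigma(\D)\hookrightarrow\ZZ\D$ and $\chi:\ZZ\D\twoheadrightarrow\Sigma^*(\D)$, and then pull it back along these isomorphisms.

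First I would observe that these two maps are dual to one another: this is exactly the horizontal duality recorded in Theorem~\ref{thm:DesD} (the type D instance of diagram~\eqref{eq:DiamondCoxD}), in which $\imath$ and $\chi$ occupy opposite sides of the self-dual $\ZZ\D$ and are interchanged by the $180^\circ$ rotation. To transport this duality faithfully, I would check that the self-pairing of $\FQSym^D$ from Proposition~\ref{prop:FQSymD}, namely $\langle\bFD_u,\bsD_v\rangle=\delta_{u,v}$, agrees with the standard pairing $\langle u,v\rangle=\delta_{u,v}$ on $\ZZ\D$ under $\bFD_w\mapsto w$ and $\bsD_w\mapsto w$, and that the restricted pairing $\langle\bsD_\alpha,\FD_\beta\rangle=\delta_{\alpha,\beta}$ between $\NSym^D$ and $\QSym^D$ corresponds to the pairing between $\Sigma(\D)$ and $\Sigma^*(\D)$. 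Both agreements are immediate from the defining dual bases together with Proposition~\ref{prop:SymForm}.

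The module-and-comodule structure of the two maps is already contained in the cited propositions: Proposition~\ref{prop:NSymD} asserts that $\imath$ realizes $\NSym^D$ as a sub-module-and-comodule of $\FQSym^D$ over $\NSym\subseteq\FQSym$, while Proposition~\ref{prop:QSymD} asserts that $\chiD$ is a surjection of graded right modules and comodules. Combining these with the duality just verified shows that $\imath$ and $\chiD$ are dual morphisms of graded modules and comodules, which is the assertion. I do not anticipate a real obstacle, since the corollary is formal once Theorem~\ref{thm:DesD} and Propositions~\ref{prop:NSymD} and~\ref{prop:QSymD} are available; the only point deserving care is the compatibility of the three pairings, and that compatibility is forced by the dual-basis definitions.
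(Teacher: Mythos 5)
Your proposal is correct and follows essentially the same route as the paper, whose proof simply cites Theorem~\ref{thm:DesD}, Proposition~\ref{prop:NSymD}, and Proposition~\ref{prop:QSymD}; you have merely spelled out the transport of structure and the compatibility of pairings that the paper leaves implicit.
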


\begin{proof} 
This follows from Theorem~\ref{thm:DesD}, Proposition~\ref{prop:NSymD}, and Proposition~\ref{prop:QSymD}.
 \end{proof}

\subsubsection{Symmetric functions of type D}

We define $\Sym^D:=\chi^D(\NSym^D) \subseteq \QSym^D$. 
It has two spanning sets $\{\sD_\alpha:\alpha\modelsD n, n\ge2\}$ and $\{\hD_\alpha:\alpha\modelsD n, n\ge2\}$, where
\[ \sD_\alpha:= \chiD(\bsD_\alpha) \qand
\hD_\alpha:=\chiD(\bhD_\alpha) = \sum_{\beta\cleq\alpha} \chiD(\bsD_\beta) = \sum_{\beta\cleq\alpha} \sD_\beta. \] 
We will give a representation theoretic interpretation for $\sD_\alpha$ later. 

Suppose that $\alpha\modelsD m\ge2$ and $\beta\models n\ge0$. We define
\[ \hD_{\alpha}\odotD h_{\beta} := \chiD(\bhD_\alpha \cdot \bh_\beta) = \hD_{\alpha\cdot\beta} \qand
\DeltaD(\sD_\alpha) = \DeltaD(\chiD(\bsD_\alpha)) := (\chiD\otimes\chi)(\DeltaD(\bsD_\alpha)).\]

\begin{proposition}
$(\Sym^D,\odotD,\DeltaD)$ is a graded right module and comodule over the Hopf algebra $\Sym$ isomorphic to the graded right module and comodule $\Lambda(\D)$ over $\Lambda(\SS)$. The injection $\imath:\Sym^D\hookrightarrow\QSym^D$ and the surjection $\chi^B:\NSym^D\twoheadrightarrow\Sym^D$ are dual morphisms of graded right modules and comodules.
\end{proposition}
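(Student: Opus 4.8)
The plan is to mirror the argument used for the type B counterpart (Proposition~\ref{prop:SymB}), transporting the abstract module–comodule structure already available on $\Lambda(\D)$ to $\Sym^D$ through the isomorphism $s$. First I would invoke the natural isomorphism $s\colon\Lambda\to\mathcal Sym$ of representations of $\mathcal Cox$ and $\mathcal Cox^{op}$ constructed in \S\ref{sec:FQSym}; specialized to the type D systems $(\D_n,S)$ it yields $\ZZ$-module isomorphisms $s\colon\Lambda(\D_n)\xrightarrow{\sim}\Sym^D_n$ sending $\Lambda_\alpha(\D_n)$ to $\sD_\alpha$, which assemble to $s\colon\Lambda(\D)\xrightarrow{\sim}\Sym^D$. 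By construction this $s$ is compatible with the isomorphisms $\bs\colon\Sigma(\D)\xrightarrow{\sim}\NSym^D$ and $F\colon\Sigma^*(\D)\xrightarrow{\sim}\QSym^D$ furnished by Propositions~\ref{prop:NSymD} and~\ref{prop:QSymD}, in that it intertwines the surjection $\chi'\colon\Sigma(\D)\twoheadrightarrow\Lambda(\D)$ with $\chiD\colon\NSym^D\twoheadrightarrow\Sym^D$ and the injection $\imath\colon\Lambda(\D)\hookrightarrow\Sigma^*(\D)$ with $\imath\colon\Sym^D\hookrightarrow\QSym^D$.

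Next I would supply the module–comodule structure. By Theorem~\ref{thm:DesD}, whose $\Lambda$-layer is governed by Corollary~\ref{cor:Sym}, the operations $\CupD$ and $\CapD$ descend to maps $\bar\mu_I^S$ and $\bar\rho_I^S$ on $\Lambda(\D)$ arising from the parabolic embeddings $\D_m\times\SS_n\hookrightarrow\D_{m+n}$ with $I=S\setminus\{s_m\}$, making $\Lambda(\D)$ a graded right module and comodule over $\Lambda(\SS)$. Since $s\colon\Lambda(\SS)\to\Sym$ is the type A Hopf-algebra isomorphism and $s\colon\Lambda(\D)\to\Sym^D$ intertwines these structure maps, transporting along $s$ shows that $(\Sym^D,\odotD,\DeltaD)$ is a graded right module and comodule over $\Sym$ and that $s$ is an isomorphism of such. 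The duality claim would then follow from Proposition~\ref{prop:SymForm} together with Corollary~\ref{cor:Sym}: there $\imath\colon\Lambda(\D)\hookrightarrow\Sigma^*(\D)$ and $\chi'\colon\Sigma(\D)\twoheadrightarrow\Lambda(\D)$ are dual to each other as morphisms of modules and comodules, and the compatible isomorphisms $s$, $\bs$, $F$ carry this duality over to $\imath\colon\Sym^D\hookrightarrow\QSym^D$ and $\chiD\colon\NSym^D\twoheadrightarrow\Sym^D$.

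The main obstacle I anticipate is verifying that the concrete operations on $\Sym^D$, defined through $\chiD$ by $\hD_\alpha\odotD h_\beta=\chiD(\bhD_\alpha\cdot\bh_\beta)$ and $\DeltaD(\sD_\alpha)=(\chiD\otimes\chi)(\DeltaD(\bsD_\alpha))$, genuinely agree with the abstract structure inherited from $\Lambda(\D)$. The delicate point is that, unlike the type B map $\chiB$, the map $\chiD$ is \emph{not} an algebra homomorphism, so one cannot naively push products and coproducts through it. Instead I would rely on the fact (Proposition~\ref{prop:QSymD}) that $\chiD$ realizes the abstract surjection $\chi\colon\FQSym^D\twoheadrightarrow\QSym^D$ dual to $\imath\colon\NSym^D\hookrightarrow\FQSym^D$, so that $\odotD$ and $\DeltaD$ on $\Sym^D$ are exactly the images of the already well-defined operations on $\NSym^D$ under $\chiD$ and $\chiD\otimes\chi$, and hence coincide with the $\Lambda(\D)$ picture under $s$. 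Once this identification is secured, the proof reduces, as in type B, to citing Theorem~\ref{thm:DesD} and Propositions~\ref{prop:NSymD} and~\ref{prop:QSymD}.
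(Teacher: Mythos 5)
Your proposal is correct and follows essentially the same route as the paper: the paper's own proof is simply to cite Theorem~\ref{thm:DesD} together with Propositions~\ref{prop:NSymD} and~\ref{prop:QSymD}, which is exactly the reduction you arrive at, with your discussion of the isomorphism $s\colon\Lambda(\D)\to\Sym^D$ and of the fact that $\chiD$ is not an algebra homomorphism serving as a careful unpacking of why that citation suffices. No gaps.
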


\begin{proof}
The result follows from Theorem~\ref{thm:DesD}, Proposition~\ref{prop:NSymD}, and Proposition~\ref{prop:QSymD}.
 \end{proof}

\subsection{Representation theory of 0-Hecke algebras of type D} 

Assume $n\ge2$ and let $(W,S)$ be the finite Coxeter system of type $D_n$, where $W=\D_n$ and $S=\{s_0=s^D_0,s_1,\ldots,s_{n-1}\}$. The 0-Hecke algebra $\HD_n(0)$ of $(W,S)$ admits two generating sets $\{\pi_0=\pi_0^D,\pi_1,\ldots,\pi_{n-1}\}$ and $\{\pib_0=\pib_0^D,\pib_1,\ldots,\pib_{n-1}\}$. One can realize $\pi_1,\ldots,\pi_{n-1}$ as signed bubble-sorting operators on $\ZZ^n$ in the same way as in type B and $\pi_0$ acts on $\ZZ^n$ by
\[ \pi_0(a_1,\ldots,a_n):=
\begin{cases}
(-a_2,-a_1,a_3,\ldots, a_n), & \text{if } a_1+a_2>0,\\
(a_1,\ldots,a_n), & \text{if } a_1+a_2\le 0.
\end{cases} \]

The projective indecomposable $\HD_n(0)$-modules and simple $\HD_n(0)$-modules are given by $\PD_\alpha:=\P_I^S$ and $\CD_\alpha:=\C_I^S$, respectively, where $I=\{s_i:i\in D(\alpha)\}$, for all $\alpha\modelsD n\ge2$. One can realize $\PD_\alpha$ as the space of \emph{type D standard tableaux of pseudo-ribbon shape} $\alpha$ with an appropriate $\HD_n(0)$-action~\cite{H0Tab}. 

The parabolic subalgebra $\HD_\alpha(0)$ of $\HD_n(0)$ is generated by $\{\pi_i: i\in\{0,1,\ldots,n-1\}\setminus D(\alpha)\}$. 
If $m\ge2$ and $n\ge0$ then one has $\HD_m(0)\otimes H_n(0)\cong \HD_{m,n}(0)$ and hence an embedding $\HD_m(0)\otimes\HD_n(0)\hookrightarrow\HD_{m+n}(0)$.

Associated with the tower $\HD_\bullet(0): \HD_2(0)\hookrightarrow \HD_3(0)\hookrightarrow\cdots$ are Grothendieck groups
\[ G_0(\HD_\bullet(0)):=\bigoplus_{n\geq2}G_0(\HD_n(0)) \qand K_0(\HD_\bullet(0)):=\bigoplus_{n\geq2}K_0(\HD_n(0)). \]
Assume $m\ge2$ and $n\ge0$.
Let $M$ and $N$ be finitely generated modules over $\HD_m(0)$ and $H_n(0)$, respectively. Define
\[ M \odotD N := (M\otimes N)\uparrow\,_{\HD_{m,n}(0)}^{\HD_{m+n}(0)} \qand
\DeltaD (M) := \sum_{2\le i\le m} M \downarrow\,_{\HD_{i,m-i}(0)}^{H_{m}(0)}. \] 
Using the linear maps $\bar\mu_I^S:G_0(H_{W_I}(0))\to G_0(H_W(0))$ and $\bar\rho_I^S:G_0(H_W(0))\to G_0(H_{W_I}(0))$ defined in \eqref{def:IndResG0}, with $S=\{s_0,\ldots,s_{m+n-1}\}$ and $I=S\setminus\{s_m\}$, one has
\[ M\odotD N = \bar\mu_I^S(M\otimes N) \qand \DeltaD_m (Q) := Q\downarrow\,_{\HD_{m,n}(0)}^{H_{m+n}(0)} = \bar\rho_I^S(Q) \]
where $Q$ is a finitely generated $\HD_{m+n}(0)$-module.
Also recall from Proposition~\ref{prop:IndResG0K0} that $\bar\mu_I^S$ and $\bar\rho_I^S$ restrict to $\mu_I^S:K_0(H_{W_I}(0))\to K_0(H_W(0))$ and $\rho_I^S:K_0(H_W(0))\to K_0(H_{W_I}(0))$.
If $M$, $N$, and $Q$ are all projective then 
\[ M \odotD N = \mu_I^S(M\otimes N) \qand \DeltaD_m(Q) = \rho_I^S(Q).\]
Define two characteristic maps below, where $\alpha\modelsD n\ge 2$: 
\[ \begin{matrix}
\mathrm{Ch}: & G_0(\HD_\bullet(0)) & \to & \QSym^D & \qand & \bch: & K_0(\HD_\bullet(0)) & \to & \NSym^D \\  
& \CD_\alpha & \mapsto & \FD_\alpha & & & \PD_\alpha & \mapsto & \bsD_\alpha.
\end{matrix}\]

\begin{theorem}
(i) $(G_0(\HD_\bullet(0)),\odotD,\DeltaD)$ is a graded right module and comodule over the graded Hopf algebra $G_0(H_\bullet(0))$.

\noindent (ii) $(K_0(\HD_\bullet(0)),\odotD,\DeltaD)$ is a graded right module and comodule over the graded Hopf algebra $K_0(H_\bullet(0))$.

\noindent (iii) $(G_0(\HB_\bullet(0)),\odotD,\DeltaD)$ is dual to $(K_0(\HD_\bullet(0)),\odotD,\DeltaD)$ via $\langle \PD_\alpha, \CD_\beta \rangle:=\delta_{\alpha,\beta}$.

\noindent(iv) Both $\Ch$ and $\bch$ are isomorphisms of graded modules and comodules.
\end{theorem}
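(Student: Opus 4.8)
The plan is to prove this theorem by reducing it entirely to results already established for general finite Coxeter systems in Section~\ref{sec:General} together with the type~D computations in Section~\ref{sec:FQSymD}, exactly paralleling the type~B argument. The unifying engine is Theorem~\ref{thm:GrH0}, which for \emph{any} finite Coxeter system $(W,S)$ furnishes natural isomorphisms $\mathcal{K}_0\cong\Sigma$ and $\mathcal{G}_0\cong\Sigma^*$ of representations of $\mathcal Cox$ and $\mathcal Cox^{op}$. Applied to type~D this gives isomorphisms of free $\ZZ$-modules $K_0(\HD_n(0))\cong\Sigma(\D_n)$ via $\PD_\alpha\mapsto D_\alpha(\D_n)$ and $G_0(\HD_n(0))\cong\Sigma^*(\D_n)$ via $\CD_\alpha\mapsto D^*_\alpha(\D_n)$, and, crucially, these intertwine the Grothendieck-group induction and restriction maps with the categorical maps $\bar\mu_I^S,\bar\rho_I^S$ (and their restrictions $\mu_I^S,\rho_I^S$ on projectives).

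First I would pin down that the module and comodule operations $\odotD$ and $\DeltaD$ on the Grothendieck groups coincide with those categorical maps. Taking $S=\{s_0,\ldots,s_{m+n-1}\}$ and $I=S\setminus\{s_m\}$, the parabolic embedding $\D_m\times\SS_n\hookrightarrow\D_{m+n}$ of Proposition~\ref{prop:DA} (valid for $m\ge2$) identifies $\HD_{m,n}(0)=\HD_m(0)\otimes H_n(0)$ with $H_{W_I}(0)$. By construction $M\odotD N$ is induction along this embedding and $\DeltaD_m$ is the degree-$m$ component of restriction, so Proposition~\ref{prop:IndResG0K0} shows these are precisely $\bar\mu_I^S$ and $\bar\rho_I^S$ on $G_0$, restricting to $\mu_I^S$ and $\rho_I^S$ on $K_0$. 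Transporting through the isomorphisms above then converts $(K_0(\HD_\bullet(0)),\odotD,\DeltaD)$ into $(\Sigma(\D),\CupD,\CapD)$ and $(G_0(\HD_\bullet(0)),\odotD,\DeltaD)$ into $(\Sigma^*(\D),\shuffleD,\unshuffleD)$.

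At this point parts~(i) and~(ii) follow immediately from Theorem~\ref{thm:DesD}, which states exactly that $\Sigma(\D)$ is a graded right module and comodule over $\Sigma(\SS)$ and that $\Sigma^*(\D)$ is one over $\Sigma^*(\SS)$, combined with the type~A identifications $K_0(H_\bullet(0))\cong\Sigma(\SS)$ and $G_0(H_\bullet(0))\cong\Sigma^*(\SS)$ recalled in Section~\ref{sec:GroupA}. Part~(iii) is the duality in Theorem~\ref{thm:DesD}, under which the pairing $\langle\PD_\alpha,\CD_\beta\rangle=\delta_{\alpha,\beta}$ matches $\langle D_\alpha(\D_n),D^*_\beta(\D_n)\rangle=\delta_{\alpha,\beta}$. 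For part~(iv) I would compose the Grothendieck isomorphisms with $\bs:\Sigma\to\mathcal NSym$ and $F:\Sigma^*\to\mathcal QSym$, then with the module/comodule isomorphisms $\NSym^D\cong\Sigma(\D)$ and $\QSym^D\cong\Sigma^*(\D)$ of Propositions~\ref{prop:NSymD} and~\ref{prop:QSymD}; since each factor is an isomorphism of the relevant structures and $\bch(\PD_\alpha)=\bsD_\alpha$, $\Ch(\CD_\alpha)=\FD_\alpha$, the composites $\bch$ and $\Ch$ are the asserted isomorphisms of graded modules and comodules.

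The genuine content is therefore not in assembling this theorem but in the structural inputs, which are the place I expect any difficulty to reside. The uniform decomposition results feeding Theorem~\ref{thm:GrH0} — namely the induction and restriction formulas on simple and projective modules (Propositions~\ref{prop:IndC}, \ref{prop:ResC}, \ref{prop:IndP}, \ref{prop:ResP}) — must be matched against the descent-algebra formulas, and these already hold for all types. The one type-specific subtlety to track carefully is the grading constraint $m\ge2$: the left tensor factor $\D_m$ must itself be of type~D, which is why $\DeltaD$ only sums over $2\le i\le m$ and the tower $\HD_\bullet(0)$ begins at $n\ge2$; this is exactly the bookkeeping handled by Proposition~\ref{prop:DA}, including its $s_0^B$-correction when the standardized left window falls outside $\D_m$. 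Once that grading is respected, the theorem is a direct translation of the uniform machinery, and the proof reduces to citing Theorem~\ref{thm:GrH0}, Theorem~\ref{thm:DesD}, Proposition~\ref{prop:NSymD}, and Proposition~\ref{prop:QSymD}.
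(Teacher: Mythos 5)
Your proposal is correct and follows essentially the same route as the paper, whose proof is simply the citation of Theorem~\ref{thm:GrH0}, the type~D descent-algebra diagram, and Propositions~\ref{prop:NSymD} and~\ref{prop:QSymD}; you have merely spelled out the intermediate identifications (induction/restriction as $\bar\mu_I^S,\bar\rho_I^S$ via Proposition~\ref{prop:IndResG0K0}, and the $m\ge2$ grading bookkeeping) that the paper leaves implicit. The only divergence is that the paper's proof cites Theorem~\ref{thm:DesB} where your Theorem~\ref{thm:DesD} is clearly the intended reference.
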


\begin{proof}
Apply Theorem~\ref{thm:GrH0}, Theorem~\ref{thm:DesB}, Proposition~\ref{prop:NSymD}, and Proposition~\ref{prop:QSymD}.
 \end{proof}

\begin{proposition}\label{prop:ChPD}
If $\alpha$ is a pseudo-composition of $n\ge2$ then ${\rm Ch}(\,\PD_\alpha) = \sD_\alpha$. 
\end{proposition}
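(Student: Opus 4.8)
The plan is to recognize this as the type D instance of the uniform statement Proposition~\ref{prop:PtoC}, exactly as Proposition~\ref{prop:ChPB} was the type B instance. First I would recall that by definition $\PD_\alpha = \P_I^S$, where $(W,S)$ is the Coxeter system of type $D_n$ and $I = \{s_i : i \in D(\alpha)\}$, and that the characteristic map $\Ch$ sends $\CD_\alpha = \C_I^S$ to $\FD_\alpha = F_I^S$, matching the uniform map $\C_w^S \mapsto F_w^S$. Thus the type D map $\Ch$ is literally the restriction to type D of the characteristic map studied in Section~\ref{sec:General}, so nothing new about its definition needs to be checked.

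Then I would invoke Proposition~\ref{prop:PtoC} directly. For an arbitrary finite Coxeter system it gives $\Ch(\P_I^S) = \chi(\bch(\P_I^S)) = s_I^S$, where $s_I^S = \sum_{w\in D_I(W)} F_{w^{-1}}^S$. Applying this with the type D system immediately yields $\Ch(\PD_\alpha) = s_I^S$.

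The only remaining step is to identify $s_I^S$ with $\sD_\alpha$. Here $\sD_\alpha := \chiD(\bsD_\alpha)$, and by Proposition~\ref{prop:bsDa} one has $\bsD_\alpha = \sum_{w\in D_\alpha(\D_n)} \bsD_w$. Since $\bsD_w = \bFD_{w^{-1}}$ and $\FD_w = \chiD(\bFD_w)$, applying $\chiD$ gives $\sD_\alpha = \sum_{w\in D_\alpha(\D_n)} \FD_{w^{-1}}$, which is exactly $s_I^S$ once the identifications $D_\alpha(\D_n) = D_I(W)$ and $F_w^S = \FD_w$ are substituted. This finishes the proof.

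I do not anticipate any genuine obstacle: every ingredient needed here — the indexing of projectives and simples by pseudo-compositions, Proposition~\ref{prop:bsDa}, and the compatibility of $\chiD$ with the relevant bases — was already established in the preceding subsections, so the entire content is absorbed into the uniform Proposition~\ref{prop:PtoC}. If a self-contained argument were preferred, the alternative would be to replicate the filtration computation in the proof of Proposition~\ref{prop:PtoC}, using the basis $\{\pib_w\pi_{w_0(I^c)} : w\in D_\alpha(\D_n)\}$ of $\PD_\alpha$ together with the type D signed bubble-sorting action of the $\pi_i$; but this merely re-derives the general result in the special case and adds nothing.
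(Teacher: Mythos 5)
Your proposal is correct and follows exactly the paper's route: the paper's proof of this proposition is literally the one-line citation of Proposition~\ref{prop:PtoC}, and your identification of $s_I^S$ with $\sD_\alpha$ via Proposition~\ref{prop:bsDa} and the compatibility of $\chiD$ with the $\bFD_w$ basis just makes explicit what the paper leaves implicit.
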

\begin{proof}
This follows from Proposition~\ref{prop:PtoC}.
 \end{proof} 

\begin{remark}
We obtained in Section~\ref{sec:HopfB} a self-dual graded Hopf algebra structure on $\ZZ\B$ using the embedding $\B_m\times\B_n\hookrightarrow\B_{m+n}$.
This embedding restricts to $\D_m\times\D_n\hookrightarrow\D_{m+n}$, which may be used to define dual graded algebra and coalgebra structures on $\ZZ\D$ similarly as in type B.
However, the result is not a Hopf algebra.
\end{remark}

\end{document}